\renewcommand{\Re}{\mathop{\rm Re}\nolimits}
\renewcommand{\Im}{\mathop{\rm Im}\nolimits}
\def\S{\mathhexbox278}
\theoremstyle{plain}
\newtheorem{theorem}{Theorem}[section]
\newtheorem{lemma}[theorem]{Lemma}
\newtheorem{proposition}[theorem]{Proposition}
\theoremstyle{definition}
\theoremstyle{remark}
\newtheorem{remark}[theorem]{Remark}
\newtheorem{notation}[theorem]{Notation}
\newtheorem{claim}[theorem]{Claim}
\newcommand{\R}{{\mathbb R}}
\newcommand{\Z}{{\mathbb Z}}
\def\im{{\rm i}}
\newcommand{\C}{\mathbb{C}}
\newcommand{\sqt}{\sqrt{2}}
\def\({\left(}
\def\){\right)}
\def\<{\left\langle}
\def\>{\right\rangle}
\newcommand{\sech}{{\mathrm{sech}}}
\newcommand{\rad}{{\mathrm{rad}}}    \newcommand{\diag}{{\mathrm{diag}}}
\numberwithin{equation}{section}
\begin{document}

\title{The asymptotic stability  on the line  of  ground states of the  pure power NLS with  $0<|p-3|\ll 1$}

\author{Scipio Cuccagna, Masaya Maeda }
\maketitle

\begin{abstract} For exponents $p$ satisfying $0<|p-3|\ll 1$ and only in the context of spatially even solutions  we prove that the ground states of the nonlinear Schr\"odinger equation (NLS) with pure power nonlinearity of exponent $p$ in the line are asymptotically stable. The proof is similar to a related result of Martel \cite{Martel2} for a cubic quintic NLS. Here we modify the second part of Martel's argument, replacing the second virial inequality for a transformed problem with a smoothing estimate on the initial problem, appropriately  tamed by multiplying the initial variables and equations  by a cutoff.
\end{abstract}

\section{Introduction}

We consider  the pure    power focusing  Nonlinear Schr\"odinger Equation  (NLS) on the line
\begin{align}\label{eq:nls1}&
  \im \partial _t  u +\partial _x^2  u = -f(u)  \text{   where }    f(u)=|u| ^{p-1}u    \text{ for $0<|p-3|\ll 1$. }
\end{align}
     We  consider only even solutions, eliminating translations  and simplifying the problem. In particular, we will study Equation \eqref{eq:nls1} in the space $H^ 1_\rad (\R )= H^ 1_\rad (\R ,\C ) $, of even functions in $H^ 1  (\R ,\C )$. It is well known that Equation   \eqref{eq:nls1} has standing waves, solutions with the form $u(t,x)=e^{\im \omega t}\phi_{\omega}(x)$. They are  obtained from  $\phi _\omega (x)=\omega ^{\frac 1{p-1}} \phi (\sqrt{\omega }x) $  with the explicit formula
\begin{align} &
    \phi (x) =  {\(\frac {p+1}2 \)^{\frac 1{p-1}}}{
\sech   ^{\frac 2{p-1}}\(\frac{p-1}2 x\)}  ,\label{eq:sol}
   \end{align}
 see formula (3.1) of Chang et al. \cite{Chang}.
Energy $\mathbf{E}$ and Mass $\mathbf{Q}$ are invariants of \eqref{eq:nls1}, where
  \begin{align}\label{eq:energy}
& \mathbf{E}( {u})=\frac{1}{2}   \| u' \| ^2 _{L^2\( \R \) } -\int_{\R} F(u)    \,dx   \text {  where }  F(u)    =\dfrac{ |u|  ^{p+1}}{p+1}, \\&   \label{eq:mass} \mathbf{Q}( {u})=\frac{1}{2}   \| u  \| ^2 _{L^2\( \R \) }.
\end{align}
  It is well known that
   $\phi _\omega$  minimizes $\mathbf{E}$ under the constraint $\mathbf{Q}=\mathbf{Q}(\phi _\omega )=:\mathbf{q}(\omega )$. Notice that $\mathbf{q}(\omega ) =\omega ^{ \frac{2}{p-1}-\frac{1}{2}}   \mathbf{q}(1)$.
     We have $\nabla \mathbf{E} (\phi _\omega)= -\omega   \nabla \mathbf{Q} (\phi _\omega)  $ which reads also
\begin{align} \label{eq:static}
   - \phi _\omega ''+\omega \phi _\omega - \phi _\omega ^p=0   .
\end{align}
Set now  for $\omega ,\delta \in \R_+:=(0,\infty)$ the set     $$\mathcal{U} (\omega  ,\delta  ) := \bigcup _{\vartheta _0\in \R }   e^{\im \vartheta _0}  D_{H ^1_\rad (\R )}({\phi}_{\omega },\delta  ), $$ where $D_X(u,r):=\{v\in X\ |\ \|u-v\|_X<r \}$.
The following was shown by Cazenave and Lions \cite{cazli}, see also Shatah  \cite{shatah} and Weinstein  \cite{W1}.

\begin{theorem}[Orbital Stability]  Let $p\in (1,5)$ and let $\omega _0 >0$. Then for any $\epsilon >0$  there exists a  $\delta >0$ such that for any  initial value    $u_0\in \mathcal{U} (\omega _0,\delta  ) $ then   the corresponding solution  satisfies
$u\in C^0  \(  \R ,   \mathcal{U} (\omega _0,\epsilon  ) \) $.
\end{theorem}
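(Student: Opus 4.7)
The plan is to adapt the classical concentration--compactness argument of Cazenave--Lions \cite{cazli}, exploiting the fact that working in $H^1_{\rad}(\R)$ eliminates the translation symmetry and leaves only the phase symmetry to contend with.

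First, I would record the variational characterization: for $m>0$ set
\[
I(m):=\inf\{\mathbf{E}(u):u\in H^1_{\rad}(\R),\ \mathbf{Q}(u)=m\}.
\]
The mass-preserving scaling $u_\lambda(x)=\lambda^{1/2}u(\lambda x)$, together with Gagliardo--Nirenberg and the subcriticality $p<5$, yields $I(m)\in(-\infty,0)$ and the strict subadditivity $I(m_1+m_2)<I(m_1)+I(m_2)$ for $m_i>0$. From \eqref{eq:sol} and \eqref{eq:static}, $\phi_{\omega_0}$ attains $I(\mathbf{q}(\omega_0))$ and is the unique minimizer in $H^1_{\rad}(\R)$ up to a phase factor.

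Next, argue by contradiction. Suppose there exist $\epsilon_0>0$, initial data $u_{0,n}\in\mathcal{U}(\omega_0,1/n)$, and times $t_n\in\R$ such that the corresponding solutions $u_n\in C(\R,H^1_{\rad})$ (global existence for $p<5$ being standard via mass conservation and Gagliardo--Nirenberg) satisfy $u_n(t_n)\notin\mathcal{U}(\omega_0,\epsilon_0)$. Continuity of $\mathbf{E}$ and $\mathbf{Q}$ on $H^1(\R)$, combined with their conservation in $t$, gives $\mathbf{E}(u_n(t_n))\to I(\mathbf{q}(\omega_0))$ and $\mathbf{Q}(u_n(t_n))\to\mathbf{q}(\omega_0)$. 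A harmless rescaling $v_n:=\alpha_n u_n(t_n)$ with $\alpha_n\to 1$ produces a minimizing sequence $\{v_n\}\subset H^1_{\rad}$ for $I(\mathbf{q}(\omega_0))$ obeying $\|v_n-u_n(t_n)\|_{H^1}\to 0$.

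Finally, apply Lions' concentration--compactness alternative to $\rho_n:=|v_n|^2$. Vanishing is excluded because $I(\mathbf{q}(\omega_0))<0$, whereas vanishing would force $\int_\R F(v_n)\,dx\to 0$ via the Sobolev embedding and yield $\liminf\mathbf{E}(v_n)\ge 0$. Dichotomy is excluded by the strict subadditivity of $I$. Here evenness of each $v_n$ is essential: any concentration profile at a sequence $x_n$ with $|x_n|\to\infty$ would, by symmetry, force simultaneous concentration at $-x_n$, once again triggering dichotomy; hence concentration must occur at $x=0$. Up to a phase $e^{\im\vartheta_n}$, $v_n\to v_*$ strongly in $L^2(\R)$, and the convergence upgrades to $H^1$ because $\mathbf{E}(v_n)\to\mathbf{E}(v_*)$ and the nonlinear term is continuous on $H^1$. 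By uniqueness $v_*=\phi_{\omega_0}$, so $u_n(t_n)\in\mathcal{U}(\omega_0,\epsilon_0)$ for large $n$, a contradiction. The main obstacle throughout is the strict subadditivity of $I$, which drives both the dichotomy exclusion and the symmetric-splitting exclusion; it rests on the strict scaling inequality for $\mathbf{E}$ under $u\mapsto\lambda^{1/2}u(\lambda\cdot)$ in the subcritical regime $p<5$.
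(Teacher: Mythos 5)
The paper does not prove this theorem at all: it states it with a citation to Cazenave--Lions (and Shatah, Weinstein), and your proposal is precisely the Cazenave--Lions concentration--compactness argument, correctly adapted to the even class (strict subadditivity of the constrained infimum, exclusion of vanishing by $I<0$, exclusion of dichotomy, and the symmetric-splitting argument to rule out a runaway concentration point in place of the translation parameter). The sketch is correct; the only remark worth adding is that the even-symmetric statement also follows directly from the classical full-$H^1(\R)$ result, since an even function close to $e^{\im\vartheta}\phi_{\omega_0}(\cdot-y)$ is also close to $e^{\im\vartheta}\phi_{\omega_0}(\cdot+y)$, which forces $|y|$ to be small.
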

\qed

In order to study the notion of asymptotic stability, like in finite dimension, it is useful to have information on the  \textit{linearization} of \eqref{eq:nls1} at $\phi _\omega$, which we will see later has the following form
\begin{align}\label{eq:lineariz2} \partial _t   \begin{pmatrix}
 r_1 \\ r_2
 \end{pmatrix}      &=  \mathcal{L}_{\omega }  \begin{pmatrix}
 r_1 \\ r_2
 \end{pmatrix}  \text{  with }    \mathcal{L}_\omega := \begin{pmatrix}
0 & L_{-\omega} \\ -L_{+\omega} & 0
\end{pmatrix} ,
    \end{align}
where    \begin{align}\label{eq:lin1}&
  L _{+\omega}:=- \partial _x^2   +\omega -p \phi _\omega ^{p-1}   \\& \label{eq:lin0} L _{-\omega}:=- \partial _x^2   +\omega-  \phi _\omega ^{p-1}.
\end{align}
The linearization is better seen in the context of functions  in $H^ 1_\rad (\R , \R ^2 ) $ rather than in $H^ 1_\rad (\R ,\C ) $, because it is $\R$--linear rather than $\C$--linear.
For $p=3$ the operator $ \mathcal{L}_\omega$ is completely known very thoroughly, so for example all its plane waves are known explicitly, see section \ref{sec:intsy}.
Coles and Gustafson \cite{coles} proved that for  $0<|p-3|\ll 1$ the linearization $ \mathcal{L}_\omega$ has exactly one eigenvalue of the form  $\im \lambda   $ near $\im \omega$.  We set $\lambda (p,\omega):=\lambda  $. Furthermore
$0<\lambda (p,\omega) < \omega $    and $\dim \ker (\mathcal{L}_\omega -\im \lambda (p,\omega))=1$.
Let $\xi _{\omega}\in H^1(\R, \C^2) $ be  an appropriately normalized generator of $\ker (\mathcal{L}_\omega -\im \lambda (p,\omega))$, see \S \ref{sec:lin1}.

In this paper we prove  the following result.

\begin{theorem} \label{thm:asstab}
There exists $p_1<3<p_2$ s.t.\ for any $p\in (p_1,p_2)\setminus\{3\}$ and any $\omega _0 >0$, any $a>0$ and any $\epsilon >0$   there exists a  $\delta >0$ such that for any  initial value    $u_0\in  D _{H^1_\rad (\R ) } ({\phi}_{\omega  _0},\delta  ) $
there exist functions $ \vartheta , \omega \in C^1 \( \R , \R \)$,  $ z \in C^1 \( \R , \C  \)$ and $\omega_+>0$    s.t.\  the  solution  of \eqref{eq:nls1} with initial datum  $u_0$  can be written as
 \begin{align} \label{eq:asstab1}
    & u(t)= e^{\im \vartheta (t)} \(   \phi _{\omega (t)} + z(t) \xi _{\omega(t) }+ \overline{z}(t) \overline{\xi} _{\omega (t)} +\eta (t)\) \text{   with}
 \\&  \label{eq:asstab2}   \int _{\R } \|  e^{- a\< x\>}   \eta (t ) \| _{H^1(\R )}^2 dt <  \epsilon  \text{  where }\< x\>:=\sqrt{1+x^2} \\&  \label{eq:asstab20}
   \lim _{t\to \infty  }  \|  e^{- a\< x\>}   \eta (t ) \| _{L^2(\R )}     =0   \\&  \label{eq:asstab2}
   \lim _{t\to \infty}z(t)=0      ,\\&
\lim _{t\to \infty}\omega (t)= \omega _+ .\label{eq:asstab3}
\end{align}

\end{theorem}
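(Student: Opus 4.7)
The argument follows the standard modulation $+$ coercivity $+$ virial framework (Soffer--Weinstein, Buslaev--Perelman, Cuccagna) as reorganized by Martel in \cite{Martel2}. First, using Theorem 1.1 and the implicit function theorem, I would introduce the decomposition \eqref{eq:asstab1} by requiring $\eta(t)$ to be symplectically orthogonal to the generalized kernel of $\mathcal{L}_{\omega(t)}$ at zero (spanned by $\phi_{\omega}$ and $\partial_\omega \phi_\omega$) and to the two-dimensional discrete eigenspace generated by $\xi_{\omega(t)}$ and $\overline{\xi}_{\omega(t)}$. These constraints determine $\omega(t), \vartheta(t), z(t)$ as $C^1$ functions of $t$ and, upon differentiation, yield modulation ODEs for $\dot\omega, \dot\vartheta, \dot z$ together with a PDE $\partial_t \eta = \mathcal{L}_\omega \eta + \mathcal{N}(\eta, z, \dot\omega, \dot\vartheta)$, where $\mathcal{N}$ collects source terms polynomial in $(z, \overline{z})$, linear in the modulation derivatives, and the genuine nonlinearity coming from $f(u) - f(\phi_\omega)$.

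Second, coercivity of the Hessian of the Lyapunov functional $\mathbf{E} + \omega \mathbf{Q}$ on the above orthogonal subspace, combined with conservation of $\mathbf{E}$ and $\mathbf{Q}$, gives the uniform a priori bound $\|\eta(t)\|_{H^1} + |z(t)| + |\omega(t)-\omega_0| \lesssim \delta$ for all $t\in\R$. To upgrade this to the integrability and vanishing claimed in \eqref{eq:asstab2}--\eqref{eq:asstab3}, the quadratic-in-$z$ source terms in $\mathcal{N}$ that are non-resonant must be absorbed by a Poincar\'e--Dulac / Birkhoff normal form, thereby isolating the single resonant interaction whose Fermi Golden Rule (FGR) coefficient has to be shown nonzero for $0<|p-3|\ll 1$. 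Here one would use the explicit spectral data at $p=3$ from \cite{coles} and check perturbatively that $2\lambda(p,\omega)$ sits strictly inside the essential spectrum of $\im\mathcal{L}_{\omega}$, opening a genuine radiation channel.

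Third, I would prove two integrated space-time estimates on $\eta$ in the spirit of \cite{Martel2}. The first is a virial estimate obtained by testing the $\eta$-equation against $\im\psi_A \partial_x + \frac{\im}{2}\psi_A'$, with $\psi_A$ a bounded antiderivative of a cutoff $\chi_A$, yielding $\int_\R\int_\R \chi_A(|\eta_x|^2 + |\eta|^2)\,dx\,dt \lesssim \delta^2 + \int_\R|z|^4\,dt$. The second, and the main novelty compared to \cite{Martel2}, is a Kato-type local smoothing estimate applied directly to the cut-off radiation $w=\chi\eta$: rewriting the equation for $w$ as $\im\partial_t w + \partial_x^2 w = g$ with $g$ a compactly supported source, the free-smoothing bound $\|e^{-a\<x\>}\partial_x e^{\im t \partial_x^2} g_0\|_{L^2_{t,x}} \lesssim \|g_0\|_{L^2}$ combined with Duhamel produces $\int_\R \|e^{-a\<x\>}\eta(t)\|_{H^1}^2 dt \lesssim \delta^2 + \int_\R|z|^4\,dt$. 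The cutoff is used both to render the variable coefficients from $\mathcal{L}_\omega$ of compact support and to reduce the problem to the free Schr\"odinger propagator away from the soliton.

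Fourth, combining these two estimates with the FGR ODE for $z$ yields $\int_\R|z(t)|^4\,dt \lesssim \delta^2$, whence \eqref{eq:asstab2} and \eqref{eq:asstab2} for $z$. Integrability of $\dot\omega, \dot\vartheta$, which are bounded in terms of $|z|^2$ and the weighted $L^2$ norm of $\eta$, then gives the limits $\omega_+$ and the linear phase, while \eqref{eq:asstab20} follows from \eqref{eq:asstab2} together with a uniform continuity argument on $t\mapsto \|e^{-a\<x\>}\eta(t)\|_{L^2}$. \emph{Main obstacle.} The critical technical step is the smoothing estimate on $\chi\eta$: the cutoff must be tailored so that the commutator $[\chi,\mathcal{L}_\omega]$ produces only source terms controlled by the first virial, while the constant mass term in $\mathcal{L}_\omega$ at infinity is handled by freeness of the propagator outside the soliton support. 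This balance, together with the verification that the FGR coefficient is nondegenerate at the integrable value $p=3$, is where the delicate $p$-dependent analysis lies.
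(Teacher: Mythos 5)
Your overall architecture (modulation, coercivity, FGR, one virial estimate, a smoothing estimate for the localized radiation) matches the paper's, but two of your key steps have genuine gaps. The most serious is the smoothing step. You propose to write the equation for $w=\chi\eta$ as $\im\partial_t w+\partial_x^2w=g$ and close with the Kato smoothing bound for the \emph{free} propagator, treating the (compactly supported) coefficients coming from $\mathcal{L}_\omega$ as a source. This cannot close: the soliton potential in $\mathcal{L}_\omega$ is localized but of size $O(1)$, so the Duhamel term it generates is bounded by a constant (not small) multiple of exactly the weighted space--time norm of $\eta$ you are trying to estimate, and there is no smallness parameter to absorb it; moreover the free flow does not see the discrete spectrum of $\mathcal{L}_\omega$ (the generalized kernel and the internal mode $\pm\im\lambda$), whose contributions do not disperse and would enter your source at order one. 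This is precisely why the paper proves smoothing and local-decay estimates for the full matrix linearization $H_{\omega_0}$ with the projection $P_c$ (Propositions \ref{lem:smoothest1} and \ref{lem:smooth111}, via the resolvent bound of Proposition \ref{prop:boudres}), which in turn requires the threshold analysis of the Jost functions and the nonvanishing Wronskian condition (item (\textbf{iii}) of Remark \ref{rem:extthm:asstab}); the cutoff $\chi_B$ is used only to tame the nonlinearity and its commutator with $\partial_x^2$, the latter being controlled by the dedicated estimate of Proposition \ref{lem:smoothest1}, not by freeness of the propagator outside the soliton. Relatedly, your virial output as stated, namely a bound by $\delta^2+\int|z|^4$, omits the localized norm $\|\eta\|_{L^2(I,\widetilde\Sigma)}$ on the right-hand side (compare \eqref{eq:sec:1virial1}); the high-energy virial alone does not give what you wrote, and it is the coupling of the virial with the matrix smoothing estimate that closes the bootstrap.

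The second gap is the convergence \eqref{eq:asstab3}. You deduce it from ``integrability of $\dot\omega$'', but the estimates available only give $\dot\omega-\widetilde\omega\in L^1$ with $\widetilde\omega=O(|z|^2)$, and $z$ is controlled only in $L^4(\R_+)$, so $\dot\omega$ is square integrable at best and need not be in $L^1$; the limit $\omega_+$ does not follow this way. The paper instead obtains \eqref{eq:asstab3} from conservation of $\mathbf{Q}$: since $\mathbf{q}(\omega)$ is monotone, it suffices to show that $\mathbf{Q}(\eta(t))$ converges, which is done by proving $\frac{d}{dt}\mathbf{Q}(\eta)\in L^1(\R_+)$, and this last step needs a careful splitting of the nonlinear term (the sets $\Omega_{1,t,s},\Omega_{2,t,s}$) to cope with the limited smoothness of $f(u)=|u|^{p-1}u$ near $u=0$. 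Two further remarks: the paper deliberately avoids the Poincar\'e--Dulac/Birkhoff normal form you invoke, extracting the FGR directly from the refined profile and the functional $\mathcal{J}_{\mathrm{FGR}}$ built with the generalized eigenfunction at $2\im\lambda$; and the nondegeneracy of the FGR coefficient is not a soft perturbative check, since $\gamma(p,1)$ vanishes at $p=3$ and one must compute its first-order term in $(p-3)$ (Lemma \ref{lem:comgamma1}) to see it is nonzero for $0<|p-3|\ll1$.
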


\begin{remark}
Standing wave solutions in the integrable case $p=3$ are not asymptotically stable due   the existence of   \textit{breathers}, see Borghese et al. \cite[formula (1.21)]{borghese}, very close in $H^1(\R )$ to a soliton (take for example  $\eta _2 \longrightarrow 0$  in \cite[formula (1.21)]{borghese}). More broadly,
when $p=3$ it is possible  to \textit{add}  solutions using B\"acklund transformations.
In fact the situation resembles that of small energy solutions of NLS with a trapping linear potential with two or more eigenvalues when we treat the nonlinearity as a perturbation. Then the linear equation  has quasiperiodic solutions, due to linear superposition, while generically a nonlinear equation does  not, see \cite{CM2109.08108} for a 1 D result and therein for
references.
However, some form of asymptotic  stability holds also in the  $p=3$ case,  using different norms and  the theory of Integrable Systems, \cite{borghese,cupe2014,saalmann}.
\end{remark}

%

\begin{remark}\label{rem:asstab3--}  Notice that the fact that the $H^1(\R )$ norm of $\eta$ is uniformly bounded for all times, guaranteed by the orbital stability, and
Theorem \ref{thm:asstab}  imply   that
\begin{align}\label{eq:rem:asstab3--1}
  \lim _{t\to +\infty }  u(t) e^{-\im \vartheta (t)}=  \phi  _{\omega _+}  \text{  in }L^\infty _{\text{loc}}(\R ) .
\end{align}

\end{remark}

\begin{remark}\label{rem:extthm:asstab}  While we prove  Theorem  \ref{thm:asstab}  in the case that   $0<|p_j -3|\ll 1$  for $j=1,2$,   it is possible to ease significantly these hypotheses.
In fact, we emphasize that our theory is largely non perturbative.
What we need in the proof are the following facts:
\begin{description}
  \item[(i)]  $2 \lambda (p,1) >1 $;
  \item[(ii)]  we can take   $\gamma (p,1)\neq 0$, where this constant is related to the Fermi Golden Rule (FGR), see below, and is defined below  in \eqref{eq:fgrgamma};
  \item[(iii)]  given   the Jost functions  $f_3(\cdot ,0)$ and $g_3( \cdot ,0)$  introduced later  in  Sect. \ref{sec:lin},   which  depend analytically on $p$,   their  Wronskian  is nonzero: $ W[f_3(x,0), g_3( x,0)] \neq 0 $.
\end{description}
 According to numerical computations in  Chang et al. \cite{Chang}, condition (\textbf{i}) holds for all $2<p<3$, where there are no other eigenvalues  of the form $\im \lambda $ for $\lambda >0$, no resonance is observed at the threshold of the continuous spectrum. Furthermore, both $\gamma (p,1) $ and $W[f_3(x,0), g_3( x,0)]$ can be made to depend analytically on $p$. This would show that Theorem \ref {thm:asstab}    holds for all the $p\in (2,3)$ except for a discrete subset of $  (2,3)$.  Similarly, when we consider  $p>3$,     we have   $\lambda (3,1)=1 $  and $\lambda (5,1)=0 $ and, according to
  the numerical computations in  Chang et al. \cite{Chang}  (similar  results were in part already known:  the first author  learned about them by personal communication by M.I.Weinstein   in the year 2000), the function $(3,5)\ni p\to \lambda (p,1) $   is  strictly decreasing, there are no other eigenvalues of the form $\im \lambda $ with $\lambda >0$
 and no resonance is observed at the threshold of the continuous spectrum. So there will be a $p_0\in (3,5)$ such that  $2 \lambda (p,1) >1 $ for all $3<p <p_0$ (notice that $p_0\in (4,5)$ in \cite[fig. 1]{Chang} with $p_0$ quite close to 5). Since also conditions (\textbf{ii}) and (\textbf{iii}) will be true for all the $p\in (3,p_0) $ outside a discrete subset of  $ (3,p_0) $ we conclude that  outside a discrete subset of  $ (3,p_0) $ Theorem \ref{thm:asstab} continues to be true.
 That the FGR constants are nonzero  and  that condition (\textbf{iii}) holds,     are expected to be  generically true also for \textit{large} perturbations of the cubic NLS.
 Furthermore, our framework could in principle  be applied in higher dimensions.
\end{remark}

\begin{remark}
  \label{rem:martconj}  Martel \cite{Martel2} conjectures that for generic small perturbations of the cubic NLS the asymptotic stability result in \cite{Martel1,Martel2} is true. Here we focus only on pure power NLS's but our method goes some way to prove this conjecture. For our method to work, we always need that   the threshold of the continuous spectrum     be not a resonance, which should be true for generic perturbations. For the smoothing estimate we need  additionally condition (\textbf{iii}) in Remark \ref{rem:extthm:asstab}, which is    true for small perturbations.  If there is no eigenvalue in $(0, \im \omega)$ like in Martel \cite{Martel1} and Rialland \cite{rialland} and if the non resonance  condition holds then  our method proves Martel's conjecture.
   If there is one eigenvalue   $\lambda \in (0, \im \omega)$  (it is easy to show, proceeding along the lines in \cite{cupe2005} or, since this is 1 dimension, Coles and Gustafson \cite{coles},
    that there can be at most one such eigenvalue for small perturbations), condition (\textbf{i}) in Remark \ref{rem:extthm:asstab}  will be true. If, as expected,  generically the Nonlinear Fermi golden rule (FGR)  condition  (\textbf{ii}) in Remark \ref{rem:extthm:asstab}  holds,   then our method works. So, using our framework, to prove Martel's conjecture it remains only to prove that for generic small perturbations there is no threshold resonance and that if there is an eigenvalue   the FGR is true.  We think that also Martel's method in \cite{Martel2}  yields a similar result.
\end{remark}


Equation \eqref{eq:nls1} is one of the most classical Hamiltonian systems in PDE's and the asymptotic stability of its ground states has been a longstanding open problem. Attempts at solving it date back at least to the 80's, see Soffer and Weinstein \cite{SW1,SW2}. The Vakhitov--Kolokolov stability criterion yields the orbital stability exactly for $p<5$, while for $p\ge 5$  the ground states are orbitally unstable. On the other hand, proving asymptotic stability requires some form of spatial dispersion. It turns out that it is difficult to prove dispersion for  $p<5$, which is the opposite condition to the one utilized for example in  Strauss  \cite{strauss} to prove a form of asymptotic stability of vacuum. In essence, dispersion is a linear phenomenon, but for  $p<5$ the nonlinearity is strong and makes it difficult to treat the problem as a perturbation of a linear equation.
Whence the inability in the literature to deal with the asymptotic stability problem for equation \eqref{eq:nls1}, left unaddressed   from Buslaev and Perelman \cite{BP1,BP2,BS} on.
Another problem is the presence of nonzero eigenvalues of the linearization $ \mathcal{L}_\omega$. These eigenvalues slow  dispersion because the corresponding discrete modes tend to oscillate periodically and decay slowly, see \cite{BP2,BS,SW3,zhousigal}, and furthermore they are a drag to the dispersion of the continuous modes, on whose equation they exert a forcing. This is especially true in the case when there are eigenvalues close to 0, as happens for  example   for $p$ close to 1  or to 5.   A mechanism  first discussed by Sigal \cite{sigal},  the  Nonlinear Fermi Golden Rule (FGR), should allow to show that the   discrete modes  lose energy by nonlinear interaction with the continuous modes.

It is next to impossible to see the FGR,  without  utilizing the Hamiltonian structure of the NLS, see for example the complications in \cite{Gz}.  In  \cite{Cu1}  the FGR is seen    using  canonical coordinates and normal forms transformations. Recently papers like \cite{CM3} have simplified significantly \cite{Cu1}, eliminating the need of normal forms, thanks to the notion of Refined Profile, which is a generalization of
the families of ground states,    a sort of surrogate of a  (here   not existent)  family of quasiperiodic solutions  and   encodes the discrete modes in the problem. Finding the Refined Profile is elementary, but   requires Taylor expansions  of the nonlinearity, with the order   higher when there are    eigenvalues of   $ \mathcal{L}_\omega$ closer to 0. Since $f(u)$ is not smooth in $u$, this is one of the   main reasons why $p$ needs here to stay close to 3, where the eigenvalue is not close to 0. Even more difficult appears the problem  when the  power $p$   is such that $ \mathcal{L}_\omega$ has resonances at the thresholds of the continuous spectrum, except in the integrable case $p=3$. To see some of the difficulties, on  a different and non integrable model  involving a resonance, we refer to  the   partial results in  \cite{LS2023,PalPus24}. We stress that here the spectral configuration is   as in Martel \cite{Martel2} and that we prove the FGR like in Martel \cite{Martel2}.

\noindent Dispersion  has played a crucial role in stabilization problems. The sequel \cite{SW2}  to   \cite{SW1} was only possible because a result on dispersion for Schr\"odinger by Journ\'e et al. \cite{jss}. Strichartz estimates, in particular the 3 D  endpoint Strichartz estimate of Keel and Tao \cite{Kl-Tao}, were introduced by Gustafson et. al. \cite{GNT} and played an important role in the theory. Dimensions 1 and 2 were considered
by Mizumachi \cite{mizu08,M2}, whose use  of smoothing estimates  has provided us with crucial insights.
But ultimately,  in low dimensions Strichartz estimates have limited scope.  A very important turning point in the theory in 1 D has been Kowalczyk et al. \cite{KMM20} which, along with the further developments and refinements in   \cite{KMM3,KMMvdB21AnnPDE,KM22}, has exploited very effectively   virial inequalities. Recently Martel \cite{Martel1,Martel2} has applied and extended these  ideas  to the study of the asymptotic stability of two versions of the cubic--quintic NLS introduced by  Pelinovsky  et al. \cite{pel98}.
    Rialland \cite{rialland}   has generalized   \cite{Martel1}.  One of the most striking  features of the theory  initiated by Kowalczyk et al. \cite{KMM3}, is how easily  the  nonlinear  term involving only the continuous mode of the solution   is sorted out by
what we might call the \textit{high energy}  virial inequality, see inequality is  \eqref{eq:cruche} below, by means of a clever but simple integration by parts.  The same term  is     almost impossible to treat with   perturbative methods involving the Duhamel formula.  There exist  also different approaches, some, but not exclusively,  stemming from the theory of space--time resonances of Germain et al. \cite{germain1}. For a partial sample we refer for example to work of Delort  \cite{Delort},  Germain et al. \cite{GPR18}, Naumkin
\cite{naumkin2016}. Recently Germain and Collot \cite{germain2} have recovered and partially generalized Martel \cite{Martel1}.  This theory requires a certain degree of smoothness of the nonlinearity $f(u)$, so it is not easily applicable to the specific model \eqref{eq:nls1}.
We think that the framework in    Kowalczyk et al., to which we return, is more robust and easier to apply in stability problems.

\noindent   After the first \textit{high energy} virial inequality, the  papers \cite{KMM3,KMMvdB21AnnPDE,KM22,Martel1,Martel2}   utilize what we might call a \textit{low energy} virial inequality, which requires  new coordinates where the linearization is nontrapping. This has some similarities  with the subtraction of solitons to study dispersion by means of the Nonlinear Steepest Descent method of Deift and Zhou, as done for instance by
Grunert  and   Teschl
\cite{GTeschl}, although the details are very different. An interesting feature and a possible criticality of the   {low energy} virial inequality, is that the virial inequality
  produces a different linear operator, which also needs to be non--trapping.
While in \cite{Martel1,Martel2,rialland}, which deal with small perturbations of the cubic NLS, the two non--trapping conditions are shown to be equivalent, thanks to a result by Simon  \cite{simon}  on small perturbations of the Laplacian in dimensions 1 and 2, in general this might not be the case, so it is plausible that  in some cases     the second virial inequality method might require   restrictions not intrinsic but  rather due to the method of proof.
To take a concrete example,   in the first paper  \cite{CM2109.08108} of our own series inspired by the work of  Kowalczyk et al.\cite{KMM3},
the repulsivity Assumption 1.13  \cite{CM2109.08108}  is in fact unnecessary and  is used  only because of     the method of proof.  This is   the main insight and motivation for this paper. In  \cite{CM2109.08108}, besides the two virial inequalities, there  is a smoothing estimate, inspired  by Mizumachi \cite{mizu08,M2},
which in   \cite{CM2109.08108} appears because the FGR rule is proved in an overly complicated way (a simplification appears in \cite{CMS23}, motivated by \cite{KM22}). The insight in the present paper, is that, while it is obviously a good idea to prove the FGR as simply as possible,  it is possible to replace the
the second virial inequality by    smoothing estimates. We explain now some further reasons why this might be convenient.  Kowalczyk et al.\cite{KMM20,KMM3,KMMvdB21AnnPDE,KM22}   and   Martel \cite{Martel1,Martel2}   perform some  Darboux transformations, which are almost isospectral transformations which   allow to eliminate eigenvalues of the linearization in a controlled way. For scalar Schr\"odiger operators in the line the theory is  fully developed in Deift and Trubowitz \cite{DT}, with an important special case   discussed in Chang et al. \cite{Chang}.  The analogue for the  linearizations $\mathcal{L}_\omega$ is in Martel \cite{Martel1}  and Rialland \cite{rialland} in  the case without internal modes and in Martel \cite{Martel2} with just one internal mode.  At exactly the same time we posted the first version of this manuscript was posted Rialland \cite{Rialland2}, which follows closely  Martel \cite{Martel2}  with a result   for nonlinearities similar but more general than the cubic quintic.

It is not clear to us what are the Darboux transformations  when the configuration of the internal modes of $\mathcal{L}_\omega$ is more complicated and   if the space dimension is 2 or larger.
So  it is worth to develop some alternative method which does not use  Darboux transformations. The Kato--smoothing estimates are a classical tool, originating in Kato \cite{kato}, valid  in any dimension.
  The smoothing estimates are perturbative, based on the Duhamel formula. But there is no issue here of too strong  nonlinearity  because we only need to bound   the continuous mode  multiplied by a spatial cutoff. This means that  we can multiply the NLS by a cutoff,   taming the nonlinearity. The cutoff appears  also in the second virial inequalities in the theory of  Kowalczyk et al.  Obviously, in the equation we obtain an additional term,    delicate for us, represented by the commutator of  Laplacian  and cutoff. We treat it via a     specific smoothing estimate, see Lemma \ref{lem:smoothest1} below. In \cite{CM2109.08108} we used some standard bounds on the Jost functions of Schr\"odinger operators in  1 D to prove an analogous lemma.  Here, for $\mathcal{L}_\omega$ these bounds on the Jost functions are not as obvious and this  is one of the points where we exploit that our problem is a small perturbation of the cubic NLS,   the specific condition is (iii) in Remark \ref{rem:extthm:asstab} that appears generic and is in principle possible to check numerically in specific examples.
Finally,   for   a  rather long list of references on the subject up until 2020,
we   refer to our  survey \cite{CM21DCDS}.

\section{Linearization}\label{sec:lin1}

We return to a discussion of the linearization \eqref{eq:lineariz2}.
Weinstein \cite{W2}  showed that  for $ 1<p<5$ the   generalized kernel $ {N}_g(\mathcal{L}_\omega):=\cup_{j=1}^\infty \mathrm{ker} \mathcal{L}_{\omega}^j$ in $H^1 _\rad ( \R , \C^2 ) $ is
\begin{align}\label{eq:Ng}
 {N}_g(\mathcal{L}_{\omega })=\mathrm{span}\left \{ \begin{pmatrix}
 0 \\ \phi_{ \omega}
 \end{pmatrix} , \begin{pmatrix}
 \partial_\omega \phi_{\omega } \\ 0
 \end{pmatrix}          \right \} .
\end{align}
By symmetry reasons,   it known that the spectrum $\sigma \(  \mathcal{L}_{\omega }\) \subseteq \C $ is symmetric by reflection with respect of the coordinate axes. Furthermore,
by Krieger and Schlag   \cite[p. 909]{KrSch}  we know that  $\sigma \(  \mathcal{L}_{\omega }\) \subseteq \im \R  $. By standard Analytic  Fredholm theory   the essential spectrum is $(- \infty \im ,-\omega \im ] \cup
[ \im \omega , +\infty \im ) $. As already mentioned $0\in  \sigma \(  \mathcal{L}_{\omega }\)$. Numerical computations by
  Chang et al. \cite{Chang} show that for $p\in (2, 3)\cup (3,5)$, besides 0 there are two eigenvalues  of $   \mathcal{L}_{\omega }$, they are of the form $\pm \im \lambda $  with $\lambda >0$ and if we set as above $\lambda (p,\omega)=\lambda$,   we have $\lambda (3,\omega)=\omega $  and $\lambda (5,\omega)=0 $. As mentioned above
       Coles and Gustafson \cite{coles} corroborate  rigorously  the numerical computations of  Chang et al. \cite{Chang} for $0<|p-3|\ll 1$. Furthermore, since at $p=3$  the linearization $ \mathcal{L}_{\omega }$
   has only 0 as an eigenvalue, and $\pm \im \omega$ are resonances,     Coles and Gustafson \cite{coles}   imply that   besides
  $-\im  \lambda (p,\omega) , 0 , \im \lambda (p,\omega)$,   for $0<|p-3|\ll 1$ there are no other eigenvalues and that $\pm \im \omega$      are not resonances.

\noindent Let us consider the orthogonal  decomposition
\begin{align}
  \label{eq:dirsum} L^2_\rad (\R , \C ^2) =  {N}_g(\mathcal{L}_{\omega })\bigoplus {N}_g^\perp (\mathcal{L}_{\omega }^*)
\end{align}  We have,  for $\lambda  = \lambda (p,\omega)$,   a further decomposition
\begin{align}
  \label{eq:dirsum1}&{N}_g^\perp (\mathcal{L}_{\omega }^*) =   \ker (\mathcal{L}_{\omega }-\im \lambda )\bigoplus \ker (\mathcal{L}_{\omega }+\im \lambda ) \bigoplus X_c (\omega ) \text{ where } \\& X_c (\omega ) : = \(   {N}_g(\mathcal{L}_{\omega }^*) \bigoplus  \ker (\mathcal{L}_{\omega }^*-\im \lambda )\bigoplus \ker (\mathcal{L}_{\omega }^*+\im \lambda ) \) ^{\perp} . \label{eq:dirsum11}
\end{align}
We denote by $P_c$ the projection of $ L^2_\rad (\R , \C ^2)$  onto $X_c (\omega )$ associated with the above decompositions.

\noindent The space $L^2_\rad(\R , \C ^2)$ and the action of $\mathcal{L}_{\omega }$ on it is obtained by first  identifying $L^2_\rad(\R , \C  ) =L^2_\rad( \R ,  \R ^2  )  $ and then by extending   this action to  the completion of
$ L^2_\rad (\R ,  \R ^2  ) \bigotimes _\R \C  $ which is identified with $L^2_\rad (\R , \C ^2)$.  In $\C$ we consider the inner product
\begin{align*}
   \< z, w \> _{\C} =\Re \{z\overline{w}    \} =z_1w_1+z_2w_2  \text{ where } a_1=\Re a, \quad a_2=\Im a   \text{ for }a=z,w.
\end{align*}
This obviously coincides with the inner product in $\R ^2$ and expands as the standard sesquilinear  $  \< X  , Y  \> _{\C ^2}  =  X ^ \intercal \overline{Y}$  (row column product, vectors here are columns)  form   in $\C ^2$.  The operator of multiplication by $\im $  in $C=\R^2$ extends into the linear operator
$J ^{-1}=-J$ where   \begin{align}
J=\begin{pmatrix}
0 & 1 \\ -1 & 0
\end{pmatrix}.\nonumber
\end{align}
For  $u,v\in L^2_\rad (\R , \C ^2)$ we set $ \< u , v  \>    :=\int _\R \< u (x), v (x) \> _{\C ^2}  dx$. We have a natural symplectic form given by $\Omega :=\<  J ^{-1}\cdot  , \cdot   \>$ in both $ L^2(\R , \C ^2)$ and
$ L^2_\rad (\R , \R ^2)=L^2_\rad (\R , \C  )$, where equation \eqref{eq:nls1} is the Hamiltonian system  in  $  L^2_\rad (\R , \C  )$     with Hamiltonian the energy $E$ in \eqref{eq:energy}.  As we mentioned we consider
a generator $\xi _{\omega}\in  \ker (\mathcal{L}_{\omega }-\im \lambda )$.  Then for the complex conjugate $\overline{\xi}_{\omega} \in  \ker (\mathcal{L}_{\omega }+\im \lambda )$.
Notice   the well known and elementary $J \mathcal{{L}}_{\omega }=-\mathcal{{L}}_{\omega } ^*J$  implies that  $ \ker (\mathcal{L}_{\omega }^*+\im \lambda )= \mathrm{span}\left \{  J {\xi}_{\omega} \right \}$ and $ \ker (\mathcal{L}_{\omega }^*-\im \lambda )= \mathrm{span}\left \{  J \overline{{\xi}}_{\omega} \right \}$. Notice that in   Lemma 2.7  \cite{CM2} it is shown that we can normalize ${\xi}_{\omega}$  so that
\begin{align}
  \label{eq:xinormlize} \Omega (   {\xi}_{\omega} ,  {\xi}_{\omega} ) =- \im ,
\end{align}
  consistently with the fact that   the functional $\mathbf{E}(u)+\omega \mathbf{Q}(u)$ has a local minimum at $u=\phi _{\omega} $, see
\eqref{eq:enrexp}--\eqref{eq:enrexp1} later. Notice that
\eqref{eq:xinormlize} is the same as
\begin{align}
  \label{eq:xinormlize2} \Omega (   \Re {\xi}_{\omega} ,  \Im {\xi}_{\omega} ) = \frac{1}{2}  \text{  and }   \Omega (   \Re {\xi}_{\omega} ,   \Re {\xi}_{\omega} )=\Omega (    \Im  {\xi}_{\omega} ,   \Im  {\xi}_{\omega} )=0
\end{align}
where the latter is immediate by the skewadjointness of $J$.
Notice that
\begin{align}\label{eq:reimxi} &
  \xi _\omega = \(   \xi _1 , \xi _2 \) ^\intercal  \in  \ker (\mathcal{L}_{\omega }-\im \lambda )  \Leftrightarrow \left\{\begin{matrix}
     L_{-\omega }\xi _2 =\im \lambda \xi _1 \\ L_{+\omega }\xi _1 =-\im \lambda \xi _2
\end{matrix}\right.  \\& \nonumber   \left\{\begin{matrix} L_{-\omega }\xi _{2I} =  \lambda \xi _{1R}
      \\  L_{+\omega }\xi  _{1R} =-\lambda \xi  _{2I} \end{matrix}\right.     \text{  and }     \left\{\begin{matrix}
     L_{-\omega }\xi  _{2R} =-\lambda \xi  _{1I} \\   L_{+\omega }\xi _{1I} =  \lambda \xi _{2R}
\end{matrix}\right.   \text{ with }\xi _{jR}= \Re \xi _j   \text{ and  }\xi _{jI}=\Im \xi _j.
\end{align}
This implies that we can normalize so that
\begin{align}\label{eq:reimxi1} &  \xi _\omega = \(   \xi _1 , \xi _2 \) ^\intercal  \text{  with }  \xi _1=\Re  \xi _1 \text{  and }  \xi _2=\im \Im  \xi _2.
\end{align}
Hence condition \eqref{eq:xinormlize} becomes
\begin{align}
  \label{eq:xinormliz1} \int _{\R} \xi _1 \Im \xi _2 dx = 2 ^{-1}.
\end{align}

\begin{notation}\label{not:notation} We will use the following miscellanea of  notations and definitions.
\begin{enumerate}
 \item   We will set
\begin{align}
  \label{eq:notation1} \text{$\mathbf{e}(\omega):=\mathbf{E}( \phi _{\omega}) $, $\mathbf{q}(\omega):=\mathbf{Q}( \phi _{\omega}) $ and $\mathbf{d}(\omega):=\mathbf{e}(\omega)+\omega \mathbf{q}(\omega)$.}
\end{align}

\item  We denote by   $\diag (a, b)$   the diagonal matrix with first $a$ and then $b$ on the diagonal.

\item For $z\in \C$ we will use $z_1= \Re z $  and  $z_2= \Im  z $ and we will use the operators
\begin{align*}
  \partial _z := \frac{1}{2}\(  \partial _{z_1} -\im  \partial _{z_2 }\)  \text{ and } \partial  _{\overline{z}} := \frac{1}{2}\(  \partial _{z_1} +\im  \partial _{z_2 }\) .
\end{align*}

\item Like in the theory of     Kowalczyk et al.   \cite{KMM3},      we   consider constants  $A, B,  \epsilon , \delta  >0$ satisfying
 \begin{align}\label{eq:relABg}
\log(\delta ^{-1})\gg\log(\epsilon ^{-1}) \gg     A  \gg    B^2\gg B  \gg 1.
 \end{align}
 Here we will take $ A\sim B^3$, see Sect. \ref{sec:smooth1} below,  but in fact  $ A\sim B^n$ for  any   $n>2$ would make no difference.

 \item The notation    $o_{\varepsilon}(1)$  means a constant   with a parameter $\varepsilon$ such that
 \begin{align}\label{eq:smallo}
 \text{ $o_{\varepsilon}(1) \xrightarrow {\varepsilon  \to 0^+   }0.$}
 \end{align}
\item For  $\kappa \in (0,1)$    fixed in terms of $p$  and  small enough, we consider
\begin{align}\label{eq:l2w}&
\|  {\eta} \|_{  L ^{p,s}} :=\left \|\< x \> ^s \eta \right \|_{L^p(\R )}   \text{  where $\< x \>  := \sqrt{1+x^2}$,}\\&
\| \eta  \|_{  { \Sigma }_A} :=\left \| \sech \(\frac{2}{A} x\) \eta '\right \|_{L^2(\R )} +A^{-1}\left \|    \sech \(\frac{2}{A} x\)  \eta   \right\|_{L^2(\R )}  \text{ and} \label{eq:normA}\\& \|  {\eta} \|_{ \widetilde{\Sigma} } :=\left \| \sech \( \kappa \omega _0 x\)   {\eta}\right \|_{L^2(\R )} . \label{eq:normk}
\end{align}
\item We set
\begin{align}\label{eq:C+-}
 \C _\pm := \{ z\in \C : \pm \Im z >0   \} .
\end{align}
\item We will consider the Pauli matrices \begin{equation*}
   \sigma_1=\begin{pmatrix} 0 &
   1 \\
   1 & 0
    \end{pmatrix} \,,
   \quad
   \sigma_2= \begin{pmatrix} 0 &
   -\im \\
    \im  & 0
    \end{pmatrix} \,,
   \quad
   \sigma_3=\begin{pmatrix} 1 &
   0 \\
   0 & -1
    \end{pmatrix}.
   \end{equation*}

\item The point $\im \omega$ is a resonance for $\mathcal{L}_\omega$ if there exists a nonzero $ v\in L^\infty (\R , \C ^2)$
such that $\mathcal{L}_\omega v= \im \omega v$. Notice that for $p=3$ the point $\im \omega$ is a resonance, see \eqref{eqres1} for a $v$ when $\omega =1$. An elementary scaling yields the cases $\omega \neq 1$ from the  $\omega =1$ case.

\item Given two Banach spaces $X$ and $Y$ we denote by $\mathcal{L}(X,Y)$ the space of continuous linear operators from $X$ to $Y$. We write $\mathcal{L}(X ):=\mathcal{L}(X,X)$.

\item We have denoted by $P_c$ the projection on the space \eqref{eq:dirsum11} associated to the spectral decomposition
\eqref{eq:dirsum1}  of the operator $\mathcal{L}_{\omega}$. Later in \eqref{eq:opH} we will introduce an operator $H_{\omega}$ which is an equivalent to $\mathcal{L}_{\omega}$ and  obtained from $\mathcal{L}_{\omega}$ by a simple conjugation. By an abuse of notation we will continue to denote by $P_c$ the analogous spectral projection to the continuous spectrum component, only of $H_\omega$ this time.

\item We have the following elementary formulas,
\begin{align} \label{eq:derf1}&
  Df(u)X=\left . \frac{d}{dt} \(  |u+tX | ^{p-1}\( u+tX \) \)  \right | _{t=0}= |u  | ^{p-1}X + (p-1) |u  | ^{p-3}u \< u, X\> _\C \text{ and}\\&  D^2f(u)X^2= \left . \frac{d}{dt}    Df(u+tX )X  \right | _{t=0} \nonumber\\& = 2(p-1) |u  | ^{p-3} X \< u, X\> _\C + (p-1) |u  | ^{p-3}u |X|^2  + (p-1) (p-3)  |u  | ^{p-5}u\< u, X\> _\C ^2. \label{eq:derf2}
\end{align}

\item Following  the framework in Kowalczyk et al. \cite{KMM3} we   fix an even function $\chi\in C_c^\infty(\R , [0,1])$ satisfying
\begin{align}  \label{eq:chi} \text{$1_{[-1,1]}\leq \chi \leq 1_{[-2,2]}$ and $x\chi'(x)\leq 0$ and set $\chi_C:=\chi(\cdot/C)$  for a  $C>0$}.
\end{align}

 \end{enumerate}

\end{notation} \qed

  The group  $e^{t \mathcal{L}_{\omega }}$ is well defined in $L^2_\rad (\R , \C ^2)$, leaves invariant $L^2_\rad (\R , \R ^2)$ and the terms of the direct sums in \eqref{eq:dirsum} and \eqref{eq:dirsum1}.
The following result is an immediate consequence of a Proposition 8.1 in Krieger and Schlag \cite{KrSch}, since $\mathcal{L}_{\omega } $ as an easy  consequence of Coles and Gustafson \cite{coles}  is  for $0<| p-3|\ll 1$     admissible in the sense indicated in \cite{KrSch}.
\begin{proposition}\label{prop:KrSch} For any fixed $s>3/2$   there is a  constant $C_\omega $
such that
\begin{align} \label{eq:KrSch}
  \| P_c e^{t \mathcal{L}_{\omega }} : L ^{2,s}  (\R , \C ^2) \to  L ^{2,-s} (\R , \C ^2)  \| \le C_\omega \< t \> ^{-\frac{3}{2}} \text{  for all $t\in \R$.}
\end{align}

\end{proposition}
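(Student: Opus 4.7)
The plan is to invoke Proposition 8.1 of Krieger--Schlag \cite{KrSch} essentially as a black box; the only real work is to verify that $\mathcal{L}_\omega$ satisfies the \emph{admissibility} hypotheses used there. I would therefore organize the argument around checking a short list of spectral conditions and then quoting the cited decay estimate.

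First, I would recall that admissibility, in the sense of \cite{KrSch}, amounts to: (a) the essential spectrum equals $(-\infty \im, -\omega \im]\cup[\im\omega, +\infty\im)$; (b) the only purely imaginary eigenvalues lie inside the spectral gap $(-\im\omega,\im\omega)$; (c) there are no embedded eigenvalues in the interior of the essential spectrum; (d) the edge points $\pm \im \omega$ are neither eigenvalues nor resonances; (e) the eigenvalues inside the gap have finite algebraic multiplicity and are simple in the appropriate sense. These are exactly the ingredients Krieger--Schlag need in order to construct the projection $P_c$ and to run the resolvent expansion that produces the $\langle t\rangle^{-3/2}$ bound in weighted $L^2$.

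Next, I would verify each point from information already collected in Section~\ref{sec:lin1}. Property (a) is the analytic Fredholm computation noted in the text; property (b), that $\sigma(\mathcal{L}_\omega)\subset \im\R$, is \cite[p.~909]{KrSch}. Properties (c), (d), and the simplicity part of (e) follow from the Coles--Gustafson analysis \cite{coles}: for $0<|p-3|\ll 1$ the only non-zero eigenvalues are the pair $\pm \im \lambda(p,\omega)$ with $0<\lambda(p,\omega)<\omega$ and with $\dim\ker(\mathcal{L}_\omega\mp \im\lambda(p,\omega))=1$; no other eigenvalues appear in the gap or embedded in the essential spectrum; the thresholds $\pm\im\omega$, which \emph{are} resonances at $p=3$, cease to be resonances once $p$ is perturbed slightly, by the perturbative resonance analysis in \cite{coles}. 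The generalized kernel at $0$ is the two-dimensional space \eqref{eq:Ng} coming from Weinstein \cite{W2}, which is what the Krieger--Schlag framework expects from the symmetries of the problem.

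Having checked admissibility, I would then invoke Proposition 8.1 of \cite{KrSch} directly, applied to $\mathcal{L}_\omega$ on $L^2_\rad(\R,\C^2)$, to conclude the bound
\[
\| P_c e^{t\mathcal{L}_\omega}\|_{L^{2,s}\to L^{2,-s}} \le C_\omega \langle t\rangle^{-3/2}, \qquad s>3/2,
\]
for an appropriate constant $C_\omega$ depending on $\omega$ (and on $p$ in the admissible range). The main potential obstacle in this proposal is the threshold analysis: Krieger--Schlag's $\langle t\rangle^{-3/2}$ rate is driven by the low-energy behavior of the resolvent, so the non-resonance condition at $\pm\im\omega$ is the condition actually being used, and it is the one that genuinely requires the perturbative input of \cite{coles}. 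Everything else is either symmetry or a direct citation.
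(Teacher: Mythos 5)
Your proposal matches the paper's own argument: the paper proves this proposition simply by observing that $\mathcal{L}_\omega$ is admissible in the sense of Krieger--Schlag (as an easy consequence of Coles--Gustafson for $0<|p-3|\ll 1$, together with the spectral facts collected in Section~\ref{sec:lin1}) and then quoting Proposition 8.1 of \cite{KrSch}. Your more explicit verification of the admissibility conditions, in particular the non-resonance of the thresholds $\pm\im\omega$, is exactly the content the paper leaves implicit, so the two arguments coincide.
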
\qed

We will need  a variation of the  last result, which we will  be rephrased later and proved as Lemma \ref{lem:smoothest} and  which is an analogue of Lemma 8.7 \cite{CM2109.08108}.
\begin{proposition} \label{lem:smoothest1} For  $s>3/2$ and $\tau >1/2$ there exists a constant $C>0 $ such that
 \begin{align}&   \label{eq:smoothest11}   \left \|   \int   _{0} ^{t   }e^{  (t-t')  \mathcal{L}_{\omega }}P_c(\omega )g(t') dt' \right \| _{L^2( \R ,L^{2,-s} (\R ))  } \le C  \|  g \| _{L^2( \R , L^{2,\tau}_\rad (\R ) ) } \text{ for all $g\in  L^2( \R , L^{2,\tau} (\R ) )$}.
\end{align}
\end{proposition}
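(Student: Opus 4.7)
The plan is to reduce the inhomogeneous estimate to a uniform resolvent bound via Fourier analysis in time. Extending $g(t)=0$ for $t<0$, the function $u(t):=\int_0^t e^{(t-t')\mathcal{L}_\omega}P_c\, g(t')\,dt'$ is supported in $[0,\infty)$ and satisfies $\partial_t u - \mathcal{L}_\omega u = P_c\, g$ on $\R$. Taking the Fourier transform in $t$ gives, formally, $\widehat u(\tau) = (\im \tau - \mathcal{L}_\omega)^{-1} P_c\, \widehat g(\tau)$, where causality selects the boundary value from $\C_+$. By Plancherel in $t$,
\[
\|u\|_{L^2(\R,L^{2,-s})}^2 \le \left(\sup_{\tau\in\R}\big\|(\im \tau+0 - \mathcal{L}_\omega)^{-1}P_c\big\|_{L^{2,\tau}\to L^{2,-s}}\right)^2 \|g\|_{L^2(\R,L^{2,\tau})}^2,
\]
so everything reduces to showing that the supremum on the right is finite for $s>3/2$ and $\tau>1/2$.

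The second step is the limiting absorption principle for $\mathcal{L}_\omega P_c$. The essential spectrum sits on $\im\R\setminus(-\im\omega,\im\omega)$, and the eigenvalues $0,\pm \im\lambda(p,\omega)$ are killed by $P_c$. The uniform bound is then obtained by splitting into three spectral regimes:
(a) away from thresholds and eigenvalues, the standard Agmon--Kato framework applies because $\mathcal{L}_\omega$ is a relatively compact (in weighted $L^2$) perturbation of $J(-\partial_x^2+\omega)$, whose free resolvent is $L^{2,\tau}\to L^{2,-s}$ bounded for $\tau,s>1/2$, and the absence of embedded eigenvalues together with $P_c$ removes the poles of $(I+K(z))^{-1}$ in the Birman--Schwinger style inversion;
(b) near the thresholds $\pm \im\omega$, where for $p=3$ a resonance is present (as noted in Notation \ref{not:notation}(9)), one uses the Coles--Gustafson result \cite{coles} that no threshold resonance exists for $0<|p-3|\ll 1$, plus the non-vanishing Wronskian condition (iii) of Remark \ref{rem:extthm:asstab}, to expand the resolvent at threshold through a Jost-function representation and show the limit stays bounded;
(c) for $|\tau|\to \infty$, a Neumann series expansion around the free resolvent yields $O(|\tau|^{-1/2})$ decay, which is more than enough.

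The third step converts the Plancherel-based estimate, which a priori controls the symmetric convolution $\int_\R e^{(t-t')\mathcal{L}_\omega}P_c g(t')\,dt'$, into the retarded version $\int_0^t$ appearing in \eqref{eq:smoothest11}. This is a direct application of the Christ--Kiselev lemma: the map $g\mapsto u$ is bounded between $L^2_t$-based spaces, which is strictly below the diagonal exponent, and the loss is only a harmless multiplicative constant. The main obstacle of the whole argument is Step 2(b), the threshold analysis: the operator $\mathcal{L}_\omega$ is matrix-valued and non-selfadjoint, so the Jost-function bookkeeping is more delicate than for scalar Schr\"odinger operators, and the uniform bound in a neighbourhood of $\pm \im\omega$ is exactly the place where the smallness of $|p-3|$ and the Wronskian hypothesis (iii) of Remark \ref{rem:extthm:asstab} enter in an essential way; this is also the reason the authors invoke \cite{coles} and why the same estimate in \cite{CM2109.08108} relied on classical one-dimensional Jost-function bounds that must now be upgraded to the matrix setting.
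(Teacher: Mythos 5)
Your overall strategy is the same as the paper's: reduce \eqref{eq:smoothest11}, via Fourier transform in time and Plancherel, to a weighted resolvent bound which is uniform along the whole essential spectrum (this is Lemma \ref{lem:LAP}), and obtain that uniform bound from a limiting absorption principle whose only delicate point is the behaviour near the thresholds $\pm\im\omega$, treated through Jost functions, the absence of a threshold resonance for $0<|p-3|\ll 1$, and the non-vanishing Wronskian of condition (iii) in Remark \ref{rem:extthm:asstab}. This is exactly what Proposition \ref{prop:boudres} and Lemma \ref{lem:lowenjost1} do: they give the kernel bound $|R_H^{\pm}(x,y,E)|\lesssim 1+x^-+y^+$ uniformly up to the thresholds, and the weights $s>3/2$, $\tau>1/2$ then come out of the Hilbert--Schmidt computation in Lemma \ref{lem:LAP}. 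Away from the thresholds the paper quotes Krieger--Schlag rather than re-running an Agmon--Kato/Birman--Schwinger argument, but that is a cosmetic difference.

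The genuine gap is your Step 3. The Christ--Kiselev lemma requires the time exponent of the source space to be strictly smaller than that of the target space; here both are $2$, i.e.\ you sit exactly on the diagonal, where the lemma is not available (the same obstruction that prevents deducing retarded endpoint Strichartz estimates from two-sided ones). So the passage from a two-sided convolution bound to the retarded integral $\int_0^t$ cannot be delegated to Christ--Kiselev. The paper handles this point differently: the Mizumachi-type identity of Lemma \ref{lem:lemma11} writes $2\int_0^t e^{-\im (t-t')H}g(t')\,dt'$ as a time-Fourier multiplier with symbol $R_H^++R_H^-$, controlled by Plancherel and Lemma \ref{lem:LAP}, plus two half-line terms of the form $e^{-\im t H}$ applied to the fixed vector $\int_{\R_\pm}e^{\im t'H}g(t')\,dt'$, and these are estimated by the homogeneous Kato smoothing estimate of Proposition \ref{lem:smooth111} and its dual --- an ingredient absent from your sketch. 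Alternatively, your Step 1 can be made to carry the whole weight, since by causality the retarded kernel is itself a Fourier multiplier whose symbol is the boundary value of the resolvent from $\C_+$; but then the real work is justifying the limit $a\to 0^+$ from $R_H(\lambda\pm\im a)$ to $R_H^{\pm}(\lambda)$ in the weighted operator topology uniformly up to the thresholds, which the paper does at the end of the proof of Lemma \ref{lem:lemma11} using the kernel bounds of Proposition \ref{prop:boudres} and dominated convergence, and which your proposal leaves implicit. As written, the argument therefore fails precisely at the step where the retarded structure of the integral must be exploited.
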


We will need the following result, whose proof is based  on   an argument in \cite[Lemma 3.4]{CV}.

\begin{proposition}[Kato smoothing]\label{lem:smooth111} For any $\omega$ and for any $s>1$  there exists $c>$   such that
\begin{equation}\label{eq:smooth111}
   \|   e^{\im t \mathcal{L}_\omega}P_c u_0 \| _{L^2(\R , L ^{2,-s}(\R   ))} \le c \|  u_0 \| _{L^2(\R  )}.
\end{equation}
\end{proposition}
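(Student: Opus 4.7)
The plan is to reduce the space-time smoothing bound \eqref{eq:smooth111} to a limiting absorption principle (LAP) for the resolvent $R_\omega(z):=(\mathcal{L}_\omega-z)^{-1}$ on the essential spectrum, via a Plancherel-in-time argument of Kato type.

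First I would use the spectral representation of $e^{\im t\mathcal{L}_\omega}P_c$. Since $\mathcal{L}_\omega P_c$ has purely essential spectrum $\sigma_{\mathrm{ess}}=\im\bigl((-\infty,-\omega]\cup[\omega,\infty)\bigr)$ with no embedded eigenvalues (Krieger–Schlag \cite{KrSch}) and, in the regime $0<|p-3|\ll 1$, no threshold resonances at $\pm\im\omega$ (Coles–Gustafson \cite{coles}), a Stone-type formula gives
$$e^{\im t\mathcal{L}_\omega}P_c u_0 = \frac{1}{2\pi\im}\int_{\sigma_{\mathrm{ess}}} e^{\im t\zeta}\bigl(R_\omega(\zeta+0)-R_\omega(\zeta-0)\bigr)u_0\,d\zeta.$$
Squaring the $L^2_t L^{2,-s}_x$ norm of the propagator and applying Plancherel in $t$, together with the standard $TT^*$ duality, reduces \eqref{eq:smooth111} to the uniform LAP bound
$$\sup_{\zeta\in\sigma_{\mathrm{ess}},\ \pm}\ \bigl\|\langle x\rangle^{-s}R_\omega(\zeta\pm 0)P_c\,\langle x\rangle^{-s}\bigr\|_{L^2\to L^2}<\infty.$$

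Second, I would establish this LAP estimate using the Jost-function representation of $R_\omega(z)$ built from the solutions $f_3,g_3$ of $\mathcal{L}_\omega$ introduced later in Section~\ref{sec:lin}. The Green's kernel has the schematic form $f_3(x_\vee,\zeta)\,g_3(x_\wedge,\zeta)/W[f_3,g_3](\zeta)$, and three regimes must be controlled: (i) away from thresholds, where the Wronskian is bounded below and pointwise kernel bounds integrate in the weighted norm with weight $\langle x\rangle^{-s}$, $s>1$; (ii) near the thresholds $\pm\im\omega$, where the non-resonance condition from \cite{coles} keeps the Wronskian nondegenerate and a Neumann/perturbation argument yields uniform bounds, in the spirit of \cite[Lemma 3.4]{CV}; (iii) at infinity $|\zeta|\to\infty$, where the oscillations of $f_3,g_3$ produce decay of the resolvent kernel in weighted $L^2$.

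The main obstacle is the uniform control at the thresholds $\pm\im\omega$: at the integrable value $p=3$ these points are genuine resonances and the LAP degenerates, so we must carefully use that for $0<|p-3|\ll 1$ the resonance splits into the eigenvalue $\pm\im\lambda(p,\omega)$ (already projected away by $P_c$) leaving a regularized resolvent near the threshold. The reason the weight exponent is taken $s>1$, rather than the sharp $s>1/2$ of the scalar Kato estimate, is the loss of one derivative in the available off-diagonal bounds for the matrix Jost kernels.
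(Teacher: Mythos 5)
Your argument breaks down at the very first reduction. The operator $\mathcal{L}_\omega$ (equivalently $H_\omega$ in \eqref{eq:opH}) is \emph{not} self-adjoint, so the machinery you invoke is not available: $(e^{\im t \mathcal{L}_\omega})^*\neq e^{-\im t\mathcal{L}_\omega}$, hence the $TT^*$ composition does not reassemble the one-parameter group, and Kato's smoothing theorem --- which is what converts a resolvent bound into an $L^2_t$ estimate for data merely in $L^2$ --- requires self-adjointness (positivity of $\Im R(z)$ and the spectral measure) together with a bound over \emph{all} non-real $z$, not just boundary values on the essential spectrum. Even if one runs the Plancherel-in-time computation formally, what must be proved is $\|(R^+_{H}-R^-_{H})(E)u_0\|_{L^{2,-s}_x L^2_E}\lesssim \|u_0\|_{L^2}$ with \emph{unweighted} $u_0$, and this does not follow from your claimed uniform bound $\sup_E\|\langle x\rangle^{-s}R^\pm(E)P_c\langle x\rangle^{-s}\|_{L^2\to L^2}<\infty$: a sup in $E$ gives no integrability in $E$. (A two-sided weighted LAP of this kind is proved in the paper, Lemma \ref{lem:LAP}, but it serves the inhomogeneous estimate of Proposition \ref{lem:smoothest1}, not the present proposition.)

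The paper's proof circumvents precisely this obstruction. After the duality pairing $\int\langle e^{-\im tH}u_0,\sigma_3 g\rangle\,dt$ (which exploits $\sigma_3 H=H^*\sigma_3$), the region of $E$ away from the thresholds is handled by comparison with the flat operator as in \cite{CPV}; near the thresholds one does \emph{not} work with the matrix Jost functions directly, but applies Kato's theorem to the auxiliary \emph{self-adjoint} operator $\sigma_3(h+1)$, $h=-\partial_x^2+\sech^2\(\frac{p-1}{2}x\)$, whose scalar resolvent kernel is controlled by Deift--Trubowitz bounds on $m_\pm$; the restriction $s>1$ comes from the linear growth of this kernel at the threshold (no zero-energy decay), not from any ``loss of one derivative''. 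The estimate is then transferred to $H=\sigma_3(h+1)+\widetilde V$ by the factorization $\widetilde V=B^*A$, $A=\langle x\rangle^{-s}$, writing $AR_H(z)=(1+Q_0(z))^{-1}AR_{\sigma_3(h+1)}(z)$ and showing $(1+Q_0(z))^{-1}$ is uniformly bounded near the threshold because $\ker(1+Q_0(1))=0$; the latter is exactly the absence of a threshold resonance for $H$, the Coles--Gustafson input you correctly flagged (the eigenvalue $\im\lambda(p,\omega)$ plays no role here beyond lying away from $\im\omega$). Your proposal is missing this two-step structure (self-adjoint comparison operator plus Birman--Schwinger transfer), and without it, or some substitute such as a bona fide distorted Fourier/Plancherel theory for $H_\omega$, the reduction to a boundary LAP does not yield \eqref{eq:smooth111}.
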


\section{Refined profile, modulation, continuation argument and proof of Theorem \ref{thm:asstab}}\label{sec:mod}

It is well known,  see Weinstein \cite{W2},  that
\begin{align}
  \label{eq:mangs} \mathcal{S}=\left \{  e^{\im \vartheta  } {\phi}_{\omega} : \vartheta \in \R , \omega >0     \right \}
\end{align}
is a symplectic  submanifold  of  $L ^2_\rad (\R, \C ) $.
We set
 \begin{align}
   \label{eq:refprof} \phi[\omega, {z}] = \phi _\omega + \widetilde{\phi}[\omega, {z}]  \text{ with } \widetilde{\phi}[\omega, {z}] :=z \xi +\overline{z} \overline{\xi}.
 \end{align}
For functions $\widetilde{z}_\mathcal{R}$,  $\widetilde{\vartheta}_\mathcal{R}$ and $\widetilde{\omega}_\mathcal{R}$ to be determined below we introduce
\begin{align*}&
  \widetilde{ z}[\omega,z]=\widetilde{z}_0 [\omega,z]+ \widetilde{z}_\mathcal{R}[\omega,z] \text{ with } \widetilde{z}_0[\omega, z]=\im \lambda z\\&\widetilde{\vartheta}[\omega,z]=\omega +  \widetilde{\vartheta}_\mathcal{R}[\omega,z] \text{  and }\widetilde{ \omega}[\omega,z]= \widetilde{\omega}_\mathcal{R}[\omega,z].
\end{align*}

\begin{proposition}\label{prop:refpropf}
There exist  $C^2$  functions  $\widetilde{z}_\mathcal{R}$,  $\widetilde{\vartheta}_\mathcal{R}$ and $\widetilde{\omega}_\mathcal{R}$   defined in a neighborhood of $(\omega _0, 0)\in \R _+\times \C$
with  \begin{align}\label{eq:estpar}
 |\widetilde{\vartheta}_\mathcal{R}| + |\widetilde{\omega}_\mathcal{R}| +|\widetilde{z}_\mathcal{R}|  \lesssim |z|^2
\end{align}
such that, if we set
\begin{align}\label{eq:phi_pre_gali}
R[\omega, {z}]:= \partial ^2_x\phi  [\omega, {z}]+ f(\phi [\omega, {z}]) - \widetilde{\vartheta}\phi [\omega, {z}]+ \im \widetilde{\omega}\partial_{\omega}\phi [\omega, {z}] +\im  D_{z}\phi  [\omega, {z}]\widetilde{z },
\end{align}
we have
\begin{align}&
 \| \cosh \( \kappa \omega  x  \)  {R}   [\omega, {z}]\| _{L^2(\R ) }\lesssim |z|^2 ,\label{estR}
\end{align}
with furthermore the following orthogonality conditions, for $z_1=\Re z$ and $z_2=\Im z$,
\begin{align}\label{R:orth} &
\< \im {R}[\omega, {z}], \phi [\omega, {z}]\>=\< \im {R}[\omega, {z}],\im  \partial_{\omega}\phi [\omega, {z}]\>  =\< \im {R}[\omega, {z}],\im \partial_{z_{ j}}\phi[\omega, {z}]\> = 0,\text{   for all $j=1,2 $.}
\end{align}

 \end{proposition}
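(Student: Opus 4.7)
The plan is to apply the implicit function theorem to the system of four real equations \eqref{R:orth} in the four real unknowns $\widetilde\vartheta_{\mathcal R}, \widetilde\omega_{\mathcal R}, (\widetilde z_{\mathcal R})_1, (\widetilde z_{\mathcal R})_2$, parametrized smoothly by $(\omega, z)$ near $(\omega_0, 0)$.

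At the base point $(\omega, z) = (\omega_0, 0)$ with zero modulation, we have $\widetilde\vartheta = \omega$, $\widetilde\omega = 0$, $\widetilde z = 0$, so \eqref{eq:phi_pre_gali} collapses to $R = \phi_\omega'' + f(\phi_\omega) - \omega\phi_\omega = 0$ by the static equation \eqref{eq:static}; hence each of the four functionals vanishes. I would then compute the $4\times 4$ Jacobian at this base point starting from
\[
\partial_{\widetilde\vartheta_{\mathcal R}} R = -\phi_\omega,\qquad \partial_{\widetilde\omega_{\mathcal R}} R = \im\,\partial_\omega\phi_\omega,\qquad D_{\widetilde z_{\mathcal R}} R\cdot \widetilde w = \im(\widetilde w\,\xi_\omega + \overline{\widetilde w}\,\bar\xi_\omega),
\]
and pair against the four test directions $\phi_\omega,\, \im\partial_\omega\phi_\omega,\, \im(\xi_\omega+\bar\xi_\omega),\, \bar\xi_\omega-\xi_\omega$ obtained by evaluating $\phi[\omega,z], \im\partial_\omega\phi[\omega,z], \im\partial_{z_j}\phi[\omega,z]$ at $z=0$. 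The resulting matrix has a block structure: the $(\widetilde\vartheta_{\mathcal R}, \widetilde\omega_{\mathcal R})$--block is antidiagonal with entries $\pm\mathbf{q}'(\omega)\neq 0$ (the Vakhitov--Kolokolov nondegeneracy, valid for $p\in(1,5)$), and the $\widetilde z_{\mathcal R}$--block is controlled by the normalization $\Omega(\xi_\omega, \bar\xi_\omega) = -\im$ of \eqref{eq:xinormlize}--\eqref{eq:xinormliz1}. The cross-pairings between the two blocks vanish because $\xi_\omega, \bar\xi_\omega \in N_g^\perp(\mathcal{L}_\omega^*)$ by the spectral decomposition \eqref{eq:dirsum}--\eqref{eq:dirsum11} together with the adjoint identity $J\mathcal{L}_\omega = -\mathcal{L}_\omega^* J$. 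The Jacobian is therefore invertible and the IFT yields unique $C^2$ functions $\widetilde\vartheta_{\mathcal R}, \widetilde\omega_{\mathcal R}, \widetilde z_{\mathcal R}$ of $(\omega, z)$ in a neighborhood of $(\omega_0, 0)$, vanishing at $z=0$.

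To upgrade the automatic $O(|z|)$ bound from the IFT to the quadratic bound \eqref{eq:estpar}, I would observe that at zero modulation the linear part of $R$ in $(z, \bar z)$ reduces --- after using the choice $\widetilde z_0 = \im\lambda z$ --- to the eigenvector equations for $\xi_\omega$ and $\bar\xi_\omega$ in \eqref{eq:reimxi}, and therefore vanishes identically. Consequently every orthogonality functional evaluated at zero modulation is already $O(|z|^2)$, and the IFT then lifts this quadratic smallness to the modulation functions themselves. The cosh-weighted bound \eqref{estR} follows from the exponential decay rates $\sqrt\omega$ of $\phi_\omega$ and $\sqrt{\omega - \lambda(p,\omega)}$ of $\xi_\omega$: choosing $\kappa\in(0,1)$ small enough forces $\cosh(\kappa\omega x)$ to be dominated by these decay rates. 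The hypothesis $|p-3|\ll 1$ enters here to control the Taylor remainder of $f(u) = |u|^{p-1}u$: the factor $|u|^{p-3}$ appearing in $D^2 f$ (cf.\ \eqref{eq:derf2}) has only a mild singularity at $u=0$, which is absorbed by the exponential decay of $|\xi_\omega|^2$.

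The main obstacle is the detailed bookkeeping of the Jacobian's block structure in the second step, specifically the vanishing of the cross-pairings $\Omega(\phi_\omega, \xi_\omega)$, $\Omega(\partial_\omega\phi_\omega, \xi_\omega)$ and their complex conjugates. These vanish because the identifications $\ker(\mathcal{L}_\omega^* - \im\lambda) = \mathrm{span}\{J\bar\xi_\omega\}$ and $\ker(\mathcal{L}_\omega^* + \im\lambda) = \mathrm{span}\{J\xi_\omega\}$ --- consequences of $J\mathcal{L}_\omega = -\mathcal{L}_\omega^* J$ --- combined with the spectral decomposition \eqref{eq:dirsum}--\eqref{eq:dirsum11} force $\xi_\omega, \bar\xi_\omega$ to be $L^2$-orthogonal to $N_g(\mathcal{L}_\omega^*) = \mathrm{span}\{\phi_\omega, \im\partial_\omega\phi_\omega\}$. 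Once these orthogonalities are in hand, the remainder is a routine Taylor expansion backed by the exponential-decay estimates already mentioned.
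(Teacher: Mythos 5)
Your proposal is correct and follows essentially the same route as the paper: since $\phi[\omega,z]$ does not depend on the corrections, the orthogonality conditions \eqref{R:orth} are an affine system in $(\widetilde{\vartheta}_\mathcal{R},\widetilde{\omega}_\mathcal{R},\widetilde{z}_\mathcal{R})$, which the paper writes as \eqref{eqsysta} and solves by inverting the matrix $\mathbf{A}$, invertible at $z=0$ by exactly the ingredients you list ($\mathbf{q}'(\omega)\neq 0$, the normalization of $\xi_\omega$, and the spectral orthogonality killing the cross terms); your implicit function theorem formulation is the same computation in different packaging. Your key cancellation --- the linear-in-$z$ part of $R$ at zero modulation vanishing through $\widetilde z_0=\im\lambda z$ and the eigenvector equations, leaving the quadratic Taylor remainder --- is precisely how the paper produces $\widehat{R}[\omega,z]$ and hence the bounds \eqref{eq:estpar} and \eqref{estR}.
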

\proof  From  \eqref{eq:static}   and
\begin{align*}
     D_{z}\widetilde{\phi}  [\omega, {z}]\widetilde{z } _0 =    \mathcal{L}_\omega \widetilde{\phi}  [\omega, {z}] =
 -\im \( -\partial _x ^2 \widetilde{\phi}  [\omega, {z}]  +\omega \widetilde{\phi}  [\omega, {z}] - Df(\phi _\omega )\widetilde{\phi}  [\omega, {z}] \)
\end{align*}
we obtain
\begin{align*}&
   \im  D_{z} {\phi}  [\omega, {z}]\widetilde{z } _0 =
    -\partial _x ^2 \widetilde{\phi}  [\omega, {z}]  +\omega  {\phi}  [\omega, {z}] - f(\phi [\omega, {z}]) +\widehat{R}   [\omega, {z}]  \text{ where }
    \\& \widehat{R}   [\omega, {z}] := f(\phi [\omega, {z}]) -  f(\phi _\omega)          - Df(\phi _\omega )\widetilde{\phi}  [\omega, {z}] .
\end{align*}
Since
\begin{align*}
  \widehat{R}   [\omega, {z}]= \int _{0} ^{1}\int _{0} ^{1} tD^2f(\phi _\omega +t s \widetilde{\phi}  [\omega, {z}]   ) dtds   \widetilde{\phi}  ^2 [\omega, {z}]
\end{align*}
we conclude that
\begin{align}\label{eq:esthatr}
  \| \cosh \( \kappa \omega  x  \)  \widehat{R}   [\omega, {z}]\| _{L^2(\R ) }\lesssim |z|^2.
\end{align}
Now we set
\begin{align*}
    {R}   [\omega, {z}] =\widehat{R}   [\omega, {z}] -\im  D_{z} {\phi}  [\omega, {z}]\widetilde{z } _\mathcal{R} + \widetilde{\vartheta } _\mathcal{R} {\phi}  [\omega, {z}] - \im \widetilde{\omega}_\mathcal{R}\partial_{\omega}\phi [\omega, {z}] .
\end{align*}
Setting $\phi  =\phi [\omega, {z}]$      and $\widehat{{R}}=\widehat{{R}}[\omega, {z}]$,
 the orthogonality conditions  \eqref{R:orth}
are equivalent to
\begin{align}\label{eqsysta}
   \mathbf{A}     \begin{pmatrix}
 \widetilde{\vartheta } _\mathcal{R} \\   \widetilde{\omega}_\mathcal{R}  \\ \widetilde{z } _\mathcal{R}
\end{pmatrix} = - \begin{pmatrix}
\<  \widehat{{R}}, \im \phi\> \\   \<  \widehat{{R}}, \partial _\omega \phi\>  \\   \< \widehat{{R}} , D _ z  \phi\> .
\end{pmatrix}
\end{align}
where, by $ D _{  z _1} \widetilde{\phi} =2 (  \xi _1 , 0)^\intercal $  and  $ D _{  z _2} \widetilde{\phi} =-2( 0, \Im  \xi _2 )^\intercal $,  with the right hand sides  defined in  \eqref{eq:reimxi},
\begin{align}\label{eq:matrixa}
  \mathbf{A} &= \begin{pmatrix}
 \cancel{  \< \phi , \im \phi \> } & -\< \partial _\omega \phi ,   \phi \>   & \cancel{ -\< D _ z  \phi ,   \phi \> } \\  \< \phi , \partial _\omega \phi \> & \cancel{- \< \im \partial _\omega \phi ,  \partial _\omega \phi  \>  }  &  -\<  \im D _ z  \phi ,   \partial _\omega \phi\> \\ \cancel{  \< \phi , D _ z  \phi \> } & -\< \im \partial _\omega \phi , D _ z  \phi \>   &  -\< \im D _ z  \phi ,   D _ z  \phi\>
\end{pmatrix}   \\&  =  \begin{pmatrix}
  \mathbf{q}'(\omega ) J ^{-1}  &   O(z)   \\ O(z)    & \left . \< J   D _{  z _i} \widetilde{\phi} ,  D _{  z _j}   \widetilde{\phi}\>  \right | _{i,j =1,2}
\end{pmatrix}  + o(z)  =      \begin{pmatrix}
  \mathbf{q}'(\omega ) J ^{-1}  &   0   \\ 0    &    J
\end{pmatrix}  + O(z)   ,\nonumber
\end{align}
where the cancelled terms are null.
Since  $ \left . \mathbf{A} \right | _{z=0} $  is invertible, we conclude that   $\mathbf{A}$ is invertible also for small $z$.  From \eqref{eq:esthatr}  and   \eqref{eqsysta} we obtain  \eqref{eq:estpar}  and \eqref{estR}.

\qed

The proof of the following   modulation is standard, see Stuart \cite{stuart}.

\begin{lemma}[Modulation]\label{lem:mod1}  Let $\omega _0 >0$. There exists an $\delta _0 >0$ and functions
$\omega   \in C^1(   \mathcal{U} (\omega _0,\delta _0   ), \R) $ and
$\vartheta \in C^1( \mathcal{U} (\omega _0,\delta _0   ), \R /\Z ) $  and $z \in C^1( \mathcal{U} (\omega _0,\delta _0   ), \C ) $
 such that for  any $u \in \mathcal{U} (\omega _0,\delta _0   )$
\small \begin{align}\label{61}
  & \eta (u):=e^{-\im \vartheta( u) } u -  \phi [\omega( u) , z(u)]    \text{ satisfies }
    \\& \nonumber
\< \eta (u),\im\phi [\omega ( u), {z}( u)]\>=\< \eta (u), \partial_{\omega}\phi [\omega( u), {z}( u)]\>  =\< \eta (u)],\partial_{z_{ j}}\phi[\omega ( u), {z}( u)]\> = 0,\text{   for all $j=1,2 $.}
\end{align}
 \normalsize
Furthermore we have the identities  $\omega  ({\phi}_{\omega })=\omega $,  $\vartheta  (  e^{\im \vartheta _0}  u)    = \vartheta  (    u)   + \vartheta _0 $   and $\omega  (  e^{\im \vartheta _0}  u)    = \omega  (    u)    $ and  $z  (  e^{\im \vartheta _0}  u)    = z  (    u)    $.
\end{lemma}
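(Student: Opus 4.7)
The plan is to deduce the lemma from the implicit function theorem applied to the map
\begin{align*}
G(u,\vartheta,\omega,z) := \bigl( \< \eta, \im \phi[\omega,z]\>,\ \< \eta, \partial_\omega \phi[\omega,z]\>,\ \< \eta, \partial_{z_1}\phi[\omega,z]\>,\ \< \eta, \partial_{z_2}\phi[\omega,z]\> \bigr),
\end{align*}
where $\eta := e^{-\im \vartheta} u - \phi[\omega,z]$. At the reference point $(u,\vartheta,\omega,z)=(\phi_{\omega_0},0,\omega_0,0)$ one has $\eta \equiv 0$ and hence $G=0$, so solving $G=0$ locally for the triple $(\vartheta,\omega,z)$ in terms of $u$ produces exactly the modulation claimed in \eqref{61}.

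To invoke the IFT I would compute $D_{(\vartheta,\omega,z)}G$ at the reference point. Since $\eta=0$ there, derivatives of the second argument of each pairing drop out and the only contribution comes from the derivatives of $\eta$ itself: $\partial_\vartheta \eta = -\im \phi_{\omega_0}$, $\partial_\omega \eta = -\partial_\omega \phi_{\omega_0}$ and $\partial_{z_j} \eta = -D_{z_j}\widetilde\phi\big|_{z=0}$. Pairing these against $\im \phi_{\omega_0}$, $\partial_\omega \phi_{\omega_0}$, $D_{z_1}\widetilde\phi|_0$, $D_{z_2}\widetilde\phi|_0$ produces, up to a global sign, exactly the matrix $\mathbf{A}|_{z=0}$ already assembled in \eqref{eq:matrixa}, namely the block-diagonal matrix
\[
\begin{pmatrix}
\mathbf{q}'(\omega_0)\, J^{-1} & 0 \\ 0 & J
\end{pmatrix}.
\]
This is invertible since $J$ is nondegenerate and $\mathbf{q}'(\omega_0)\neq 0$: from $\mathbf{q}(\omega)=\omega^{\frac{2}{p-1}-\frac12}\mathbf{q}(1)$ the exponent is nonzero for $p\in(1,5)\setminus\{3\}$ (the Vakhitov--Kolokolov condition), and even at $p=3$ one checks directly that the exponent equals $\tfrac12$. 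The IFT then furnishes $C^1$ functions $\vartheta(u)$, $\omega(u)$, $z(u)$ on a neighborhood of $\phi_{\omega_0}$, and after shrinking $\delta_0$ if necessary these extend over the tube $\mathcal{U}(\omega_0,\delta_0)$ by continuation along the $S^1$-orbit.

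The equivariance claims then follow from the uniqueness part of the IFT. For any $\vartheta_0\in \R$, using $e^{-\im(\vartheta+\vartheta_0)}(e^{\im\vartheta_0}u)=e^{-\im\vartheta}u$, the triple $(\vartheta(u)+\vartheta_0,\omega(u),z(u))$ also solves the equation $G(e^{\im\vartheta_0}u,\cdot,\cdot,\cdot)=0$, so by uniqueness $\vartheta(e^{\im\vartheta_0}u)=\vartheta(u)+\vartheta_0$, $\omega(e^{\im\vartheta_0}u)=\omega(u)$ and $z(e^{\im\vartheta_0}u)=z(u)$; the identity $\omega(\phi_\omega)=\omega$ is immediate, since $(0,\omega,0)$ trivially satisfies the orthogonality conditions at $u=\phi_\omega$. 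I do not expect a real obstacle here: the essential spectral/linear-algebraic content, namely invertibility of the modulation matrix $\mathbf{A}|_{z=0}$, has been secured already in Proposition \ref{prop:refpropf}, and the only mild point of care is treating $\vartheta$ as an element of $\R/\Z$, which is handled by choosing a local lift at $\vartheta=0$.
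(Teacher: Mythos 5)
Your implicit function theorem argument is correct and is exactly the standard modulation argument the paper has in mind: the paper gives no proof, only citing Stuart \cite{stuart}, and the cited proof is precisely this IFT scheme, with the invertibility of the block matrix $\diag(\mathbf{q}'(\omega_0)J^{-1},J)$ (i.e.\ $\mathbf{q}'(\omega_0)\neq 0$ for $p<5$) as the key nondegeneracy, and the equivariance identities following from uniqueness. No gap; only note that the exponent $\tfrac{2}{p-1}-\tfrac12$ is nonzero for all $p\in(1,5)$, so the aside about $p=3$ is unnecessary.
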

\qed

We  have now the ansatz
\begin{align}\label{eq:ansatz}
  u = e^{\im \vartheta} \(  \phi [\omega  , z ]+ \eta \) .
\end{align}
By orbital stability we can assume that there exists $\theta =\theta (t)$ such that
\begin{equation}\label{eq:orbsta1}
 \| u- e^{\im \theta} \phi _{\omega  _0}\| _{H^1} <\delta \text{  for all values of time.}
\end{equation}
Then, using the notation in \eqref{eq:notation1} and the fact that      $\mathbf{d}'(\omega )= \mathbf{q}(\omega)$
 it is standard to write
\begin{align}\nonumber &
 O(\delta ) =  \mathbf{E}(u) +\omega \mathbf{Q}(u)- \mathbf{e}(\omega _0 ) -\omega \mathbf{q}(\omega _0) = \mathbf{d}(\omega ) - \mathbf{d}(\omega _0)- \mathbf{d}'(\omega _0) (\omega -\omega _0) \\& +2^{-1}\< \(\mathbf{d}^2\mathbf{E}(\phi _{\omega}  )+\omega \mathbf{d}^2\mathbf{Q}(\phi _{\omega}  )\) r , r   \> +o\(\| r\| _{H^1}^2 \) + o\(\| r\| _{H^1} (\omega -\omega _0)\) , \label{eq:enrexp}
\end{align}
for $r= z \xi +\overline{z} \overline{\xi} +\eta$.  Now we have
\begin{align}   &
  \< \(d^2\mathbf{E}(\phi _{\omega}  )+\omega d^2\mathbf{Q}(\phi _{\omega}  )\) r , r   \> = \<  \mathcal{L}_{\omega} r , Jr   \>  =2\lambda |z|^2 + \<  \mathcal{L}_{\omega} \eta , J\eta   \>   \label{eq:enrexp1}
\end{align}
Since $\<  \mathcal{L}_{\omega} \eta , J\eta   \> \gtrsim  \| \eta \| _{H^1}^2$,  from the above  and the strict convexity of $\mathbf{d}(\omega )$ we conclude that
\begin{align}
  \label{eq:oertstab1} |\omega -\omega _0|+ |z|+ \| \eta \| _{H^1}\lesssim \sqrt{\delta } \text{   for all values of time}.
\end{align}
We will set
\begin{equation}\label{eq:dismodtheta}
  \Theta :=( \vartheta , \omega ,z) , \quad  \widetilde{\Theta} :=( \widetilde{\vartheta} , \widetilde{\omega} ,\widetilde{z}) \text{ and }\widetilde{\Theta}_{\mathcal{R}} :=( \widetilde{\vartheta}_{\mathcal{R}} , \widetilde{\omega} _{\mathcal{R}},\widetilde{z}_{\mathcal{R}})  .
\end{equation}
The proof of Theorem \ref{thm:asstab} is mainly  based on the following continuation argument.

\begin{proposition}\label{prop:continuation}
There exists  a    $\delta _0= \delta _0(\epsilon )   $ s.t.\  if
\begin{align}
  \label{eq:main2} \| \eta \| _{L^2(I, \Sigma _A )} +  \| \eta \| _{L^2(I, \widetilde{\Sigma}  )} + \| \dot \Theta - \widetilde{\Theta} \| _{L^2(I  )} + \| z^2 \| _{L^2(I  )}\le \epsilon
\end{align}
holds  for $I=[0,T]$ for some $T>0$ and for $\delta  \in (0, \delta _0)$
then in fact for $I=[0,T]$    inequality   \eqref{eq:main2} holds   for   $\epsilon$ replaced by $   o _{\varepsilon }(1) \epsilon $.
\end{proposition}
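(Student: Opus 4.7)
The plan is to prove the four bootstrap bounds in \eqref{eq:main2} can each be improved to $o_\epsilon(1)\epsilon$, working from the ansatz \eqref{eq:ansatz} and the refined profile \eqref{eq:phi_pre_gali}. First I would derive the evolution equation for $\eta$ by substituting $u=e^{\im\vartheta}(\phi[\omega,z]+\eta)$ into \eqref{eq:nls1}, subtracting the refined profile identity, and collecting terms: the result is schematically
$$\im\partial_t\eta=\mathcal{L}_\omega\eta+\im R[\omega,z]+\im(\dot{\Theta}-\widetilde{\Theta})\cdot\partial_\Theta\phi[\omega,z]+ N(\phi,\eta),$$
with $N$ the genuinely nonlinear remainder, $R$ exponentially localized by \eqref{estR}, and $\|\eta\|_{H^1}\lesssim\sqrt\delta$ by \eqref{eq:oertstab1}. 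Differentiating the orthogonality conditions \eqref{61} in time and using the invertibility of the matrix $\mathbf{A}$ from \eqref{eq:matrixa} yields the pointwise modulation bound
$$|\dot{\Theta}-\widetilde{\Theta}|\lesssim |z|^3+\|\eta\|_{\widetilde{\Sigma}}\bigl(|z|+\|\eta\|_{\widetilde{\Sigma}}\bigr),$$
so that the $\|\dot\Theta-\widetilde\Theta\|_{L^2(I)}$ term is controlled by the other three already within the bootstrap window, reducing the problem to closing the $\Sigma_A$, $\widetilde{\Sigma}$ and $z^2$ estimates.

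For the $\Sigma_A$ control I would implement the high energy virial inequality in the spirit of Kowalczyk--Martinez--Munoz. Using a multiplier of the form $\Psi_A \eta$, where $\Psi_A$ is built from a primitive of $\sech^2(2x/A)$ twisted with $J$, and computing $\frac{d}{dt}\langle\eta,\Psi_A\eta\rangle$, integration by parts and the nontrapping character of $-\partial_x^2+\omega$ away from the soliton deliver the coercive lower bound $\|\eta\|_{\Sigma_A}^2$ on the quadratic part, with the soliton-localized error absorbed by $\|\eta\|_{\widetilde{\Sigma}}^2$ (since $A\gg B$), and the $z$-coupling giving only $O(|z|^4+|z|^2\|\eta\|_{\widetilde\Sigma}^2)$. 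Integrating over $I$, the boundary terms are $O(\delta)$, producing $\|\eta\|_{L^2(I,\Sigma_A)}^2\lesssim \delta+o_\epsilon(1)\epsilon^2$.

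For the $\widetilde{\Sigma}$ bound I replace Martel's second virial inequality by the smoothing estimates of Propositions \ref{lem:smooth111} and \ref{lem:smoothest1}, which is the main obstacle. Naively applying Duhamel to $P_c\eta$ fails because the nonlinearity is not an $L^2_t L^{2,\tau}$ source. To remedy this I multiply $\eta$ by a cutoff $\chi_B$ (at scale $B\ll A$) and write the equation for $\chi_B\eta$; the nonlinear and $R$-contributions then become exponentially localized and sit in $L^2_t L^{2,\tau}$, while the new forcing $[-\partial_x^2,\chi_B]\eta$ from the commutator is delicate and is exactly the term that Lemma~\ref{lem:smoothest1} is designed to handle, the non-resonance at threshold and the non-vanishing Wronskian condition (iii) of Remark~\ref{rem:extthm:asstab} being used here. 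Combining Propositions \ref{lem:smooth111}--\ref{lem:smoothest1} with the high energy bound for $\|\eta\|_{L^2(I,\Sigma_A)}$ yields $\|\eta\|_{L^2(I,\widetilde\Sigma)}\lesssim o_\epsilon(1)\epsilon+\|z^2\|_{L^2(I)}$.

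Finally, for the FGR bound on $\|z^2\|_{L^2(I)}$, I would follow Martel~\cite{Martel2}: compute $\frac{d}{dt}$ of a carefully chosen quadratic-in-$(z,\eta)$ functional incorporating a gauge removal of the linear oscillation $e^{2\im\lambda t}z^2$, isolate the resonant interaction of $z^2$ with the continuous spectrum at frequency $2\lambda$, and use hypothesis (i) of Remark~\ref{rem:extthm:asstab} ($2\lambda>\omega$, so $2\lambda$ lies in the essential spectrum) together with the non-vanishing FGR constant $\gamma(p,1)\neq 0$ to extract the coercive lower bound $\gamma\|z^2\|_{L^2(I)}^2$, the error again controlled by $\|\eta\|_{L^2(I,\widetilde\Sigma)}^2$ and cubic terms. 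Substituting this into the previous inequality and using $A\gg B^2\gg B\gg 1$ from \eqref{eq:relABg} to absorb cross-terms closes the system and delivers the sharpened bound $o_\epsilon(1)\epsilon$; the hardest single step remains the smoothing-with-cutoff estimate, since the commutator term is precisely what forces the use of the Jost-function analysis and condition~(iii).
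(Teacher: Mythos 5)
Your proposal follows essentially the same route as the paper: the paper proves Proposition \ref{prop:continuation} by combining exactly the four ingredients you describe, namely the modulation estimates (Proposition \ref{prop:modpar}, via differentiating the orthogonality conditions), the FGR bound on $\|z^2\|_{L^2(I)}$ (Proposition \ref{prop:FGR}), the high energy virial inequality for $\|\eta\|_{L^2(I,\Sigma_A)}$ (Proposition \ref{prop:1virial}), and the smoothing estimate with the cutoff $\chi_B$ replacing the second virial for $\|\eta\|_{L^2(I,\widetilde\Sigma)}$ (Proposition \ref{prop:smooth11}), closed using the hierarchy $A\gg B\gg 1$. Your sketch of each ingredient matches the paper's Sections \ref{sec:modpar}--\ref{sec:smooth1}, so this is the same argument.
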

Notice that this implies that in fact the result is true for $I=\R _+$.   We will split the proof of Proposition \ref{prop:continuation} in a number of partial results obtained assuming the hypotheses of Proposition  \ref{prop:continuation}.
\begin{proposition}\label{prop:modpar} We have
  \begin{align}&
  \label{eq:modpar1}  \|\dot \vartheta -\widetilde{\vartheta} \|  _{L^1(I  )} + \|\dot \omega -\widetilde{\omega} \|  _{L^1(I  )}\lesssim  \epsilon ^2 ,\\&  \label{eq:modpar2}   \|\dot z -\widetilde{z} \|  _{L^2(I  )} \lesssim \sqrt{ \delta}\epsilon  ,\\&  \label{eq:modpar3}   \|\dot z \|  _{L^\infty(I  )} \lesssim  \sqrt{\delta} .
\end{align}

\end{proposition}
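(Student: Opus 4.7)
I would substitute the ansatz \eqref{eq:ansatz} into \eqref{eq:nls1} and use the refined profile identity \eqref{eq:phi_pre_gali} to derive the evolution equation for $\eta$:
\begin{equation*}
i\partial_t\eta = A\eta + \dot\vartheta\,\eta + M - R - \mathcal N(\eta),
\end{equation*}
where $A = -\partial_x^2 - Df(\phi)$ is the linearization at $\phi[\omega,z]$, $\mathcal N(\eta) = f(\phi+\eta) - f(\phi) - Df(\phi)\eta$ is the quadratic nonlinear remainder (estimated via \eqref{eq:derf2}), and
\begin{equation*}
M := (\dot\vartheta - \widetilde\vartheta)\phi - i(\dot\omega - \widetilde\omega)\partial_\omega\phi - iD_z\phi\,(\dot z - \widetilde z)
\end{equation*}
is the modulation error containing the quantities we want to bound. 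Next I would differentiate in time the four orthogonality conditions $\langle \eta,F\rangle=0$ from Lemma~\ref{lem:mod1}, with $F\in\{i\phi,\partial_\omega\phi,\partial_{z_1}\phi,\partial_{z_2}\phi\}$, and substitute the equation for $\partial_t\eta$ to obtain the identity
\begin{equation*}
\langle iM,F\rangle = -\langle iA\eta,F\rangle - \dot\vartheta\langle i\eta,F\rangle + \langle iR,F\rangle + \langle i\mathcal N,F\rangle + \langle \eta,\partial_tF\rangle.
\end{equation*}
Collected for the four choices of $F$, this becomes a linear system $\mathbf B\,(\dot\vartheta-\widetilde\vartheta,\,\dot\omega-\widetilde\omega,\,\dot z-\widetilde z)^{\intercal}=\mathcal G$ whose matrix $\mathbf B$ equals the matrix $\mathbf A$ from \eqref{eq:matrixa} up to corrections of size $|z|+\|\eta\|_{\widetilde\Sigma}$, hence is invertible with uniformly bounded inverse for $\delta$ small.

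The heart of the proof is estimating $\mathcal G$ in the correct norms. For the $\dot z - \widetilde z$ components I expect two cancellations to produce the extra factor of $\sqrt\delta$ in \eqref{eq:modpar2}: first, the linear piece $\langle iA\eta,\partial_{z_j}\phi\rangle = \langle\eta,iA\partial_{z_j}\phi\rangle$ is $O(|z|\,\|\eta\|_{\widetilde\Sigma})$ because $A$ sends the span of $\{\partial_{z_j}\widetilde\phi\}$ into itself up to a $Df(\phi)-Df(\phi_\omega)=O(|z|)$ correction while $\eta\perp\partial_{z_j}\phi$; second, the residual source $\langle iR,\partial_{z_j}\phi\rangle=-\langle R,i\partial_{z_j}\phi\rangle$ reduces to $O(|z|^3)$ using the explicit description of $\partial_{z_j}\widetilde\phi$ in terms of $\xi,\bar\xi$ together with \eqref{R:orth} and $\|R\|_{\widetilde\Sigma}\lesssim|z|^2$. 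The remaining pieces of $\mathcal G$ give $\|\eta\|_{H^1}\|\eta\|_{\widetilde\Sigma}$ from $\langle i\mathcal N,\cdot\rangle$ and $(|\dot\omega|+|\dot z|)\|\eta\|_{\widetilde\Sigma}$ from $\langle\eta,\partial_tF\rangle$, each $\lesssim\sqrt\delta\,\|\eta\|_{\widetilde\Sigma}$ by orbital stability \eqref{eq:oertstab1}; applying Cauchy--Schwarz with the hypothesis \eqref{eq:main2} delivers \eqref{eq:modpar2}.

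For the $\dot\vartheta - \widetilde\vartheta$ and $\dot\omega - \widetilde\omega$ components I would observe that $\widetilde\vartheta_{\mathcal R}$ and $\widetilde\omega_{\mathcal R}$ have been defined in \eqref{eqsysta} precisely so as to absorb the leading $O(|z|^2)$ parts of $\langle iR,i\phi\rangle = \langle R,\phi\rangle$ and $\langle iR,\partial_\omega\phi\rangle$; after this cancellation the remaining sources are bilinear, of the schematic form $\|\eta\|_{\widetilde\Sigma}^2+|z|^2\|\eta\|_{\widetilde\Sigma}+\|\eta\|_{\widetilde\Sigma}\,|\dot\Theta-\widetilde\Theta|+|z|^4$. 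Each such term is bounded in $L^1(I)$ by $\epsilon^2$ via Cauchy--Schwarz, using the hypotheses $\|z^2\|_{L^2(I)}\le\epsilon$, $\|\eta\|_{L^2(I,\widetilde\Sigma)}\le\epsilon$ and $\|\dot\Theta-\widetilde\Theta\|_{L^2(I)}\le\epsilon$ from \eqref{eq:main2}, yielding \eqref{eq:modpar1}. Finally, \eqref{eq:modpar3} is immediate: $|\widetilde z|=|i\lambda z+\widetilde z_{\mathcal R}|\lesssim|z|\lesssim\sqrt\delta$ by \eqref{eq:estpar} and orbital stability, while solving the system pointwise in $t$ yields $|\dot z - \widetilde z|\lesssim\sqrt\delta$. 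The hardest step is the algebraic verification that the definition of $\widetilde\vartheta_{\mathcal R},\widetilde\omega_{\mathcal R}$ exactly cancels the pure $|z|^2$ source in the $\vartheta,\omega$ equations so that only bilinear terms survive; this is where the fine structure of the refined profile is essential.
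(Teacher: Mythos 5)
Your plan is essentially the paper's own proof: Lemma \ref{lem:lemdscrt} derives exactly your linear system by pairing \eqref{eq:nls3} with $\im e^{-\im \vartheta}D_\Theta \phi[\Theta]\boldsymbol{\Theta}$ (equivalent to differentiating the orthogonality conditions \eqref{61} in time), inverts the Gram matrix, which has the structure of $\mathbf{A}$ up to $O(|z|+\|\eta\|_{\widetilde{\Sigma}})$, and arrives at $|\dot\vartheta-\widetilde\vartheta|+|\dot\omega-\widetilde\omega|\lesssim(|z|^2+\|\eta\|_{\widetilde\Sigma})\|\eta\|_{\widetilde\Sigma}$ and $|\dot z-\widetilde z|\lesssim(|z|+\|\eta\|_{\widetilde\Sigma})\|\eta\|_{\widetilde\Sigma}$; then \eqref{eq:main2}, \eqref{eq:estpar}, \eqref{eq:oertstab1} and Cauchy--Schwarz give \eqref{eq:modpar1}--\eqref{eq:modpar3} exactly as you describe. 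The only real difference is bookkeeping: where you argue that the linearization approximately preserves the span of the modulation directions, the paper differentiates the refined-profile identity \eqref{eq:phi_pre_gali} in $\Theta$ and uses \eqref{61} to convert $\<\eta,(\partial_x^2+Df(\phi))D_\Theta\phi\,\boldsymbol{\Theta}\>$ into $\<\eta, D_\Theta R\,\boldsymbol{\Theta}\>$ plus terms that cancel identically, yielding the same surviving sources ($\<\eta,\partial_{z_j}R\>=O(|z|\,\|\eta\|_{\widetilde\Sigma})$, $\<\eta,\partial_\omega R\>=O(|z|^2\|\eta\|_{\widetilde\Sigma})$).

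One caution on your treatment of the $R$-sources: the pairings $\<\im R,\phi\>$, $\<\im R,\im\partial_\omega\phi\>$, $\<\im R,\im\partial_{z_j}\phi\>$ vanish \emph{exactly} by \eqref{R:orth} -- this is precisely what the system \eqref{eqsysta} encodes -- not merely to leading $O(|z|^2)$ order. This is not a cosmetic point: a genuine $O(|z|^3)$ leftover in the $\vartheta,\omega$ equations could not be bounded in $L^1(I)$ by $\epsilon^2$ using \eqref{eq:main2} alone, since neither $\|z\|_{L^2(I)}$ nor $\|z^2\|_{L^1(I)}$ is controlled; only even powers $|z|^4=|z^2|^2$ and the bilinear terms are summable. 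So the ``absorb the leading part, keep $|z|^3$/$|z|^4$'' heuristic should be replaced by the exact orthogonality (which your construction already provides); with that replacement your source terms coincide with the paper's and the conclusion follows.
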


\begin{proposition}[Fermi Golden Rule (FGR) estimate]\label{prop:FGR}
 We have
 \begin{align}\label{eq:FGRint}
  \| z^2\|_{L^2(I)}\lesssim  A^{-1/2} \epsilon .
 \end{align}
 \end{proposition}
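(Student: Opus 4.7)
The plan is to adapt the Fermi Golden Rule (FGR) scheme of Martel \cite{Martel2}. Substituting the ansatz \eqref{eq:ansatz} into \eqref{eq:nls1} and applying Proposition \ref{prop:refpropf}, one obtains for $\eta$ an equation of the schematic form
\begin{align*}
\partial _t \eta \;=\; \mathcal{L}_\omega \eta + z^2 G_\omega + \overline{z}^2 \overline{G}_\omega + \mathcal{R},
\end{align*}
where $G_\omega \in L^{2,s}(\R,\C^2)$ is read off from $\tfrac{1}{2}D^2 f(\phi_\omega)\widetilde{\phi}^2$ (after projecting onto $X_c(\omega)$), and $\mathcal{R}$ collects cubic and higher order terms in $(z,\eta)$ together with the modulation errors $\dot\Theta-\widetilde\Theta$; by Proposition \ref{prop:modpar} and the continuation hypothesis \eqref{eq:main2} these are small in the weighted norms we use. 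Because $z(t)$ oscillates at leading order like $e^{\im\lambda t}$, the source $z^2 G_\omega$ oscillates at frequency $2\lambda$, which by assumption (\textbf{i}) lies strictly inside the continuous spectrum of $\mathcal{L}_\omega$.

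Next, by the limiting absorption principle for $\mathcal{L}_\omega$ at the spectral value $2\lambda$ — available thanks to the absence of threshold resonances and the Jost function Wronskian nondegeneracy condition (\textbf{iii}) of Remark \ref{rem:extthm:asstab} — there is a bounded outgoing solution $\Psi_\omega \in L^\infty(\R,\C^2)$ of $(\mathcal{L}_\omega - 2\im\lambda)\Psi_\omega = G_\omega$. The FGR constant is, up to normalization,
\begin{align*}
\gamma \;:=\; \Im\bigl\langle G_\omega, \Psi_\omega\bigr\rangle_{\mathrm{ren.}} \;\neq\; 0
\end{align*}
by assumption (\textbf{ii}). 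Truncate $\Psi_{\omega,A}:=\chi_A\Psi_\omega \in L^2$, with $\|\Psi_{\omega,A}\|_{L^2}\lesssim\sqrt{A}$, and consider the scalar functional
\begin{align*}
\mathcal{I}(t) \;:=\; \Re\bigl(z(t)^2\,\bigl\langle \eta(t), J\overline{\Psi_{\omega,A}}\bigr\rangle\bigr).
\end{align*}
Differentiating $\mathcal{I}$ and using $\dot z = \im\lambda z + \widetilde z_\mathcal{R} + (\dot z - \widetilde z)$ together with the equation for $\eta$, and using $(\mathcal{L}_\omega - 2\im\lambda)\Psi_\omega = G_\omega$ to cancel the $\mathcal{L}_\omega$-pieces, the leading contribution to $d\mathcal{I}/dt$ is the FGR term $-\gamma|z(t)|^4$, arising from the pairing of the source $\overline z^2\overline G_\omega$ against $z^2\overline{\Psi_{\omega,A}}$ and the outgoing boundary condition of $\Psi_\omega$.

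Integrating $d\mathcal{I}/dt$ over $I = [0,T]$ yields
\begin{align*}
\gamma\int_I |z|^4\,dt \;\lesssim\; \sup_{t\in\{0,T\}}|\mathcal{I}(t)| + \int_I |\mathrm{errors}|\,dt.
\end{align*}
The boundary terms satisfy $|\mathcal{I}(t)|\lesssim|z|^2\|\eta\|_{L^2}\|\Psi_{\omega,A}\|_{L^2}\lesssim A^{1/2}\delta^{3/2}$, which by \eqref{eq:relABg} is vastly smaller than $A^{-1}\epsilon^2$. The error integral splits into modulation errors, bounded via $\|\dot\Theta-\widetilde\Theta\|_{L^2(I)}\le\epsilon$; cubic contributions in $(z,\eta)$, controlled by $\|z^2\|_{L^2(I)}\|\eta\|_{L^2(I,\widetilde\Sigma)}\lesssim\epsilon^2$ and bootstrapped; and, crucially, a commutator contribution from $[\partial_x^2,\chi_A]=O(A^{-1}\partial_x + A^{-2})$ acting on $\Psi_{\omega,A}$, whose pairing against $\eta$ is controlled via Proposition \ref{lem:smoothest1} and the $\Sigma_A$-norm in \eqref{eq:main2} by $A^{-1}\|\eta\|_{L^2(I,\Sigma_A)}^2 \lesssim A^{-1}\epsilon^2$. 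Dividing by $\gamma\ne 0$ gives $\|z^2\|_{L^2(I)}^2 = \int_I|z|^4\,dt\lesssim A^{-1}\epsilon^2$, i.e., $\|z^2\|_{L^2(I)}\lesssim A^{-1/2}\epsilon$.

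The main obstacle will be the rigorous identification of the $-\gamma|z|^4$ term independently of the particular cutoff, together with the precise bookkeeping showing that every cutoff-induced error carries the gain $A^{-1}$. This rests on the outgoing radiation structure of $\Psi_\omega$ (so that the renormalized pairing $\Im\langle G_\omega,\Psi_\omega\rangle$ yields a well-defined nonzero real number), and on the fact that $[\partial_x^2,\chi_A]$ behaves as $O(A^{-1})$ when paired against the smoothing norm $\Sigma_A$ of $\eta$ — the mechanism that converts the continuation hypothesis \eqref{eq:main2} into the claimed $A^{-1/2}$ improvement.
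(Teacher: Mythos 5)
Your overall architecture is the same as the paper's (which follows Martel \cite{Martel2}): a functional linear in $\eta$ and quadratic in $z$, cut off by $\chi_A$, differentiated in time, with non‑resonant monomials $z^4$, $|z|^2z^2$ removed by an auxiliary time normal form and the boundary values controlled by orbital stability. But the key choice of test function is different, and it breaks the argument. The paper pairs $\eta$ against $\chi_A\(z^2 g^{(\omega)}+\overline{z}^2\overline{g}^{(\omega)}\)$ where $g^{(\omega)}\in L^\infty$ solves the \emph{homogeneous} equation \eqref{eq:eqsatg2}, $\mathcal{L}_\omega g^{(\omega)}=2\im\lambda g^{(\omega)}$. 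Then the term in which $\mathcal{L}_\omega$ falls on the test function cancels \emph{exactly} against the term produced by $\frac{d}{dt}z^2\approx 2\im\lambda z^2$ (the cancellation in Lemma \ref{lem:FGR1}), and every surviving term that is linear in $\eta$ carries either a commutator factor $\chi_A',\chi_A''=O(A^{-1})$ or an extra factor $O(|z|)$; the resonant $|z|^4$ then comes solely from pairing the quadratic source $\tfrac12 D^2f(\phi_\omega)\(z\xi+\overline{z}\overline{\xi}\)^2$ with the test function, with coefficient the constant \eqref{eq:fgrgamma}, which is made nonzero by Lemma \ref{lem:FGRnondeg}.

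Your $\Psi_\omega$, solving the \emph{inhomogeneous} equation $(\mathcal{L}_\omega-2\im\lambda)\Psi_\omega=G_\omega$, does not "cancel the $\mathcal{L}_\omega$-pieces": integrating by parts, $\mathcal{L}_\omega$ acting on $\chi_A\Psi_\omega$ yields $2\im\lambda\chi_A\Psi_\omega+\chi_A G_\omega+[\mathcal{L}_\omega,\chi_A]\Psi_\omega$. The $2\im\lambda$ part can cancel the oscillation of $z^2$ (only if you pair with $\Psi_{\omega,A}$ and not $\overline{\Psi_{\omega,A}}$ — as written the phases add to $4\im\lambda$), but the $\chi_A G_\omega$ part survives as a term of the form $\Re\(z^2\langle\eta, J\chi_A G_\omega\rangle\)$: linear in $\eta$, quadratic in $z$, with an $O(1)$ exponentially localized weight and no $A^{-1}$ gain. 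Cauchy–Schwarz only gives $|z|^4+\|\eta\|_{\widetilde{\Sigma}}^2$ with an $O(1)$ constant, which both prevents absorbing $\int_I|z|^4$ and destroys the claimed $A^{-1}$ factor; at best one would reach $\|z^2\|_{L^2(I)}\lesssim\epsilon$, which does not close Proposition \ref{prop:continuation}. Nor is this term purely non‑resonant: the locally driven component of $\eta$ oscillating at frequency $-2\lambda$ makes it contain another copy of the resonant $|z|^4$, so it cannot be eliminated by a further time normal form; extracting it rigorously would require decomposing $\eta$ through the resolvent/smoothing machinery — precisely the heavier route this choice of functional is meant to avoid, and which the paper's homogeneous $g^{(\omega)}$ renders unnecessary. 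Two smaller points: inside the FGR functional the cutoff commutator is estimated directly (using $\chi_A'\sim A^{-1}$ on an annulus), not via Proposition \ref{lem:smoothest1}; and your constant $\Im\langle G_\omega,\Psi_\omega\rangle$ is not the constant \eqref{eq:fgrgamma} whose nonvanishing Lemma \ref{lem:FGRnondeg} provides, so the appeal to assumption (\textbf{ii}) needs an extra identification.
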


\begin{proposition}[Virial Inequality]\label{prop:1virial}
 We have
 \begin{align}\label{eq:sec:1virial1}
   \| \eta \| _{L^2(I, \Sigma _A )} \lesssim  A \delta + \| z^2\|_{L^2(I)} + \| \eta \| _{L^2(I, \widetilde{\Sigma}   )} + \epsilon ^2  .
 \end{align}
 \end{proposition}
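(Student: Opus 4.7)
The plan is to establish \eqref{eq:sec:1virial1} by a first--order (Morawetz) virial computation adapted to the weight $\sech(2x/A)$. Choose $\psi_A(x) := \int_0^x \sech^2(2y/A)\, dy$, so that $\psi_A' = \sech^2(2\cdot/A)\geq 0$, $\|\psi_A\|_{L^\infty}\lesssim A$ and $|\psi_A^{(3)}|\lesssim A^{-2}\sech^2(2\cdot/A)$, and define
\begin{align*}
\mathcal{I}_A(t) := \Im \int_{\R} \psi_A(x)\, \eta(t,x)\, \overline{\partial_x \eta(t,x)}\, dx.
\end{align*}
By Cauchy--Schwarz together with the orbital--stability bound \eqref{eq:oertstab1}, one has $|\mathcal{I}_A(t)|\lesssim A\|\eta(t)\|_{H^1}^2 \lesssim A\delta$ uniformly on $I$, which accounts for the $A\delta$ term in \eqref{eq:sec:1virial1}.

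Next I would compute $\dot{\mathcal{I}}_A$ using the evolution equation for $\eta$, obtained by differentiating \eqref{eq:ansatz}, inserting into \eqref{eq:nls1} and subtracting the refined--profile identity \eqref{eq:phi_pre_gali}. Schematically,
\begin{align*}
i\partial_t \eta = -\partial_x^2 \eta + \dot\vartheta\,\eta - Df(\phi[\omega,z])\eta - \mathcal{N}(\eta) - R[\omega,z] + \mathcal{M},
\end{align*}
where $\mathcal{N}(\eta) := f(\phi[\omega,z]+\eta) - f(\phi[\omega,z]) - Df(\phi[\omega,z])\eta$ is the super-quadratic nonlinear remainder and $\mathcal{M}$ collects the modulation pieces proportional to $\dot\vartheta - \widetilde\vartheta$, $\dot\omega - \widetilde\omega$ and $\dot z - \widetilde z$. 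The standard Morawetz identity then yields
\begin{align*}
\dot{\mathcal{I}}_A = \int \psi_A'\, |\partial_x \eta|^2\, dx - \tfrac14 \int \psi_A^{(3)}\, |\eta|^2\, dx + (\mathrm{pot.}) + (\mathrm{nonl.}) + (\mathrm{mod.}) + (R,z\text{--coupling}),
\end{align*}
the first summand being exactly $\|\sech(2x/A)\partial_x\eta\|_{L^2}^2$, and the second of size $O(A^{-2}\|\sech(2x/A)\eta\|_{L^2}^2)$, which together (after a small correction, see below) reconstruct the full $\|\eta\|_{\Sigma_A}^2$ on the left.

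Each remaining term must be matched to the right of \eqref{eq:sec:1virial1}. The potential contribution from $Df(\phi[\omega,z])\eta$, expanded via \eqref{eq:derf1}, is exponentially localized at rate $\sqrt{\omega_0}$, so for $\kappa$ small enough it is pointwise dominated by $C\sech^2(\kappa\omega_0 x)|\eta|^2$ and integrates to $\lesssim \|\eta\|_{L^2(I,\widetilde\Sigma)}^2$. The nonlinear remainder satisfies $|\mathcal{N}(\eta)|\lesssim (|\eta|+|\eta|^{p-1})|\eta|$, and using Sobolev embedding with $\|\eta\|_{H^1}\lesssim\sqrt{\delta}$ yields a bound $\delta^{\mu}\|\eta\|_{L^2(I,\Sigma_A)}^2$ for some $\mu>0$, absorbable on the left by taking $\delta$ small. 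The modulation piece $\mathcal{M}$ pairs the exponentially localized $\phi$, $\partial_\omega\phi$, $D_z\phi$ against the small scalar errors $\dot\Theta - \widetilde\Theta$ and gives, by Cauchy--Schwarz in $t$, a contribution $\lesssim \|\dot\Theta-\widetilde\Theta\|_{L^2(I)}^2\le\epsilon^2$. Finally, \eqref{estR} supplies $\|\cosh(\kappa\omega x)R[\omega,z]\|_{L^2}\lesssim |z|^2$, so the $R$--piece furnishes the $\|z^2\|_{L^2(I)}^2$ contribution.

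The step I expect to require the most care is ensuring that the $A^{-2}\|\sech(2x/A)\eta\|_{L^2}^2$ component of the $\Sigma_A$ norm actually appears with the correct sign on the left. The indefinite $\psi_A^{(3)}$ term is only of the correct order; to pin down the sign I would add a small auxiliary functional of the form $c A^{-2}\int \sech^2(2x/A)|\eta|^2\, dx$ (or, equivalently, invoke a weighted Poincar\'e-type inequality on the scale $A$, exactly in the spirit of Kowalczyk--Martinez--Mu\~noz). One must then verify that the scale mismatch $\kappa\omega_0\gg 2/A$ really lets one dominate the potential term by $\|\eta\|_{\widetilde\Sigma}^2$ with a constant independent of $A$, and that the various quadratic-in-$\eta$ pieces are controlled with an $o_\delta(1)$ prefactor so they may be absorbed. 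Integrating $\dot{\mathcal{I}}_A$ over $I$ and using $\sup_I|\mathcal{I}_A|\lesssim A\delta$ then produces \eqref{eq:sec:1virial1}.
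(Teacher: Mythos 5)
Your overall architecture (a virial functional at scale $A$ with boundary term $O(A\delta)$, potential terms sent to $\widetilde\Sigma$, modulation terms to $\epsilon^2$, the $R$--term to $\|z^2\|_{L^2(I)}$, and a weighted Poincar\'e/CMS-type step to recover the full $\Sigma_A$ norm) coincides with the paper's, which uses $\mathcal{I}=2^{-1}\<\im\eta,S_A\eta\>$ with $S_A=\varphi_A'+2\varphi_A\partial_x$ as in \eqref{def:zetaC}--\eqref{def:vfSA}. However, there is a genuine gap at exactly the step the paper identifies as the crux: your treatment of the super-quadratic remainder $\mathcal{N}(\eta)=f(\phi+\eta)-f(\phi)-Df(\phi)\eta$. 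In $\dot{\mathcal{I}}_A$ this term is paired against $\psi_A\,\partial_x\eta$ (equivalently $\varphi_A\partial_x\eta$), and $\psi_A$ is of size $A$ and \emph{does not decay}; so the crude pointwise bound $|\mathcal{N}(\eta)|\lesssim(|\eta|+|\eta|^{p-1})|\eta|$ together with Sobolev embedding does not give anything of the form $\delta^{\mu}\|\eta\|_{L^2(I,\Sigma_A)}^2$. The quantity $A\int(|\eta|^2+|\eta|^p)|\eta_x|\,dx$ is a global, unweighted expression and cannot be dominated by the exponentially localized $\Sigma_A$ norm (consider $\eta$ concentrated at distance $\gg A$ from the origin); summed over a long time interval it is simply not controlled by the right-hand side of \eqref{eq:sec:1virial1}.

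What is missing is the integration by parts of Kowalczyk--Martel--Mu\~noz/Martel, which the paper uses: one writes
\begin{align*}
\<f(\phi+\eta)-f(\phi),S_A\eta\>&=-2\<F(\phi+\eta)-F(\phi)-f(\phi)\eta,\zeta_A^2\>\\
&\quad-2\<f(\phi+\eta)-f(\phi)-f'(\phi)\eta,\phi'\varphi_A\>+\<f(\phi+\eta)-f(\phi),\zeta_A^2\eta\>,
\end{align*}
so that the non-decaying weight $\varphi_A$ only ever multiplies the exponentially localized $\phi'$, while the purely-$\eta$ power term appears only with the localized weight $\zeta_A^2$; that last term is then handled by the key estimate \eqref{eq:cruche}, $\int|\eta|^{p+1}\zeta_A^2\,dx\lesssim A^2\|\eta\|_{L^\infty}^{p-1}\|(\zeta_A\eta)'\|_{L^2}^2$, and absorbed using $\delta^{\frac{p-1}{2}}A^2\ll1$, which relies on the hierarchy $\log(\delta^{-1})\gg A$ in \eqref{eq:relABg} and not merely on ``$\delta$ small''. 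Without this structural step your bound for the nonlinear contribution is unjustified. Your concern about recovering the $A^{-2}\|\sech(2x/A)\eta\|_{L^2}^2$ piece is legitimate but is not the real obstruction: the paper gets it from $\|(\zeta_A\eta)'\|_{L^2}^2+A^{-1}\|\eta\|_{\widetilde\Sigma}^2$ via Lemma 6.2 of \cite{CMS23}, much as you anticipate.
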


\begin{proposition}[Smoothing Inequality]\label{prop:smooth11}
 We have
 \begin{align}\label{eq:sec:smooth11}
   \| \eta \| _{L^2(I, \widetilde{\Sigma}   )} \lesssim  o _{B^{-1}} (1)  \epsilon    .
 \end{align}
 \end{proposition}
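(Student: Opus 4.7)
The plan is to carry out the strategy sketched in the introduction: localize the equation for $\eta$ by a spatial cutoff and apply the Kato-type smoothing bounds of Propositions \ref{lem:smooth111} and \ref{lem:smoothest1} to the localized unknown. Viewing $\eta$ through the identification $L^2_\rad(\R,\C)=L^2_\rad(\R,\R^2)$, it satisfies
\begin{align*}
\partial_t \eta - \mathcal{L}_\omega \eta = G,
\end{align*}
where the source $G$ collects the refined-profile error $R[\omega,z]$ (controlled by $|z|^2$ via Proposition \ref{prop:refpropf}), the modulation remainders proportional to $\dot\Theta-\widetilde\Theta$ (controlled via Proposition \ref{prop:modpar}), and the purely nonlinear terms of quadratic or higher order in $\eta$. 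I would pick a cutoff $\chi:=\chi_C$ at an intermediate scale $C:=B^{\alpha}$ with $\alpha\in(2,3)$, so that $C\gg B$ (smallness of commutator prefactors) while $C\ll A\sim B^3$ (which makes $\sech(2x/A)\ge c>0$ on $\supp\chi$, so that $\|\cdot\|_{\Sigma_A}$ controls the unweighted $L^2$-norm there). The localized unknown $\zeta:=\chi\eta$ solves
\begin{align*}
\partial_t \zeta - \mathcal{L}_\omega \zeta = \chi G - [\mathcal{L}_\omega,\chi]\eta.
\end{align*}

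Next I would apply Duhamel to $\zeta$, project onto $X_c(\omega)$ (treating the commutator of $\chi$ with $P_c$ as a localized lower-order error), and invoke the Kato bound \eqref{eq:smooth111} on the initial term together with the inhomogeneous smoothing \eqref{eq:smoothest11} on the forcing. For $s>3/2$ and $\tau>1/2$ this yields
\begin{align*}
\|P_c \zeta\|_{L^2(I,L^{2,-s})}\lesssim \|\eta(0)\|_{L^2}+\|\chi G\|_{L^2(I,L^{2,\tau})}+\|[\mathcal{L}_\omega,\chi]\eta\|_{L^2(I,L^{2,\tau})}.
\end{align*}
Since $\sech(\kappa\omega_0 x)\lesssim\langle x\rangle^{-s}$ and $\chi\equiv 1$ on $|x|\le C$, the left-hand side controls $\|\eta\|_{L^2(I,\widetilde\Sigma)}$ up to the exponentially small tail $\|(1-\chi)\eta\|_{L^2(I,\widetilde\Sigma)}$ and the discrete-mode correction $(1-P_c)\zeta$; the latter is handled by combining the modulation orthogonalities \eqref{61} (which give $\langle\eta,f_j\rangle=O(|z|\|\eta\|)$ for each discrete generalized eigenfunction $f_j$) with the exponential localization of the $f_j$.

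For the source terms, $\|\eta(0)\|_{L^2}\lesssim\delta\ll\epsilon$ by \eqref{eq:relABg}. The portions of $\chi G$ coming from $R[\omega,z]$ and from the modulation remainder inherit exponential decay in $x$ from $\phi_\omega$ and $\xi_\omega$, so the $\langle x\rangle^\tau$ weight is harmless and they contribute $\lesssim \|z^2\|_{L^2(I)}+\|\dot\Theta-\widetilde\Theta\|_{L^2(I)}\le 2\epsilon$, each multiplied by a small numerical prefactor extracted from the decay. The purely nonlinear piece of $\chi G$ is quadratic or higher in $\eta$ on $\supp\chi$ and closes by Sobolev together with the orbital-stability bound $\|\eta\|_{L^\infty(I,H^1)}\lesssim\sqrt\delta$, producing an extra $\sqrt\delta$ factor. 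The crucial commutator
\begin{align*}
[\mathcal{L}_\omega,\chi]\eta=(\text{bounded matrix})\cdot\bigl(2\chi'\partial_x\eta+\chi''\eta\bigr)
\end{align*}
is supported on $|x|\sim C$; there $\sech(2x/A)\ge c$, and $\|\sech(2x/A)\eta\|_{L^2}\le A\|\eta\|_{\Sigma_A}$ gives
\begin{align*}
\|\chi'\partial_x\eta\|_{L^{2,\tau}}\lesssim C^{\tau-1}\|\eta\|_{\Sigma_A},\qquad \|\chi''\eta\|_{L^{2,\tau}}\lesssim C^{\tau-2}A\|\eta\|_{\Sigma_A}.
\end{align*}
For $\tau$ close to $1/2$, $A\sim B^3$, and $\alpha\in(2,3)$, both $C^{\tau-1}$ and $C^{\tau-2}A=B^{\alpha\tau-2\alpha+3}$ are negative powers of $B$, hence $o_{B^{-1}}(1)$; multiplying by $\|\eta\|_{L^2(I,\Sigma_A)}\le\epsilon$ from \eqref{eq:main2} produces the target bound.

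The main obstacle is precisely the commutator estimate, and in particular the $\chi''\eta$ piece, which carries no a priori smallness in the $\eta$ factor. The resolution is the delicate choice of cutoff scale $C$ in the window $B^2\ll C\ll A=B^3$, which forces $C^{\tau-2}A$ to be a negative power of $B$; this flexibility in turn relies on Proposition \ref{lem:smoothest1} admitting $\tau>1/2$ arbitrarily close to $1/2$, which is precisely where the Wronskian assumption (\textbf{iii}) of Remark \ref{rem:extthm:asstab} is used in the paper.
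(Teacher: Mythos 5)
Your overall route is the paper's own: localize by a cutoff, split off the discrete spectral component using the modulation orthogonality \eqref{61} and the exponential localization of the discrete modes, run Duhamel, and apply the Kato--type bounds of Propositions \ref{lem:smooth111} and \ref{lem:smoothest1} (plus the decay of Proposition \ref{prop:KrSch}), with the commutator $2\chi'\partial_x+\chi''$ as the key term. The one genuine variation is the commutator itself: the paper keeps the cutoff at scale $B$ and gains $B^{s-1}$ through the annulus inequality $\|1_{B\le |x|\le 2B}u\|_{L^2}\lesssim B\big(\|u'\|_{L^2}+\|u\|_{\widetilde\Sigma}\big)$, while you enlarge the cutoff to $C=B^{\alpha}$ with $\alpha<3$ and $\alpha(2-\tau)>3$ and use the cruder bound $\|1_{C\le|x|\le 2C}\eta\|_{L^2}\lesssim A\|\eta\|_{\Sigma_A}$; your exponent bookkeeping does close, so this is a legitimate, slightly wasteful, alternative.

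Two points, however, need repair. First, your source $G$ omits the linear-in-$\eta$ terms that are \emph{not} spatially localized: the phase correction (the term with coefficient $\widetilde\vartheta_{\mathcal R}+\widetilde\vartheta-\dot\vartheta$ in \eqref{eq:nls4}) and, once you freeze the generator so that Propositions \ref{lem:smooth111}--\ref{lem:smoothest1} (stated for a fixed $\omega$) are applicable, the shift coming from $\mathcal{L}_{\omega(t)}-\mathcal{L}_{\omega_0}$, whose non-localized part is of size $|\omega(t)-\omega_0|=O(\sqrt\delta)$ and is not integrable in time. The paper deals with these by absorbing $\varpi$ of \eqref{eq:nls61} into the modified propagator $\mathcal{U}(t,t')$ of \eqref{eq:expv21} via $P_\pm$; in your scheme one can instead leave them as sources, because after the cutoff they cost at most $\sqrt\delta\, C^{\tau} A\,\|\eta\|_{L^2(I,\Sigma_A)}\ll \epsilon$ thanks to $\log\delta^{-1}\gg A$ in \eqref{eq:relABg} --- but this step is exactly what your Duhamel formula glosses over and must be stated. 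Second, the claimed ``small numerical prefactor extracted from the decay'' for the $R[\omega,z]$ and $D_\Theta\phi\,(\dot\Theta-\widetilde\Theta)$ sources is not a valid source of smallness: exponential localization only yields an $O(1)$ constant, and the bootstrap \eqref{eq:main2} alone gives $\|z^2\|_{L^2(I)}\le\epsilon$ and $\|\dot\Theta-\widetilde\Theta\|_{L^2(I)}\le\epsilon$, which would produce a contribution of order $\epsilon$, not $o_{B^{-1}}(1)\epsilon$. The required gain comes from elsewhere: the FGR estimate \eqref{eq:FGRint} gives $\|z^2\|_{L^2(I)}\lesssim A^{-1/2}\epsilon$, and Lemma \ref{lem:lemdscrt} (or \eqref{eq:modpar2}) combined with \eqref{eq:oertstab1} gives $\|\dot\Theta-\widetilde\Theta\|_{L^2(I)}\lesssim\sqrt\delta\,\epsilon$. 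With these substitutions, and with the tail $(1-\chi_C)\eta$ in $\widetilde\Sigma$ estimated through $\|\eta\|_{L^2(I,\Sigma_A)}$ (not through orbital stability, which is not square-integrable on $I$), your argument matches the paper's proof.
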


%

\textit{Proof of Theorem \ref{thm:asstab}.}
It is straightforward that Propositions \ref{prop:modpar}--\ref{prop:smooth11}  imply Proposition \ref{prop:continuation} and thus the fact that we can take $I=\R _+$ in all the above inequalities. This in particular implies \eqref{eq:asstab2}. By $z\in L^4(\R  _+)$ and $\dot z \in L^\infty (\R  _+)$ we have  \eqref{eq:asstab3}.  

\noindent We next focus on the limit \eqref{eq:asstab20}. We first rewrite our equation,
entering the  ansatz \eqref{eq:ansatz} in \eqref{eq:nls1},  to obtain, for $\phi = \phi [\omega  , z ]$,
 \begin{align*}
  \im \dot \eta - \dot \vartheta \eta -   \dot \vartheta  \phi + \im  \dot \omega \partial _\omega \phi + \im D_z \phi \dot z =  -\partial _x^2  \eta   -\partial _x^2  \phi  -f(\phi  + \eta) .
 \end{align*}
 Then, adding and subtracting and using  \eqref{eq:phi_pre_gali}, for $R=R[\omega, {z}]$ we obtain
\begin{align}\label{eq:nls2}&
  \im \dot \eta   -(\dot \vartheta  -\widetilde{\vartheta} ) \eta -   (\dot \vartheta  -\widetilde{\vartheta} ) {\phi}   + \im  ( \dot \omega   - \widetilde{ \omega} ) \partial _\omega \phi + \im D_z \phi (\dot z -\widetilde{z}) = -\partial _x^2 \eta +\widetilde{\vartheta} \eta       \\& \nonumber - \( f( \phi  + \eta  )  -  f( \phi    ) \) - R    \\&      {   -\partial _x^2\phi  -  f( \phi    )  +\widetilde{\vartheta} \phi -  \im \widetilde{\omega}\partial_{\omega}\phi   -\im  D_{z}\phi   \widetilde{z } +R }    ,  \nonumber
\end{align}
where the last line equals 0,  because of the definition of $R$ in   \eqref{eq:phi_pre_gali}. Equation \eqref {eq:nls2}   rewrites \small
\begin{align}\label{eq:nls3}&
   \dot \eta   +\im (\dot \vartheta  -\widetilde{\vartheta} ) \eta +e^{-\im \vartheta}    D _{\Theta} \phi [\Theta ] \(  \dot \Theta  -\widetilde{\Theta}   \)      \\& =  \im  \(\partial _x^2    +D f( \phi  [\omega  , z ]  )  \) \eta-\im  \widetilde{\vartheta}   \eta  +  \im  \( f( \phi [\omega  , z ] + \eta  )  -  f( \phi  [\omega  , z ]  ) -D f( \phi  [\omega  , z ]  ) \eta\) +\im R[\omega  , z ] . \nonumber
\end{align}\normalsize
for $\phi [\Theta]  =  e^{ \im \vartheta} \phi[\omega, {z}]$.
 Let
  \begin{equation*}
  \begin{aligned}
   \mathbf{a}(t) &:= 2 ^{-1}\|  e^{- a\< x\>}   \eta (t)  \| _{L^2(\R )} ^2
  .
  \end{aligned}
  \end{equation*}
Then by the Orbital Stability
\begin{align}\label{eq:intparts}
  \dot {\mathbf{a}}  &= -\frac{1}{2} \< \left [   e^{- 2a\< x\>} , \im  \partial _x ^2  \right ] \eta , \eta \>
  \\&
   -\<
  e^{- a\< x\>}  \( \im  \dot \vartheta   \eta +e^{-\im \vartheta}     D _{\Theta} \phi [\Theta ] \(  \dot \Theta  -\widetilde{\Theta}   \)  \) , e^{- a\< x\>} \eta    \> \nonumber  \\& + \<
  e^{- a\< x\>}  \(     \im  \( f( \phi [\omega  , z ] + \eta  )  -  f( \phi  [\omega  , z ]  )  \) +\im R[\omega  , z ] \) , e^{- a\< x\>} \eta    \> =O( \epsilon ^2 ) \text{ for all times}.\nonumber
\end{align}
   Since we already know from \eqref{eq:asstab2} that $\mathbf{a}\in L^1(\R )$, we conclude that
$ \mathbf{a}(t) \xrightarrow{t\to +\infty} 0$. Notice that the integration by parts in \eqref{eq:intparts} can be made rigorous  considering that
if $u_0\in H^2(\R )$ by the well known regularity result by Kato, see \cite{CazSemi}, we have $\eta \in C^0\( \R , H^2 (\R )\)$ and the above argument is correct and by a standard  density argument the result  can be extended to $u_0\in H^1(\R )$.

Finally, we prove \eqref{eq:asstab3}.
Since $\mathbf{Q}(\phi_{\omega})=\mathbf{q}(\omega)$ is monotonic, it suffices to show $\mathbf{Q}(\phi_{\omega})$ converges as $t\to\infty$.
Next, from the conservation of $\mathbf{Q}$, the exponential decay of $\phi[\omega,z]$, \eqref{eq:asstab20} and \eqref{eq:asstab2}, we have
\begin{align*}
\lim_{t\to \infty}\(\mathbf{Q}(u_0)-\mathbf{Q}(\phi_{\omega (t)})-\mathbf{Q}(\eta (t))\)=0.
\end{align*}
Here, notice that by orbital stability we can  take  $a>0$ such that we have the  following, which will be used below,
\begin{equation*}
  e^{-2a\<x\>}\gtrsim    \max \{ |\phi[\omega (t) ,z(t)]|, |\phi[\omega (t),z(t) ]| ^{p-1} , |\phi[\omega(t) ,z(t)]| ^{p-2}\}  \text {  for all } t\in \R .
\end{equation*}
Thus, our task is now to prove $\frac{d}{dt}\mathbf{Q}(\eta) \in L^1$, which is sufficient to show the convergence of $\mathbf{Q}(\eta)$.
Now, from \eqref{eq:nls3}, we have
\begin{align*}
\frac{d}{dt}\mathbf{Q}(\eta)&=\<\eta,\dot{\eta}\>
=-\<\eta,e^{-\im \vartheta}     D _{\Theta} \phi [\Theta ] \(  \dot \Theta  -\widetilde{\Theta}   \) \>    +\<\eta,  \im  D f( \phi  [\omega  , z ]  )   \eta \>\\&\quad+\<\eta,  \im  \( f( \phi [\omega  , z ] + \eta  )  -  f( \phi  [\omega  , z ]  ) -D f( \phi  [\omega  , z ]  ) \eta\)\>+\<\eta, \im R[\omega  , z ]\>\\&
=I+II+III+IV.
\end{align*}
By the bound of the 1st and the 3rd term of \eqref{eq:main2}, we have $I\in L^1(\R _+)$.
Next, by \eqref{eq:derf1},  
we have $|Df(\phi[\omega,z])\eta|\lesssim e^{-a\<x\>}|\eta|$ for some small $a$.
Therefore,   $II\in L^1(\R _+)$.
$IV\in L^1(\R _+)$ follows from \eqref{estR} and $|z|^2\in L^2(\R _+)$.
For $III$, since $f$ is at least $C^2$ (we only consider $p>2$) we have
In analogy to similar partitions in \cite{CMMS23} which allow to offset the lack of differentiability of $f(u)$,  we partition the line  where $x$ lives as
\begin{align*}&
   \Omega_{1,t,s}=\{x\in \R\ |\  |s\eta(t,x)|\leq 2|\phi[\omega (t) ,z(t)]|\}  \text{ and }\\&  \Omega_{2,t,s}=\R\setminus \Omega_{1,t,s}=\{x\in \R\ |\ |s\eta(t,x)|> 2|\phi[\omega (t),z(t)]|\} ,
\end{align*}
Then, we have
\begin{align*}
III(t)=\sum_{j=1,2}\Re \int_0^1 ds \int_{\Omega_{j,t,s}}\im\overline{\eta}(t)  D^2 f(\phi[\omega (t),z(t)]+s\eta)(\eta (t),\eta (t))\,dx=:III_1(t)+III_2(t).
\end{align*}
For $III_1$, by $\|\eta\|_{L^\infty}\lesssim \epsilon\leq 1$   and by \eqref{eq:derf2}, we have
\begin{align*}
|III_1(t)|\lesssim \int_0^1ds\int_{\Omega_{1,t,s}} |\eta (t)| (|\phi[\omega (t),z(t)] |+|s\eta|)^{p-2}|\eta|^2\,dx \lesssim \int_{\R} |\phi[\omega (t),z(t)]|^{p-2}|\eta (t)|^2\,dx .
\end{align*}
Thus, we see $III_1\in L^1(\R _+ )$.
For $III_2$, since $|s\eta+\phi[\omega,z]|>\frac{1}{2}|s\eta|>0$
we can expand the integrand as
\begin{align*}
D^2f(\phi[\omega (t),z(t)]+s\eta (t) )(\eta  (t),\eta (t) )&=D^2f(s\eta (t) )(\eta (t) ,\eta (t) )\\& +\int_0^1D^3f(\tau\phi[\omega (t),z(t)]+s\eta (t))(\phi[\omega,z],\eta (t) ,\eta (t)) d\tau .
\end{align*}
Now, since $D^2f(s\eta)(\eta,\eta)=s^{p-2}p(p-1)|\eta|^{p-1}\eta$, we have
\begin{align*}
\Re \int_0^1 ds \int_{\Omega_{2,t, s}}\im\overline{\eta}  D^2f(s\eta (t) )(\eta (t) ,\eta (t) ) \,dx =0,
\end{align*}
because the integrand becomes purely imaginary.
Therefore, from the bound
\begin{align*}
 |D^3f(\psi)(w_1,w_2,w_3)|\lesssim |\psi|^{p-3}|w_1||w_2||w_3|
\end{align*}
and from
 \begin{align*}
   |\tau\phi[\omega (t) ,z (t) ]+s\eta (t) |\sim |s\eta (t) | \text{ for $\tau\in [0,1]$ in } \Omega_{2,t, s}
 \end{align*}
  we have
\begin{align*}
|III_2(t)|&\lesssim \int_0^1   ds \int_{\Omega_{2,t, s}} |\eta   (t) | |s\eta (t)|^{p-3}|\phi[\omega (t) ,z(t) ]||\eta (t)|^2\,dx \\& \lesssim \int_0^1 s^{p-3} \,ds \int _{\R }|\phi[\omega (t) ,z (t)]||\eta (t)|^2\,dx.
\end{align*}
Since    $p>2$  we have $\int_0^1s^{p-3}\,ds<\infty$ and  we see $III_2\in L^1(\R _+)$.
Therefore, we have the conclusion.
\qed

\section{Proof of Proposition  \ref{prop:modpar} }\label{sec:modpar}

\begin{lemma}\label{lem:lemdscrt} We have the estimates
 \begin{align} \label{eq:discrest1}
   &|\dot \vartheta -\widetilde{\vartheta} | + |\dot \omega -\widetilde{\omega} |  \lesssim  \(|z|^2+    \|   \eta \| _{\widetilde{\Sigma}}     \)   \|   \eta \| _{\widetilde{\Sigma}}    \\&  |\dot z -\widetilde{z} | \lesssim   \(|z| +    \|   \eta \| _{\widetilde{\Sigma}}     \)  \|   \eta \| _{\widetilde{\Sigma}}   .   \label{eq:discrest2}
\end{align}

\end{lemma}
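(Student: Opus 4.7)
I will derive \eqref{eq:discrest1}--\eqref{eq:discrest2} by projecting the evolution equation \eqref{eq:nls3} onto the four modulation directions $\chi_j \in \{\im\phi[\omega,z], \partial_\omega\phi[\omega,z], \partial_{z_1}\phi[\omega,z], \partial_{z_2}\phi[\omega,z]\}$ that appear in \eqref{61}. Differentiating the orthogonality $\<\eta, \chi_j\> = 0$ in time yields $\<\dot\eta, \chi_j\> = -\<\eta, \partial_t\chi_j\>$, a quantity bounded by $(|\dot\omega|+|\dot z|)\|\eta\|_{\widetilde{\Sigma}}$. The $D_\Theta\phi[\Theta](\dot\Theta - \widetilde\Theta)$ contribution projects exactly to the matrix $\mathbf{A}(\omega,z)$ of \eqref{eq:matrixa} applied to $\dot\Theta - \widetilde\Theta$, so assembling I obtain a linear algebraic system
\begin{equation*}
  \bigl(\mathbf{A}(\omega,z) + \mathbf{E}(\eta)\bigr)\bigl(\dot\Theta - \widetilde\Theta\bigr) = \mathbf{b},
\end{equation*}
in which $\mathbf{E}(\eta)$ is an $O(\|\eta\|_{\widetilde{\Sigma}})$ perturbation absorbing the cross term $\im(\dot\vartheta - \widetilde\vartheta)\eta$ from the left-hand side of \eqref{eq:nls3}.

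The right-hand side $\mathbf{b}$ has three genuine contributions. (i) The residue $\im R[\omega,z]$ is killed by the orthogonality \eqref{R:orth}. (ii) The quadratic remainder $f(\phi+\eta) - f(\phi) - Df(\phi)\eta = O(|\eta|^2)$, via \eqref{eq:derf2} and the exponential decay of $\chi_j$, yields $O(\|\eta\|_{\widetilde{\Sigma}}^2)$. (iii) The delicate linear-in-$\eta$ term $\<\im L_0 \eta, \chi_j\>$ with $L_0 := \partial_x^2 + Df(\phi[\omega,z]) - \widetilde\vartheta$ is treated by transferring $L_0$ onto $\chi_j$ via self-adjointness and expanding $\chi_j = \chi_j^{(0)} + O(|z|)$. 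The identity $L_{-\omega}\phi_\omega = 0$ from \eqref{eq:static}, its $\omega$-derivative giving $L_{+\omega}\partial_\omega\phi_\omega = -\phi_\omega$, and the eigenvalue relations \eqref{eq:reimxi} determine $L_0\chi_j^{(0)}$ explicitly: zero or a real multiple of $\phi_\omega$ along the generalized-kernel directions $\chi_1,\chi_2$, and real multiples of $\Im\xi_2$ and $\xi_1$ along the internal-mode directions $\chi_3,\chi_4$. Since \eqref{61} forces $\<\eta, \xi_1\>$ and $\<\eta, \Im\xi_2\>$ to be $O(|z|\|\eta\|_{\widetilde{\Sigma}})$, and since a parallel use of the conservation of $\mathbf{Q}(u)$ provides the matching cancellation for the otherwise unrestricted $\<\eta, \phi_\omega\>$ in the $\im\phi$-projection, the projections onto $\chi_1,\chi_2$ become $O((|z|^2+\|\eta\|_{\widetilde{\Sigma}})\|\eta\|_{\widetilde{\Sigma}})$, while the projections onto $\chi_3,\chi_4$ are only $O((|z|+\|\eta\|_{\widetilde{\Sigma}})\|\eta\|_{\widetilde{\Sigma}})$.

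The final step inverts $\mathbf{A} + \mathbf{E}(\eta)$ by Neumann series, using that $\mathbf{A}^{-1} = \mathbf{A}_0^{-1} + O(|z|)$ with $\mathbf{A}_0^{-1}$ block-diagonal of the form $\diag(\mathbf{q}'(\omega)^{-1}J, -J)$. The off-diagonal $O(|z|)$ correction in $\mathbf{A}^{-1}$ mixes the $(\dot\vartheta,\dot\omega)$ block with the $\dot z$ block, but since $|z|\cdot(|z|+\|\eta\|_{\widetilde{\Sigma}})\|\eta\|_{\widetilde{\Sigma}} \lesssim (|z|^2+\|\eta\|_{\widetilde{\Sigma}})\|\eta\|_{\widetilde{\Sigma}}$ for $|z|$ bounded, the mixing preserves the stated bounds, and \eqref{eq:discrest1}--\eqref{eq:discrest2} follow. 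The main obstacle, and the only non-routine step, is the identification of the cancellation that lifts $|z|$ to $|z|^2$ in \eqref{eq:discrest1}: this rests on the generalized-kernel structure of $\mathcal{L}_\omega$ along the directions $\im\phi, \partial_\omega\phi$ and on an auxiliary use of mass conservation to eliminate the non-orthogonal projection of $\eta$ along $\phi_\omega$.
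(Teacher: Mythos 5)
Your overall scaffolding (project \eqref{eq:nls3} onto the modulation directions, differentiate the orthogonality relations \eqref{61}, invert the Gram matrix $\mathbf{A}+O(\|\eta\|_{\widetilde\Sigma})$) coincides with the paper's, but the heart of the lemma is the size of the source terms, and there your argument has a genuine gap. After you transfer the linearized operator onto the test directions frozen at $z=0$, the kernel identities $L_{-\omega}\phi_\omega=0$, $L_{+\omega}\partial_\omega\phi_\omega=-\phi_\omega$ and the relations \eqref{eq:reimxi} do \emph{not} make the leftover pairings harmless: they produce terms such as $\int \Re\eta\,\phi_\omega$, $\int\Re\eta\,\Im\xi_2$, $\int\Im\eta\, L_{-\omega}\xi_1$ of size $O(\|\eta\|_{\widetilde\Sigma})$ with no smallness in $z$ at all, and the $O(|z|)$ corrections coming from $Df(\phi[\omega,z])-Df(\phi_\omega)$ and from the $z$-part of the test functions (which involve $L_{+\omega}\Im\xi_2$, $L_{-\omega}\xi_1$, not determined by \eqref{eq:reimxi}) give $O(|z|\,\|\eta\|_{\widetilde\Sigma})$. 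None of these pairings is controlled by \eqref{61}: the conditions \eqref{61} kill $\int\Im\eta\,\phi$, $\int\Re\eta\,\xi_1$, $\int\Im\eta\,\Im\xi_2$, i.e.\ exactly the complementary real components of the ones that actually appear. If such terms survived, even \eqref{eq:discrest2} would fail, let alone the quadratic gain in \eqref{eq:discrest1}. They do cancel, but the cancellation is against the terms generated by the $\widetilde\Theta$-part of the ansatz ($\widetilde\vartheta\simeq\omega$, $\widetilde z\simeq\im\lambda z$ and the corrections $\widetilde\Theta_{\mathcal R}$ of \eqref{eq:estpar}); the paper exhibits it systematically by differentiating the refined-profile equation \eqref{eq:phi_pre_gali} in $\Theta$, so that the action of $\partial_x^2+Df(\phi[\omega,z])$ on the full $z$-dependent tangent vectors is rewritten through $D^2_\Theta\phi(\widetilde\Theta,\cdot)$, $D_\Theta\phi\,D_\Theta\widetilde\Theta$ and $D_\Theta R$; the only surviving source is $\<\eta, D_\Theta R\,\boldsymbol\Theta\>$, and the asymmetry between \eqref{eq:discrest1} and \eqref{eq:discrest2} comes precisely from $\|\cosh(\kappa\omega x)R\|_{L^2}+\|\cosh(\kappa\omega x)\partial_\omega R\|_{L^2}\lesssim|z|^2$ versus $\partial_{z_j}R=O(|z|)$. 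Your sketch only uses $R$ through \eqref{R:orth}, which kills the inhomogeneous $R$-term but says nothing about the linear-in-$\eta$ terms, so the $|z|^2$ in \eqref{eq:discrest1} (and the absence of $O(\|\eta\|_{\widetilde\Sigma})$ terms in \eqref{eq:discrest2}) is never actually derived.

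The proposed patch via conservation of $\mathbf{Q}$ cannot close this gap. Mass conservation gives $\<\phi[\omega,z],\eta\>=\mathbf{Q}(u_0)-\mathbf{Q}(\phi[\omega,z])-\mathbf{Q}(\eta)$, whose right-hand side contains the global quantity $\|\eta\|_{L^2(\R)}^2$ (of size $\delta$, not controlled by the local norm $\|\eta\|_{\widetilde\Sigma}$) and the drift $\mathbf{Q}(u_0)-\mathbf{q}(\omega(t))$; neither is $\lesssim(|z|^2+\|\eta\|_{\widetilde\Sigma})\|\eta\|_{\widetilde\Sigma}$, so a surviving $O(1)\cdot\<\eta,\phi_\omega\>$ term cannot be absorbed this way. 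Indeed the lemma is a pointwise-in-time algebraic consequence of the modulation system and the paper's proof uses no conservation law: the $\<\eta,\phi_\omega\>$-type pairings cancel identically inside the differentiated refined-profile identity rather than being estimated. To repair your argument you would have to carry out exactly this bookkeeping (or an equivalent one), i.e.\ show that the linear-in-$z$ and $z$-independent dangerous pairings recombine into derivatives of $R[\omega,z]$ plus terms killed by \eqref{61}, which is the paper's route.
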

\proof
Applying $\< \cdot ,\im e^{-\im \vartheta} D_{\Theta} \phi [\Theta ]   \boldsymbol{\Theta } \>  $  with $\boldsymbol{\Theta }\in \R ^4$   to \eqref{eq:nls3} and by the cancelations \eqref{R:orth},   we get   \small
\begin{align}\nonumber   &\< D _{\Theta} \phi [\Theta ] (\dot \Theta -\widetilde{\Theta} ) , \im  D _{\Theta} \phi [\Theta ]   \boldsymbol{\Theta } \>
  -      \<\eta , \im  e^{-\im \vartheta}    D _{\Theta}^2\phi [\Theta ]  ( \boldsymbol{\Theta } ,   \dot \Theta -\widetilde{\Theta}  ) \>  \\& \nonumber  \cancel {-  \<\eta ,    e^{-\im \vartheta}D _{\Theta}   \phi [\Theta ]   \boldsymbol{\Theta } \>  \widetilde{\vartheta} } -  \<\eta , \im  e^{-\im \vartheta}    D _{\Theta}^2\phi [\Theta ]  ( \boldsymbol{\Theta } ,   \widetilde{\Theta}  ) \> = \cancel {-    \widetilde{\vartheta}    \<     \eta,    e^{-\im \vartheta} D _{\Theta} \phi [\Theta ]   \boldsymbol{\Theta } \> }  \\&  + \< \eta ,   \(\partial _x^2    +D f( \phi  [\omega  , z ]  )  \)  e^{-\im \vartheta} D _{\Theta} \phi [\Theta ]   \boldsymbol{\Theta } \>   -\<     f( \phi  + \eta  )  -  f( \phi    ) -D f( \phi    ) \eta ,    e^{-\im \vartheta} D _{\Theta} \phi [\Theta ]   \boldsymbol{\Theta } \>   \label{eq:est1}.
   \end{align}
\normalsize
Setting also  $R[\Theta]  =  e^{ \im \vartheta} R[\omega, {z}]$, notice that equation  \eqref{eq:phi_pre_gali} can be written as
\begin{align*}
  \partial ^2_x\phi  [\Theta]+ f(\phi [\Theta]) +\im  D _{\Theta}   \phi [\Theta ]  \widetilde{\Theta}  =R[\Theta] .
\end{align*}
Differentiating  in $\Theta$,   we obtain
\begin{align*}
  \( \partial ^2_x   + Df(\phi [\Theta]) \) D _{\Theta} \phi [\Theta ]   \boldsymbol{\Theta } +\im  D _{\Theta}^2   \phi [\Theta ]  (\widetilde{\Theta}  , \boldsymbol{\Theta })+  \im  D _{\Theta}   \phi [\Theta ]  D _{\Theta}  \widetilde{\Theta} \boldsymbol{\Theta }   =D _{\Theta} R[\Theta] \boldsymbol{\Theta }.
\end{align*}
Since from $f( e^{ \im \vartheta} \phi)  = e^{ \im \vartheta} f(   \phi)$ we have $e^{ -\im \vartheta} D f( e^{ \im \vartheta} \phi) X=    D f(  \phi) e^{ -\im \vartheta} X$,  we get  \small
\begin{align*}&
      \<     \eta,  \(\partial _x^2    +D f( \phi )  \)  e^{-\im \vartheta} D _{\Theta} \phi [\Theta ]   \boldsymbol{\Theta } \>          =   \<  e^{ \im \vartheta}\eta ,  \(  \partial ^2_x + Df(\phi [\Theta])  \) D _{\Theta} \phi [\Theta ]   \boldsymbol{\Theta } \>   \\& = -\<  e^{ \im \vartheta}\eta , \im  D _{\Theta}^2   \phi [\Theta ]  (\widetilde{\Theta}  , \boldsymbol{\Theta })+ \cancel{ \im  D _{\Theta}   \phi [\Theta ]  D _{\Theta}  \widetilde{\Theta} \boldsymbol{\Theta }} -D _{\Theta} R[\Theta] \boldsymbol{\Theta }\>
\end{align*}
\normalsize  with cancellation following by  the modulation orthogonality \eqref{61}.
Entering this inside \eqref{eq:est1}   yields \small
\begin{align*}\nonumber   &\< D _{\Theta} \phi [\Theta ] (\dot \Theta -\widetilde{\Theta} ) , \im  D _{\Theta} \phi [\Theta ]   \boldsymbol{\Theta } \>
-      \<\eta , \im  e^{-\im \vartheta}    D _{\Theta}^2\phi [\Theta ]  ( \boldsymbol{\Theta } ,   \dot \Theta -\widetilde{\Theta}  ) \>  \\& \nonumber    \cancel{-  \<\eta , \im  e^{-\im \vartheta}    D _{\Theta}^2\phi [\Theta ]  ( \boldsymbol{\Theta } ,   \widetilde{\Theta}  ) \>} = \cancel{-\<  e^{ \im \vartheta}\eta , \im  D _{\Theta}^2   \phi [\Theta ]  (\widetilde{\Theta}  , \boldsymbol{\Theta })\> } +
\<  e^{ \im \vartheta}\eta ,
 D _{\Theta} R[\Theta] \boldsymbol{\Theta }\>   \\&    -\<     f( \phi  + \eta  )  -  f( \phi    ) -D f( \phi    ) \eta ,    e^{-\im \vartheta} D _{\Theta} \phi [\Theta ]   \boldsymbol{\Theta } \>,   \nonumber
   \end{align*}\normalsize
 where the cancellation  is obvious, since we have equal terms. So, from this  we get
\begin{align*}   &
   -  (\dot \omega -\widetilde{\omega}   )  \< \partial _\omega \phi , \phi \> + \< D _{z} \phi (\dot z -\widetilde{z})  , \phi \> + O\(   \|   \eta \| _{\widetilde{\Sigma}}       |\dot \Theta -\widetilde{\Theta} |\) \\& = \<   \eta ,
  \im  R   \>        -\<     f( \phi  + \eta  )  -  f( \phi    ) -D f( \phi    ) \eta ,    \im  \phi  \>
\end{align*}
which implies
\begin{align} \label{eq:omega}  &
   -  (\dot \omega -\widetilde{\omega}   )  \< \partial _\omega \phi , \phi \> + O\(    | z|    |\dot z -\widetilde{z}|   \)  + O\(  \|   \eta \| _{\widetilde{\Sigma}}        |\dot \Theta -\widetilde{\Theta} |\) \\& =   O\(   \|   \eta \| _{\widetilde{\Sigma}}    |z|^2\)  +  O\(  \|   \eta \| _{\widetilde{\Sigma}}    ^2\)  .\nonumber
\end{align}
Similarly, using   $
  \| \cosh \( \kappa \omega  x  \)  \partial _\omega \widehat{R}   [\omega, {z}]\| _{L^2(\R ) }\lesssim |z|^2,
 $
\begin{align} \label{eq:theta}  &
      (\dot \vartheta -\widetilde{\vartheta}   )  \< \phi , \partial _\omega  \phi \> +  O\(    | z|    |\dot z -\widetilde{z}|   \)   + O\(   \|   \eta \| _{\widetilde{\Sigma}}        |\dot \Theta -\widetilde{\Theta} |\) \\& =   O\(   \|   \eta \| _{\widetilde{\Sigma}}   |z|^2\)  +  O\(   \|   \eta \| _{\widetilde{\Sigma}}    ^2\)  .\nonumber
\end{align}
Finally we get the following which along the other formulas yields the lemma
\begin{align}  \label{eq:z}  &
    \< D_z\phi  (\dot z -\widetilde{z}) ,\im  \partial _{z_j}  \phi \>   +  O\(    | z|     |\dot \Theta -\widetilde{\Theta} |   \)    =  \<   \eta ,
    \partial _{z_j}  R   \>   +  O\(  \|   \eta \| _{\widetilde{\Sigma}}    ^2\) .
\end{align}
 \qed

\textit{Proof of Proposition  \ref{prop:modpar}.}
Lemma \ref{lem:lemdscrt} and \eqref{eq:main2} imply immediately \eqref{eq:modpar1}--\eqref{eq:modpar2}. Entering this,  \eqref{eq:estpar}  and \eqref{eq:oertstab1} in \eqref{eq:z}  we obtain \eqref{eq:modpar3}.

\qed

\section{The Fermi Golden Rule: proof of Proposition \ref{prop:FGR}}\label{sec:fgr}

The nonlinear Fermi Golden Rule (FGR) was an idea initiated by Sigal \cite{sigal} and further developed by Buslaev and Perelman \cite{BP2} and by Soffer and Weinstein \cite{SW3}. More complicated configurations were discussed in \cite{Cu1}, where the deep connection of the FGR with the Hamiltonian nature of the NLS was clarified. This comes about  because the FGR has to do with the fact that the integral of certain coefficients on appropriate spheres of the phase space associated to $\mathcal{L}_\omega$ are strictly  positive. The positivity is due to the fact that the coefficients are essentially squares, that is the product of pairs of factors which are complex conjugates to each other. It turns out that the factors are like this thanks  to the Hamiltonian structure of the NLS, which   gives relations between coefficients of the system, since they  are partial derivatives of a fixed  given function, the Hamiltonian $\mathbf{E}$, see \cite[pp. 287--288]{Cu1} for a heuristical explanation.  The rigorous argument in \cite{Cu1} needed various changes of variables, to get into canonical coordinates and normal forms. However the notion of Refined  Profile and the related modulation ansatz   provide a framework to prove the FGR in a direct way, without any need of a search  of canonical coordinates and of normal forms, see for example
  \cite{CM3}. The proof involves differentiating a Lyapunov functional which lately in the literature, especially  for space dimension 1,   is simpler than what would be analogous here to the energy $\mathbf{E}(\phi [\Theta ])$  used up until \cite{CM3,CM2109.08108}. A good reference for the simpler  Lyapunov functional is  Kowalczyk and Martel \cite{KM22} where the spectrum is rather simple while   for  a version with a more complicated spectral configuration we refer to \cite{CMS23}.
   In our current paper the spectrum is  like in Kowalczyk and Martel \cite{KM22} and Martel \cite{Martel2} and involves, using function and notation in \eqref{eq:chi},  the   functional
\begin{align}\label{eq:FGRfunctional}
\mathcal{J}_{\mathrm{FGR}}:=\< J   {\eta},\chi_A \(  {z}^{2} g ^{(\omega)}+ \overline{{z}}^{2} \overline{g}^{(\omega)} \)      \>  ,
\end{align}
with a nonzero  $g ^{(\omega)}\in L^\infty(\R , \C ^2)$  satisfying
\begin{align}
  \label{eq:eqsatg2} \mathcal{L}_{\omega}g ^{(\omega)}=2\im \lambda (p, \omega) g ^{(\omega)}.
\end{align}
That $g^{(\omega)} $ exists is known since   Krieger and Schlag \cite{KrSch}, see  Lemma 6.3, or earlier Buslaev and Perelman \cite{BP1}.
Notice that  if $g   $  solves \eqref{eq:eqsatg2}  for $\omega=1$ then $g ^{(\omega)} (x):= g\( \sqrt{\omega}x \)$ solves
\eqref{eq:eqsatg2}, where $\lambda (p, \omega) =\omega \lambda (p, 1)$.
We define the FGR constant $\gamma(\omega,p)$,   for $g ^{(\omega)} =\( g_1 ^{(\omega)} ,g_2 ^{(\omega)}   \) ^\intercal $,    by
\begin{equation}\label{eq:fgrgamma}
   \gamma ( p, \omega)  :=     \<    \phi _\omega ^{p-2}  \( p  \xi _1  ^2+  \xi _2  ^2 \) , g_1 ^{(\omega)} \> +2 \<    \phi _\omega ^{p-2}    \xi _1  \xi _2 , g_2  ^{(\omega)}\>.
\end{equation}
The non-degeneracy of this constant, which is usually assumed, but proved in this paper, is important.

\begin{lemma}\label{lem:FGRnondeg}
For $|p-3|\ll1$, we can choose $g^{(\omega)}$ so that $\gamma(\omega,p)\neq 0$.
\end{lemma}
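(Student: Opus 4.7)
The overall strategy is: reduce to $\omega=1$ by scaling, then show analyticity in $p$, and finally verify the non-vanishing by passing to the explicit integrable limit $p=3$.

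\textbf{Step 1 (Scaling reduction).} Using $\phi_\omega(x) = \omega^{1/(p-1)}\phi_1(\sqrt{\omega}x)$ together with the analogous scaling $g^{(\omega)}(x) = g^{(1)}(\sqrt{\omega}x)$ provided in the text (and the corresponding scaling for $\xi_\omega$ dictated by $\mathcal{L}_\omega$), one checks that $\gamma(p,\omega) = c(p,\omega)\,\gamma(p,1)$ with a nonzero explicit prefactor $c(p,\omega)$. Hence it suffices to establish $\gamma(p,1)\neq 0$ for $|p-3|\ll 1$.

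\textbf{Step 2 (Analytic dependence).} The Coles--Gustafson perturbative construction gives $\xi_p$ and $\lambda(p,1)$ as analytic functions of $p$ near $p=3$, with $\xi_p$ bifurcating from the threshold resonance of $\mathcal{L}_1^{(p=3)}$ at $\im\omega=\im$. The generalized eigenfunction $g^{(1)}$ solving $\mathcal{L}_1 g = 2\im\lambda(p,1)\,g$ sits at an energy $2\lambda(p,1)\approx 2$, i.e.\ strictly inside the essential spectrum and away from its threshold for $|p-3|\ll 1$; standard limiting-absorption / Jost-function constructions (used in Krieger--Schlag and Buslaev--Perelman) then produce an outgoing $g^{(1)}$ which, after fixing a suitable asymptotic normalization at $x\to+\infty$, depends analytically on $p$. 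With this normalization the map $p\mapsto \gamma(p,1)$ is real-analytic on a neighborhood of $3$, so it either vanishes on a discrete set or is identically zero.

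\textbf{Step 3 (Evaluation at the integrable point).} At $p=3$, $\omega=1$ one has $\phi_1(x)=\sqrt{2}\,\sech(x)$, and the operator $\mathcal{L}_1$ has a fully explicit spectral theory coming from the integrability of the cubic NLS (this is precisely the material of Section~\ref{sec:intsy} and formula \eqref{eqres1}). Inserting the explicit formulas for the threshold limit $(\xi_1,\xi_2)|_{p=3}$ (the bounded bifurcating resonance function) and for $g^{(1)}|_{p=3}$ (the explicit generalized eigenfunction at energy $2\im$, given in terms of the Jost functions), the integral defining $\gamma(3,1)$ reduces to an explicit $L^1$ integral in $\sech$ and $\tanh$ weighted by an oscillatory plane-wave piece. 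Because $\phi^{p-2}=\phi$ at $p=3$ provides exponential decay $e^{-|x|}$, the integrand is absolutely integrable even though $\xi$ is only bounded, and the integral evaluates to a nonzero explicit constant.

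\textbf{Step 4 (Conclusion).} Steps 2 and 3 give $\gamma(p,1)\not\equiv 0$, hence by analyticity $\gamma(p,1)\neq 0$ for all $p$ in a punctured neighborhood of $3$. Combined with Step 1 this yields the claim.

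\textbf{Main obstacle.} The delicate point is Step 3: one has to track the normalization of $\xi_p$ coherently through the bifurcation at $p=3$ (where $\xi$ degenerates from an $L^2$ eigenfunction into an $L^\infty$ threshold resonance) in order to make sense of $\gamma(3,1)$ as the limit of $\gamma(p,1)$, and one has to organize the explicit computation of the resulting $\sech$/$\tanh$ integral so as to identify a definite nonzero value (rather than a spurious cancellation). If the integral at $p=3$ did vanish by such a cancellation, the fallback is to compute the first nonvanishing Taylor coefficient of $\gamma(p,1)$ in $p-3$, which by analyticity must exist unless $\gamma\equiv 0$; that a Hamiltonian FGR coefficient of this type cannot be identically zero is a generic fact reflecting the positivity ``square'' structure recalled in Section~\ref{sec:fgr}.
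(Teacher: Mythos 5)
There is a genuine gap, and it is exactly at the point your plan relies on: Step 3 asserts that the integral defining $\gamma(3,1)$ "evaluates to a nonzero explicit constant", but in fact $\gamma(3,1)=0$. This vanishing is structural, not a spurious cancellation: at $p=3$ the cubic NLS is integrable, breathers arbitrarily close to the soliton exist, and asymptotic stability fails, so the FGR coupling must degenerate there. The paper proves this directly in Sect.~\ref{sec:FGRconst}: writing $G_{p,1}=\phi_p^{p-2}(p\xi_{p,1}^2+\xi_{p,2}^2)$, $G_{p,2}=-2\im\phi_p^{p-2}\xi_{p,1}\xi_{p,2}$ and using $g_{p,2}=\frac{\im}{2\lambda}L_{p+}g_{p,1}$, one finds $G_{3,1}+\tfrac12 L_{+3}G_{3,2}=2\phi_3$ together with $\<\phi_3,g_{3,1}\>=0$, hence $\gamma(3,1)=0$. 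Consequently your primary route collapses, and your "fallback" becomes the whole proof; but as stated it is not a proof. The assertion that a Hamiltonian FGR coefficient "cannot be identically zero" by the square/positivity structure of Sect.~\ref{sec:fgr} is not valid: that structure only says the quantity multiplying $|z|^4$ in the Lyapunov identity is (after normalization) $\gamma$ itself; whether $\gamma$ vanishes is precisely the nondegeneracy hypothesis that must be verified, and nothing in your argument excludes $\gamma(p,1)$ vanishing identically or to higher order along the branch through $p=3$.

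What the paper actually does (following Martel) is the computation your plan defers: it expands the bifurcating eigenfunction $\xi_p$ to first order in $p-3$ (Lemma~\ref{lem:expv}, with explicit correctors $R_1,R_2$ built from $\mathbf{T}=\frac{e^{-\sqrt2|\cdot|}}{2}*\phi_3^2$), expands $g_p$ around the explicit $p=3$ generalized eigenfunction, expands $\phi_p^{p-2}$, and then reduces the coefficient of $(p-3)$ to a long linear combination of elementary $\sech$/$\tanh$/oscillatory integrals $p_k,q_k,r_k,s_k,a_k,\dots$; after integration-by-parts reductions all coefficients except that of $p_1$ cancel, giving $\gamma(p,1)=\frac{\pi}{\sqrt2\cosh(\pi/2)}(p-3)+o(p-3)$, whence $\gamma(p,1)\neq0$ for $0<|p-3|\ll1$. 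Note also that the paper does not invoke analyticity in $p$ at all (it only needs the first-order expansion with $o(p-3)$ error), and the delicate normalization issue you flag (the eigenfunction degenerating into a resonance at $p=3$) is handled by fixing $\xi_p$ through the explicit transformed system \eqref{eq:eqv01}--\eqref{eq:eqw01} rather than through the symplectic normalization \eqref{eq:xinormliz1}. Without carrying out (or citing) this first-order computation, the nonvanishing claim remains unproved.
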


The proof of Lemma \ref{lem:FGRnondeg} is given in section \ref{sec:FGRconst}.
 Notice that once we have have $\gamma (p, \omega ) \neq 0$,   we can multiply $g$ by a constant to get
 \begin{equation}\label{eq:normalizfgrc}
     (p-1)    \gamma (p, \omega)=1 .
 \end{equation}
    In the next lemma we will need the following
reformulation of equation \eqref{eq:nls3}, where we identify $J=-\im $,
\small
\begin{align}\label{eq:nls4}
   \dot \eta   &=   \mathcal{L}_{\omega}\eta  -  J  ( \widetilde{\vartheta}  _{\mathcal{R}} + \widetilde{\vartheta} - \dot \vartheta) \eta  -   e^{J \vartheta} D_\Theta \phi [\Theta]  (\dot \Theta  -\widetilde{\Theta})   +J  \( D f( \phi  [\omega  , z ]  ) - D f( \phi   _{ \omega }   )\) \eta
    \\&-J \( f( \phi [\omega  , z ] + \eta  )  -  f( \phi  [\omega  , z ]  ) -D f( \phi  [\omega  , z ]  ) \eta\) -J R[\omega  , z ].  \nonumber
\end{align}\normalsize
We have  the following.
\begin{lemma}\label{lem:FGR1}
There is a  $C^1 $  in time    function $\mathcal{I}_{\mathrm{FGR}}$, which satisfies $|\mathcal{I}_{\mathrm{FGR}}|\lesssim  \sqrt{\delta}$
such that
\begin{align}
\left|\dot{\mathcal{J}}_{\mathrm{FGR}}  + \dot{\mathcal{I}}_{\mathrm{FGR}}  -|z|^4     \right |    \lesssim  A ^{-1}   \(  |z|^4   + \| \eta \| _{\Sigma _A}^{2}+ \| \eta \| ^{2}_{\widetilde{\Sigma}}  \)    . \label{eq:lem:FGR11}
\end{align}
\end{lemma}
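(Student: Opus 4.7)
The plan is to differentiate $\mathcal{J}_{\mathrm{FGR}}$ in time, substitute the evolution equation \eqref{eq:nls4} for $\dot\eta$, and show that the leading quartic $|z|^4$ contribution is produced by pairing the quadratic part of $R[\omega,z]$ against $\chi_A(z^2 g^{(\omega)}+\overline z^2\overline g^{(\omega)})$, once the spectral identity $\mathcal{L}_{\omega}g^{(\omega)}=2\im\lambda g^{(\omega)}$ cancels the linear part of the derivative against the leading term of $2z\dot z$. By the product rule,
\begin{align*}
\dot{\mathcal{J}}_{\mathrm{FGR}}
&=\<J\dot\eta,\chi_A(z^2 g^{(\omega)}+\overline z^2\overline g^{(\omega)})\>
+\<J\eta,\chi_A(2z\dot z\, g^{(\omega)}+2\overline z\dot{\overline z}\,\overline g^{(\omega)})\>\\
&\quad +\dot\omega\<J\eta,\chi_A(z^2\partial_\omega g^{(\omega)}+\overline z^2\partial_\omega\overline g^{(\omega)})\>,
\end{align*}
and the third group, thanks to orbital stability $|\dot\omega|\lesssim\sqrt\delta$, is estimated trivially by $A^{-1}(|z|^4+\|\eta\|_{\widetilde{\Sigma}}^2)$.

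Setting $h:=z^2 g^{(\omega)}+\overline z^2\overline g^{(\omega)}$ and substituting \eqref{eq:nls4} into the first term, the dominant linear contribution is $\<J\mathcal{L}_\omega\eta,\chi_A h\>$. Using $J\mathcal{L}_\omega=-\mathcal{L}_\omega^* J$, the natural move is to transfer the operator:
\begin{align*}
\<J\mathcal{L}_\omega\eta,\chi_A h\>=-\<J\eta,\chi_A\mathcal{L}_\omega h\>-\<J\eta,[\mathcal{L}_\omega,\chi_A]h\>.
\end{align*}
The spectral relation \eqref{eq:eqsatg2} gives $\mathcal{L}_\omega h=2\im\lambda(z^2 g^{(\omega)}-\overline z^2\overline g^{(\omega)})$, while $\widetilde z_0=\im\lambda z$ from Proposition \ref{prop:refpropf} allows one to rewrite $2z\dot z=2\im\lambda z^2+2z(\dot z-\widetilde z)+2z\widetilde z_{\mathcal R}$ (and conjugate). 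Hence the leading parts of $\<J\eta,\chi_A(2z\dot z g^{(\omega)}+\cdots)\>$ and of $-\<J\eta,\chi_A\mathcal{L}_\omega h\>$ cancel exactly. What remains is (i) the commutator residue $-\<J\eta,[\mathcal{L}_\omega,\chi_A]h\>$, supported on $|x|\sim A$ with a kernel of size $A^{-1}$ (since $[\partial_x^2,\chi_A]=\chi_A''+2\chi_A'\partial_x$ and $\chi_A^{(k)}=A^{-k}\chi^{(k)}(\cdot/A)$), hence bounded by $A^{-1}(|z|^4+\|\eta\|_{\Sigma_A}^2)$ after Cauchy--Schwarz and Young; and (ii) corrections involving $\dot z-\widetilde z$ and $\widetilde z_{\mathcal R}$, which by Lemma \ref{lem:lemdscrt} and \eqref{eq:estpar} are of size $(|z|+\|\eta\|_{\widetilde{\Sigma}})\|\eta\|_{\widetilde{\Sigma}}|z|$ and can be absorbed in the right--hand side of \eqref{eq:lem:FGR11}.

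The $|z|^4$ term itself comes from the $-JR[\omega,z]$ piece of \eqref{eq:nls4}: since $J(-J)=I$, it contributes $\<R[\omega,z],\chi_A h\>$. Writing $R=\widehat R-\im D_z\phi\,\widetilde z_{\mathcal R}+\widetilde\vartheta_{\mathcal R}\phi-\im\widetilde\omega_{\mathcal R}\partial_\omega\phi$ as in the proof of Proposition \ref{prop:refpropf}, the $\widetilde\Theta_{\mathcal R}$ terms are of order $|z|^2$ times an exponentially localized factor by \eqref{eq:estpar} and produce either a total time derivative absorbed into $\mathcal I_{\mathrm{FGR}}$ or a remainder controlled by $A^{-1}|z|^4$. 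The essential contribution is $\<\widehat R,\chi_A h\>$: Taylor expansion gives
\begin{align*}
\widehat R=\tfrac12 D^2 f(\phi_\omega)(\widetilde\phi,\widetilde\phi)+O(|\widetilde\phi|^3),\qquad \widetilde\phi=z\xi+\overline z\,\overline\xi,
\end{align*}
and the pairing with $\chi_A(z^2 g^{(\omega)}+\overline z^2\overline g^{(\omega)})$ produces a $|z|^4$ contribution coming from the $\overline z^2 D^2 f(\phi_\omega)(\overline\xi,\overline\xi)$ cross term against $z^2 g^{(\omega)}$, and symmetrically. The purely oscillatory $z^4,\overline z^4$ pieces are rewritten, using $\dot z\approx \im\lambda z$, as an exact time derivative plus an $A^{-1}|z|^4$ remainder, both absorbed into $\dot{\mathcal I}_{\mathrm{FGR}}$. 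A direct computation using \eqref{eq:derf2} at $u=\phi_\omega$ together with the reality/imaginarity structure \eqref{eq:reimxi1} identifies the surviving quartic coefficient with $(p-1)\gamma(p,\omega)$, and the normalization \eqref{eq:normalizfgrc} then gives exactly $|z|^4$.

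The remaining contributions to $\<J\dot\eta,\chi_A h\>$, namely the gauge term $-J(\widetilde\vartheta_{\mathcal R}+\widetilde\vartheta-\dot\vartheta)\eta$, the modulation flux $-e^{J\vartheta}D_\Theta\phi[\Theta](\dot\Theta-\widetilde\Theta)$, the potential difference $J(Df(\phi[\omega,z])-Df(\phi_\omega))\eta$ and the super--quadratic nonlinearity $-J(f(\phi+\eta)-f(\phi)-Df(\phi)\eta)$, are all cubic or higher in $(z,\eta)$ with exponential spatial localization; they are bounded by $A^{-1}(|z|^4+\|\eta\|_{\widetilde{\Sigma}}^2+\|\eta\|_{\Sigma_A}^2)$ using Cauchy--Schwarz/Young together with Proposition \ref{prop:modpar} and Lemma \ref{lem:lemdscrt}, after splitting off an exact time derivative that is incorporated into $\mathcal I_{\mathrm{FGR}}$. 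By construction $\mathcal I_{\mathrm{FGR}}$ is a sum of pairings of $\eta$ against exponentially--localized terms of size at most $|z|^2$, so by orbital stability \eqref{eq:oertstab1} one obtains $|\mathcal I_{\mathrm{FGR}}|\lesssim\sqrt\delta$. The main obstacle I expect is the algebraic identification in the previous paragraph, namely verifying that the quartic coefficient equals exactly $(p-1)\gamma(p,\omega)$ and that the purely oscillatory $z^4,\overline z^4$ contributions can be arranged as time derivatives compatible with $|\mathcal I_{\mathrm{FGR}}|\lesssim\sqrt\delta$; this is where the Hamiltonian structure of the NLS, through the symmetry of $D^2 f$ and the relation $J\mathcal{L}_\omega=-\mathcal{L}_\omega^*J$, plays the decisive role.
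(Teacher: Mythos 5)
Your proposal follows essentially the same route as the paper's proof: differentiate $\mathcal{J}_{\mathrm{FGR}}$, use $J\mathcal{L}_\omega=-\mathcal{L}_\omega^*J$ together with \eqref{eq:eqsatg2} to cancel the linear-in-$\eta$ terms against the leading part of $2z\dot z$, extract the resonant $|z|^4$ from the quadratic Taylor term of $\widehat{R}$ paired with $\chi_A\bigl(z^2 g^{(\omega)}+\overline{z}^2\overline{g}^{(\omega)}\bigr)$ and identify its coefficient with $(p-1)\gamma(p,\omega)$ normalized by \eqref{eq:normalizfgrc}, while the oscillatory pieces are converted into the time derivative of $\mathcal{I}_{\mathrm{FGR}}$ exactly as in the paper. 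The only minor imprecisions — the conjugation bookkeeping of which cross term is resonant (a convention issue that does not change $\gamma$), the omission of the $|z|^2z^{\pm 2}$ oscillatory terms alongside $z^4,\overline{z}^4$, and the claimed clean $A^{-1}$ prefactor on the cutoff--commutator term (keeping the derivatives on $\eta$ one gets a bound of the type $A^{-1/2}|z|^2\|\eta\|_{\Sigma_A}$, which still suffices for Proposition \ref{prop:FGR} after integration in time) — do not affect the validity of the argument.
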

\proof
Differentiating $\mathcal{J}_{\mathrm{FGR}}$, we have \small
\begin{align*}
\dot{\mathcal{J}}_{\mathrm{FGR}}=&
\<  J  \dot{  \eta },\chi_A \(  {z}^{2} g ^{(\omega)}+ \overline{ z }^{2} \overline{g} ^{(\omega)}\)      \>
+\< J    \eta , \chi_A \( 2z    \widetilde{z}  g^{(\omega)} +  2\overline{z}   \overline{\widetilde{z}}  \overline{g} ^{(\omega)}\)            \>   \\&+ \<  J    \eta , \chi_A \( 2z  (\dot{z}  - \widetilde{z}) g ^{(\omega)}+  2\overline{z}  (\dot{\overline{z}}  - \overline{\widetilde{z}}) \overline{g}^{(\omega)} \)            \> \\&  + \<  J    \eta , \chi_A \({z}^{2} \partial _\omega g ^{(\omega)}+ \overline{ z }^{2}\partial _\omega \overline{g} ^{(\omega)}\)      \> (\dot \omega -\widetilde{\omega}) +    \<  J    \eta , \chi_A \({z}^{2} \partial _\omega g ^{(\omega)}+ \overline{ z }^{2}\partial _\omega \overline{g} ^{(\omega)}\)      \>  \widetilde{\omega}
 \\&  =:A_1+A_2+A_3 +A_4+A_5  .\nonumber
\end{align*}
\normalsize
We consider first the last three terms, the simplest ones.
By \eqref{eq:discrest2} we have
\begin{align*}
|A_3|&\lesssim
 \| {\eta}\chi_A\|_{L^1}|z | \   |\dot{z}  - \widetilde{z}| \lesssim     A ^{\frac{3}{2}}   A^{-1} \| \sech \( \frac{2}{A}x\) {\eta}\|_{L^2}  |\dot{z}  - \widetilde{z}|  \lesssim \sqrt{\delta} A ^{\frac{3}{2}}
 \( \| \eta \| _{\Sigma _A}^{2}+  \| \eta \| _{\widetilde{\Sigma}  }^{2} \) .
\end{align*}
Since $\|  \partial _\omega g ^{(\omega)} \|_{L^\infty}\lesssim \<x \>$, using also \eqref{eq:discrest1}  we have
  \begin{align*}
|A_4|&\lesssim A
 \| {\eta}\chi_A\|_{L^1}|z | ^4   |\dot{\omega}  - \widetilde{\omega}| \lesssim     A ^{\frac{5}{2}}  |z | ^4  \| \eta \| _{\Sigma _A}   |\dot{\omega}  - \widetilde{\omega}| \\&   \lesssim  {\delta}^5  A ^{\frac{5}{2}}
 \( \| \eta \| _{\Sigma _A}^{2}+   \| \eta \| _{\widetilde{\Sigma}  }^{2}  \) .
\end{align*}
Finally, using \eqref{eq:estpar}  we have
\begin{align*}
|A_5|&\lesssim A
 \| {\eta}\chi_A\|_{L^1}|z | ^2   |  \widetilde{\omega}| \lesssim     A ^{\frac{5}{2}}  |z | ^4  \| \eta \| _{\Sigma _A}      \lesssim  {\delta}^2  A ^{\frac{5}{2}}
 \( \| \eta \| _{\Sigma _A}^{2}+  |z | ^4   \) .
\end{align*}
Turning to the main terms,
we have
\begin{align*}
   A_2 & =  \<  J \eta , 2\im \lambda \chi_A \( z^2  g ^{(\omega)}- \overline{z} ^2     \overline{g} ^{(\omega)}\)            \> + \<  J    \eta , \chi_A \( 2z     \widetilde{z}_{\mathcal{R}}  g ^{(\omega)}+  2\overline{z}  \ \overline{\widetilde{z}}_{\mathcal{R}} \ \overline{g} ^{(\omega)}\)            \> = A _{21}+ A _{22}.
\end{align*}
By \eqref{eq:estpar}   proceeding like for $A_3$,
\begin{align*}
|A _{22}|&\lesssim
 \| {\eta}\chi_A\|_{L^1}|z | ^3   \lesssim  \sqrt{ \delta }  A ^{\frac{3}{2}}    \| \eta \| _{\Sigma _A}  |z| ^2 \lesssim \sqrt{ \delta} A ^{\frac{3}{2}}
 \( \| \eta \| _{\Sigma _A}^{2}+  |z|^{4} \) .
\end{align*}
By \eqref{eq:nls4} and  by \eqref{eq:eqsatg2}
for the cancellation,  we have
  \small\begin{align*}&
A_1+A _{21}=  -\xcancel{
\<  J
	 {\eta},\chi_A   \mathcal{L}_{\omega}  (  {z}^{2} {g} ^{(\omega)} + \overline{{z}}^{2} \overline{g}^{(\omega)} )\> }  + \xcancel{A _{21}}  +  ( \widetilde{\vartheta}  _{\mathcal{R}} + \widetilde{\vartheta} - \dot \vartheta)
\< \eta ,  \chi_A   (  {z}^{2} {g} ^{(\omega)} + \overline{{z}}^{2} \overline{g}^{(\omega)} )\>
\\&
-\< J   e^{-\im  \vartheta} D_\Theta \phi [\Theta]  (\dot \Theta  -\widetilde{\Theta})  , \chi_A   (  {z}^{2} {g}^{(\omega)}  + \overline{{z}}^{2} \overline{g} ^{(\omega)}) \>
\\
	& + \< (2\chi _A '\partial _x + \chi _A '')
	 {\eta} , {z}^{2} {g} ^{(\omega)} + \overline{{z}}^{2} \overline{g}^{(\omega)} \>  -\<  \( D f( \phi  [\omega  , z ]  ) - D f( \phi   _{ \omega }   )\) \eta  ,\chi_A   (  {z}^{2} {g} ^{(\omega)} + \overline{{z}}^{2} \overline{g} ^{(\omega)}) \> \\& +\<  f( \phi [\omega  , z ] + \eta  )  -  f( \phi  [\omega  , z ]  ) -D f( \phi  [\omega  , z ]  ) \eta  ,\chi_A   (  {z}^{2} {g} ^{(\omega)} + \overline{{z}}^{2} \overline{g}^{(\omega)} ) \>
+\< R ,\chi_A(  {z}^{2} {g}^{(\omega)}  + \overline{{z}}^{2} \overline{g} ^{(\omega)})) \>
\\&
=A_{11}+A_{12}+A_{13}+A_{14} + A_{15}+A_{16}.
\end{align*}
\normalsize
It is easy to see, and a rather routine computation repeated often in the literature, also using Lemma   \ref{lem:lemdscrt}, that
\begin{align*}
   \sum _{j=1}^{5}|A _{1j}| \lesssim \sqrt{\delta} A ^{\frac{3}{2}}
  \(  |z|^4   + \| \eta \| _{\Sigma _A}^{2}+ \| \eta \| ^{2}_{\widetilde{\Sigma}}  \)  .
\end{align*}
The key term for the FGR is $ A_{16}$. We  have \small
\begin{align*}
  A_{16} &=  \< f(\phi [\omega, {z}]) -  f(\phi _\omega)          - Df(\phi _\omega )\(  z \xi +\overline{z} \overline{\xi} \)  ,\chi_A(  {z}^{2} {g}^{(\omega)}  + \overline{{z}}^{2} \overline{g} ^{(\omega)})) \> \\& -
 \<    \im  D_{z} {\phi}  [\omega, {z}]\widetilde{z } _\mathcal{R} - \widetilde{\vartheta } _\mathcal{R} {\phi}  [\omega, {z}] + \im \widetilde{\omega}_\mathcal{R}\partial_{\omega}\phi [\omega, {z}]   ,\chi_A(  {z}^{2} {g} ^{(\omega)} + \overline{{z}}^{2} \overline{g}^{(\omega)} ))\>  =: A_{161} + A_{162}.
\end{align*}\normalsize
We claim
\begin{align}\label{eq:A152}
  | A_{162}| \lesssim  A ^{-1}   |z|^4.
\end{align}
Indeed, for example we have
\begin{align*}
  &  \widetilde{\omega}_\mathcal{R}    \< \im  \partial_{\omega}\phi [\omega, {z}],  \chi_A(  {z}^{2} {g} ^{(\omega)} + \overline{{z}}^{2} \overline{g} ^{(\omega)} )\> =  \widetilde{\omega}_\mathcal{R}    \< \im  \( \cancel{\partial_{\omega}\phi  _{\omega }}    +    z\partial _\omega \xi   +  +    \overline{z}\partial _\omega \xi  \) ,     {z}^{2} {g} ^{(\omega)} + \overline{{z}}^{2} \overline{g}^{(\omega)}  \> \\& -  \widetilde{\omega}_\mathcal{R}    \< \im  \partial_{\omega}\phi [\omega, {z}], \(1 - \chi_A   \) (  {z}^{2} {g} ^{(\omega)} + \overline{{z}}^{2} \overline{g}^{(\omega)}  )\>    = O\( z^5 \) + O\(  e ^{-\kappa \omega _0 A }z^4 \)
\end{align*}
where we used the orthogonality \eqref{eq:dirsum}--\eqref{eq:dirsum1}, the bound \eqref{eq:estpar} and the exponential decay of $\xi$. The other terms forming $A_{162}$ can be bounded similarly, yielding
\eqref{eq:A152}.
  We have  \small
\begin{align*}
  &A_{161}  =  2^{-1} \<   D^2f(\phi _\omega )\(  z \xi +\overline{z} \overline{\xi} \) ^{2}  ,\chi_A(  {z}^{2} {g}^{(\omega)}  + \overline{{z}}^{2} \overline{g}^{(\omega)} )) \> \\& +\int _{[0,1]^2} t
  \< \(  D^2f(\phi _\omega  +t \ s \(  z \xi +\overline{z} \overline{\xi} \) )-  D^2f(\phi _\omega )\) \(  z \xi +\overline{z} \overline{\xi} \) ^{2}  ,\chi_A(  {z}^{2} {g}^{(\omega)}  + \overline{{z}}^{2} \overline{g} ^{(\omega)})) \> dt \ ds
  \\&=:  A_{1611} + A_{1612}.
\end{align*}\normalsize
We have, taking $\delta>0$  small enough,
\begin{align*}
  | A_{1612}|\le o_{\delta}(1) |z|^4 \le A ^{-1}   |z|^4.
\end{align*}
Next,  by    \eqref{eq:derf2}   for $\xi = (\xi _1, \xi _2) ^ \intercal$,  $X=(z\xi _1 + \overline{z}\overline{\xi} _1)+\im (z\xi _2 + \overline{z}\overline{\xi} _2)$, $u= \phi _\omega$ and identifying $\C=\R ^2$, we have
\begin{align} \label{eq:fgrhess}
  D^2f(\phi _\omega )\(  z \xi +\overline{z} \overline{\xi} \) ^{2} = (p-1)\phi _\omega ^{p-2}  \begin{pmatrix} p (z\xi _1 + \overline{z}\overline{\xi} _1)^2+(z\xi _2 + \overline{z}\overline{\xi} _2) ^2
	    \\ 2(z\xi _1 + \overline{z}\overline{\xi} _1) (z\xi _2 + \overline{z}\overline{\xi} _2)
\end{pmatrix} .
\end{align}
Now we write
\begin{align*}
  A_{1611} & =  2^{-1} \<   D^2f(\phi _\omega )\(  z \xi +\overline{z} \overline{\xi} \) ^{2}  , (  {z}^{2} {g}^{(\omega)}  + \overline{{z}}^{2} \overline{g}^{(\omega)} )) \>  \\&- 2^{-1} \<   D^2f(\phi _\omega )\(  z \xi +\overline{z} \overline{\xi} \) ^{2}  ,(1-\chi_A)(  {z}^{2} {g}^{(\omega)}  + \overline{{z}}^{2} \overline{g}^{(\omega)} )) \> =:A_{16111}+A_{16112}.
\end{align*}
Then it is elementary that  by the exponential decay  as $|x|\to \infty $ of $\xi$ and $\phi _\omega $ we have
\begin{align*}
  |A_{16112}| \lesssim  A^{-1} |z|^4.
\end{align*}
We have
\begin{align*}
   A_{16111}& =   (p-1) |z|^4  \gamma (p, \omega)    \\& +  2(p-1) |z|^2 \( \<    \phi _\omega ^{p-2}  \( p |\xi _1|^2+ |\xi _2|^2 \) , z^2 g_1 ^{(\omega)}\> + 2\<    \phi _\omega ^{p-2}    (\xi _1  \overline{\xi} _2  + \overline{\xi} _1   {\xi} _2)   ,z^2  g_2^{(\omega)} \> \) \\& +   (p-1) \( \<    \phi _\omega ^{p-2}  \( p  \overline{\xi} _1 ^2+  \overline{\xi} _2 ^2 \) , z^4 g_1^{(\omega)} \> +2 \<    \phi _\omega ^{p-2}     \overline{\xi} _1  \overline{\xi} _2     ,z^4  g_2^{(\omega)} \> \)   \\&  =:  A_{161111}+ A_{161112} +A_{161113} .
\end{align*}
We claim that  there exists a function $\mathcal{I}_{\mathrm{FGR}}$ like in the statement such that
\begin{align}\label{eq:classnormform}
 \left |    - \frac{d}{dt} \mathcal{I}_{\mathrm{FGR}}  +  \sum _{j=2}^{3}  A_{16111j}     \right |\lesssim |z|^5+ |z | ^3   |\dot \Theta - \widetilde{\Theta}|.
\end{align}
For   $n=2,4$   write
\begin{align}\label{eq:nform}
   \frac{d}{dt}z  ^n&= n   \widetilde{ {z}}  z   ^{n-1} +n  \( \dot z -  \widetilde{ {z}}   \) z   ^{n-1}   =   n\im \lambda        z ^n +    n   \widetilde{ {z}}_{\mathcal{R}}  z   ^{n-1}     +n  \( \dot z -  \widetilde{ {z}}   \) z   ^{n-1}.
\end{align}
Then for example
\begin{align*}&
  \<    \phi _\omega ^{p-2}     \overline{\xi} _1  \overline{\xi} _2     ,z^4  g_2^{(\omega)} \> = \<    \phi _\omega ^{p-2}     \overline{\xi} _1  \overline{\xi} _2     ,   \frac{1}{4\im \lambda }
    \(  \frac{d}{dt} z^4  - 4  \widetilde{ {z}}_{\mathcal{R}}  z   ^{3} -4  \( \dot z -  \widetilde{ {z}}   \) z   ^{3} \)  g_2^{(\omega)} \>
\end{align*}
and applying the Leibnitz rule for the time derivative, it is easy to obtain the claim   \eqref{eq:classnormform}.
Using also the normalization in \eqref{eq:normalizfgrc} we obtain   \eqref{eq:lem:FGR11}.

\qed

\textit{Proof of Proposition \ref{prop:FGR}.
}
Integrating \eqref{eq:lem:FGR11}  we obtain $\| z^2 \| _{L^2(I)}   ^2 \lesssim \sqrt{\delta} + A ^{-1}\epsilon ^2$ yielding \eqref{eq:FGRint}.

\qed

\section{High energies: proof of Proposition \ref{prop:1virial}}\label{sec:vir}

The power of the method used by Kowalczyk et al. \cite{KMM3}  is seen at high energies, thanks
to a striking computation that deals with great ease with the $|\eta | ^{p-1}\eta$ term in the equation \eqref{eq:nls3}, see in particular formula (3.12) in  Kowalczyk et al. \cite{KMM3}.  Notice that methods of proof of dispersion based on the Duhamel formula, run into great trouble when dealing with the $|\eta | ^{p-1}\eta$ at low $p$'s.

\noindent  Following  the framework in Kowalczyk et al. \cite{KMM3}  and
using the function $\chi$  in \eqref{eq:chi}
we  consider the   function
\begin{align}\label{def:zetaC}
\zeta_A(x):=\exp\(-\frac{|x|}{A}(1-\chi(x))\)   \text {  and   }  \varphi_A (x):=\int_0^x \zeta_A^2(y)\,dy
\end{align}
  and  the vector field
\begin{align}\label{def:vfSA} \  S_A:=   \varphi_A'+2 \varphi   _{ A}\partial_x .
\end{align}
Next, we set  \begin{align*}
\mathcal{I} := 2^{-1}\<  \im \eta  ,S_A   \eta \>  .
\end{align*}
\begin{lemma}\label{lem:1stV1}
There exists a fixed constant $C>0$ s.t.
\begin{align} &
\|   \eta  \|_{\Sigma _A}^2   \le C\left [  \dot{\mathcal{I}} +  \|    \eta \| _{\widetilde{\Sigma}} ^2  +|\dot \Theta   - \widetilde{\Theta}| ^2+     |z |^4 \right ].   \label{eq:lem:1stV1}
\end{align}
\end{lemma}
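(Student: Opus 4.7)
The plan is to carry out the Kowalczyk--Martel--Mu\~noz (KMM) virial computation. Since $\varphi_A$ is real, $S_A=\varphi_A'+2\varphi_A\partial_x$ is skew-adjoint on $L^2(\mathbb{R})$, so that after symmetrization $\dot{\mathcal{I}}=\langle i\dot\eta,S_A\eta\rangle$. I would then substitute \eqref{eq:nls3} for $\dot\eta$ and analyze the contribution of each right-hand side term separately.

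The dominant positive contribution comes from the linear flow $i\partial_x^2\eta$: two integrations by parts convert $-\langle\partial_x^2\eta,S_A\eta\rangle$ into $2\int\varphi_A'|\eta_x|^2\,dx-\tfrac{1}{2}\int\varphi_A'''|\eta|^2\,dx$. By the definition of $\varphi_A$ one has $\varphi_A'=\zeta_A^2\gtrsim \mathrm{sech}^2(2x/A)$, producing the first-order part $\|\mathrm{sech}(2x/A)\eta_x\|_{L^2}^2$ of $\|\eta\|_{\Sigma_A}^2$, while $|\varphi_A'''|\lesssim A^{-2}\zeta_A^2$ is compatible with the $A^{-1}\|\mathrm{sech}(2x/A)\eta\|_{L^2}$ component and can be absorbed for $A$ large. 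The linear potential contribution $-\langle Df(\phi[\omega,z])\eta,S_A\eta\rangle$ is bounded by $C\|\eta\|_{\widetilde\Sigma}^2$: the exponential decay of $\phi[\omega,z]$ dominates the linear growth of $\varphi_A$ at infinity, and $\kappa$ in \eqref{eq:normk} is chosen small enough so that $Df(\phi[\omega,z])$ decays faster than $\mathrm{sech}(\kappa\omega_0 x)^2$.

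The delicate step is the genuinely nonlinear term $N:=f(\phi+\eta)-f(\phi)-Df(\phi)\eta$, which for the pure-power $|u|^{p-1}u$ contains the piece $|\eta|^{p-1}\eta$ that resists any Duhamel/perturbative treatment at low $p$. Here the KMM trick (formula 3.12 of \cite{KMM3}) is crucial: one pairs $N$ against $2\varphi_A\eta_x$ and exploits the primitive identity $\mathrm{Re}(|\eta|^{p-1}\bar\eta\,\eta_x)=\tfrac{1}{p+1}\partial_x|\eta|^{p+1}$, transferring the derivative from $\eta$ to $\varphi_A$ to yield $-\tfrac{2}{p+1}\int\varphi_A'|\eta|^{p+1}\,dx$; by the $L^\infty$ smallness of $\eta$ from orbital stability \eqref{eq:oertstab1}, this is dominated by $o_\delta(1)\int\varphi_A'|\eta|^2\,dx$ and absorbs harmlessly. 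The cross terms in $N$ containing at least one factor of $\phi$ decay exponentially in $x$ and contribute to $\|\eta\|_{\widetilde\Sigma}^2$; the low regularity of $f$ for $p$ near $3$ is handled by the same partition into $\{|s\eta|\le 2|\phi|\}$ and $\{|s\eta|>2|\phi|\}$ as in the proof of \eqref{eq:asstab3} above.

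The remaining right-hand side terms are lower-order and absorbed via Young's inequality. The modulation error $-e^{-i\vartheta}D_\Theta\phi[\Theta](\dot\Theta-\widetilde\Theta)$ paired against $S_A\eta$ is bounded by $|\dot\Theta-\widetilde\Theta|\,\|\eta\|_{\Sigma_A}$ (since $D_\Theta\phi$ decays exponentially and dominates $\varphi_A$), yielding the $|\dot\Theta-\widetilde\Theta|^2$ summand; the refined profile remainder paired with $S_A\eta$ gives, by \eqref{estR} and the exponential factor $\cosh(\kappa\omega x)$ there, a bound $\lesssim |z|^2\|\eta\|_{\widetilde\Sigma}$ that Young-splits into $|z|^4+\|\eta\|_{\widetilde\Sigma}^2$; the phase-drift term $-i(\dot\vartheta-\widetilde\vartheta)\eta$ contributes $|\dot\Theta-\widetilde\Theta|\,\|\eta\|_{\Sigma_A}^2$, lower order in $\delta$ by \eqref{eq:oertstab1} and absorbed. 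Summing, rearranging and choosing $A$ sufficiently large to absorb all $o_A(1)$-errors into the main virial quantity $2\int\varphi_A'|\eta_x|^2$ yields \eqref{eq:lem:1stV1}. The principal obstacle is the nonlinear step, where the combination of low regularity of $f$ and the need to exploit the primitive structure forces the partition argument rather than a direct Taylor expansion.
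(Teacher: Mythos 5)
Your strategy is the same as the paper's (the Kowalczyk--Martel--Mu\~noz/Martel virial computation with $S_A$, the derivative-transfer trick for $|\eta|^{p-1}\eta$, and Young splitting of the modulation, phase and $R$ terms), but as written it has a concrete gap: your computation only produces the weighted $\dot H^1$ quantity $2\int\varphi_A'|\eta_x|^2$, whereas the conclusion \eqref{eq:lem:1stV1} claims the full norm $\|\eta\|_{\Sigma_A}$, whose second component $A^{-1}\|\sech(2x/A)\eta\|_{L^2}$ is \emph{not} controlled by $\|\sech(2x/A)\eta_x\|_{L^2}$ alone. The paper closes this by (i) using Martel's lower bound $-\langle\partial_x^2\eta,S_A\eta\rangle\ge 2\|(\zeta_A\eta)'\|_{L^2}^2-\tfrac{C}{A}\|\eta\|_{\widetilde\Sigma}^2$, i.e.\ coercivity for the derivative of the \emph{weighted} function $\zeta_A\eta$, and (ii) the Poincar\'e-type inequality (Lemma 6.2 of \cite{CMS23}) $\|\sech(\tfrac2Ax)\eta'\|_{L^2}^2+A^{-2}\|\sech(\tfrac2Ax)\eta\|_{L^2}^2\lesssim\|(\zeta_A\eta)'\|_{L^2}^2+A^{-1}\|\eta\|_{\widetilde\Sigma}^2$. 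Without some such step your argument proves a weaker inequality than the one stated, and the same missing ingredient also undermines your absorptions: both the error $\int|\varphi_A'''||\eta|^2$ and the pure-power piece $\int\zeta_A^2|\eta|^{p+1}$ are bounded by weighted $L^2$ quantities of $\eta$ \emph{without} the $A^{-1}$ prefactor, so they cannot simply be ``absorbed for $A$ large''; via \eqref{eq:cruche} the nonlinear term costs $A^2\|\eta\|_{L^\infty}^{p-1}\|(\zeta_A\eta)'\|_{L^2}^2$, and the absorption works only because $\delta^{p-1}A^2\ll1$ under the hierarchy \eqref{eq:relABg} (large $A$ alone goes in the wrong direction).

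Two smaller inaccuracies: the bound $|\varphi_A'''|\lesssim A^{-2}\zeta_A^2$ fails on the support of $\chi'$ in \eqref{def:zetaC}, where $\varphi_A'''$ is only $O(A^{-1})$; that localized piece is harmless but must be routed into $\|\eta\|_{\widetilde\Sigma}^2$, which is exactly the $\tfrac{C}{A}\|\eta\|_{\widetilde\Sigma}^2$ loss in Martel's estimate. Also, the phase-drift term actually vanishes identically, since $\langle \im(\dot\vartheta-\widetilde\vartheta)\eta,\im S_A\eta\rangle=(\dot\vartheta-\widetilde\vartheta)\langle\eta,S_A\eta\rangle=0$ by skew-adjointness of $S_A$; your estimate of it through $\|\eta\|_{\Sigma_A}^2$ is not needed. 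A structural difference worth noting: the paper does not split off $Df(\phi)\eta$ as a linear potential term; it treats $f(\phi+\eta)-f(\phi)$ as a whole and uses Martel's algebraic decomposition into $B_1,B_2,B_3$ (involving the primitive $F$), which packages the cross terms and the $|\eta|^{p+1}$ piece at once; your splitting is workable but then the $\{|s\eta|\le2|\phi|\}$/$\{|s\eta|>2|\phi|\}$ partition you invoke must be carried out for the virial pairing as well, with the integration by parts in the $2\varphi_A\eta_x$ pairing done before Taylor expanding, to avoid hitting the non-differentiability of $u\mapsto|u|^{p-1}u$ at the required order.
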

\proof
From \eqref{eq:nls3}
\begin{align*}
  \dot {\mathcal{I}} = -\<  \dot \eta,    \im S_A\eta \>  &= -\<  \partial _x^2    \eta ,S_A\eta  \> -  \<    R ,S_A\eta  \>  -  \<    f( \phi  + \eta  )  -  f( \phi    )     ,S_A\eta  \>    \\& + O\(  |\dot \Theta   - \widetilde{\Theta}|   \|    \eta \| _{\widetilde{\Sigma}}  \).
\end{align*}
By \eqref{estR}, for a preassigned small constant $\delta _2>0$
\begin{align*}
  | \<    R ,S_A\eta  \>|\lesssim |z|^4 +\| \eta \| _{\widetilde{\Sigma}} ^2 +\delta _2 \| \eta \| _{ {\Sigma}_A} ^2 .
\end{align*}
From  \cite{Martel1} we have
\begin{align*}
  - \<  \partial _x^2    \eta ,S_A\eta  \> \ge   2 \|   (\zeta _A \eta)' \| _{L^2}^2- \frac{C }{A}  \|    \eta \| _{\widetilde{\Sigma}} ^2.
\end{align*}
Like in  \cite{Martel1}
\begin{align*}
  &  \<    f( \phi  + \eta  )  -  f( \phi    )     ,S_A\eta  \>   = -2  \<    F( \phi  + \eta  ) - F( \phi    )    -  f( \phi    ) \eta    , \zeta _A ^2  \>  \\&   -2 \<    f( \phi  + \eta  ) - f( \phi    )    -  f'( \phi    ) \eta    ,  \phi ' \varphi  _A    \>  +\<    f( \phi  + \eta  )  -  f( \phi    )     ,  \zeta _A ^2  \eta  \> = \sum _{j=1}^{3}B_j .
\end{align*}
 We have
\begin{align*}
 & B_1= -2\int _{[0,1]^3}  t_1 \< D^2f( t_3\phi  +t_1 t_2 \eta  ) (\eta, \phi ), \eta \zeta _A ^2\> dt_ 1dt_ 2dt_ 3 +2 \< F(\eta ) , \zeta _A ^2\>   \\&   B_3= \int _{[0,1]^2}   \< D^2f( t_2\phi  +t_1   \eta  ) (\eta, \phi ), \eta \zeta _A ^2\> dt_ 1dt_ 2
 -  \< f(\eta ) , \eta \zeta _A ^2\>
\end{align*}
This yields
 \begin{align*}
   |B_1|+|B_3|& \lesssim  \|    \eta \| _{\widetilde{\Sigma}} ^2 + \delta ^{p-1}A ^2  \| \eta \| _{\Sigma_{A}}^2,
 \end{align*}
where the crucial bound is
\begin{align}\label{eq:cruche}
  \int _{\R} |\eta|^{p+1} \zeta _A ^2 dx\lesssim A^2 \| \eta \| ^{p-1}_{L^\infty (\R )}   \| (\zeta _A\eta ) ' \| ^{2}_{L^2 (\R )},
\end{align}
see   Kowalczyk et al. \cite{KMM3}, see also \cite{CM19SIMA}.
We have
 \begin{align*}
   |B_2|& \lesssim  \|    \eta \| _{\widetilde{\Sigma}} ^2 .
 \end{align*}
Notice, see Lemma 6.2 \cite{CMS23},  that the following holds, which with the above yields the proof,  \begin{align*} \| \sech  \(   \frac{2}{A}x\) \eta '\|_{L^2}^2 + A^{-2}\| \sech  \(   \frac{2}{A}x\) \eta \|_{L^2}^2 \lesssim   \|   ( \zeta _A \eta )' \| _{L^2(\R )} ^2    +A ^{-1}  \|    \eta \| _{\widetilde{\Sigma}}  ^2.\end{align*}
\qed

 \textit{Proof of Proposition \ref{prop:1virial}.}
Integrating  inequality \eqref{lem:1stV1} we obtain \eqref{eq:sec:1virial1}.

\qed

\section{Low energies: proof of Proposition \ref{prop:smooth11}}\label{sec:smooth1}

While very effective and efficient at proving dispersion at  high energies thanks to inequality \eqref{eq:cruche}, the virial inequality of Kowalczyk et al. \cite{KMM3} is somewhat inefficient at low energies, because it places some   restrictions on the system  that seem   due to the  method of proof.  In fact, as we show below, we can replace
 the virial inequality  with smoothing estimates. This  because we only need to bound  $\|  {\eta} \|_{ \widetilde{\Sigma} }  =\left \| \sech \( \kappa \omega _0 x\)   {\eta}\right \|_{L^2(\R )}$, which has the rapidly decaying weight $\sech \( \kappa \omega _0 x\) $. It is enough to bound $\left \| \sech \( \kappa \omega _0 x\)   \chi _{B} {\eta}\right \|_{L^2(\R )}$ because the difference is, choosing  $B\sim \sqrt[3]{A}\ll \sqrt{A}$, a small fraction of $\| \eta \| _{\Sigma_{A}}$. To get a bound for $\left \| \sech \( \kappa \omega _0 x\)   \chi _{B} {\eta}\right \|_{L^2(\R )}$, we   can multiply equation \eqref{eq:nls4}  by $\chi _B$. The cutoff $\chi _B$ tames the term $\chi _B |\eta | ^{p-1}\eta $, which  is   very small and easy to bound.  There is a new term due to the commutator of $\chi _{B}$  and $\mathcal{L}_{\omega}$    which requires a new smoothing estimate, see Proposition \ref{lem:smoothest1}, not  already contained in Krieger and Schlag \cite{KrSch}.  Notice that
in the case of  scalar Schr\"odinger operators in $\R$  a version of  Proposition \ref{lem:smoothest1} is implied by Deift and Trubowitz \cite{DT}  and is easy to prove, see Sect. 8 \cite{CM2109.08108}.
 For convenience, in the study of dispersive and smoothing estimates  of \eqref{eq:nls4}  it is customary to  use a different coordinate system.
We consider  the matrix $U$ defined
by
\begin{align}\label{def:U}
	U = \begin{pmatrix}
		1 &
		1  \\
		\im &
		-\im      \end{pmatrix} \, , \quad
	U^{-1}= \frac 12  \begin{pmatrix}  1 &
		-\im   \\
		1 &
		\im    \end{pmatrix}   .\end{align}
	We have  $$U^{-1} J
	U= \im \sigma _3     \text{ where }  \sigma _3:=\diag (1, -1)
	.$$
   By  elementary computations
\begin{align}\label{eq:opH}
   & U ^{-1}\mathcal{L}_{\omega} U = \im  H _{\omega}  \text{  where } H _{\omega} =   \sigma _3 \(-\partial _x^2 +\omega \) +V_\omega , \\&
    V_\omega := -
    \frac{p+1}{2}\phi _\omega  ^{p-1}
    \sigma _3      -\im  \frac{p-1}{2}\phi _\omega  ^{p-1} \sigma _2  . \nonumber
\end{align}
Notice that we have the symmetries
\begin{align}\label{eq:symmlin1}
   \sigma _1 H _{\omega} = - H _{\omega}  \sigma _1  \text{  and } \sigma _3 H _{\omega} =   H _{\omega} ^* \sigma _3.
\end{align}
Applying $U^{-1}$ to equation \eqref{eq:nls4} we get
\small
\begin{align}\label{eq:nls5}
   \partial _t(  U^{-1} \eta )    &=   \im  H _{\omega} U^{-1} \eta -  \im \sigma _3   ( \widetilde{\vartheta}  _{\mathcal{R}} + \widetilde{\vartheta} - \dot \vartheta) U^{-1}  \eta \\& \nonumber  -   e^{\im \sigma _3 \vartheta} U^{-1}D_\Theta \phi [\Theta]  (\dot \Theta  -\widetilde{\Theta})   +\im \sigma _3   U^{-1} \( D f( \phi  [\omega  , z ]  ) - D f( \phi   _{ \omega }   )\) \eta
    \\&-\im \sigma _3   U^{-1} \( f( \phi [\omega  , z ] + \eta  )  -  f( \phi  [\omega  , z ]  ) -D f( \phi  [\omega  , z ]  ) \eta\) -\im \sigma _3   U^{-1}R[\omega  , z ].  \nonumber
\end{align}\normalsize
Set     $v:= \chi _B   U^{-1}\eta $.
 We denote  $P_d(\omega)= P_d(H _{\omega}) $ the discrete spectrum projection and   $P_c(\omega)=P_c(H _{\omega}) $ the continuous spectrum projection associated to $H _{\omega}$, which are closely related to the corresponding projections for $\mathcal{L}_\omega$, indeed  $ U P_a(H _{\omega})  U ^{-1} =  P_a(\mathcal{L}_{\omega})   $ for $a=c,d$. Then we write
 \begin{align}\nonumber & v=P_c(\omega _0) v + P_d(\omega _0) v   \text{ where we have}   \\&  P_d(\omega _0) v=  P_d(\mathcal{L}_{\omega}) \eta + U ^{-1} \( P_d(\mathcal{L}_{\omega _0}) - P_d(\mathcal{L}_{\omega}) \) \eta - P_d(\omega _0) \( 1 - \chi _B\)   U ^{-1} \eta  . \label{eq:calA}
    \end{align} 
     It is easy to check that
 \begin{align}
   \nonumber  \|  P_d(\omega _0) \( 1 - \chi _B\)   U ^{-1} \eta  \| _{L^{2,s}(\R ) } \le o_{B^{-1}}(1)   \| \eta \| _{\widetilde{\Sigma}}    \text{   for any $s\in \R$} .
 \end{align}
Similarly, we have
 \begin{align}  \nonumber  \|     U ^{-1} \( P_d(\mathcal{L}_{\omega _0}) - P_d(\mathcal{L}_{\omega}) \) \eta  \| _{L^{2,s}(\R ) } \le  \epsilon    \| \eta \| _{\widetilde{\Sigma}}    \text{   for any $s\in \R$} .
 \end{align}
 Finally, using the notation in \eqref{eq:refprof} and item 3 in Notation \ref{not:notation},
  \begin{align}&
     \|     U ^{-1} P_d(\mathcal{L}_{\omega}) \eta   \| _{L^{2,s}(\R ) } \lesssim |\< \eta , \im \phi _{\omega}\> |  + |\< \eta , \partial _\omega \phi _{\omega}\> | +\sum _{j=1}^2 \left |\< \eta ,  \partial _{z_j}\widetilde{\phi } [{\omega}, z]\> \right |    \text{   for any $s\in \R$} . \label{eq:error-11}
 \end{align}
 But then, by the definition of the refined profile and by  the orthogonality in \eqref{61} for the fact that the cancelled term is null,  we have
  \begin{align*}  |\< \eta , \im \phi _{\omega}\> | \le \cancel{ |\< \eta , \im {\phi } [{\omega}, z] \> |} +   \| \cosh \(  \kappa \omega _0 x \)    \( \phi _{\omega} - {\phi } [{\omega}, z] \)  \|  _{L^2}  \| \eta \| _{\widetilde{\Sigma}}
  \lesssim        |z| \| \eta \| _{\widetilde{\Sigma}}
  \end{align*}
  with similar estimates for the other terms in the right hand side of  \eqref{eq:error-11}. Hence we conclude 
 \begin{align*}&
     \|     U ^{-1} P_d(\mathcal{L}_{\omega}) \eta   \| _{L^{2,s}(\R ) } \lesssim     |z|   \| \eta \| _{\widetilde{\Sigma}}    \text{   for any $s\in \R$} .
 \end{align*}
 So we conclude that for the term in \eqref{eq:calA} we have
\begin{align}
  \label{eq:estcalA}   \| P_d(\omega _0) v   \| _{L^{2,s}(\R ) } = o_{B^{-1}}(1)   \| \eta \| _{\widetilde{\Sigma}}    \text{   for any $s\in \R$} .
\end{align}
 Setting $w= P_c(\omega _0)v$ and
 \begin{align} \label{eq:nls61}
   \varpi := \widetilde{\vartheta}  _{\mathcal{R}} + \widetilde{\vartheta} - \dot \vartheta +\omega -\omega _0 .
\end{align}
 we have      \small
\begin{align}\label{eq:nls6}
     \partial _t w&=   \im  H _{\omega _0} w   -\im  \varpi  P_c(\omega _0)\sigma _3        w   +\im  P_c(\omega _0) \sigma _3 \( 2 \chi ' _B \partial _x + \chi _B '' \)  U^{-1}\eta   \\& \nonumber  -\im  \varpi  P_c(\omega _0)\sigma _3    P_d(\omega _0) v  + \im P_c(\omega _0)  (V _{\omega _{0}}    - V _{\omega  })v  \\& \nonumber  -  P_c(\omega _0) \chi _B e^{\im \sigma _3 \vartheta} U^{-1}D_\Theta \phi [\Theta]  (\dot \Theta  -\widetilde{\Theta})   +\im P_c(\omega _0) \sigma _3 \chi _B   U^{-1} \( D f( \phi  [\omega  , z ]  ) - D f( \phi   _{ \omega }   )\) \eta
    \\&-\im P_c(\omega _0)  \chi _B\sigma _3   U^{-1} \( f( \phi [\omega  , z ] + \eta  )  -  f( \phi  [\omega  , z ]  ) -D f( \phi  [\omega  , z ]  ) \eta\) -\im P_c(\omega _0)\sigma _3  \chi _B  U^{-1}R[\omega  , z ]  . \nonumber
\end{align}\normalsize
There is a splitting   $P_c(\omega
)= P_+(\omega )+P_-(\omega )$    with  $P_\pm (\omega )$ the spectral
projections in $\R_\pm \cap \sigma _e(H_\omega )$.  Specifically  we have the following  for which we refer to \cite{CV}.
\begin{lemma}\label{lem:spcproj1} The following are bounded operators  $P_\pm (\omega )$   in $L^2\( \R \) $
\begin{equation}\label{eq:proj+-}
   \begin{aligned}
  &  P_+(\omega )u =\lim _{M \to  + \infty}\lim _{
\epsilon \to  0 ^+ }
 \frac 1{2\pi \im }
 \int  _{[\omega ,M] } \left [ R_{H_\omega}(E +\im \epsilon )-
R_{H_\omega}(E -\im \epsilon )
 \right ] udE \\&
P_-(\omega )u =\lim _{M \to  + \infty}\lim _{\epsilon \to  0 ^+
}
 \frac 1{2\pi \im }
 \int _{[-M,-\omega ] } \left [ R_{H_\omega}(E +\im \epsilon )-
R_{H_\omega}(E -i\epsilon ) \right ] udE
\end{aligned}
\end{equation}
and for any $M>0$ and
$N>0$ and for $C=C (N,M,\omega  )$ 
we have
\begin{equation}\|   (P_+(\omega
 )-P_-(\omega  )-P_c(H_\omega   )\sigma _3) f\|  _{L ^{2,M}\( \R \) }\le
C  \|      f\|  _{L ^{2,-N}\( \R \) }. \label{eq:spcproj11}
\end{equation}

\end{lemma}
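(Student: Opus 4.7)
The plan is to follow the standard approach for non-selfadjoint matrix Schr\"odinger operators of the form $H_\omega = \sigma_3(-\partial_x^2+\omega)+V_\omega$ with real spectrum, via the limiting absorption principle (LAP) and the distorted Fourier transform, as developed in \cite{CV}. The first step is to establish the LAP for $H_\omega$. The Krein-type symmetry $\sigma_3 H_\omega = H_\omega^*\sigma_3$ in \eqref{eq:symmlin1} makes $\sigma_3 H_\omega$ formally selfadjoint, which is enough structure to run an Agmon--Kato--Kuroda argument once we know $\pm\omega$ are neither embedded eigenvalues nor resonances: this is exactly the spectral input from Coles--Gustafson for $0<|p-3|\ll 1$ invoked earlier. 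The output is that the boundary values $R_{H_\omega}(E\pm\im 0):L^{2,\tau}\to L^{2,-\tau}$ exist for $\tau>1/2$ and $E$ in the essential spectrum, with local uniform bounds and $|E|^{-1/2}$ decay as $|E|\to\infty$. This guarantees integrability of the integrands in \eqref{eq:proj+-} in the weighted $L^2$ sense and lets us take the $\epsilon\to 0^+$ limit and then the $M\to\infty$ limit.

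Next I would prove $L^2$-boundedness of $P_\pm(\omega)$ by constructing a distorted Fourier transform $\mathcal{F}_\omega$ adapted to $H_\omega$, built from the Jost solutions of $H_\omega$. Under our non-resonance hypothesis and the non-vanishing Wronskian condition (condition (\textbf{iii}) of Remark \ref{rem:extthm:asstab}), the generalized eigenfunctions satisfy uniform pointwise bounds and the Plancherel identity $\|\mathcal{F}_\omega u\|_{L^2}\sim\|P_c u\|_{L^2}$ holds. Under $\mathcal{F}_\omega$ the projections $P_\pm(\omega)$ correspond to multiplication by the characteristic functions of $[\omega,\infty)$ and $(-\infty,-\omega]$ respectively, whence the $L^2$ boundedness is immediate.

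For the smoothing estimate \eqref{eq:spcproj11}, I would first observe that in the unperturbed case $V_\omega=0$ one has $H_0 = \sigma_3(-\partial_x^2+\omega)$, whose spectral projections are $P_+^{(0)}=\mathrm{diag}(1,0)$ and $P_-^{(0)}=\mathrm{diag}(0,1)$, so that $P_+^{(0)}-P_-^{(0)}=\sigma_3=\sigma_3 P_c(H_0)$ exactly. The correction for the perturbed operator comes from inserting the resolvent identity $R_{H_\omega}(z)=R_{H_0}(z)-R_{H_\omega}(z)V_\omega R_{H_0}(z)$ into Stone's formula. Each correction picks up a factor of $V_\omega$, which has exponential decay because $\phi_\omega^{p-1}$ does, and the boundary values of $R_{H_0}$ and $R_{H_\omega}$ are controlled by the LAP in weighted spaces. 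The discrete spectral contributions to $P_c(H_\omega)\sigma_3$ are finite-rank operators with kernels built from the exponentially decaying $\phi_\omega,\partial_\omega\phi_\omega,\xi_\omega$ and their duals, hence are trivially smoothing. Putting everything together, the difference $P_+(\omega)-P_-(\omega)-P_c(H_\omega)\sigma_3$ has a kernel with two factors of $V_\omega$ (or one factor of $V_\omega$ against the discrete pieces), which provides exponential decay in both $x$ and $x'$, giving the $L^{2,-N}\to L^{2,M}$ mapping for any $M,N$.

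The main obstacle I expect is the uniform control at the thresholds $\pm\omega$ and at high energies simultaneously, needed to justify the spectral integrals and the exchange of the $M\to\infty$ and $\epsilon\to 0^+$ limits. The threshold part is exactly where the non-resonance condition enters crucially; without it, the boundary values of the resolvent would blow up like $|E\mp\omega|^{-1/2}$ and the weighted estimates would fail. The high-energy part is handled by the standard Neumann series in weighted spaces for $R_{H_\omega}=(1+R_{H_0}V_\omega)^{-1}R_{H_0}$, relying on the $|E|^{-1/2}$ decay of $R_{H_0}(E\pm\im 0)$ against the exponentially decaying $V_\omega$.
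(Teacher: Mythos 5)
First, a remark on the comparison: the paper does not prove this lemma at all — it is quoted, with the statement attributed to Buslaev--Perelman \cite{BP2} (see also \cite{BS}) and the precise version taken from \cite{CV}. Your first part (limiting absorption principle plus the generalized eigenfunction expansion to get existence of the limits in \eqref{eq:proj+-} and $L^2$--boundedness of $P_\pm(\omega)$) is in the spirit of those cited proofs and is essentially sound as a sketch, modulo the usual caveat that for the non-selfadjoint $H_\omega$ the ``Plancherel'' identity is only a two-sided boundedness statement coming from completeness of the generalized eigenfunctions.

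For \eqref{eq:spcproj11}, however, your mechanism has a genuine gap. A kernel of the form $\int\!\!\int R_{H_0}^\pm(x,w)V_\omega(w)R_{H_\omega}^\pm(w,w')V_\omega(w')R_{H_0}^\pm(w',y)\,dw\,dw'$ has its \emph{intermediate} variables localized by $V_\omega$, but the external variables $x,y$ enter through resolvent kernels whose open-channel entries (e.g. $e^{\pm\im k|x-w|}/(2\im k)$ at $E=\omega+k^2$) are merely bounded and oscillatory, not decaying; so ``two factors of $V_\omega$'' do not produce any decay in $x$ and $y$, let alone the $L^{2,-N}\to L^{2,M}$ gain for arbitrary $M,N$, and an LAP bound with a fixed weight $\tau>1/2$ cannot supply it either. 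Worse, a term-by-term Born expansion is itself problematic here: the comparison operator $\sigma_3(-\partial_x^2+\omega)$ has resonances at its thresholds $\pm\omega$ (the constant functions in the open channel), so the individual Born terms carry $1/k$ threshold singularities; for instance the energy integral of the jump of $R_{H_0}VR_{H_0}$ in the open-open entry behaves like $\int_0^\infty \sin\bigl(k(|x-w|+|w-y|)\bigr)k^{-1}dk$, which tends to a nonzero constant, not zero, as $|x|+|y|\to\infty$. The non-resonance hypothesis you invoke concerns the full $H_\omega$, and the good behaviour only emerges after resummation. The actual proofs in \cite{BP2,BS,CV} instead work with the exact resolvent/generalized eigenfunctions: after the algebraic cancellation $P_+-P_--P_c\sigma_3=P_+(I-\sigma_3)-P_-(I+\sigma_3)+P_d\sigma_3$-type reductions, the remaining kernels involve either the exponentially decaying closed-channel components of the Jost solutions (giving decay in one variable) or oscillatory factors $e^{\im kx}$ with amplitudes smooth (indeed analytic in a strip, since $V_\omega$ decays exponentially) in $k$, and the decay in the external variables is extracted from the $k$--integration by repeated integration by parts or contour deformation, with the non-resonance of $H_\omega$ at $\pm\omega$ controlling the threshold contribution. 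Your sketch omits this energy-integration step, which is where the arbitrary polynomial (in fact exponential) weights in \eqref{eq:spcproj11} actually come from.
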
 \qed

 A version of Lemma  \ref{lem:spcproj1}  was introduced by Buslaev and Perelman \cite{BP2}, see also \cite{BS}.


 We rewrite  \eqref{eq:nls6} as
  \small
\begin{align}
     \partial _t w&=   \im  H _{\omega _0} w   -\im  \varpi  (P_+(\omega _0
 )-P_-(\omega _0 ))  w   +\im \sigma _3 \( 2 \chi ' _B \partial _x + \chi _B '' \)U^{-1}\eta \nonumber \\& \label{eq:nls71}  -\im     \varpi  P_c(\omega _0)\sigma _3      P_d(\omega _0) v          -\im  \varpi  \left [  P_c(\omega _0)\sigma _3 -P_+(\omega _0
 )+P_-(\omega _0 )\right ]  w      \\& \label{eq:nls72} -\im  \varpi  P_c(\omega _0)\sigma _3 P_d(\omega _0) v + \im P_c(\omega _0) (V _{\omega _{0}}    - V _{\omega  })v   \\& \label{eq:nls73} -  P_c(\omega _0) \chi _B e^{\im \sigma _3 \vartheta} U^{-1}D_\Theta \phi [\Theta]  (\dot \Theta  -\widetilde{\Theta})   +\im P_c(\omega _0) \sigma _3 \chi _B   U^{-1} \( D f( \phi  [\omega  , z ]  ) - D f( \phi   _{ \omega }   )\) \eta
    \\&-\im P_c(\omega _0)  \chi _B\sigma _3   U^{-1} \( f( \phi [\omega  , z ] + \eta  )  -  f( \phi  [\omega  , z ]  ) -D f( \phi  [\omega  , z ]  ) \eta\) -\im P_c(\omega _0)\sigma _3  \chi _B  U^{-1}R[\omega  , z ]  .  \label{eq:nls74}
\end{align}\normalsize
We have
\begin{align}\label{eq:expv1}
   w &= \mathcal{U}(t,0) w(0)   +\im       \int _0 ^t   \mathcal{U}(t,t')    P_c(\omega _0) \sigma _3 \( 2 \chi ' _B \partial _x + \chi _B '' \)  U^{-1}\eta  dt' \\& + \int _0 ^t   \mathcal{U}(t,t')  \(  \text{lines \eqref{eq:nls71}-- \eqref{eq:nls74}}  \)   dt',\label{eq:expv2}
\end{align}
where $\mathcal{U}$ is the generator associated with the linear evolution $ \partial _t w=   \im  H _{\omega _0} w   -\im  \varpi  (P_+(\omega _0
)-P_-(\omega _0 ))  w$.
For $\alpha _{tt'}= \int _{t'}^{t}\varpi  ( \upsilon )  d \upsilon$,  $P_\pm = P_\pm (\omega _0 )$  and $P_c= P_c(\omega _0 )$,  expanding the exponential we get
\begin{align}\label{eq:expv21}
  \mathcal{U}(t,t'):= e ^{\im (t-t') H _{\omega _0}} P_c  e^{\im \alpha _{tt'}(P_+ -  P_- )} =P_c e ^{\im (t-t') H _{\omega _0}} \(   \cos \( \alpha _{tt'}   \) P_c +\im   \sin \( \alpha _{tt'}  \)   (P_+ -  P_- ) \)  .
\end{align}

\begin{lemma}\label{lem:estw}
 For $S > 3/2$  we have
 \begin{equation}\label{eq:estw1}
   \| w \|  _{L^2(I , L ^{2,-S}(\R ) )}\le   o_{B^{-1}}(1)   \epsilon.
 \end{equation}

\end{lemma}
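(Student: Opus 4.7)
\textit{Proof proposal.} The plan is to bound $w$ via the Duhamel expansion \eqref{eq:expv1}--\eqref{eq:expv2} and apply term by term the smoothing estimates in Propositions \ref{lem:smoothest1} and \ref{lem:smooth111} to the modified propagator $\mathcal{U}(t,t')$. Using the factorization \eqref{eq:expv21} and the fact that $\sin(\alpha_{tt'})$, $\cos(\alpha_{tt'})$ are uniformly bounded, $\mathcal{U}$ reduces to $e^{\im(t-t')H_{\omega_0}}P_c$ composed with a uniformly bounded factor whose norm on weighted $L^2$ spaces is controlled by Lemma \ref{lem:spcproj1}, so the smoothing estimates transfer from the free propagator to $\mathcal{U}$. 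The homogeneous contribution $\mathcal{U}(t,0)w(0)$ is then estimated by Kato smoothing via $\|w(0)\|_{L^2}\lesssim\delta$, which is already far smaller than $o_{B^{-1}}(1)\epsilon$ because of the hierarchy \eqref{eq:relABg}.

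The most important inhomogeneous source is the commutator $(2\chi'_B\partial_x + \chi_B'')U^{-1}\eta$. The supports of $\chi'_B$ and $\chi''_B$ lie in $\{|x|\in[B,2B]\}$, where $\sech(2x/A)\asymp 1$ because $A\sim B^3$. Combined with $|\chi'_B|\lesssim B^{-1}$ and $|\chi''_B|\lesssim B^{-2}$, I expect, for some $\tau\in(1/2,1)$,
\begin{align*}
\|(2\chi'_B\partial_x + \chi_B'')U^{-1}\eta\|_{L^2(I,L^{2,\tau})} \lesssim B^{\tau-1}\|\eta\|_{L^2(I,\Sigma_A)} \leq o_{B^{-1}}(1)\,\epsilon,
\end{align*}
and Proposition \ref{lem:smoothest1} then yields the corresponding bound on $w$.

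The remaining sources \eqref{eq:nls71}--\eqref{eq:nls74} are handled by routine case analysis. The $\varpi P_d(\omega_0)v$ piece uses \eqref{eq:estcalA} and $\varpi\in L^\infty$; the $[P_c\sigma_3-(P_+-P_-)]w$ piece uses the arbitrary decay provided by Lemma \ref{lem:spcproj1}; the $(V_{\omega_0}-V_\omega)v$ term gains a factor $|\omega-\omega_0|\lesssim\sqrt{\delta}$; the $e^{\im\sigma_3\vartheta}U^{-1}D_\Theta\phi[\Theta](\dot\Theta-\widetilde\Theta)$ term uses the exponential decay of $D_\Theta\phi$ together with $\|\dot\Theta-\widetilde\Theta\|_{L^2(I)}\le\epsilon$; the $(Df(\phi[\omega,z])-Df(\phi_\omega))\eta$ term has spatial factor of order $|z|\phi^{p-2}$, hence an exponential cutoff; and the $R[\omega,z]$ term uses \eqref{estR} in combination with Proposition \ref{prop:FGR}.

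The genuine obstacle is the nonlinear source $\chi_B U^{-1}\bigl(f(\phi+\eta)-f(\phi)-Df(\phi)\eta\bigr)$. Since $f$ is only $C^2$ when $p$ is slightly below $3$, I would mimic the splitting used for the term $III$ at the end of Section \ref{sec:mod}: on $\{|\eta|\le 2|\phi|\}$ the Taylor estimate \eqref{eq:derf2} bounds the integrand pointwise by $\phi^{p-2}|\eta|^2$, whose inherited exponential decay absorbs $\langle x\rangle^\tau$ and closes via $\sqrt{\delta}\,\|\eta\|_{L^2(I,\widetilde\Sigma)}\le\sqrt{\delta}\,\epsilon$; on the complementary region $\{|\eta|>2|\phi|\}$ the integrand is controlled by $|\eta|^p$, and using $\|\eta\|_{L^\infty}\lesssim\sqrt{\delta}$ together with the $\chi_B$--cutoff and the argument via $\int_0^1 s^{p-3}ds<\infty$ for $p>2$ yields an overall bound by $\delta^{(p-1)/2}$ times $\|\eta\|_{L^2(I,\widetilde\Sigma)}$, again negligible. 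The careful bookkeeping of this splitting—ensuring that every piece lands in $L^2(I,L^{2,\tau})$ with a constant that is $o_{B^{-1}}(1)\epsilon$—is where the proof will require the most attention.
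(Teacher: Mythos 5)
Your overall route is the paper's: expand $w$ by Duhamel, treat the commutator $(2\chi'_B\partial_x+\chi''_B)U^{-1}\eta$ as the principal source, and close with the smoothing estimates and the bootstrap norms. But two steps fail as written. First, the claim that the smoothing estimates ``transfer'' to $\mathcal{U}(t,t')$ because $\cos(\alpha_{tt'})$ and $\sin(\alpha_{tt'})$ are bounded is not justified: Proposition \ref{lem:smoothest1} and Lemma \ref{lem:smoothest} concern the pure retarded convolution $\int_0^t e^{\im(t-t')H_{\omega_0}}P_c\,g(t')\,dt'$ (they are proved by Plancherel in $t$), and inserting between propagator and source a factor that depends jointly on $(t,t')$ destroys that structure; boundedness of the factor does not rescue it. The paper's device is the addition formula $\alpha_{tt'}=\alpha_{t0}-\alpha_{t'0}$, which separates variables so that the $t$-dependent scalars $\cos\alpha_{t0},\sin\alpha_{t0}$ come out of the integral (harmless in $L^2_tL^{2,-S}$) and the $t'$-dependent scalars multiply the source (harmless in $L^2_{t'}L^{2,s}$); moreover $P_+-P_-$ is not covered by the smoothing lemma and is written as $P_c\sigma_3+(P_+-P_--P_c\sigma_3)$, the remainder being handled with \eqref{eq:spcproj11} together with the $\< t-t'\>^{-3/2}$ bound \eqref{eq:KrSch} (the term $I_1$), and the $P_c$ part with Lemma \ref{lem:smoothest} (the term $I_2$). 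You need to supply this separation, or an equivalent argument, also for the homogeneous term $\mathcal{U}(t,0)w(0)$.

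Second, your commutator bound $\|(2\chi'_B\partial_x+\chi''_B)U^{-1}\eta\|_{L^{2,\tau}}\lesssim B^{\tau-1}\|\eta\|_{\Sigma_A}$ is false for the $\chi''_B\eta$ piece: on the annulus $B\le|x|\le 2B$ the norm $\Sigma_A$ controls the function itself only through $A^{-1}\|\sech(2x/A)\eta\|_{L^2}$, so this route gives $B^{\tau-2}\cdot A\,\|\eta\|_{\Sigma_A}\sim B^{\tau+1}\|\eta\|_{\Sigma_A}$, which is large. The paper instead uses $\|1_{B\le|x|\le 2B}u\|_{L^2}\lesssim B\bigl(\|u'\|_{L^2}+\|u\|_{\widetilde\Sigma}\bigr)$, which yields $B^{\tau-1}\bigl(\|(\sech(2x/A)\eta)'\|_{L^2}+\|\eta\|_{\widetilde\Sigma}\bigr)$ and closes using both bootstrap norms in \eqref{eq:main2}. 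A similar weight bookkeeping slip occurs in your treatment of the nonlinear source on $\{|\eta|>2|\phi|\}$: with the cutoff $\chi_B$ you cannot pass to $\|\eta\|_{\widetilde\Sigma}$, since the weight $\sech(\kappa\omega_0 x)$ is exponentially small at $|x|\sim B$; you should accept a polynomial-in-$B$ loss through $\Sigma_A$ or plain $L^2$ and absorb it with $\delta^{(p-1)/2}$ via the hierarchy \eqref{eq:relABg}, which is what the paper's reference to elementary estimates as in \cite{CM2109.08108} amounts to. With these repairs your argument coincides with the paper's proof.
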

\proof   Let us take $S>3/2$.      Taking the expansion in \eqref{eq:expv21}, we have
\begin{align*}
   \| \mathcal{U}(t,0) w(0) \| _{L^2(I , L ^{2,-S}(\R ) )} & \le   \|    e ^{\im t H _{\omega _0}}   w(0) \| _{L^2(\R, L ^{2,-S}(\R ))}
   \\&  + \|    e ^{\im t H _{\omega _0}}   (P_+-P_-) w(0) \| _{L^2(\R, L ^{2,-S}(\R ))} .\end{align*}
By  the analogue for $H_\omega $ of \eqref{eq:smooth111}, see \eqref{eq:smooth111b}, which is what we actually prove in Sect. \ref{sec:smoothing} below,    we have
 \begin{align}
   \|    e ^{\im t H _{\omega _0}}   w(0) \| _{L^2(\R, L ^{2,-S}(\R ))} \lesssim   \|
   w(0)  \| _{L^2 (\R )}.\nonumber
 \end{align}
 Similarly
  \begin{align*}
     \|    e ^{\im t H _{\omega _0}}   (P_+-P_-) w(0) \| _{L^2(\R, L ^{2,-S}(\R ))} &      \lesssim   \|
   (P_+-P_-) w(0)  \| _{L^2 (\R )} \lesssim   \|
     w(0)  \| _{L^2 (\R )}.
  \end{align*}
  The second term  in  \eqref{eq:expv1} is more delicate for more than one reason. First of all,  by \eqref{eq:expv21} and $\alpha _{tt'} =\alpha _{t0}-\alpha _{ t'0}$,
   we write  the integrand as  the sum
\begin{align*}&   e ^{\im (t-t') H _{\omega _0}}P_c  \left [    \cos \( \alpha _{t0} \)  \cos \( \alpha _{ t'0}\)  +  \sin \( \alpha _{t0} \)  \sin \( \alpha _{ t'0}\)   \right . \\& \left . +  \im  \( \cos \( \alpha _{t0} \)  \sin \( \alpha _{ t'0}\)  - \sin \( \alpha _{t0} \) \cos \( \alpha _{ t'0}\) \) (P_+-P_-) \right ] \sigma _3 \( 2 \chi ' _B \partial _x + \chi _B '' \)  U^{-1}\eta
\end{align*}
 This yields various terms, that can be bounded all in the same way, so that we bound only the last of them.
  We proceed like in \cite{CM2109.08108}.  We have
\begin{align*}&
   \|  \sin \( \alpha _{t0} \)    \int _0 ^t   e ^{\im (t-t') H _{\omega _0}} P_c (P_+-P_-)  \sigma _3  \( 2 \chi ' _B \partial _x + \chi _B '' \)  \cos \( \alpha _{ t'0}\) U^{-1}\eta  dt'  \| _{L^2(I , L ^{2,-S}(\R ) )} \\&
   \lesssim   \|      \int _0 ^t    e ^{\im (t-t') H _{\omega _0}}   (P_+-P_- - P_c \sigma _3)  \sigma _3  \( 2 \chi ' _B \partial _x + \chi _B '' \)  \cos \( \alpha _{ t'0}\) U^{-1}\eta  dt'  \| _{L^2(I , L ^{2,-S}(\R ) )}
   \\& +   \|       \int _0 ^t    e ^{\im (t-t') H _{\omega _0}} P_c    \( 2 \chi ' _B \partial _x + \chi _B '' \)  \cos \( \alpha _{ t'0}\) U^{-1}\eta  dt'  \| _{L^2(I , L ^{2,-S}(\R ) )}
   =:I_1+I_2 .
\end{align*}
For $I_1$ can use the estimate  \eqref{eq:KrSch} derived by Krieger and Schlag \cite{KrSch} and write \small
\begin{align}    \nonumber
   I_1 &\lesssim  \|  \int _0 ^t   \|  e ^{\im (t-t') H _{\omega _0}} P_c \| _{L^{2,S}\to L^{2,-S}}   \|  P_+-P_- - P_c \sigma _3\| _{L^{2 }\to L^{2, S}}  \| \( 2 \chi ' _B \partial _x + \chi _B '' \)   U^{-1}\eta  \| _{L^2(\R )} dt'    \| _{L^2(I   )}
   \\& \lesssim   \|  \int _0 ^t   \< t-t'\> ^{-3/2}  \| \( 2 \chi ' _B \partial _x + \chi _B '' \)   U^{-1}\eta  \| _{L^2(\R )} dt'    \| _{L^2(I   )}
   \lesssim      \|     \( 2 \chi ' _B \partial _x + \chi _B '' \)   \eta    \| _{L^2(I , L ^{2 }(\R ) )}  , \label{eq:bboundI1}
\end{align}\normalsize
 where we postpone completion of the analysis.  The term  $I_2$ is more delicate and is bounded by    Lemma \ref{lem:smoothest}, expressed for $H_\omega$ instead of $\mathcal{L}_\omega$, which is the same. So  for any $s>1/2$
\begin{align}  I_2& \lesssim
           \|     \( 2 \chi ' _B \partial _x + \chi _B '' \)   \eta    \| _{L^2(I , L ^{2,s}(\R ) )} . \label{eq:bboundI2}
\end{align}
 Now we have
\begin{align*}
  &\|     \( 2 \chi ' _B \partial _x + \chi _B '' \)  \eta     \| _{L^2(I , L ^{2,s}(\R ) )}  \lesssim B ^{s-1} \| \sech \( \frac{2}{A}x\) \eta ' \| _{L^2(I ,L^2(\R ))} \\& + B ^{s-2} \| 1_{B\le |x|\le 2 B} \sech \( \frac{2}{A} x\)  \eta   \| _{L^2(I ,L^2(\R ))} \lesssim B ^{s-1} \|   \eta   \| _{L^2(I ,\Sigma _A )} \\& +  B ^{s-1} \( \left \|   \( \sech \( \frac{2}{A}x\) \eta  \) '    \right \| _{L^2(I ,L^2(\R ))}   +  \|    \eta \| _{L^2(I , \widetilde{\Sigma})}
  \) =o_{B^{-1}} (1)   \epsilon,
\end{align*}
 where we used   $s\in (1/2,1) $   and, see   \cite{CM2109.08108},
\begin{align*}
  \|  1_{B\le |x|\le 2 B} u \| _{L^2(\R )}\lesssim \sqrt{\|  1_{B\le |x|\le 2 B} |x| \| _{L^1(\R )}} \( \left \|  u '    \right \| _{ L^2(\R  )}   +  \|    u \| _{  \widetilde{\Sigma} }\)  .
\end{align*}
This implies the following, yielding good  bounds for the terms in the right hand side of line \eqref{eq:expv1},
\begin{align*}
   I_1 + I_2 = o_{B^{-1}}(1)   \epsilon.
\end{align*} The terms in line \eqref{eq:expv2} can be similarly bounded using in particular the  analogue for $H_\omega $  of {Proposition} \ref{prop:KrSch}. The estimates are elementary and similar to \cite[Sect. 8]{CM2109.08108}.

\qed

\textit{Proof of Proposition \ref{prop:smooth11}.}
From \eqref{eq:calA},  \eqref{eq:estcalA} and \eqref{eq:estw1} we have $\| v\| _{L^2(I , \widetilde{\Sigma})}\lesssim o_{B^{-1}}(1) \epsilon$.  Next, from  $v = \chi _B   U^{-1}\eta $,  and thanks to the relation  $A\sim B^3$  set in \eqref{eq:relABg}    we have
\begin{align} \nonumber
   \| \eta \| _{\widetilde{\Sigma}}&\lesssim  \| v \| _{\widetilde{\Sigma}} + \| (1- \chi _B   )\eta \| _{\widetilde{\Sigma}}\lesssim \| v \| _{\widetilde{\Sigma}} + A ^{-2} \|  \sech \( \frac{2}{A}x  \) \eta \| _{L^2}\\& \lesssim \| v \| _{\widetilde{\Sigma}} + A ^{-1} \|   \eta \| _{\Sigma _A}\label{eq:relAB0}
\end{align}
So by \eqref{eq:sec:1virial1}  and \eqref{eq:FGRint}  we get  the following, which implies \eqref{eq:sec:smooth11},
\begin{align} \nonumber
   \| \eta \| _{L^2(I,\widetilde{\Sigma})}&\lesssim    \| v \| _{L^2(I,\widetilde{\Sigma})} + A ^{-1} \|   \eta \| _{L^2(I, \Sigma _A)}\\& \lesssim  o_{B^{-1}} (1) \epsilon +  A ^{-1} \( \| z^2\|_{L^2(I)} + \| \eta \| _{L^2(I, \widetilde{\Sigma}   )}\) \lesssim   o_{B^{-1}}(1) \epsilon +  A ^{-1}\| \eta \| _{L^2(I, \widetilde{\Sigma}   )}    .  \nonumber
\end{align}

 \qed

\section{The resolvent of the linearized operator}\label{sec:lin}

We will focus on the operator $H _{\omega}$ in  \eqref{eq:opH}. For the discussion it is enough to consider $\omega =1$, since the operators  for other values of $\omega $ are obtained by a scaling transformation.
We will set $H= H_1$ with  vector potential $V=V_1$.  We will set
\begin{align*}
  e_1:= \begin{pmatrix}
 1   \\ 0
\end{pmatrix} \text{   and } e_2:= \begin{pmatrix}
 0   \\ 1
\end{pmatrix} .
\end{align*}
Given two (column) functions $f,g:\R \to \C ^2$,  using the row column product, we consider the Wronskian
\begin{align*}
  W[f ,g](x) := f'(x) ^ \intercal g(x)- f (x) ^ \intercal g '(x).
\end{align*}

It is well known that $H  $ has Jost functions, discussed in \cite{BP1,KrSch}, which we subsume here.

\begin{proposition}
  \label{prop:jost1}For any $k\in \R$  there exists solutions $f_j (x,k)$ for $j=1,2,3,4$ of
  \begin{align}\label{eq:jost11}
     H u=(1+k^2) u
  \end{align}
 with for a fixed $C>0$ and for $x\ge 0$
  \begin{align}\label{eq:jost12} &
    f_j (x,k) =e^{\im (-1)^{j+1}xk}m_j (x,k) \text{   with }  \left | m_j (x,k) - e_1   \right | \le C  \< k \> ^{-1}  e ^{-(p-1) x}    \text{ for $j=1,2$},  \\& \label{eq:jost13}
    f_3 (x,k) =e^{ -\sqrt{2+k^2} x}m_3 (x,k) \text{   with }  \left | m_3 (x,k) - e_2   \right | \le C   \< k \> ^{-1}  e ^{-(p-1) x}   .
  \end{align}
  There is a solution $\widetilde{f}_4 (x,k)$ of \eqref{eq:jost11} with
  \begin{align}\label{eq:jost14t} &
    \widetilde{f}_4 (x,k) =e^{  \sqrt{2+k^2} x}\widetilde{m}_4 (x,k) \text{   with }  \left | \widetilde{m}_4 (x,k) - e_2   \right | \le C   \< k \> ^{-1}  e ^{-(p-1) x}   .
  \end{align}
  We have
 \begin{align}
   \label{eq:wronsk1} W[f_1,f_2]= 2\im k \text{ ,  } W[f_3,\widetilde{f}_4]= 2\sqrt{2+k^2} \text{ ,  } W[f_j,f_3]=0 \text{ for $j=1,2$.}
 \end{align}
  There is a unique choice of $c_1,c_2\in \C$ such that for
  \begin{align}\label{eq:jost14} {f}_4 (x,k) :=-c_1  f_1 (x,k)  -c_2 f_2 (x,k)+ \widetilde{f}_4 \Longrightarrow     W[f_j,f_4]=0 \text{ for $j=1,2$.}
     \end{align}

  \end{proposition}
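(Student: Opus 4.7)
The plan is to construct each Jost function by a Volterra integral equation at $+\infty$, exploiting that the potential $V(x)$ decays like $\phi^{p-1}(x)\sim e^{-(p-1)|x|}$, and then to derive the Wronskian identities by showing that $W[\cdot,\cdot]$ is constant along solution pairs of \eqref{eq:jost11} and evaluating the limits as $x\to +\infty$. Writing out $Hu=(1+k^2)u$ componentwise gives
\begin{align*}
u_1''+k^2 u_1 &= -\tfrac{p+1}{2}\phi^{p-1}u_1-\tfrac{p-1}{2}\phi^{p-1}u_2,\\
u_2''-(2+k^2)u_2 &= -\tfrac{p+1}{2}\phi^{p-1}u_2-\tfrac{p-1}{2}\phi^{p-1}u_1,
\end{align*}
so the admissible free behaviors at $+\infty$ are $e^{\pm ikx}e_1$ and $e^{\pm\sqrt{2+k^2}x}e_2$, matching the claimed leading orders of $f_1,f_2,f_3,\widetilde f_4$.

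To build $f_1$ I factor $f_1=e^{ikx}m_1$, producing for $m_1$ a Volterra equation integrated from $x$ to $+\infty$, with diagonal Green's function $\sin(k(x-y))/k$ on the first slot and the bounded kernel of $\partial_x^2-(2+k^2)$ on the second. The contraction closes because every integrand carries $\phi^{p-1}(y)\sim e^{-(p-1)y}$; iterating yields $|m_1-e_1|\le C\langle k\rangle^{-1}e^{-(p-1)x}$, the $\langle k\rangle^{-1}$ factor arising from the standard bound $|\sin(k(x-y))/k|\le \min(|x-y|,|k|^{-1})$. The construction of $f_2$ is obtained by complex conjugation of the kernel, and $f_3$ is built analogously starting from $e^{-\sqrt{2+k^2}x}e_2$. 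For the growing mode $\widetilde f_4$ I write $\widetilde f_4=e^{\sqrt{2+k^2}x}\widetilde m_4$ and set up a Volterra equation integrated from $+\infty$ chosen to project onto the unstable direction; after the exponential rescaling the source becomes integrable near $+\infty$ and the fixed-point argument produces a unique $\widetilde m_4\to e_2$ satisfying \eqref{eq:jost14t}.

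For the Wronskian claim I compute $W'[f,g]=f''^\intercal g-f^\intercal g''$ and substitute from the componentwise equations above: the diagonal contributions involving $-\tfrac{p+1}{2}\phi^{p-1}$ cancel by symmetry, while the off-diagonal contributions produce $\tfrac{p-1}{2}\phi^{p-1}(f_1 g_2-f_2 g_1)$ from the first component and $-\tfrac{p-1}{2}\phi^{p-1}(f_1 g_2-f_2 g_1)$ from the second, which cancel exactly. Hence $W$ is constant on pairs of solutions, and the identities in \eqref{eq:wronsk1} follow by passing to $x\to+\infty$ using the leading asymptotics: $W[f_1,f_2]$ and $W[f_3,\widetilde f_4]$ give the claimed values, while $W[f_j,f_3]$ vanishes to leading order because $e_1^\intercal e_2=0$ and the correction terms decay at $+\infty$. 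Finally, to determine $(c_1,c_2)$ in \eqref{eq:jost14}, I impose $W[f_j,f_4]=0$ for $j=1,2$; by antisymmetry and $W[f_1,f_2]=2\im k$, the two conditions reduce to $c_2=W[f_1,\widetilde f_4]/(2\im k)$ and $c_1=-W[f_2,\widetilde f_4]/(2\im k)$, uniquely solvable for $k\neq 0$.

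The main obstacle I anticipate is the construction of $\widetilde f_4$: since the asymptotic grows exponentially, the natural Volterra equation is only conditionally convergent, and the desired solution must be extracted by carefully projecting onto the unstable direction so as to avoid contamination by the slower-growing modes $f_1,f_2,f_3$ (each of which can be added without disturbing the leading exponential behavior). This is standard in the Jost-function theory for $2\times 2$ matrix Schr\"odinger operators as developed by Buslaev--Perelman and Krieger--Schlag for the linearized NLS, and one proceeds along those templates; the exponential decay $V\sim e^{-(p-1)x}$ is more than enough for all the contraction estimates to close, and the case $k=0$, where $f_1$ and $f_2$ degenerate, is handled separately as in Chang et al.\ and the references cited in the paper.
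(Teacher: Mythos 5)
First, note that the paper itself does not prove Proposition \ref{prop:jost1}: it simply defers to Krieger--Schlag \cite{KrSch} (and Buslaev--Perelman), so the comparison is with the standard construction there, which is indeed the route you sketch (integral equations at $+\infty$ plus constancy of the Wronskian, the latter being correct here because the effective second--order system $u''=M(x)u$ has symmetric $M$, equivalently $\sigma_3 H=H^*\sigma_3$). Your treatment of the Wronskian identities, of the evaluation at $x\to+\infty$, and of the linear system determining $(c_1,c_2)$ (with the correct caveat that uniqueness degenerates at $k=0$, where $W[f_1,f_2]=0$) is fine in outline.

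The genuine gap is in the construction of the oscillatory solutions $f_1,f_2$, precisely in the hyperbolic slot. As written, your scheme does not close: if you integrate ``the bounded kernel of $\partial_x^2-(2+k^2)$'' only over $y\in(x,\infty)$, the resulting function does not solve the inhomogeneous equation (a one--sided integral of $e^{-\sqrt{2+k^2}\,|x-y|}/(2\sqrt{2+k^2})$ produces extra terms), while the kernel that does preserve the Volterra structure from $+\infty$, namely $\sinh\bigl(\sqrt{2+k^2}\,(y-x)\bigr)/\sqrt{2+k^2}$, grows like $e^{\sqrt{2+k^2}(y-x)}$ and the iteration converges only when $\sqrt{2+k^2}<p-1$, i.e.\ for $|k|$ small; for $|k|$ large the fixed decay $e^{-(p-1)y}$ of $V$ is not enough, so ``the contraction closes because every integrand carries $\phi^{p-1}$'' is false uniformly in $k$. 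The standard repair (and what the cited sources do) is to use the two--sided bounded kernel, accepting a non--Volterra fixed--point equation, and to recover invertibility either from the $\langle k\rangle^{-1}$ smallness of the kernels for large $|k|$, or by first solving on a half--line $[x_0,\infty)$ with $x_0$ large (so that the tail $\int_{x_0}^\infty|V|$ is small and one has a genuine contraction) and then continuing down to $x=0$ by the ODE, which is harmless since the bounds are only claimed for $x\ge 0$. This is not a pedantic point in the present paper: the whole--line version of exactly this non--Volterra equation is the content of Lemma \ref{lem:lowenjost1}, where solvability near $k=0$ is a Fredholm issue settled only through the non--degeneracy $W[f_3(\cdot,0),g_3(\cdot,0)]\neq 0$ (condition (iii) of Remark \ref{rem:extthm:asstab}). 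You flag a related subtlety for the growing mode $\widetilde f_4$, but the same care is needed already for $f_1,f_2$; with the half--line--plus--continuation (or large-$k$ smallness) device supplied, the rest of your argument goes through as in \cite{KrSch}.
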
 \qed

For the proof see \cite{KrSch}. Since the potential $V(x)$ is even, writing
  \begin{align}\label{eq:jost21} &
    g_j (x,k)  :=
    f_j (-x,k)
  \end{align}
yields analogous Jost functions with prescribed behavior as $x\to -\infty$.   Notice that since the potential $V(x)$ is exponentially decreasing, all the above Jost functions extend in the region $|\Im k |\le \delta _p$  for a small $\delta _p>0$.

\begin{remark}\label{rem:plwcub} For $p=3$ it is possible to write explicit formulas for the above Jost functions,  for $ f_j (x,k)$ for $j=1,2$ see \cite{kaup}. We write the formulas in  Sect. \ref{sec:intsy}.

\end{remark}

 We consider the matrices
\begin{align*}&  F_1 (x,k) = (f_1 (x,k), f_3 (x,k) ) \quad , \quad  F_2 (x,k) = (f_2 (x,k), f_4 (x,k) )  \, , \\&
  G_1 (x,k) = (g_2 (x,k), g_4 (x,k) ) \quad , \quad  G_2 (x,k) = (g_1 (x,k), g_3 (x,k) )  .
\end{align*}
For   matrix valued functions     $F= (\phi _1, \phi _2) $   and   $G= (\psi _1, \psi _2) $ we set
\begin{align*}
 W[F,G]:= F'(x)^ \intercal G(x) - F  (x)^ \intercal G '(x).
\end{align*}
By direct computation, see   \cite{KrSch},
\begin{align*}
 W[F,G] =    \begin{pmatrix} W[\phi_1,\psi _1]  &   W[\phi_1,\psi _2] \\ W[\phi_2,\psi _1]  &   W[\phi_2,\psi _2] \end{pmatrix}
 .
\end{align*}
Still quoting from \cite{KrSch}, we have the following.
\begin{lemma}\label{lem:trrefl} For any $k\in \R \backslash \{  0 \} $ there matrices $A(k) $ and $B(k)$, smooth in $k$ and  s.t.
   \begin{align}
     \label{eq:trrefl1}  F_1(x,k) =  G_1(x,k)  A(k) + G_2(x,k) B(k),
   \end{align}
   with $ A(-k)= \overline{A}(k)$, $ B(-k)= \overline{B}(k)$ and
   \begin{align} &  \label{eq:trrefl2}   G_2(x,k) =  F_2(x,k)  A(k) + F_1(x,k) B(k) ,    \\&    W[F_1(x,k),G_2(x,k)]  = A(k)^ \intercal   \diag (2\im k, -2\sqrt{2+k^2}) \label{eq:trrefl3} \\&  W[F_1(x,k),G_1(x,k)]  = - B(k)^ \intercal   \diag (2\im k, -2\sqrt{2+k^2}) .\label{eq:trrefl4}
   \end{align}
   Furthermore
   \begin{equation}\label{eq:complcon}
     \begin{aligned}
        & G_1(x,k)= F_2(-x,k)  \quad , \quad G_2(x,k)= F_1(-x,k) \\&  \overline{F_1( x,k)}= F_1( x,-k) \quad , \quad  \overline{F_2( x,k)}= F_2( x,-k).
     \end{aligned}
   \end{equation}

\end{lemma}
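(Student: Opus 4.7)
The strategy is to derive the two reflection identities of \eqref{eq:complcon} directly from the definitions, then use them to reduce the other four statements to a single linear‐algebra fact, namely that for $k\in\R\setminus\{0\}$ the four Jost columns $g_1,g_2,g_3,g_4$ form a basis of the $4$-dimensional solution space of $Hu=(1+k^2)u$.

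First I will prove $G_2(x,k)=F_1(-x,k)$ and $G_1(x,k)=F_2(-x,k)$. By definition $g_j(x,k)=f_j(-x,k)$, so $F_1(-x,k)=(f_1(-x,k),f_3(-x,k))=(g_1(x,k),g_3(x,k))=G_2(x,k)$, and the other identity is identical. For the complex conjugation relations I will use that the matrix $V_\omega$ in \eqref{eq:opH} has real entries (note $-\im\sigma_2$ is a real matrix), so $H_\omega$ is real; hence $\overline{f_j(x,k)}$ solves $Hu=(1+k^2)u$, and a direct inspection of \eqref{eq:jost12}--\eqref{eq:jost14t} shows that it has the same asymptotics as $f_j(x,-k)$. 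Uniqueness of solutions with prescribed behaviour at $+\infty$ (for $j=1,2$ from the oscillatory asymptotics, for $j=3$ from being the unique decaying solution in the $e_2$-direction up to scale) then forces $\overline{F_1(x,k)}=F_1(x,-k)$, and similarly for $F_2$; for $f_4$ one uses that the constants $c_1,c_2$ in \eqref{eq:jost14} are uniquely determined by vanishing Wronskian conditions, so complex conjugation sends them to $c_j(-k)$.

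Next I will establish the decomposition \eqref{eq:trrefl1}. For $k\ne 0$ the four vector functions $g_1,g_2,g_3,g_4$ have mutually distinct asymptotic behaviours as $x\to-\infty$ ($e^{\mp \im kx}e_1$ oscillatory, and $e^{\pm\sqrt{2+k^2}x}e_2$ exponential), so they are linearly independent and hence span the $4$-dimensional space of solutions of $Hu=(1+k^2)u$. Each column of $F_1$ lies in that span, producing unique matrices $A(k),B(k)$ with $F_1=G_1A+G_2B$. Smooth dependence of the Jost functions on $k$ transfers to smooth dependence of $A,B$. The identity \eqref{eq:trrefl2} then follows immediately by substituting $x\mapsto-x$ in \eqref{eq:trrefl1} and using $F_1(-x,k)=G_2(x,k)$, $G_j(-x,k)=F_{3-j}(x,k)$. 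The relations $A(-k)=\overline{A(k)}$, $B(-k)=\overline{B(k)}$ come from conjugating \eqref{eq:trrefl1} and invoking the symmetries of step 1 together with uniqueness of the decomposition.

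Finally for the Wronskian formulas \eqref{eq:trrefl3}--\eqref{eq:trrefl4} I will use bilinearity with constant coefficients: for constant matrices $A$ and matrix solutions $U,V$ of the system one has $W[UA,V]=A^\intercal W[U,V]$ and $W[U,VA]=W[U,V]A$. Applying this to $F_1=G_1A+G_2B$ gives
\begin{equation*}
W[F_1,G_2]=A^\intercal W[G_1,G_2]+B^\intercal W[G_2,G_2],\qquad W[F_1,G_1]=A^\intercal W[G_1,G_1]+B^\intercal W[G_2,G_1].
\end{equation*}
The matrix Wronskians of $G_i$ with $G_j$ are entrywise scalar Wronskians of the $g_m$, and these are computed from $W[g_i,g_j](x)=-W[f_i,f_j](-x)$, which since each $W[f_i,f_j]$ is constant in $x$ reduces to the values in \eqref{eq:wronsk1} and \eqref{eq:jost14}: $W[G_2,G_2]=0$, $W[G_1,G_1]=0$, and $W[G_1,G_2]$ is the diagonal matrix $\diag(2\im k,-2\sqrt{2+k^2})$ with the diagonal entries $-W[f_2,f_1]=2\im k$ and $-W[f_4,f_3]=-W[\widetilde{f}_4,f_3]=-2\sqrt{2+k^2}$. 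Both desired identities drop out.

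\textbf{Main obstacle.} The one step requiring real care is the bookkeeping of signs and transposes in the matrix Wronskian computation, because $W[f_i,f_3]=0$ for $i=1,2$ makes the off‐diagonal entries of $W[G_1,G_2]$ vanish, and the particular sign $-2\sqrt{2+k^2}$ (rather than $+2\sqrt{2+k^2}$) must be tracked through the identity $W[g_i,g_j](x)=-W[f_i,f_j]$ and the replacement $f_4=-c_1f_1-c_2f_2+\widetilde{f}_4$, which contributes nothing to $W[f_3,f_4]$ precisely because $W[f_3,f_j]=0$ for $j=1,2$. Everything else is a direct translation of definitions.
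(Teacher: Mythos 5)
The paper offers no proof of this lemma (it is quoted verbatim from Krieger--Schlag \cite{KrSch}), so your proposal can only be measured against the standard argument, which it essentially reproduces: expand the columns of $F_1$ in the basis $g_1,\dots,g_4$, obtain \eqref{eq:trrefl2} by the substitution $x\mapsto -x$ together with $G_j(x,k)=F_{3-j}(-x,k)$, and get \eqref{eq:trrefl3}--\eqref{eq:trrefl4} from bilinearity of the matrix Wronskian plus the table of scalar Wronskians. Two points need repair. First, your justification of the conjugation symmetries in \eqref{eq:complcon} for $j=1,2$, ``uniqueness of solutions with prescribed oscillatory behaviour at $+\infty$'', is not a valid principle as stated: adding any multiple of the decaying solution $f_3$ to $f_1$ or $f_2$ does not change the leading oscillatory asymptotics, so the asymptotics alone do not single out $f_1,f_2$ (nor $\widetilde f_4$). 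What is true is that $\overline{f_1(\cdot,k)}-f_1(\cdot,-k)$ tends to $0$ at $+\infty$, hence is a multiple of $f_3$; killing that multiple by comparing decay rates via the error bound in \eqref{eq:jost12} works only when $p-1>\sqrt{2+k^2}$, i.e.\ not for all $k$. The clean and standard fix is to read the symmetry off the Volterra integral equations defining the Jost solutions, using that the potential in \eqref{eq:opH} is a real matrix; your treatment of $f_4$ through uniqueness of $c_1,c_2$ is then fine.

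Second, the sign bookkeeping you yourself flag as the main obstacle is inconsistent with the paper's stated normalization. With the convention $W[f,g]=f'^\intercal g-f^\intercal g'$ and your (correct) identity $W[g_i,g_j]=-W[f_i,f_j]$, taking \eqref{eq:wronsk1} at face value (namely $W[f_3,\widetilde f_4]=2\sqrt{2+k^2}$) yields $-W[\widetilde f_4,f_3]=+2\sqrt{2+k^2}$, i.e.\ $W[G_1,G_2]=\diag(2\im k,\,+2\sqrt{2+k^2})$, the opposite of what you assert and of \eqref{eq:trrefl3}. The value you actually need, $W[f_3,\widetilde f_4]=-2\sqrt{2+k^2}$, is the one forced by the asymptotics \eqref{eq:jost13}--\eqref{eq:jost14t} when the constant Wronskian is evaluated in the limit $x\to+\infty$; so your final formula agrees with the lemma, and the discrepancy reflects a sign slip (or a difference of convention with \cite{KrSch}) in \eqref{eq:wronsk1} as printed, but as written your chain $-W[f_4,f_3]=-W[\widetilde f_4,f_3]=-2\sqrt{2+k^2}$ contradicts the formula you cite, so you should recompute $W[f_3,\widetilde f_4]$ from the asymptotics and state explicitly which normalization you use. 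With these two repairs the proof is complete: the linear independence of the $g_j$ for $k\neq 0$, the identities $W[UA,V]=A^\intercal W[U,V]$ and $W[U,VA]=W[U,V]A$, the vanishing of $W[G_1,G_1]$ and $W[G_2,G_2]$, the constancy in $x$ of the scalar Wronskians (which holds because $\sigma_3 V$ is symmetric), and the smoothness of $A,B$ obtained by solving for them via Wronskians are all correct.
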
  \qed

We set
\begin{align}\label{eq:defd}
  D(k):= W[ F_1(x,k) ,  G_2(x,k)] =  \begin{pmatrix} W[ f_1(x,k),g_1(x,k)]  &   W[ f_1(x,k),g_3(x,k)] \\ W[ f_3(x,k),g_1(x,k)]  &   W[f_3(x,k),g_3(x,k)] \end{pmatrix} .
\end{align}
The following holds, see   \cite{KrSch}.

\begin{lemma} \label{lem:eig0}For $k\neq 0$ the following are equivalent: \begin{itemize}
                                                                            \item  $\det A(k)=0$;
                                                                            \item $E=1+k^2$ is an eigenvalue of $H$;
                                                                            \item  $\det D(k)=0$.
                                                                       \end{itemize}      $E=1$ is neither a resonance nor an eigenvalue   of $H$ if and only if $\det D(0)\neq 0$; furthermore we have $D(-k)=\overline{D(k)}$   and $D( k) ^ \intercal = {D(k)}$.

\end{lemma}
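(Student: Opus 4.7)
My plan is to dispatch the lemma's four assertions---the symmetries, the algebraic equivalence $\det A(k)=0\iff\det D(k)=0$, the spectral equivalence to eigenvalues, and the threshold statement---essentially independently. The symmetries come first because they are bookkeeping. The relation $D(-k)=\overline{D(k)}$ is immediate from $\overline{f_j(x,k)}=f_j(x,-k)$ and the analogous identity for $g_j$ in \eqref{eq:complcon}, applied termwise to each Wronskian. For the transpose identity $D(k)^{\intercal}=D(k)$, I would use that the vector Wronskian $W[u,v]=u'^{\intercal}v-u^{\intercal}v'$ is $x$-independent on any two solutions (after moving $\sigma_3$ across via \eqref{eq:symmlin1}, the second-order coefficient matrix is symmetric) and evaluate at $x=0$. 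Since the potential is even, $g_j(x,k)=f_j(-x,k)$ gives $g_j(0)=f_j(0)$ and $g_j'(0)=-f_j'(0)$, whence
\begin{align*}
W[f_i,g_j](0)=f_i'(0)^{\intercal}f_j(0)+f_i(0)^{\intercal}f_j'(0),
\end{align*}
manifestly symmetric in $(i,j)$.

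The equivalence $\det A(k)=0\iff\det D(k)=0$ for $k\ne 0$ is an algebraic consequence of \eqref{eq:trrefl3}: taking determinants yields $\det D(k)=-4ik\sqrt{2+k^2}\,\det A(k)$, and the prefactor is nonzero off $k=0$. This also pins down the $k=0$ case as the exceptional threshold.

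For the eigenvalue equivalence, the forward direction is the standard Jost-basis argument. An $L^{2}$ eigenfunction $u$ at $E=1+k^{2}$ must decay at both ends; among the solutions $f_1,f_2,f_3,f_4$ at $+\infty$ only $f_3$ decays (the first two are bounded oscillatory, $f_4$ grows), so $u=c_{+}f_3$; symmetrically $u=c_{-}g_3$, so $f_3\parallel g_3$. Substituting into the second column of the connection formula \eqref{eq:trrefl1} forces $A_{12}=A_{22}=B_{12}=0$, whence $\det A(k)=0$. The converse is the heart: given $v\ne 0$ with $Av=0$, \eqref{eq:trrefl1} produces the globally bounded solution
\begin{align*}
u:=F_{1}v=G_{2}Bv,
\end{align*}
in which the growing Jost modes $f_4,g_4$ are absent at both ends; the task is to upgrade "bounded" to "$L^{2}$", i.e.\ to force the oscillatory coefficients $v_1$ at $+\infty$ and $(Bv)_1$ at $-\infty$ to vanish. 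I would leverage the transpose symmetry $D^{\intercal}=D$ just proven, the Wronskian orthogonalities $W[f_j,f_3]=0$ from \eqref{eq:wronsk1}, and the companion identity \eqref{eq:trrefl4} for $B$ to derive a flux-type relation between these two coefficients, and then use the $\sigma_3$-pairing \eqref{eq:symmlin1} to conclude both are zero.

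The threshold statement is handled by the same machinery at $k=0$. The would-be oscillatory Jost modes $f_1,f_2$ collapse to polynomially bounded solutions while $f_3,g_3$ still decay exponentially, so a null vector of $A(0)$ produces either a genuine $L^{2}$ eigenfunction (if the polynomially bounded components vanish) or a merely bounded distinguished solution (a threshold resonance). Because the prefactor $-4ik\sqrt{2+k^{2}}$ degenerates at $k=0$, the link to $\det A$ is lost, and one works directly with $\det D(0)$ via \eqref{eq:defd} and the symmetry $D^{\intercal}=D$: either alternative forces $\det D(0)=0$, and conversely $\det D(0)\ne 0$ excludes both. The hardest step throughout is the converse of the eigenvalue equivalence---ruling out bounded non-$L^{2}$ solutions when $\det A(k)=0$. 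One expects this to fail for a generic non-self-adjoint operator, and it is the specific $\sigma_3$-pseudo-self-adjoint structure of $H_{\omega}$, inherited from the Hamiltonian origin of the NLS linearization, that saves the argument.
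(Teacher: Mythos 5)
You should first note that the paper itself does not prove Lemma \ref{lem:eig0}: it is quoted from Krieger and Schlag \cite{KrSch}, so your proposal has to be judged on its own terms rather than against an argument in the text. The parts you actually carry out are correct: $D(-k)=\overline{D(k)}$ follows from \eqref{eq:complcon}; $D(k)^\intercal=D(k)$ follows from constancy of the Wronskian (the coefficient matrix of $\sigma_3(H-E)$, namely $1+\sigma_3V-E\sigma_3$, is symmetric) together with evaluation at $x=0$, where $g_j(0,k)=f_j(0,k)$ and $g_j'(0,k)=-f_j'(0,k)$; taking determinants in \eqref{eq:trrefl3} gives $\det D(k)=-4\im k\sqrt{2+k^2}\,\det A(k)$, hence the equivalence of the two determinant conditions for $k\neq0$; and the implication ``$E=1+k^2$ eigenvalue $\Rightarrow\det A(k)=0$'' by expanding the eigenfunction in the Jost bases at $\pm\infty$ is fine, as is your construction $u:=F_1v=G_2Bv$ of a nonzero solution bounded at both ends from a null vector $v$ of $A(k)$.

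The gap is precisely where you say the difficulty lies, and it is left unexecuted in two respects. First, for $k\neq0$ the step ``bounded $\Rightarrow$ the oscillatory coefficients $c_1$ (of $f_1$) and $d_1$ (of $g_1$) vanish'' is only asserted; the ingredients you name ($D^\intercal=D$, $W[f_j,f_3]=0$, \eqref{eq:trrefl4}) do not produce it. What does is the conserved current $\Im\big(\overline{u}^\intercal u'\big)$, which is $x$-independent for solutions of $(H-E)u=0$ with $E$ real because $\sigma_3V$ is Hermitian, i.e.\ by the second identity in \eqref{eq:symmlin1}; evaluating it as $x\to+\infty$ gives $k|c_1|^2$ (wave $e^{\im kx}$) and as $x\to-\infty$ gives $-k|d_1|^2$ (wave $e^{-\im kx}$), so the single identity $k|c_1|^2=-k|d_1|^2$ forces $c_1=d_1=0$; the opposite signs at the two ends are the whole point, and your sketch never exhibits them. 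Second, the threshold case cannot start from ``a null vector of $A(0)$'': Lemma \ref{lem:trrefl} defines $A(k)$ only for $k\neq0$ and its entries generically blow up at the threshold. The argument must run through $D(0)$ directly: a left null vector $(c_1,c_3)$ of $D(0)$ gives $u=c_1f_1(\cdot,0)+c_3f_3(\cdot,0)\neq0$ with $W[u,g_1]=W[u,g_3]=0$, and since $\mathrm{span}\{g_1,g_3\}$ is two-dimensional and isotropic ($W[g_1,g_3]=-W[f_1,f_3]=0$ by \eqref{eq:wronsk1}), hence Lagrangian for the nondegenerate constant Wronskian form on the four-dimensional solution space, one gets $u\in\mathrm{span}\{g_1,g_3\}$, i.e.\ $u$ is bounded at both ends — exactly a threshold eigenfunction or resonance; conversely any bounded solution at $E=1$ lies in $\mathrm{span}\{f_1,f_3\}\cap\mathrm{span}\{g_1,g_3\}$ and yields such a null vector, so $\det D(0)=0$. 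This Wronskian/Lagrangian step (or an equivalent substitute) is missing from your sketch, and note that no flux argument is available at $k=0$ — nor is one needed, since bounded non-$L^2$ solutions are then admitted as resonances. With these two pieces supplied, your outline becomes a complete proof along the lines of \cite{KrSch}.
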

The following holds, see   \cite{KrSch}.
\begin{lemma} \label{lem:resolv}For $k\ge 0$ the following extensions of the resolvent $R_H (E)$ from above and from below the real line hold, for $E=1+k^2$:
\begin{align}
  & \label{eq:ris+} R _{H}^+ (x,y,E)  =\left\{\begin{matrix}      -F_1(x,k)  D ^{-1}(k) G_2(y,k) ^ \intercal \sigma _3\text{   if  }  x\ge y   \\
  -G_2(x,k)  D ^{-1}(k) F_1(y,k) ^ \intercal \sigma _3\text{   if  }  x\le y
\end{matrix}\right.  \\&  \label{eq:ris-} R _{H}^- (x,y,E)  =\left\{\begin{matrix}      -F_1(x,-k)  D ^{-1}(-k) G_2(y,-k) ^ \intercal \sigma _3\text{   if  }  x\ge y   \\
  -G_2(x,-k)  D ^{-1}(-k) F_1(y,-k) ^ \intercal \sigma _3\text{   if  }  x\le y .
\end{matrix}\right.
\end{align}

\end{lemma}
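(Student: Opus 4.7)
\textbf{Proof plan for Lemma \ref{lem:resolv}.}
The strategy is to construct the integral kernel of $R_H(E)$ for $k$ in the open upper half-plane $\C_+$ (where $\Im E > 0$ and $R_H(E) \colon L^2 \to L^2$ is bounded), verify that it equals the right-hand side of \eqref{eq:ris+}, and then pass to the real axis via limiting absorption. For $k \in \C_+$ the Jost functions $f_j(\cdot,k)$, $g_j(\cdot,k)$ and the matrix $D(k)$ extend analytically to a strip $|\Im k| \le \delta_p$ (the defining Volterra equations converge since $V$ decays exponentially), and the factor $e^{\im k x}$ in $f_1$ genuinely decays as $x \to +\infty$; $f_3$ decays in any case, and symmetrically $g_1, g_3$ decay at $-\infty$. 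Hence $F_1(\cdot,k) \in L^2$ near $+\infty$ and $G_2(\cdot,k) \in L^2$ near $-\infty$, which is exactly what is needed to build an $L^2$ Green's function from these solutions.

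The core step is to verify that the ansatz
\begin{align*}
K(x,y,E) := \begin{cases} -F_1(x,k)\,D^{-1}(k)\,G_2(y,k)^\intercal \sigma_3, & x \geq y, \\ -G_2(x,k)\,D^{-1}(k)\,F_1(y,k)^\intercal \sigma_3, & x \leq y, \end{cases}
\end{align*}
solves $(H_x - E) K(x,y,E) = \delta(x-y)\,I$ distributionally. Away from $x = y$ this is immediate since the columns of $F_1$ and $G_2$ satisfy \eqref{eq:jost11}. The matching across $x = y$ reduces, after isolating the distributional source produced by the $-\sigma_3 \partial_x^2$ piece of $H$, to a continuity relation $F_1(y)D^{-1}(k)G_2(y)^\intercal = G_2(y)D^{-1}(k)F_1(y)^\intercal$ and to a jump identity
\begin{align*}
F_1'(y,k)\,D^{-1}(k)\,G_2(y,k)^\intercal - G_2'(y,k)\,D^{-1}(k)\,F_1(y,k)^\intercal = I.
\end{align*}
I expect this pair to be the main algebraic obstacle. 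My plan is to derive both simultaneously from $\mathcal{M}(y)\mathcal{M}(y)^{-1} = I$ with $\mathcal{M}(y) := \begin{pmatrix} F_1(y) & G_2(y) \\ F_1'(y) & G_2'(y) \end{pmatrix}$. A direct block computation gives the symplectic Wronskian identity
\begin{align*}
\mathcal{M}^\intercal \begin{pmatrix} 0 & I \\ -I & 0 \end{pmatrix} \mathcal{M} = \begin{pmatrix} 0 & -D(k) \\ D(k) & 0 \end{pmatrix},
\end{align*}
which combines $W[F_1,G_2] = D(k)$ from \eqref{eq:defd}, $W[F_1,F_1] = W[G_2,G_2] = 0$ (by $W[f_j,f_3] = 0$ in \eqref{eq:wronsk1}), and $D^\intercal = D$ from Lemma \ref{lem:eig0}. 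Inverting produces an explicit block form of $\mathcal{M}^{-1}$, and the upper-right and upper-left $2\times 2$ blocks of $\mathcal{M}\mathcal{M}^{-1}$ read off as exactly the continuity and jump conditions.

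Finally, together with the $L^2$ decay at $\pm\infty$, these properties characterize $K$ as the integral kernel of $R_H(E)$ for $k \in \C_+$ by uniqueness. Letting $\Im k \downarrow 0$ with $\Re k = k_0 > 0$, the spectral input from \cite{coles} (that for $0 < |p-3| \ll 1$ the operator $\mathcal{L}_\omega$ has no embedded eigenvalues besides $\pm\im\lambda(p,\omega), 0$ and that $\pm\im\omega$ is not a resonance) combined with Lemma \ref{lem:eig0} yields $\det D(k_0) \neq 0$, so $D^{-1}(k)$ and the Jost data remain continuous up to the real axis. The limit exists as an operator $L^{2,s}(\R) \to L^{2,-s}(\R)$ for $s > 1/2$, which is the standard matrix-Schr\"odinger limiting absorption principle, and produces \eqref{eq:ris+}. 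The companion formula \eqref{eq:ris-} follows either by an analogous construction from $\C_-$ or by applying complex conjugation to \eqref{eq:ris+} via $\overline{F_1(x,k)} = F_1(x,-k)$ and $\overline{G_2(x,k)} = G_2(x,-k)$ from \eqref{eq:complcon}, which exchanges the $\pm\im 0$ prescriptions.
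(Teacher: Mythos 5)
Your proposal is correct, and it coincides with the standard construction behind this lemma: the paper itself does not prove the statement but quotes it from Krieger and Schlag \cite{KrSch}, and your plan (Green's-kernel ansatz from $F_1$, $G_2$, the continuity and jump identities obtained from the symplectic Wronskian relation $\mathcal{M}^\intercal J_4\mathcal{M}=\mathrm{diag\ blocks}\ (0,-D;D,0)$ using \eqref{eq:wronsk1} and $D^\intercal=D$, uniqueness of the resolvent for non-real $E$, and limiting absorption to the boundary using $\det D\neq 0$ from Lemma \ref{lem:eig0} together with the absence of embedded eigenvalues and of a threshold resonance) is exactly that argument. Two harmless imprecisions: the correspondence $E=1+k^2$ sends only the first quadrant of the $k$-plane to $\Im E>0$ (not all of $\C_+$), and the endpoint $k=0$ of the statement requires the threshold non-degeneracy $\det D(0)\neq 0$, which you do list among your spectral inputs even though your limit is phrased for $k_0>0$.
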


We set $x^\pm =\max \{  \pm x, 0   \}$.
The main result of this section is the following.
\begin{proposition}\label{prop:boudres}  There exists a small constant  $\delta _3>0$ such that for any $ p $  with $0<|p-3|< \delta _3$
 there exists a constant  $C$ such that  for any $E\in (-\infty , -1]\cup [1, +\infty ) $ we have
    \begin{align}
       \label{eq:boudres1}   \left |       R _{H}^ \pm  (x,y,E)            \right | \le C  \left\{\begin{matrix}      (1+x^- +y ^+)\text{   if  }  x\ge y   \\
   (1+x^+ +y ^-)\text{   if  }  x\le y .
\end{matrix}\right.
         \end{align}
\end{proposition}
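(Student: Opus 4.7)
The plan is to bound the three factors in the explicit formulas \eqref{eq:ris+}--\eqref{eq:ris-} uniformly in the spectral parameter and conclude by a direct product estimate. First, I reduce the case $E \in (-\infty, -1]$ to $E \in [1, +\infty)$: the intertwining $\sigma_1 H = -H \sigma_1$ from \eqref{eq:symmlin1} gives a conjugation relation between the resolvents at $E$ and $-E$, so it suffices to parametrize $E = 1+k^2$ with $k \ge 0$ and bound the kernel $F_1(x,k)\, D(k)^{-1}\, G_2(y,k)^\intercal \sigma_3$ uniformly in $k \in [0,\infty)$.

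For the Jost factors, \eqref{eq:jost12}--\eqref{eq:jost14t} together with the parity $g_j(x, k) = f_j(-x, k)$ from \eqref{eq:jost21} directly give $|F_1(x, k)| \lesssim 1$ for $(x, k) \in [0,\infty) \times \R$ and $|G_2(y, k)| \lesssim 1$ for $(y, k) \in (-\infty, 0] \times \R$. When $x \le 0$ I evaluate $F_1(x, k)$ via the connection formula \eqref{eq:trrefl1}, $F_1(x, k) = G_1(x, k) A(k) + G_2(x, k) B(k)$, with $G_j(x, k)$ uniformly bounded on $(-\infty,0] \times \R$; the coefficient matrices $A(k), B(k)$ are smooth for $k \ne 0$ and admit at worst a $1/k$ singularity at $k = 0$ in the non-resonant regime. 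Symmetrically, \eqref{eq:trrefl2} expresses $G_2(y, k)$ for $y \ge 0$ via the uniformly bounded $F_1(y, k), F_2(y, k)$ and the same scattering coefficients.

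For the spectral factor, \eqref{eq:trrefl3} gives $D(k) = A(k)^\intercal \operatorname{diag}(2\im k, -2\sqrt{2+k^2})$, hence $\det D(k) = -4\im k \sqrt{2+k^2}\, \det A(k)$. The hypothesis $0 < |p-3| \ll 1$ combined with Coles--Gustafson \cite{coles} ensures that $E = 1$ is neither an eigenvalue nor a resonance of $H$, equivalently $\det D(0) \ne 0$ by Lemma \ref{lem:eig0}. Hence $\det A(k)$ has a simple pole at $k = 0$ with nonzero residue, $\det D(k)$ extends to a finite nonzero limit there, and $|D(k)^{-1}| \le C$ uniformly in $k \in \R$: the $1/k$ poles in the connection matrices are absorbed by the diagonal zero of $\operatorname{diag}(2\im k, -2\sqrt{2+k^2})^{-1}$. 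Consequently the product $F_1(x, k) D(k)^{-1}$ admits a finite limit at $k = 0$, and the residual linear growth in $x^-$ (respectively $y^+$) on the ``wrong'' half-line reflects exactly the linear growth of the scalar Green's function $-|x-y|/2$ of $-\partial_x^2$ at zero energy, which governs the oscillating component of $H$ at threshold. This yields $|F_1(x, k) D(k)^{-1} G_2(y, k)^\intercal \sigma_3| \lesssim (1 + x^-)(1 + y^+)$, and the ordering $x \ge y$ forces $x^- y^+ = 0$ (the cases $x < 0$ and $y > 0$ are mutually exclusive), so the cross-term collapses to give \eqref{eq:boudres1}.

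The main obstacle is the simultaneous control of the $1/k$ singularity in $A(k), B(k)$ and its precise cancellation against the zero in $D(k)^{-1}$, uniformly in $k \in \R$, together with the clean extraction of the linear-growth factor $(1 + x^-)$ in the limit. For $|k|$ bounded away from zero this follows from continuity and the established non-singularity of $D(k)$; the delicate regime is $|k|$ small, where I plan to Taylor-expand $F_1(x, k) D(k)^{-1}$ in $k$ at $k = 0$, verifying the pole-zero cancellation and showing that the remainder is linearly controlled in $x^-$ for $x \le 0$. This argument is modeled on Lemma 8.7 of \cite{CM2109.08108}, which treated the scalar Schr\"odinger analogue, and the required Jost-function bounds on $\mathcal{L}_\omega$ for $0 < |p-3| \ll 1$ are exactly provided by condition (iii) in Remark \ref{rem:extthm:asstab}.
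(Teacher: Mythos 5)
Your outer reductions (the symmetries reducing to $R_H^+$, $E=1+k^2$, $x\ge y$, and quoting Krieger--Schlag for $k$ bounded away from $0$) coincide with the paper's, but the central step --- bounding the three factors separately and concluding $|F_1(x,k)D(k)^{-1}G_2(y,k)^\intercal\sigma_3|\lesssim (1+x^-)(1+y^+)$ --- does not work as stated. The second column of $F_1$, namely $f_3(x,k)$, grows like $e^{\sqrt{2+k^2}\,|x|}$ as $x\to-\infty$ (this is exactly Claim \ref{lem:plwcubic} at $p=3$, $k=0$, and persists for nearby $p$), and in the connection formula \eqref{eq:trrefl1} the matrix $G_1=(g_2,g_4)$ is \emph{not} uniformly bounded on $(-\infty,0]$ either, since $g_4$ grows exponentially there; multiplying $F_1(x,k)$ on the right by the constant-in-$x$ matrix $D(k)^{-1}$ mixes the columns and cannot remove this growth, so the intermediate bound $|F_1(x,k)D(k)^{-1}|\lesssim 1+x^-$ is false. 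The exponential growth can only be compensated by the exponential decay of $g_3(y,k)$ together with the ordering $x\ge y$ (so that $|x|\le|y|$ when both are negative), and to implement this one must first know that the off-diagonal Wronskians $W[f_1,g_3]$ and $W[f_3,g_1]$ vanish for $k$ near $0$, i.e.\ that $D(k)$ is diagonal; this is how the paper splits the kernel in \eqref{eq:diagD} into a $f_1$--$g_1$ block (at most linear growth times bounded) and a $f_3$--$g_3$ block (exponentials cancelling). Your observation that $x\ge y$ forces $x^-y^+=0$ only addresses the polynomial cross term, not this exponential bookkeeping.

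The diagonality of $D(k)$ and, above all, the uniform-in-$k$ bound $|f_1(x,k)|\lesssim 1+x^-$ for $x\le 0$ and $k$ near $0$ are the actual hard content of the proposition, and your proposal does not supply them: you defer them to a Taylor expansion of $F_1D^{-1}$ at $k=0$ ``modeled on'' the scalar Lemma 8.7 of \cite{CM2109.08108} and to condition (iii), but condition (iii) is a Wronskian nondegeneracy statement, not an estimate, and the paper explicitly stresses that the scalar Deift--Trubowitz bounds do not carry over to the matrix operator --- that is precisely why Lemma \ref{lem:lowenjost1} is needed. There the bound is proved by writing $f_1=e^{\im kx}m(x,k)$ and solving a Volterra equation for the first component (the kernel bound $|D_k(x-y)|\le|x-y|$ producing the $1+x^-$ growth, uniformly in $k$ via Claim \ref{claim:1}) coupled with a Fredholm equation $(1+\mathbf{A}(k))m_2=\dots$ for the second; a nontrivial kernel of $1+\mathbf{A}(0)$ would force $W[f_3(\cdot,0),g_3(\cdot,0)]=0$, which is excluded at $p=3$ by the explicit Jost functions and hence for nearby $p$ by continuity --- this is where condition (iii) genuinely enters. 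Without an argument of this type your pole--zero cancellation between $A(k),B(k)$ and $\diag(2\im k,-2\sqrt{2+k^2})$ remains an assertion; note also that in the nonresonant regime the relevant phenomenon is not a cancellation rendering $f_1(\cdot,0)$ bounded on the left half-line, but precisely the generic linear growth there, which is the source of the factor $1+x^-$ in \eqref{eq:boudres1}.
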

Assuming  Proposition \ref{prop:boudres}, we have the following.
\begin{lemma} \label{lem:LAP} For  $S>3/2$ and $\tau >1/2$  we have
 \begin{align}&   \label{eq:LAP1}   \sup _{E \in   \R  } \|   R ^{\pm }_{H }(E ) P_c \| _{L^{2,\tau}(\R ) \to L^{2,-S}(\R )} <\infty  .
\end{align}
\end{lemma}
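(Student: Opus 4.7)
The plan is to split the $E$-axis into the essential spectrum $(-\infty,-1]\cup[1,+\infty)$ of $H=H_1$ and its complement $(-1,1)$, and to establish a uniform bound on $\|R^{\pm}_H(E)P_c\|_{L^{2,\tau}\to L^{2,-S}}$ on each piece separately, then concatenate.

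On the essential spectrum I would control the operator norm by the weighted Hilbert--Schmidt norm, namely
$$\|R^{\pm}_H(E)P_c\|_{L^{2,\tau}\to L^{2,-S}}^2\le C\iint_{\R^2} \langle x\rangle^{-2S}\,|R^{\pm}_H(x,y,E)|^2\,\langle y\rangle^{-2\tau}\,dx\,dy,$$
which is legitimate because $P_c$ differs from the identity by a finite-rank projection onto exponentially decaying eigenfunctions and is hence bounded on every weighted $L^{2,s}(\R)$. A case analysis of the bound of Proposition~\ref{prop:boudres} on the six sign-and-order regions of $\R^2$ shows that $|R^{\pm}_H(x,y,E)|\lesssim 1+\min(|x|,|y|)$ with constants independent of $E$: in the region $\{x\ge y\}$ exactly one of $x^-$, $y^+$ can be nonzero, and when nonzero it equals $\min(|x|,|y|)$, and similarly for $\{x\le y\}$. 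Splitting $\R^2$ into $\{|x|\le|y|\}$ and $\{|x|\ge|y|\}$ and integrating the smaller variable first, the double integral is seen to converge provided $S+\tau>2$, which is exactly what is ensured by $S>3/2$ and $\tau>1/2$; this yields a bound uniform in $E$ throughout the essential spectrum.

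For $E\in(-1,1)$ I would argue that the only spectrum of $H$ in this interval consists of the eigenvalues $0$ and $\pm\lambda(p,1)$, all of which are annihilated by $P_c$, so $R_H(E)P_c$ is analytic in $E$ on $(-1,1)$ and bounded $L^2\to L^2$ locally uniformly. By the non-resonance condition at the thresholds from Coles--Gustafson, which in the notation of Lemma~\ref{lem:eig0} reads $\det D(0)\neq 0$ (and the analogue at $E=-1$ follows by the same argument), the operator extends continuously to the closed interval $[-1,1]$. Via the trivial embeddings $L^{2,\tau}\hookrightarrow L^2\hookrightarrow L^{2,-S}$, the resulting uniform $L^2\to L^2$ bound on $[-1,1]$ implies the desired $L^{2,\tau}\to L^{2,-S}$ estimate on this piece, which then joins the bound obtained on the essential spectrum.

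The main obstacle is the Hilbert--Schmidt computation on the essential spectrum: a direct Schur test with $L^1$ kernels fails since $\int\langle y\rangle^{-\tau}\,dy=\infty$ for $\tau\le 1$, so the linear-in-$\min(|x|,|y|)$ growth of the resolvent kernel must be absorbed symmetrically by the two weights. The resulting threshold $S+\tau>2$ is precisely what forces the hypotheses $S>3/2$ and $\tau>1/2$ and explains why the sharp kernel bound of Proposition~\ref{prop:boudres} is the essential input; matching the two regimes at $E=\pm1$ uses only the already-assumed absence of threshold resonances.
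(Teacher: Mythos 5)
Your treatment of the essential spectrum is essentially the paper's argument: control the operator norm by the weighted Hilbert--Schmidt norm and integrate the kernel bound of Proposition \ref{prop:boudres}, which indeed closes exactly when $S>3/2$ and $\tau>1/2$ (the paper does the same case analysis, and ignoring $P_c$ there is harmless since $P_c$ is bounded on all weighted spaces). The genuine gap is in your handling of $E\in(-1,1)$ near the thresholds. You claim that, thanks to the absence of a threshold resonance, $R_H(E)P_c$ ``extends continuously to the closed interval $[-1,1]$'' and that this gives a \emph{uniform $L^2\to L^2$ bound on $[-1,1]$}. This is false: for any closed operator one has $\|R(z)\|\ge \operatorname{dist}(z,\sigma)^{-1}$, and $\pm 1$ belong to the essential spectrum of $HP_c$ (restricted to the range of $P_c$), so $\|R_H(E)P_c\|_{L^2\to L^2}$ blows up at least like $(1-|E|)^{-1}$ as $E\to\pm1$ from inside the gap. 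The non-resonance condition $\det D(0)\neq 0$ only yields continuity of the boundary values in \emph{weighted} spaces $L^{2,\tau}\to L^{2,-S}$ (the limiting absorption principle); it cannot produce an unweighted bound up to the threshold, so your matching of the two regimes at $E=\pm1$ does not go through as stated.

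The paper avoids this by splitting at a point strictly inside the gap: it observes that the Jost-function representation and hence the kernel bound \eqref{eq:boudres1} remain valid (uniformly) for all $E$ with $|E|\ge a$ for some $a\in(0,1)$ close to $1$ --- the analysis of $f_1,f_3,g_1,g_3$ and of $D(k)$ is uniform for $k$ in a complex neighborhood of $0$, which covers a real neighborhood of the thresholds including energies slightly inside the gap --- so the Hilbert--Schmidt computation already handles $\{|E|\ge a\}$. On the remaining compact set $[-a,a]$, which lies at positive distance from $\sigma(HP_c)$, the trivial bound $\|R_H(E)P_c\|_{L^{2,\tau}\to L^{2,-S}}\le\|R_{HP_c}(E)\|_{L^2\to L^2}$ together with the embeddings $L^{2,\tau}\hookrightarrow L^2\hookrightarrow L^{2,-S}$ suffices. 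Your argument on the gap interval works verbatim once you replace $[-1,1]$ by $[-a,a]$ and invoke the extension of the weighted kernel bound past the thresholds; without that extension, the threshold regime is not covered.
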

\proof     First of all, from the proof of Proposition \ref{prop:boudres} it will be clear that
\eqref{eq:boudres1} holds for  any $E\in (-\infty , -a]\cup [a, +\infty ) $ for an $a\in (0,1)$ sufficiently close to 1.  Then in such a set  we proceed like in \cite {CM2109.08108}, we can ignore $P_c$  and  consider the square of the  Hilbert--Schmidt norm
 \begin{align} \nonumber &   \int _{\R} dx \< x \> ^{-2S} \int_{\R}  |R ^{+ }_{H }(x,y,z )| ^2  \< y \> ^{-2\tau} dy   =    \int _{\R} dx \< x \> ^{-2S} \int_{-\infty}^{x}  |R ^{+ }_{H }(x,y,z )| ^2  \< y \> ^{-2\tau} dy\\& + \int _{\R} dx \< x \> ^{-2S} \int_{x}^{+\infty}  |R ^{+ }_{H }(x,y,z )| ^2  \< y \> ^{-2\tau} dy.\label{eq:estresolv11}
\end{align}
 The second term in the right hand side  is bounded by
  \begin{align*}  &       \int _{x<y} \< x \> ^{-2S}  \< y \> ^{-2\tau}  \( 1+ x^++  y^- \) ^2   dx  dy
   \le  \int _{0<x<y } \< x \> ^{-2S+2}  \< y \> ^{-2\tau  }    dx  dy   \\&+ \int _{x<y<0} \< x \> ^{-2S}  \< y \> ^{-2\tau +2}    dx  dy + \int _{ x<0<y } \< x \> ^{-2S }  \< y \> ^{-2\tau  }    dx  dy =:\sum _{j=1}^{3}I_j.
\end{align*}
Then
\begin{align*}
  I_1\le  \int _{\R } \< x \> ^{-2S+2}  dx \int _\R   \< y \> ^{-2\tau  }      dy =:I_4 <\infty \text{  for $S>3/2$ and $\tau >1/2$.}
\end{align*}
Similarly $I_j< I_4$ for $j=2,3$. Similar estimates hold for the term in the first line in the right hand side of \eqref{eq:estresolv11}.
So now we need to consider the inequality in \eqref{eq:LAP1} only for $E\in [-a,a]$, in which case we can drop the superscript $\pm$. Then the result is trivial, because
\begin{align*} &
  \sup _{E \in  [-a,a] } \|   R  _{H }(E ) P_c \| _{L^{2,\tau}(\R ) \to L^{2,-S}(\R )} \le  \sup _{E \in  [-a,a] } \|   R  _{H }(E ) P_c  \| _{L^{2 }(\R ) \to L^{2 }(\R )}   \\& = \sup _{E \in  [-a,a] } \|   R  _{HP_c }(E )   \| _{R(P_c) \to R(P_c)}  <\infty
\end{align*}
by the invariance of $R(P_c)$,  the Range of $P_c$,  and by   $\sigma (H P_c)\cap (-1,1) = \emptyset $.

\qed

The following formula is inspired by Mizumachi \cite[Lemma 4.5]{mizu08}.
 \begin{lemma} \label{lem:lemma11} Let  for $g\in \mathcal{S}(\R \times \R , \C  ^2)$  with $P_c g(t) =g(t)$    \begin{align*}&  U(t,\cdot ) := \frac{\im } {2\pi}    \int  _{\R} e^{-\im E t}\( R ^{-}_{H }(E )+R ^{+}_{H }(E )   \)  g ^{\vee}( E , \cdot ) dE
\end{align*}
where $ g ^{\vee}$  is the inverse Fourier transform in $t$  of $g$. Then
 \begin{align} \label{eq:lemma11}2\int _0^t e^{-\im (t-t') H  }g(t') dt'   &=  U(t,x)  - \int  _{\R _-} e^{-\im (t-t') H  }g(t') dt'
 \\& + \int  _{\R _+} e^{-\im (t-t') H  }g(t') dt' .\nonumber
\end{align}
\end{lemma}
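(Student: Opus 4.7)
The plan is Fourier analysis in the time variable. A purely algebraic decomposition comes first: writing $\int_0^t = \int_{-\infty}^t - \int_{\R_-}$ and $\int_0^t = \int_{\R_+} - \int_t^\infty$ and adding yields
\[ 2\int_0^t e^{-\im(t-t')H} g(t') dt' = \Big(\int_{-\infty}^t - \int_t^\infty\Big) e^{-\im(t-t')H} g(t') dt' + \int_{\R_+} e^{-\im(t-t')H} g(t') dt' - \int_{\R_-} e^{-\im(t-t')H} g(t') dt'. \]
So matching \eqref{eq:lemma11} reduces to identifying $U(t,x)$, up to the sign convention on $g^\vee$, with $\bigl(\int_{-\infty}^t - \int_t^\infty\bigr) e^{-\im(t-t')H} g(t') dt'$.

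To achieve this identification I would use Fourier representations of the retarded and advanced propagators on the continuous spectrum,
\[ \theta(\tau) e^{-\im \tau H} P_c = -\frac{\im}{2\pi} \int_\R e^{-\im E \tau} R^+_H(E) P_c \, dE, \qquad \theta(-\tau) e^{-\im \tau H} P_c = \frac{\im}{2\pi} \int_\R e^{-\im E \tau} R^-_H(E) P_c \, dE, \]
which may be verified either by direct contour integration (for $R^+_H$ the singularity of $(H-E-\im 0)^{-1}$ sits in the lower $E$-half plane, so closing there for $\tau>0$ recovers $e^{-\im \tau H} P_c$ by a residue/boundary-value calculus) or by checking that both sides solve $(\im \partial_\tau - H)(\cdot) = \delta(\tau) P_c$ with the correct support. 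The hypothesis $P_c g=g$ is essential here: it restricts the analysis to the continuous spectrum, where $R^\pm_H(E)$ extend continuously with the uniform bounds in Lemma \ref{lem:LAP} and Proposition \ref{prop:boudres}. Inserting these representations into the two half-line Duhamel integrals and invoking Fubini (justified by Schwartz decay of $g$ in $t$) gives
\[ \int_{-\infty}^t e^{-\im(t-t')H} g(t') dt' = -\frac{\im}{2\pi} \int_\R e^{-\im E t} R^+_H(E) g^\vee(E, \cdot) \, dE, \]
together with the analogous formula with $+R^-_H$ for $\int_t^\infty$. Subtracting yields precisely $\pm U(t,x)$.

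The main obstacle is the non-self-adjointness of $H_\omega$: the usual spectral theorem and Stone's formula are not directly available, so the Fourier representation of $e^{-\im \tau H} P_c$ must be justified by hand. I would lean on the analytic and pointwise structure of $R^\pm_H(E) P_c$ built up in Section \ref{sec:lin} via the Jost functions, which supplies continuity of $R^\pm_H(E)$ across the continuous spectrum together with enough decay as $|E|\to\infty$ to legitimize the contour shifts and the Fubini applications. A secondary, purely bookkeeping matter is to track the convention of the inverse Fourier transform $g^\vee$ carefully enough that the sign of $U$ in \eqref{eq:lemma11} comes out as stated.
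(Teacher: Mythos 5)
Your algebraic reduction is correct and matches what the paper ultimately proves: the sum of the two decompositions does give $2\int_0^t=\bigl(\int_{-\infty}^t-\int_t^\infty\bigr)+\int_{\R_+}-\int_{\R_-}$, so \eqref{eq:lemma11} amounts to identifying $U$ with the retarded-minus-advanced half-line Duhamel integrals, which is exactly the content of the paper's computation. The gap is in how you propose to justify that identification. The unregularized propagator formulas $\theta(\pm\tau)e^{-\im\tau H}P_c=\mp\frac{\im}{2\pi}\int_\R e^{-\im E\tau}R^{\pm}_H(E)P_c\,dE$ are only formal: $R^{\pm}_H(E)$ does not decay in $E$ fast enough (even in weighted norms it is at best $O(|E|^{-1/2})$, and Proposition \ref{prop:boudres} only gives uniform boundedness of the kernel), so the $E$-integral is not absolutely convergent, and the Fubini step you invoke after inserting these formulas into the Duhamel integrals fails in the naive form, since $\int_\R\|R^{\pm}_H(E)\|\,dE=\infty$. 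Moreover the contour-closing/residue justification is not available here: for $\tau>0$ one would have to close in the lower half-plane, but $R^{+}_H(E)$ is the boundary value from above of a function that is not analytic across the essential spectrum $(-\infty,-1]\cup[1,\infty)$ (there are branch points at the thresholds $\pm1$, and $H$ is not self-adjoint, so no spectral theorem or Stone formula is at hand); the alternative "both sides solve $(\im\partial_\tau-H)(\cdot)=\delta(\tau)P_c$" route needs a uniqueness argument and still presupposes that the divergent integral has been given a meaning.

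What actually closes this gap — and is the paper's proof — is a regularization-and-limit argument rather than a direct boundary-value identity. One uses the Hille--Yosida/Laplace-transform representation of $R_H(\lambda\mp\im a)$, $a>0$, where all integrals converge absolutely because of the factors $e^{\mp a t'}$; this identifies exactly the half-line Duhamel integrals with regularized resolvent integrals, and then one lets $a\to0^+$ in the combined expression. The time-side limit is taken in $L^2$ using the endpoint Strichartz estimate of Keel--Tao, while the frequency-side limit is taken in $L^{2,-s}$ only after pairing with $g^{\vee}(\lambda,\cdot)$ (Schwartz in $\lambda$, which removes the large-$\lambda$ issue your formulation runs into), splitting $\lambda$ into the spectral gap (where $P_cg^{\vee}=g^{\vee}$ avoids the eigenvalue singularities), the bulk of the continuous spectrum (limiting absorption continuity of $R_H(z)$ in weighted spaces, as in Lemma \ref{lem:LAP}), and neighborhoods of the thresholds $\pm1$, where the Jost-function analysis and the bound \eqref{eq:boudres1} allow dominated convergence. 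Your proposal names the right ingredients (LAP, threshold structure, $P_cg=g$) but does not supply this limiting mechanism, and the specific justifications it does offer (decay at infinity, contour shifts, residues) do not work as stated.
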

We postpone the proof of Lemma \ref {lem:lemma11} until the end of this section.
From Lemmas \ref{lem:LAP} and \ref{lem:lemma11} we conclude the following, inspired by Mizumachi \cite{mizu08}.
\begin{lemma} \label{lem:smoothest} For  $S>3/2$ and $\tau >1/2$ there exists a constant $C(S,\tau )$ such that we have
 \begin{align}&   \label{eq:smoothest1}   \left \|   \int   _{0} ^{t   }e^{-\im (t-t') H  }P_c g(t') dt' \right \| _{L^2( \R ,L^{2,-S}(\R ))  } \le C(S,\tau ) \|  g \| _{L^2( \R , L^{2,\tau}(\R ) ) }.
\end{align}
\end{lemma}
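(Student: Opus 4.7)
\medskip

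\noindent\textit{Proof plan.} The plan is to combine Lemma \ref{lem:lemma11}, the uniform limiting absorption bound of Lemma \ref{lem:LAP}, and the Kato smoothing estimate (Proposition \ref{lem:smooth111}, transferred from $\mathcal{L}_\omega$ to $H_\omega$ via the conjugation \eqref{eq:opH}). By a standard density argument I may assume $g\in \mathcal{S}(\R\times\R,\C^2)$ with $P_c g=g$. Lemma \ref{lem:lemma11} reduces the problem to bounding, in the norm $L^2(\R,L^{2,-S}(\R))$, the three pieces
\begin{align*}
U(t,x),\qquad \int_{\R_-}e^{-\im(t-t')H}g(t')\,dt',\qquad \int_{\R_+}e^{-\im(t-t')H}g(t')\,dt'.
\end{align*}

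For the $U$ term I would apply Plancherel in the time variable. Since $U(t,x)$ is (up to the factor $\im$) the inverse Fourier transform in $E$, evaluated at $t$, of $\bigl[R_H^+(E)+R_H^-(E)\bigr]g^{\vee}(E,\cdot)(x)$, Plancherel applied at each fixed $x$ yields
\begin{align*}
\|U\|^2_{L^2(\R,L^{2,-S})} \lesssim \int_{\R}\bigl\|[R_H^+(E)+R_H^-(E)]g^{\vee}(E,\cdot)\bigr\|_{L^{2,-S}}^{2}\,dE.
\end{align*}
Because $P_c$ commutes with the Fourier transform in $t$, we have $P_c g^{\vee}=g^{\vee}$, so Lemma \ref{lem:LAP} (with the hypotheses $S>3/2$, $\tau>1/2$) gives the pointwise-in-$E$ bound $\|R_H^{\pm}(E)g^{\vee}(E,\cdot)\|_{L^{2,-S}}\le C\|g^{\vee}(E,\cdot)\|_{L^{2,\tau}}$. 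One more use of Plancherel in $E\leftrightarrow t$ (with the weight $\<x\>^{\tau}$ applied before Fourier transforming) turns the right-hand side into $C\|g\|^2_{L^2(\R,L^{2,\tau})}$, as required.

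For the two integrals over $\R_\pm$ I would factor out the time dependence, writing $\int_{\R_\pm}e^{-\im(t-t')H}g(t')\,dt' = e^{-\im t H}h_\pm$ with $h_\pm:=\int_{\R_\pm}e^{\im t'H}g(t')\,dt'$; note $h_\pm = P_c h_\pm$. Applying Kato smoothing (the analogue of Proposition \ref{lem:smooth111} for $H_\omega$) gives
\begin{align*}
\|e^{-\im t H}h_\pm\|_{L^2(\R,L^{2,-S})} \le C\|h_\pm\|_{L^2},
\end{align*}
while the dual Kato smoothing estimate yields $\|h_\pm\|_{L^2}\le C\|g\|_{L^2(\R,L^{2,\tau})}$. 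Combining the three estimates and using Lemma \ref{lem:lemma11} completes the proof. The main obstacle is the weight bookkeeping: Proposition \ref{lem:smooth111} is stated for $s>1$, while the target range is $\tau>1/2$; one either invokes the sharper $1$D version of Kato smoothing, or, more intrinsically, derives it via a $TT^*$ argument directly from the limiting absorption bound of Lemma \ref{lem:LAP}, which already has the correct weights $S>3/2$ and $\tau>1/2$ built in.
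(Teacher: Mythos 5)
Your reduction via Lemma \ref{lem:lemma11} and your treatment of the main term $U$ (Plancherel in $t$ plus the uniform bound of Lemma \ref{lem:LAP}) coincide with the paper's argument. The gap is in the tail terms. Writing $\int_{\R_\pm}e^{-\im(t-t')H}g(t')\,dt'=e^{-\im tH}h_\pm$ and using the weight-$S$ smoothing bound $\|e^{-\im tH}h_\pm\|_{L^2_tL^{2,-S}}\lesssim\|h_\pm\|_{L^2}$ is fine (here $S>3/2>1$, so Proposition \ref{lem:smooth111} applies), but the dual bound $\|h_\pm\|_{L^2}\lesssim\|g\|_{L^2_tL^{2,\tau}}$ that you invoke is, by duality and the symmetry $\sigma_3H\sigma_3=H^*$ of \eqref{eq:symmlin1}, equivalent to a Kato smoothing estimate with weight $\tau$, and this is only available for weight $s>1$: the underlying symmetric weighted resolvent bound fails for weights $\le 1$ because near the (non-resonant) thresholds $\pm 1$ the kernel of $R_H^{\pm}$ genuinely grows linearly in one variable — this is exactly the content of \eqref{eq:boudres1} and the reason Lemma \ref{lem:LAP} is stated with asymmetric weights $S>3/2$, $\tau>1/2$. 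Neither of your suggested repairs closes this: there is no ``sharper 1D Kato smoothing'' with weight in $(1/2,1]$ for $H$, and a $TT^*$ argument from Lemma \ref{lem:LAP} only shows that $\langle x\rangle^{-S}$ is $H$-smooth, whose dual costs the weight $S>3/2$, not $\tau$, on $g$. Since the lemma is applied in the proof of Lemma \ref{lem:estw} precisely with $\tau=s\in(1/2,1)$ (to extract the small factor $B^{s-1}$), the weight range is essential and this is a genuine gap, not bookkeeping.

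Note that the paper's own proof displays only the Plancherel estimate for $U$ and dismisses the remaining two terms as ``similar'' (deferring to \cite{CM2109.08108}), so you cannot simply defer to it on this point either. A sound way to obtain \eqref{eq:smoothest1} with the stated weights is to avoid estimating the two tails separately: split $g=g\,\mathbf{1}_{\{t'\ge0\}}+g\,\mathbf{1}_{\{t'<0\}}$ and observe that each piece of the retarded integral is a causal (respectively anticausal) convolution in time; inserting a damping factor $e^{\mp\epsilon t}$ and taking the Fourier transform in $t$ produces only the one-sided resolvents $R^{\pm}_H$ at distance $\epsilon$ from the real axis, so Plancherel together with the uniform asymmetric bound of Lemma \ref{lem:LAP} (which persists for spectral parameters in the corresponding closed half-plane, by the same kernel bounds) gives the estimate uniformly in $\epsilon$, and one concludes by letting $\epsilon\to0^+$. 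This uses exactly the ingredients you already have, with the correct weights $S>3/2$, $\tau>1/2$, and no appeal to a weight-$\tau$ smoothing estimate.
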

\proof The proof is verbatim in    \cite{CM2109.08108}.  We can use formula \eqref{eq:lemma11} and bound $U$, with the bound on the last two terms in the right hand side of \eqref{eq:lemma11} similar. Taking Fourier transform in $t$,
$$ \aligned &
\| U\|_{
L_{t}^2L^{2,-S}} \le   2 \sup _{\pm}\| R_{ H_{N+1} }^\pm (\lambda )
   \widehat{ g}(\lambda,\cdot )\|_{L_{\lambda  }^2 L^{2,-S}}   \le \\& \le
2\sup _{\pm}   \sup _{\lambda \in \R }
\|  R_{ H_{N+1} }^\pm (\lambda )  \| _{ L^{2,\tau}  \to L^{2,-S}   } \|
     \widehat{g} (\lambda,x) \|_{ L^{2,\tau} L_{\lambda  }^2 }\,
 \lesssim  &        \| g\|_{L_{t }^2L^{2,\tau} }.
\endaligned $$

 \qed

\textit{Proof of Proposition \ref{prop:boudres}.}   From  $ R _{H}^- (x,y,E) = \overline{R _{H}^+ (x,y,E)} $
for the bounds it is enough    consider     the case of $ R _{H}^ +$.    From
  \begin{align*}
    R _{H}^+ (x,y,E) =\sigma _3 R _{H}^+ (-y,-x,E) ^ \intercal  \sigma _3
  \end{align*}
      it is enough to consider case  $ x\ge y$. Finally
 \begin{align*}
   R _{H}^+ (x,y,E) =-\sigma _1    R _{H}^- (x,y,-E)  \sigma _1 ,
 \end{align*}
    it is enough to focus on $ E=1+k^2$.

 \noindent After the above reductions, we remark that it  is enough to consider the case when $k$ is close to 0.
  This is  because for $k$ away from 0  a better estimate, without the term $1+x^+ +y ^-$, is already  contained in \cite{KrSch}.  Notice that  the  estimates in \cite{KrSch}    there are some subtleties because, while for $x\ge 0 \ge y$ the desired estimate follows directly form the bounds here stated in Proposition \ref{prop:jost1}, for say  $0 > x \ge  y$ the bounds rely on  \eqref{eq:trrefl1}--\eqref{eq:trrefl2}  and on    formulas   \eqref{eq:trrefl3}--\eqref{eq:trrefl4}    which yield formulas like $ A(k) D ^{-1}(k)=\diag \(  \dfrac{1}{2\im  k}, -\dfrac{1}{\sqrt{2+k^2}} \)  $ which are responsible for some crucial cancellations.

\noindent In the sequel  we consider only the case  $ R _{H}^ +$   for   $E=1+k^2$  and $ x\ge y$ for $k$ close to   0.
We will follow the argument in   \cite{Cu3}, which unfortunately    has some mistakes, but  contains some useful insights that we will review, avoiding errors.  We will prove the following   where to simplify notation we set $f(x,k)=f_1(x,k)$. We will write $ D_k(x)=\frac{1-e^{-2\im kx}}{2k\im  } $ with $D_0(x)=x$.

\begin{lemma}\label{lem:lowenjost1}For $0<|p-3|\ll 1$ and $k\in \overline{\C }_+$ close to 0,      it is possible  to write
  $f(x,k) = e^{\im k
 x} {m}  (x,k)$  where
 \begin{align} \label{eq:lowenjost11}
   {m} (x,k)=&    {e}_1 - \int _x^{  \infty
}  D_k (  x-y )
\diag  (1,0) V(y)  {m}  (y,k) dt \\
& - \int _\R\frac{ e^{-\sqrt{k^2+2  }|x-y|
-\im k(x-y)}}{2\sqrt{k^2+2  }}\diag  (0,1) V(y)
 {m}  (y,k) dt.\nonumber
 \end{align}
 In particular    there exists a constant $C$ such that
 \begin{align} \label{eq:lowenjost12}
   | {m}
(x,k)-  {e} _1 |\le C (1+x^{-} ) \text{  for all $k$ near 0 and $x\in \R $.}
 \end{align}
\end{lemma}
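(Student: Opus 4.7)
The plan is to deduce \eqref{eq:lowenjost11} from the ODE $Hu=(1+k^2)u$ by a standard Jost--function construction, inverting the two components of $H$ with their appropriate Green's functions, and then to close the bound \eqref{eq:lowenjost12} by a weighted fixed--point argument.

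First I would write $f(x,k)=e^{\im k x} m(x,k)$ and separate the scalar equations. Using $\sigma_3=\diag(1,-1)$ in \eqref{eq:opH}, the equation $Hu=(1+k^2)u$ becomes, componentwise,
\begin{equation*}
 m_1''+2\im k\,m_1' = (Vm)_1, \qquad m_2''+2\im k\,m_2' - (2+2k^2)\,m_2 = -(Vm)_2,
\end{equation*}
with boundary condition $m\to e_1$ at $+\infty$. The first equation has fundamental solutions $\{1,e^{-2\im kx}\}$ and its Volterra resolvent from $+\infty$ has kernel $-D_k(x-y)$ integrated on $[x,\infty)$; the second is uniformly elliptic for $k$ near $0$ and has the unique bounded (on $\R$) Green's function proportional to $e^{-\sqrt{2+k^2}|x-y|}$, producing after the substitution $f_2=e^{\im k x}m_2$ the kernel in the second term of \eqref{eq:lowenjost11}. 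Combining the two inversions via the projections $\diag(1,0)$ and $\diag(0,1)$ reproduces the claimed integral equation.

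Second, I would solve \eqref{eq:lowenjost11} and verify \eqref{eq:lowenjost12} in one stroke by a fixed--point argument in the weighted space $\mathcal{X}:=\{h\in L^\infty(\R,\C^2):\|h\|_{\mathcal{X}}:=\sup_{x\in\R}|h(x)|/(1+x^-)<\infty\}$. For $h\in\mathcal{X}$ and for the integral operator $T$ given by the right--hand side of \eqref{eq:lowenjost11} minus $e_1$, the key estimates are $|D_k(x-y)|\le C(1+|x-y|)$ uniformly for $k$ near $0$, the exponential decay $|V(y)|\lesssim e^{-(p-1)|y|}$ (which follows from the exponential tails of $\phi$), and the pointwise monotonicity $y^-\le x^-$ valid on $\{y\ge x\}$. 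These combine to bound $\|T\|_{\mathcal{X}\to\mathcal{X}}$ uniformly in $k$ near $0$ and $p$ near $3$. Iterating $T$ and exploiting the factorial gain from the ordered integration in the $n$--fold Volterra composition (after symmetrizing against $\Sigma(y):=\int_y^\infty|V|\,ds$) produces a convergent Neumann series whose sum solves \eqref{eq:lowenjost11} and satisfies \eqref{eq:lowenjost12}.

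The principal obstacle is the linear growth of $D_k(x-y)$ in $|x-y|$, which blocks any naive $L^\infty(\R)$ contraction argument. My plan is to absorb this growth into the weight $1+x^-$ via the pointwise inequality $y^-\le x^-$ on the Volterra range, so that the factor $(1+|x-y|)|V(y)|$ is integrated against a uniformly bounded envelope; the iteration then produces a $1/n!$ gain from the standard ordered--integration symmetrization against $\Sigma$, which closes the argument. A secondary uniformity point, handled within the same estimates, is that the second, exponentially off--diagonal kernel in \eqref{eq:lowenjost11} contributes only a lower--order, $k$--smooth perturbation throughout, since its kernel decays like $e^{-\sqrt{2}|x-y|}$ at $k=0$ and $V$ itself is exponentially localized.
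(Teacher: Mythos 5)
There is a genuine gap, and it sits exactly where the lemma is hardest. Your plan treats the whole-line kernel $\frac{e^{-\sqrt{2+k^2}|x-y|-\im k(x-y)}}{2\sqrt{2+k^2}}\,\diag(0,1)V(y)$ as a harmless, "lower-order" perturbation and then runs a single Neumann series / fixed point for the combined operator $T$ in the weighted space $\mathcal{X}$. But this second operator is not Volterra (it integrates over all of $\R$, not over $[x,\infty)$), so the ordered-integration $1/n!$ gain is simply unavailable for any composition containing it; and it is not small in operator norm either: exponential decay of the kernel and of $V$ give compactness, not smallness, and since $V$ is an $O(1)$ potential (of size comparable to $4\sech^2$ at $p=3$) the resulting operator on $L^\infty$ has norm of order one uniformly in $p$ near $3$ and $k$ near $0$. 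Consequently neither a contraction nor a Neumann series for $T$ can converge in general, and no smallness parameter is at your disposal ($p-3$ small does not make $V$ small). In fact, invertibility of $1+\mathbf{A}(k)$, where $\mathbf{A}(k)$ is the operator obtained after eliminating the first component, is precisely a threshold non-resonance condition and can genuinely fail for some potentials; it is the content of condition (iii) in Remark \ref{rem:extthm:asstab}, and it is the reason the lemma carries the hypothesis $0<|p-3|\ll 1$.

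The paper's argument separates the two mechanisms. The first-component equation is Volterra, and there your tools do work: the series with the factorial gain plus a Gronwall inequality show that $(1-A_{11}(k))^{-1}$ is bounded on $\langle x^-\rangle L^\infty$ uniformly in $k\in\overline{\C}_+$ (Claim \ref{claim:1}); this expresses $m_1$ in terms of $m_2$ and already yields the weight $1+x^-$. Substituting into the second-component equation gives a Fredholm equation $(1+\mathbf{A}(k))m_2=\text{(explicit datum)}$ on $L^\infty(\R)$ with $\mathbf{A}(k)$ compact, and the whole issue is to show $\ker(1+\mathbf{A}(0))=0$. The paper does this by contradiction: a nontrivial kernel element would produce a solution of the $k=0$ equation growing at most like $1+x^-$, hence proportional to $f_3(x,0)$, which is equivalent to $W[f_3(\cdot,0),g_3(\cdot,0)]=0$; the explicit $p=3$ Jost functions (Claim \ref{lem:plwcubic}, via \eqref{eq:plwave2}) show $|f_3(x,0)|\sim e^{\sqrt{2}|x|}$ as $x\to-\infty$, so the Wronskian is nonzero at $p=3$ and, by continuity in $p$, for $p$ near $3$. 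This Fredholm step and the Wronskian verification are the heart of the proof and are entirely absent from your proposal; without them the claimed bound \eqref{eq:lowenjost12} cannot be closed.
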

\proof First of all it is easy to see that if ${m} (x,k)$ satisfies \eqref{eq:lowenjost11}, then $f(x,k) = e^{\im k
 x} {m}  (x,k)$ satisfies  \eqref{eq:jost11} and can be taken as the $f_1(x,k)$  in  {Proposition}
  \ref{prop:jost1}. Now let us write ${m}  (x,k)  =( {m}_1 (x,k), {m}_2  (x,k))^ \intercal$, where here $m_1$ and $m_2$ are the two components of $m$ and should not be confused with the $m_1$ and $m_2$ in {Proposition}
  \ref{prop:jost1}. For the first component of $m$ we have
\begin{align*}&
  m_1 (x,k) + \int _x^{  \infty }  D_k (  x-y )V_{11}(y)
m_1 (y,k) dy = 1- \int _x^{  \infty }  D_k (  x-y  )V_{12}(y)
m_2 (y,k) dy .
\end{align*}
It is elementary that the  operators
\begin{align*}
   A_{ij}(k)u := - \int _x^{  \infty }  D_k (  x-y  )V_{ij}(y)
u (y) dy
\end{align*}
are bounded within  the space $ \< x  ^-\>   L ^{\infty } (\R )$ endowed with norm $ \|    v \| _{ \< x  ^-\>  L^\infty (\R )} := \| \< x  ^-\> ^{-1}   v \| _{   L^\infty (\R )}$.  The following is standard and follows from  \cite[Lemma 1 p.130]{DT}.
\begin{claim} \label{claim:1} The operators $(1-  A_{ij}(k)) : \< x  ^-\>   L ^{\infty } (\R ) \to  \< x  ^-\>   L ^{\infty } (\R ) $ can be inverted with norm of the inverse uniformly bounded  in $k\in \overline{\C }_+$.
   \end{claim}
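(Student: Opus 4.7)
The plan is to apply a Fredholm-alternative argument in the Banach space $X := \langle x^-\rangle L^\infty(\mathbb{R})$, following the template of \cite[Lemma~1, p.~130]{DT} cited in the statement. The four steps are: uniform boundedness of $A_{ij}(k)$ on $X$; compactness and continuity of $k \mapsto A_{ij}(k)$ in operator norm; injectivity of $1 - A_{ij}(k)$ for each $k \in \overline{\mathbb{C}}_+$ near zero; and extraction of the uniform bound from continuity together with compactness of a small disk.

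For boundedness, I would first establish the pointwise kernel bound $|D_k(x-y)| \leq y-x$ for $x \leq y$ and $k \in \overline{\mathbb{C}}_+$, using $D_k(z) = \int_0^z e^{-2\im ks}\,ds$ together with $|e^{-2\im ks}| = e^{2s\Im k} \leq 1$ when $s \leq 0$ and $\Im k \geq 0$. Combined with the exponential decay $|V_{ij}(y)| \lesssim e^{-(p-1)|y|}$, inherited from $\phi_\omega^{p-1}$ via \eqref{eq:sol}, this yields
\begin{equation*}
|A_{ij}(k)u(x)| \leq \|u\|_X \int_x^\infty (y-x)\langle y^-\rangle |V_{ij}(y)|\,dy \leq C\|u\|_X \langle x^-\rangle,
\end{equation*}
where the inner-integral bound is checked by splitting at $y = 0$: for $x \leq 0$, the substitution $y = x+s$ shows that the apparent quadratic envelope $(y-x)(-y)$ actually produces only linear growth in $|x|$ after multiplication by $e^{-(p-1)|y|}$. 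Hence $\|A_{ij}(k)\|_{X\to X} \leq C$ uniformly in $k$. Compactness on $X$ will then follow from Arzel\`a--Ascoli applied to bounded subsets: the derivative $(A_{ij}(k)u)'(x) = \int_x^\infty e^{-2\im k(x-y)} V_{ij}(y)u(y)\,dy$ is uniformly bounded in $L^\infty$, while the exponential decay of $V_{ij}$ furnishes the required tail control at $+\infty$ and the weighted equicontinuity of $\langle x^-\rangle^{-1}A_{ij}(k)u$ as $x\to-\infty$. Continuity of $k \mapsto A_{ij}(k)$ in operator norm follows from $|D_k(z) - D_{k_0}(z)| = o_{k\to k_0}(1)\cdot |z|$ on compact intervals and dominated convergence.

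The crux will be injectivity of $1 - A_{ij}(k)$ on $X$ for each $k \in \overline{\mathbb{C}}_+$ near $0$. Given $u \in X$ with $u = A_{ij}(k)u$, differentiating the integral equation twice exhibits $u$ as a solution to a scalar Schr\"odinger equation involving $V_{ij}$, with $e^{-\im kx}u$ controlled at $+\infty$ and $\langle x^-\rangle^{-1}u$ bounded at $-\infty$. For $\Im k > 0$ this would force exponential decay of $u$ at both infinities and yield an auxiliary eigenvalue at a complex energy, a contradiction. For small nonzero real $k$, the absence of embedded eigenvalues and resonances of $H$ in $[1,\infty)$ for $0 < |p-3| \ll 1$, guaranteed by Coles--Gustafson \cite{coles}, rules out nontrivial $u$. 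At $k = 0$, the same source supplies the hypothesis that the threshold $E = 1$ (corresponding to $\im\omega$ for $\mathcal{L}_\omega$) is not a resonance of $H$, which precisely excludes the scalar bounded solution that a nonzero $u$ would generate. The Fredholm alternative will then give invertibility of $1 - A_{ij}(k)$ on $X$, and continuity of the inverse on the compact disk $\overline{\mathbb{C}}_+ \cap \{|k| \leq \delta\}$ will deliver the uniform bound asserted in the claim.

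The hard part will be this last injectivity step at $k = 0$: one has to convert the vector-valued non-resonance statement for the full $2\times 2$ operator $H$ into non-existence of a null vector for the scalar integral operator $1 - A_{ij}(0)$. Concretely, a hypothetical nonzero $u \in X$ with $u = A_{ij}(0)u$ must be matched, via the Jost-function construction of Lemma~\ref{lem:lowenjost1} and the coupled equations for $m_1$ and $m_2$, to a bounded solution of $Hw = w$, and the non-resonance hypothesis is then invoked to force $u = 0$.
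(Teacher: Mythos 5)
Your plan rests on a Fredholm alternative plus an injectivity argument tied to the non-resonance of the full operator $H$, and that is where it goes wrong. The operators $A_{ij}(k)$ in this claim are Volterra operators: the integration runs only over $[x,\infty)$ against the exponentially decaying potential entry $V_{ij}$, with $|D_k(x-y)|\le |x-y|$ for all $k\in\overline{\C}_+$. Consequently $1-A_{ij}(k)$ is invertible \emph{unconditionally}, with no spectral hypothesis: iterating $u_n=A_{ij}(k)u_{n-1}$ gives, exactly as in Deift--Trubowitz \cite[Lemma 1, p.~130]{DT}, the factorial bound $|u_n(x)|\le \frac{1}{n!}\bigl(\int_x^\infty (y-x)\<y\>|V_{ij}(y)|\,dy\bigr)^n\|v\|_{\<x^-\>L^\infty}$, so the Neumann-type series converges on every half-line, and a Gronwall estimate then gives $\<x^-\>^{-1}|u(x)|\le \|v\|_{\<x^-\>L^\infty}\exp\bigl(\int_x^\infty 2\<y\>^2|V_{ij}(y)|\,dy\bigr)$, uniformly in $k\in\overline{\C}_+$. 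This is the paper's proof, and it never needs compactness, injectivity, or any information about eigenvalues or resonances. Your injectivity step is in fact unsound as stated: a nonzero $u$ with $u=A_{11}(k)u$ would solve only the decoupled scalar equation built from the single entry $V_{11}$, so it cannot be ``matched'' to a bounded solution of $Hw=w$ without producing the second component, and the non-resonance of $H$ at the threshold says nothing about such a scalar null vector. The place where Fredholm theory and the threshold/Wronskian condition genuinely enter is later, for the operator $\mathbf{A}(k)$ in the equation for $m_2$ (the non-Volterra piece), not for the $A_{ij}(k)$ of this claim.

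Two further defects of the proposed route: the claim demands a bound uniform over \emph{all} of $\overline{\C}_+$, while your continuity-plus-compact-disk argument only covers $k$ near $0$ (for large $|k|$ one would separately need smallness of the operator norm, which you do not address); and compactness of $A_{ij}(k)$ on $\<x^-\>L^\infty(\R)$ is itself doubtful, since $\<x^-\>^{-1}A_{ij}(k)u$ does not decay as $x\to-\infty$, so the Arzel\`a--Ascoli tail control you invoke is not available. The direct Volterra iteration avoids all of these issues.
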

\proof  Consider $  (1-  A_{ij}(k)) u=v$ and write formally the series
\begin{align*}
   \sum _{n=0}^{\infty}  u_n   \text{   with }    u_n = A_{ij} (k) u _{n-1}    \text{  and  }  u_0=v.
\end{align*}
Then, like in \cite[p.132]{DT} and by $|D_k(x-y)|\le |x-y|$,   for $x_0=x$
\begin{align*}
  |u_n(x )|&\le  \int _{x \le x_1\le ...\le x_n} dx_1...dx _n \prod _{\ell=1}^{n}    (x_\ell- x_{\ell-1})|V_{ij}(x_\ell)|   \< x_n\>  \|  \< \cdot \> ^{-1} v \| _{L^\infty (\R )} \\& \le \frac{1}{n!}
  \( \int _{x }^{\infty} (y-x )  \< y\> |V_{ij}(y)| dy \) ^n   \|    v \| _{ \< x  ^-\>  L^\infty (\R )} .
\end{align*}
This means that the series is uniformly convergent  in half--lines  and that for any $x$ we have
\begin{align*}
  u(x) =v(x)- \int _x^{  \infty }  D_k (  x-y  )V_{ij}(y)
u (y) dy.
\end{align*}
Then
\begin{align*}
 | u(x)| &\le |v(x)| + \int _x^{  \infty } y     |V_{ij}(y)| |u (y)|  dy-x   \int _x^{  \infty }      |V_{ij}(y)| |u (y)|  dy \\& \le |v(x)| + \int _0^{  \infty } y     |V_{ij}(y)| |u (y)|  dy +x^-   \int _x^{  \infty }      |V_{ij}(y)| |u (y)|  dy  .
\end{align*}
This implies
\begin{align*}
  \< x  ^-\> ^{-1}  | u(x)| &\le  \|    v \| _{ \< x  ^-\>  L^\infty (\R )}
  + \int _x^{  \infty }  2\< y\> ^2 |V_{ij}(y)|   \< y  ^-\> ^{-1}  | u(y)| dy
\end{align*}
      which in turn implies the following, by an application of Gronwall's inequality,
  \begin{align*}
     \< x  ^-\> ^{-1}  | u(x)| &\le     \|    v \| _{ \< x  ^-\>  L^\infty (\R )}
     \exp \( \int _x^{  \infty }  2\< y\> ^2 |V_{ij}(y)|     dy  \)   .
  \end{align*}

\qed

Thanks to Claim \ref{claim:1}  we can write
\begin{align}\label{eq:lowenjost15}
  m_1 ( \cdot ,k )  =(1-  A_{11}(k)) ^{-1} 1 + (1-  A_{11}(k)) ^{-1} A_{12}(k)m_2 (\cdot ,k)
\end{align}
where if
\begin{equation}\label{eq:lowenjost13}
    |  m_2 (y,k)|\le C \text{  for all $y\in \R$ and for $k$ close to 0,}
\end{equation}
then
\begin{equation}\label{eq:lowenjost14}
    |  m_1 (x,k)|\le C \< x  ^-\>        \text{  for all $x\in \R$ and for $k$ close to 0.}
\end{equation}
For the second component of $m$ we have
\begin{align*}
   m_2 (x,k) &=  - \int _\R\frac{ e^{-\sqrt{k^2+2  }|x-y|
-\im k(x-y)}}{2\sqrt{k^2+2  }}  V_{21}(y)
 {m}_1  (y,k) dy \\& - \int _\R\frac{ e^{-\sqrt{k^2+2  }|x-y|
-\im k(x-y)}}{2\sqrt{k^2+2  }}  V_{22}(y)
 {m}_2  (y,k) dy .
\end{align*}
Using formula \eqref{eq:lowenjost15}, we  can eliminate in the last equation $m_1 ( \cdot ,k )$, obtaining an equation of the form
\begin{align}\label{eq:solm2}&
  (1+ \mathbf{{A}}(k))   m_2 (\cdot ,k) =  - \int _\R\frac{ e^{-\sqrt{k^2+2  }|x-y|
-\im k(x-y)}}{2\sqrt{k^2+2  }}  V_{21}(y)
 (1-  A_{11}(k)) ^{-1} 1  dy  \text{  where}\\&
\mathbf{{A}}(k)u_2:= \int _\R\frac{ e^{-\sqrt{k^2+2  }|x-y|
-\im k(x-y)}}{2\sqrt{k^2+2  }}  \(  V_{22}(y)
 u_2  (y )  +     V_{21}(y) (1-  A_{11}(k)) ^{-1} A_{12}(k)u_2   \)      dy .\nonumber
\end{align}
We want to solve this equation in $L^\infty (\R )$. The operator $\mathbf{{A}}(k)$ is compact from $L^\infty (\R )$ into itself.
If $\ker  (1+\mathbf{A}(k))=0$ for $k=0$  then the same is true for $k$ close to 0 and
by Fredholm theory,  equation  \eqref{eq:solm2}   is solvable  and  Lemma \ref{lem:lowenjost1} is proved.
So suppose now that there exists a nonzero $u _2\in  L^\infty (\R )$ such that $(1+\mathbf{{A}}(0))   u_2 =0$.  Then setting
\begin{align*}
  u_1:= (1-  A_{11}(0)) ^{-1} A_{12}(0)u_2,
\end{align*}
    with $u_1\in     \< x  ^-\>  L^\infty (\R ) $ by   Claim \ref{claim:1}, the pair $u:=(u_1,u_2)^ \intercal$ solves (recall  that $D_0(x)=x$)
\begin{align}\label{eq:solm3}
  u(x)=&      - \int _x^{  \infty
}   (  x-y )
\diag  (1,0) V(y)  u  (y ) dt \\
& - \int _\R\frac{ e^{-\sqrt{ 2  }|x-y|
  }}{2\sqrt{ 2  }}\diag  (0,1) V(y)
 {u}  (y ) dt \nonumber
\end{align}
and $ u(x)  $ is a solution of \eqref{eq:jost11} for $k=0$.
Since  $ u(x) \xrightarrow{x\to +\infty} 0$,  it follows that $ u(x) =c f_3(x,0) $ for a non zero  constant $c\in \C $. This,
   	  $u_1\in    \< x  ^-\>  L^\infty (\R )   $ and $u_2\in    L^\infty (\R )   $
 yield
\begin{align}
  \label{eq:bff_2} |f_3(x,0)|\lesssim 1+x^-.
\end{align}
  The latter is equivalent to
\begin{align}
  \label{eq:bff_22} W[f_3(x,0), g_3( x,0)] =0.
\end{align}
 If this is not true for $p=3$, by continuity of the dependence on the parameter  $p$    of the solutions of system  \eqref{eq:jost11},   \eqref{eq:bff_2} is not true
  for any $p$ close to 3.
 \begin{claim} \label{lem:plwcubic} For $p=3$ we have  $|f_3(x,0)| \sim  e ^{\sqrt{ 2}\ |x|}$   as $x\to -\infty$.
   \end{claim}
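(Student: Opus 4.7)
The plan is to compute $f_3(x,0)$ at $p=3$ explicitly and read off its asymptotic growth at $x\to-\infty$. First I would exploit the block structure of the vector ODE $Hu=u$ by introducing $v_\pm := u_1\pm u_2$, where $u=(u_1,u_2)^\intercal$: a short calculation shows the system becomes $L_+v_+ = v_-$ and $L_-v_- = v_+$, where $L_\pm = -\partial_x^2 + 1 - n_\pm\sech^2 x$ are scalar P\"oschl--Teller operators with $n_+ = 6$ and $n_- = 2$. Hence $v_+$ satisfies the fourth-order equation $L_-L_+ v_+ = v_+$, and the $v_+$-component of $f_3(x,0)$ is the unique solution with $v_+(x)\sim e^{-\sqrt{2}x}$ as $x\to+\infty$ (with $v_-$ then determined by $v_- = L_+v_+$).

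A direct verification shows that $v_+^{(1)}(x) := 2\sech^2 x - 1$ is an even bounded solution of $L_-L_+v_+ = v_+$. Indeed, using the identities $L_+\sech^2 x = -3\sech^2 x$ and $L_+ 1 = 1 - 6\sech^2 x$ one gets $L_+v_+^{(1)} = -1$, and then $L_-(-1) = 2\sech^2 x - 1 = v_+^{(1)}$. In the $u$-variables this corresponds via $u_j = (v_+^{(1)}\pm v_-^{(1)})/2$ to the explicit threshold resonance $v(x) = (-\tanh^2 x,\sech^2 x)^\intercal$ of $H$ at $E=1$ at $p=3$. Using $v_+^{(1)}$ as a known solution of the fourth-order ODE, I would apply reduction of order to produce three further linearly independent solutions in closed form, naturally phrased in the variable $t = \tanh x$ where $L_\pm$ become Legendre-type operators. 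Matching the boundary condition at $+\infty$ then pins down $f_3(x,0)$ as a specific combination of these four basis elements.

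A short direct calculation of the Wronskian at $x\to+\infty$, using $g_3(x,0) = f_3(-x,0)$ and the asymptotic expansion $f_3(y,0)\sim A\,e_1 + B\,y\,e_1 + C\,e^{\sqrt{2}y}e_2 + D\,e^{-\sqrt{2}y}e_2$ as $y\to-\infty$, yields $W[f_3(\cdot,0),g_3(\cdot,0)] = -2\sqrt{2}\,D$; thus the desired conclusion $|f_3(x,0)|\sim e^{\sqrt{2}|x|}$ at $-\infty$ is equivalent to $D\neq 0$, i.e.\ to the nonvanishing of this Wronskian. The main obstacle is the reliable extraction of $D$ from the reduction-of-order construction, which involves several integrals and delicate cancellations. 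An alternative I would pursue is to invoke the closed-form Jost functions available for the cubic case via squared eigenfunctions of the Zakharov--Shabat Lax system at spectral parameter zero (see Section~\ref{sec:intsy} and \cite{kaup}), where the ZS equations at $\zeta=0$ decouple into pure rotations parametrized by the Gudermannian $\mathrm{gd}(x) = 2\arctan(e^x) - \pi/2$, so that $f_3(x,0)$ is written in closed form in terms of $\tanh x$, $\sech x$, and an exponential factor, from which the exponential growth at $-\infty$ and the nonvanishing of $D$ are immediate.
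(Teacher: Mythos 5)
Your reduction is set up correctly: at $p=3$, $E=1$ the substitution $v_\pm=u_1\pm u_2$ does decouple $Hu=u$ into $L_+v_+=v_-$, $L_-v_-=v_+$ with $L_+=-\partial_x^2+1-6\sech^2x$, $L_-=-\partial_x^2+1-2\sech^2x$; the function $2\sech^2x-1$ is indeed a bounded solution of $L_-L_+v=v$ (up to sign it is the $v_+$--component of the $k=0$ limit of \eqref{eq:plwave1}, i.e.\ of the threshold resonance \eqref{eqres1}); and your Wronskian identity $W[f_3(\cdot,0),g_3(\cdot,0)]=-2\sqrt{2}\,D$ correctly reduces the claim to $D\neq0$. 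But the argument stops exactly where the claim has content: you never exhibit $f_3(x,0)$ nor compute $D$. Route 1 (reduction of order in $t=\tanh x$, matching at $+\infty$, extraction of the $e^{-\sqrt2 y}$--coefficient at $-\infty$) is only announced, and you yourself flag the extraction of $D$ as the main obstacle. Genericity cannot substitute for this step, since $D\neq 0$ is precisely the nondegeneracy (condition (iii) of Remark \ref{rem:extthm:asstab} at $p=3$) that must be verified.

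The fallback you propose cannot close the gap as described. At Zakharov--Shabat spectral parameter $\zeta=0$ the ZS system with the real potential $\sqrt{2}\sech x$ has a real antisymmetric coefficient matrix, so its fundamental solution is a rotation through the angle $\sqrt{2}\,\mathrm{gd}(x)$: every solution is bounded (of constant norm), hence every quadratic (squared--eigenfunction) combination at $\zeta=0$ is bounded as well. Such combinations reproduce the threshold resonance and the $k=0$ limit of the oscillatory Jost functions $f_{1,2}$, but they can never contain the factor $e^{\pm\sqrt{2}x}$ characterizing $f_3(\cdot,0)$ and $g_3(\cdot,0)$; so ``the exponential growth at $-\infty$ \dots is immediate'' does not follow from the mechanism you invoke (the exponential branch corresponds to an imaginary spectral point, not to $\zeta=0$). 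What is actually needed, and what the paper does, is the explicit exponential--branch Jost function \eqref{eq:plwave2}: the factorization identities of Section \ref{sec:intsy} applied to the seed $e^{\mu x}(1,\im)^\intercal$ of the free system \eqref{eqmartelid3} give $g_3(x,0)$ proportional to $e^{\sqrt{2} x}\(-\sech^2x,\ 3-2\sqrt{2}\tanh x-\sech^2x\)^\intercal$, whose second component tends to $3-2\sqrt{2}\neq0$ as $x\to+\infty$; with $f_3(x,0)=g_3(-x,0)$ this yields the claim (equivalently $D\neq0$) in one line. Completing your Route 1 would essentially amount to redoing that computation.
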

 \proof Follows immediately from  formula  \eqref{eq:plwave2} below which yields a function proportional to $g_3(x,0)$ and from $ f_3(x,0)= g_3( -x,0)$. \qed

  From Claim \ref{lem:plwcubic}  we conclude that   $ W[f_3(x,0), g_3( x,0)]\neq 0$  for $p=3$ and also for $p$ close to 3. Hence  $\ker  (1+\mathbf{{A}}(0))=0$.  This completes the proof of Lemma \ref{lem:lowenjost1}.  \qed

 \textit{Proof of Proposition \ref{prop:boudres}: continuation and end.}  We have already discussed the fact that the desired bound for the kernel  $ R _{H}^ +(x,y,E)$   for   $E=1+k^2$  and $ x\ge y$ and $k$ outside a neighborhood of 0 are true by Krieger and Schlag \cite{KrSch}. So now we consider the case when $k$ is small. Then  by the bound  \eqref{eq:lowenjost12} for the  $f_1(x,k)$ in \eqref{eq:jost12} and by the exponential decay to 0  for $x\to -\infty$ of $g_3(x,k) =f_3(-x,k)$, it follows that
 \begin{align*}
     W[ f_1(x,k),g_3(x,k)]=  W[ f_3(x,k),g_1(x,k)]=0.
 \end{align*}
  So the matrix $D(k)$ in \eqref{eq:defd} is diagonal. This then implies, similarly to the proof in Kriger and Schlag \cite{KrSch}, that  for $ x\ge y$ and for  $k$ small,
 \begin{align}&
   R _{H}^+ (x,y,1+k^2)=  (f_1(x,k)  , 0 )D ^{-1}(k) (g_1(y,k)   , 0 )  ^ \intercal + (0  , f_3(x,k) )D ^{-1}(k) (0 , g_3(y,k)  )  ^ \intercal .\label{eq:diagD}
 \end{align}
 We bound this  for $0>x>y$. The first term can be bounded by a constant times $\< x ^-\> $ because $|g_1(y,k)|\lesssim 1 $  for $y<0$ and, by  \eqref{eq:lowenjost12},
   $|f_1(x,k)|\lesssim \< x ^-\> $ for $x\le 0$. The second term  is uniformly bounded, because   $|f_3(x,k)|\lesssim  e^{\sqrt{2+k^2} |x|} $ for $x\le 0$  and  $|g_3(y,k)|\lesssim e^{-\sqrt{2+k^2} |y|}$
  for $y\le 0$  when $k$ is sufficiently small. So this yields
  \begin{align*}
     | R _{H}^+ (x,y,1+k^2)| \lesssim  \< x ^-\>  \text{  for $0\ge x\ge y$ and for $k$ close to 0.}
  \end{align*}
  By exploiting the symmetries due to $g_j(x,k)=f_j(-x,k)$ and by similar estimates, we obtain also the estimate
 \begin{align*}
     | R _{H}^+ (x,y,1+k^2)| \lesssim  \< y ^+\>  \text{  for $ x\ge y \ge 0$ and for $k$ close to 0.}
  \end{align*}
  So we obtained the estimate      \eqref{eq:boudres1}   for all $ x\ge y  $.  This completes the proof of Proposition \ref{prop:boudres}.
  \qed

\textit{Proof of Lemma \ref{lem:lemma11}.}
The group $e^{\im t H}$ is continuous and, see \cite[Lemma 6.11]{KrSch} equibounded, with infinitesimal generator $\im H$.  Then
     for $a>0$ and $u_0,v_0\in L^2(\R , \C^2)$ by the Hille Yoshida theorem, Goldstein \cite[p. 17]{gold},   we have
\begin{align*}&
   \<\im R_{H }(\lambda -\im a)u_0 ,v_0\> = \int _{0}^{+\infty}   \<e^{\im t  (H-\lambda +\im a)}u_0dt,v_0\>    = \int _{0}^{+\infty} e^{-\im t   \lambda  }  \<e^{\im t  (H  +\im a)}u_0,v_0\>  dt\text{  and }\\&  \<- \im R_{H }(\lambda +\im a)u_0 ,v_0\> = \<\int _{-\infty }^{0}e^{\im t  (H-\lambda -\im a)}u_0 dt   ,v_0\>   = \int _{-\infty }^{0} e^{-\im t   \lambda  }  \<e^{\im t  (H  -\im a)}u_0,v_0\> dt .
\end{align*}
Then
\begin{align*} &
 \frac{1}{2\pi}  \int _\R   e^{ \im t   \lambda  } \<\im R_{H }(\lambda -\im a)u_0 ,v_0\> d\lambda = \chi _{\R _+} (t)\<e^{\im t  (H  +\im a)}u_0   ,v_0\>  \text{   and }\\& - \frac{1}{2\pi}  \int _\R   e^{ \im t   \lambda  } \<\im R_{H }(\lambda +\im a)u_0 ,v_0\> d\lambda = \chi _{\R _-} (t)\<e^{\im t  (H  -\im a)}u_0   ,v_0\> .
\end{align*}
So for $g$, which for convenience we take in $  \mathcal{S}(\R \times \R , \C  ^2)\cap C _{c} (\R _t , L^2(\R _x, \C^2))$, we have
\begin{align*}&
    \frac{\im}{2\pi}\int _\R e^{ \im t   \lambda  }\im R_{H }(\lambda -\im a)  g ^{\vee}( \lambda , \cdot ) d\lambda =\int _{-\infty }^{t} e^{\im (t-t') (H+\im a)} g(t') dt'     \text{   and}\\&
    -\frac{\im}{2\pi}\int _\R e^{ \im t   \lambda  }\im R_{H }(\lambda +\im a)  g ^{\vee}( \lambda , \cdot ) d\lambda =\int _{t }^{+\infty} e^{\im (t-t') (H-\im a)} g(t') dt'   . \end{align*}
Summing up and after an elementary manipulation, for $t>0$ we have
\begin{align}& \label{eq:mizfor1}
   \frac{\im}{2\pi}\int _{\R} e^{ \im t   \lambda  } \(  R_{H }(\lambda -\im a)  + R_{H }(\lambda +\im a)  \) g ^{\vee}( \lambda , \cdot ) d\lambda \\&  = \int_{-\infty}^{0}   e^{- (t-t')a} e^{\im (t-t') H} g(t') dt'   - \int _{0 }^{+\infty}  e^{  (t-t')a} e^{\im (t-t') H} g(t') dt' \nonumber \\&\nonumber + \int_{0}^{  t} e^{- (t-t')a} e^{\im (t-t') H}  g(t') dt'+ \int_{0}^{t}  e^{  (t-t')a} e^{\im (t-t') H} g(t') dt' \xrightarrow{a\to 0^+} \\& \int_{-\infty}^{0}    e^{\im (t-t') H} g(t') dt'   - \int _{0 }^{+\infty}   e^{\im (t-t') H} g(t') dt'   + 2\int_{0}^{t}   e^{\im (t-t') H}  g(t') dt'\label{eq:mizfor2}
\end{align}
where the limit of the right hand side holds in $L^2(\R )$   by $ e^{\pm a t'}g(t')  \xrightarrow{a\to 0^+} g$ in
$L^1(\R ,L^2(\R _x, \C^2))$ and by the Strichartz estimates, see Keel and Tao \cite[Theorem 1.2]{Kl-Tao}. We now focus at the limit of line \eqref{eq:mizfor1} as $a\to 0^+$   when  $P_c g(t) =g(t)$ for all times. We claim that
\begin{align}& \label{eq:mizfor3}
\lim _{a\to 0^+ } \text{line \eqref{eq:mizfor1}}= \frac{\im}{2\pi}\int _{\R} e^{ \im t   \lambda  } \(  R_{H }^-(\lambda  )  + R_{H }^+(\lambda  )  \) g ^{\vee}( \lambda , \cdot ) d\lambda \text{ in } L^{2,-s}\( \R \)
\end{align}
for $s>3/2$.  We distinguish between three cases.   For $ \lambda \in [-1+\alpha , 1-\alpha    ]$ for any fixed $\alpha \in (0,1)$ we have uniform convergence of the resolvents in the operator norm, where $P_c g ^{\vee}( \lambda) =g ^{\vee}( \lambda)$ avoids the singularities of the resolvent. For   such that $ \Re z \in (-\infty , -1-\alpha    ]\cup  [1+\alpha , \infty )$  and $\Im z \ge 0$ (resp.  $\Im z \le 0$)   it is possible to apply the 3 dimensional theory in \cite{Cu} to conclude that $ R_{H }(z)$  is continuous as a function with values in the space of of bounded operators operator from   $L^{2, s}\( \R \)$  to $L^{2,-s}\( \R \)$ for $s>1/2$.  We finally consider the case when  $\lambda  $ is close to $\{ 1, -1  \}$. For symmetry reasons it is not restrictive to consider the limit of
$R_{H }(\lambda +\im a)$   for $\lambda$   close to 1 focusing on the corresponding integral kernel in the region $x\ge y$. Notice that Lemma \ref{lem:lowenjost1} continues to be true for
$  k\in \C \backslash [0,+\infty ) $ with $k$ near 0. The resolvent of $R_{H }(\lambda +\im a)$ is given by  \eqref{eq:ris+} with $\lambda +ia =1+k^2$ and we continue to  have the diagonalization of $D(k)$ yielding to \eqref{eq:diagD}. Then $R_{H }(x,y,\lambda +\im a)$  satisfies the estimate \eqref{eq:boudres1} and by dominated convergence  we obtain the desired limit.

\qed

\section{Proof of Proposition \ref{lem:smooth111}}\label{sec:smoothing}

 We will prove the  following version for $H$, that is in the case $\omega =1$, of \eqref{eq:smooth111}, where $P_c$ is the spectral  projection associated to the continuous spectrum of $H$,  for any $u_0\in L^2(\R  , \C^2)$ and for a fixed constant $c>0$,
\begin{equation}\label{eq:smooth111b}
   \|   e^{\im t \mathcal{L} }P_c u_0 \| _{L^2(\R , L ^{2,-s}(\R   ))} \le c \|  u_0 \| _{L^2(\R  )}  .
\end{equation}       Let $g(t,x) \in   \mathcal{S}(\R^2) $ with $g(t)=P_c(H  )g(t)$.
Then
\begin{align*}
   \int _\R \< e^{-\im tH  } u_0,\sigma _3
g\>   dt&=  \frac{1}{\sqrt{2\pi}\im }\int _{\R}
  \<
 (R_{H  }^{+}(E  )-R_{H
}^{-}(E   ))   u_0,\sigma _3  \overline{\widehat{g}}(E
)\> _x  dE   \\& = \frac{1}{\sqrt{2\pi}\im  }\int_{ {\sigma _c(H)} }
  \<
 (R_{H  }^{+}(E   )-R_{H
}^{-}(E   ))    u_0,  \sigma _3\overline{\widehat{g}}(E
)\> _x  dE   .
\end{align*}
Then from Fubini   we have
$$\aligned & \left  |  \int _\R \< e^{-\im  tH  }  u_0, \sigma _3
g\>   dt \right
|
 \le
\| (R_{H  }^{+}(E   )-R_{H  }^{-}(E  ))
 u_0\|_{L^{2, -s}_xL^2_E (\sigma _c(H  ))}     \| {g}
\|_{L^{2, s}_x L^2_t  }
 .
  \endaligned $$
So now we need to show that
 \begin{align*}
    \| (R_{H  }^{+}(E   )-R_{H  }^{-}(E  ))
 u_0\| _{L^{2, -s}_xL^2_E  (\sigma _c(H  ))} \lesssim \|  u_0 \| _{L^2(\R )},
 \end{align*}
 where the subscripts $x$ and $E$ indicate the variables of integration.
We can split between  $E  $ away from the thresholds of $\sigma _c(H  )$,  where the corresponding    bound is obtained thanks to the corresponding bound  for the flat operator $\sigma _3 (-\partial ^2_x+1)$  like in the 3 dimensional case,  proved in \cite{CPV},
 and the case when $E  $ is close  to   the thresholds $\pm 1 $. More generally, we will show that  for $s>1$  there is a constant $C_s$
\begin{align}\label{locsmooth}
    \|  R_{H  }^{\pm}(E   )
 u_0\| _{L^2   (|E-1|\ll 1,     L^{2, -s}(\R ) )}   \le     C_s \|  u_0 \| _{L^2(\R )},
 \end{align}
with an analogous estimate   valid   near $-1$.   Let us consider the  scalar  Schr\"odinger operator $h = -\partial _x ^2   +  \sech ^2 \(  \frac{p-1}{2} x \)$. Then  we claim that
$\<x\> ^{-s}$  is $h$--smoothing in the sense of Kato \cite{kato}, which implies that for $s>1$  there is a constant $C_s$ such that
\begin{align}\label{eq:katosm}
    \|  R_{\sigma _3 (h +1)  }^{\pm}(E   )
 u_0\| _{L^2 _E  (\R,     L^{2, -s}_x(\R ) )}   \le C_s  \|  u_0 \| _{L^2(\R )}.
 \end{align} Since $ \sigma _3 (h +1) $ is selfadjoint,
by   (5.3) in
Theorem 5.1 \cite{kato},   \eqref{eq:katosm} will follow  if for a fixed $C>0$
\begin{equation}\label{eq:katosm1}
  \| \langle x
\rangle ^{-s}R_{\sigma _3 (h +1)    } (z ) \langle x \rangle ^{-s} \| _{  L^{2}(\R ) \to
L^{2}(\R ))}<C \text{  for all $z$ with $0<|\Im z |  $}.
\end{equation}
From $R_{\sigma _3 (h +1) } (z ) =\diag \(  R_{h  } (z -1) ,  -R_{h  } (z +1)  \)$,
\eqref{eq:katosm1}  follows from
\begin{equation}\label{eq:katosm2}
  \| \langle x
\rangle ^{-s}R_{  h  } (z ) \langle x \rangle ^{-s} \| _{  L^{2}(\R ) \to
L^{2}(\R ))}<C \text{  for all $z$ with $0<|\Im z |  $}.
\end{equation}
The kernel of $R_{  h  } (z )$ for $x<y$, with an analogous formula for $x>y$,   for $\arg \sqrt{z} \in [0,\pi] $  is \begin{align}&   R _{h  }(z ) (x,y) =   \frac{T(\sqrt{z})}{2\im \sqrt{z}}     f_- (x, \sqrt{z})   f_+ (y, \sqrt{z})   =   \frac{T(\sqrt{z})}{2\im \sqrt{z}}     e^{\im \sqrt{z} (x-y)}       m_- (x, \sqrt{z})   m_+ (y, \sqrt{z})  ,   
\nonumber\end{align}
where  the  Jost functions  $f_{\pm } (x,\sqrt{z} )=e^{\pm \im \sqrt{z} x}m_{\pm } (x,\sqrt{z} )$  solve $ h u=z u$ with\begin{align*} \lim _{x\to +\infty }   {m_{ + } (x,\sqrt{z} )}  =1 =\lim _{x\to -\infty }  {m_{- } (x,\sqrt{z})}  .\end{align*}  These functions  satisfy, see Lemma 1 p. 130 \cite{DT},\begin{align} 
\nonumber &  |m_\pm(x, \sqrt{z} )-1|\le  C _1 \langle  \max \{ 0,\mp x \}\rangle\langle \sqrt{z} \rangle ^{-1}\left | \int _x^{\pm \infty}\langle y \rangle  \sech ^2 \(  \frac{p-1}{2} y \)     dy \right |     \\ &  |m_\pm(x, k )-1|\le    \<    \sqrt{z}\>  ^{-1}  \left | \int _x^{\pm \infty}  \sech ^2 \(  \frac{p-1}{2} y \)   dy \right |    \exp \(  \<    \sqrt{z}\>  ^{-1}  \left | \int _x^{\pm \infty}   \sech ^2 \(  \frac{p-1}{2} y \)  dy \right | \)   ,\nonumber
\end{align} while, since $h$ has no 0 resonance,  $T(k) =\alpha k (1+o(1))$  near $k=0$ for some $\alpha \neq 0$ and $T(k) = 1+O(1/k) $ for $k\to \infty$ and $T\in C^0(\R )$, see Theorem 1 \cite{DT}.
Then (here for $z\in \R$ we are taking $R_{  h  }^{+} (z )$)
 \begin{align*}
   \| \langle x
\rangle ^{-s}R_{  h  } (z ) \langle x \rangle ^{-s} \| _{  L^{2}(\R ) \to
L^{2}(\R ))}^2 &\le  \int _{x<y} \langle x
\rangle ^{-2s}  \langle y
\rangle ^{-2s}  |R _{h  }(z ) (x,y)| ^2 dxdy  \\& +  \int _{x>y} \langle x
\rangle ^{-2s}  \langle y
\rangle ^{-2s}  |R _{h  }(z ) (x,y)| ^2 dxdy =:\mathcal{A}+\mathcal{B} .
 \end{align*}
The two terms on the right can be estimated similarly, so we bound only the first. It is easy to see   that, like in Proposition \ref{prop:boudres},
\begin{align}
  \left |       R _{h  } (z ) (x,y)            \right | \le C  \left\{\begin{matrix}      (1+x^- +y ^+)\text{   if  }  x\ge y   \\
   (1+x^+ +y ^-)\text{   if  }  x\le y ,   \end{matrix}   \right .  \label{eq:bdRh}
\end{align}
where in fact these estimates are what  inspired \eqref{eq:boudres1}.
Then
\begin{align*}
   \mathcal{A} &\lesssim    \int _{0<x<y} \langle x
\rangle ^{2-2s}  \langle y
\rangle ^{-2s}    dxdy +  \int _{ x<y<0} \langle x
\rangle ^{ -2s}  \langle y
\rangle ^{2-2s}    dxdy  +  \int _{  x<0<y} \langle x
\rangle ^{ -2s}  \langle y
\rangle ^{-2s}    dxdy \\&=: A_1+A_2+A_3.
\end{align*}
Then
\begin{align*}
   A_1 & = \int _{\R_+} dx \langle x
\rangle ^{2-2s}  \int _{x}^{+\infty}  \langle y
\rangle ^{-2s}    dy \lesssim  \int _{\R_+}  \langle x
\rangle ^{3-4s}    dx<+\infty \text{ for } 3-4s<-1 \Longleftrightarrow s>1.
\end{align*}
Similarly $A_2$,  obviously also $A_3$,  and $\mathcal{B}$ are bounded for   $s>1$.  So this yields \eqref{eq:katosm2} and \eqref{eq:katosm1}. In particular this implies \eqref{eq:katosm}.
 Now we can express
 \begin{align*}
    H = \sigma _3 (h +1) + \widetilde{V}
 \end{align*}
 where $\widetilde{V}  =M_0  \ \sech ^2 \(  \frac{p-1}{2} x \)$, for $M_0$ a constant matrix.  We can  factor
 \begin{align*}
    \widetilde{V} =B^*A \text{  with }   B^* =  \langle x
\rangle ^{  s}  \widetilde{V}   \text{  and }A=    \langle x
\rangle ^{- s}    .
 \end{align*}
 Now, for $\Im z >0$, for $ Q_0 (z) =A R_{\sigma _3 (h +1)    } (z ) B^*$, we have
 \begin{align*}
  A R_H(z)= \( 1+  Q_0 (z) \) ^{-1}A R_{\sigma _3 (h +1)    } (z ).
 \end{align*}
 The function $ Q_0 (z)$ extends as  an element of $C^0(\overline{\C} _+ \backslash \sigma _p (H), \mathcal{L}  (L^2 ))$ with values in the space of compact operators of $L^2=L^2(\R , \C^2)$ in itself.
  Furthermore, $\( 1+  Q_0 (z) \) ^{-1}$ extends into a bounded operator except for those $z\in \R$ for which $\ker \( 1+  Q_0 (z) \)  \neq 0$.  For $z$ near but not equal to 1, by standard arguments that can be seen in \cite{CPV}, this implies that $z$ is an eigenvalue of $H$, but this is not possible since here our $z$'s are taken much closer to 1 than $\lambda (p,1)$, which is the only positive eigenvalue of $H$. The other possibility is that $\ker \( 1+  Q_0 (1) \)  \neq 0$.   We exclude this  proceeding by contradiction. If
  \begin{align*}
     \( 1+  Q_0 (1) \)  w  =0 \text{ with }  w  \neq 0
  \end{align*}
    then  $\psi = R_{\sigma_3(h+1 )}^{+} (1 ) B^\ast w$  satisfies
   \begin{align*}
      \( \sigma_3(h+1 ) -1   \) \psi =B^\ast w =- B^*A  R_{\sigma_3(h+1 )}^{+} (1 ) B^\ast w = -\widetilde{V}\psi
   \end{align*}
 and so
$\psi \neq 0$  is
 a
nontrivial distributional solution of $(H  -1 )u=0$. We claim that $\psi \in L^\infty (\R )$. In fact, for $g=B^\ast w$,
\begin{align*}
   |\psi(x) |\le  \int _{x<y}  |R_{\sigma _3 (h +1)  }^{+}(x,y, 1   )|   \ | g(y)| dy +  \int _{x>y}  |R_{\sigma _3 (h +1)  }^{+}(x,y, 1   )|   \ | g(y)| dy =:B_1(x) +B_2(x).
\end{align*}
Then by \eqref{eq:bdRh} we have
\begin{align*}
  B_1(x) \le  \int _{x<y}  (1+x^+ +y ^-)   \ | g(y)| dy
\end{align*}
If now $x<0$ then by the rapid decay of $g$  we get
\begin{align*}
  B_1(x) \le    \int _{\R}  (1+|y|) \ | g(y)| dy <\infty .
\end{align*}
If $x>0$ we write
\begin{align*}
  B_1(x) \le    \int _{x<y}  (1+|x|) \ | g(y)| dy   \le \int _{\R}  (1+|y|) \ | g(y)| dy  <\infty .
\end{align*}
So $B_1\in L^\infty (\R )$. By a similar argument we obtain $B_2\in L^\infty (\R )$  and hence also $\psi \in L^\infty (\R )$.
But then 1 is a resonance for $H$, which is not true.  So we conclude that $\ker \( 1+  Q_0 (1) \)  = 0$. Then
\begin{align*}
  &\|  R_{H  }^{\pm}(E   )
 u_0\| _{L^2   (|E-1|\ll 1,     L^{2, -s}(\R ) )} \\&= \|    \( 1+  Q_0 (E) \) ^{-1}  \< x \> ^{-s}  R_{\sigma _3(h+1)  }^{\pm}(E   )
 u_0\| _{L^2   (|E-1|\ll 1,     L^{2 }(\R ) )}\\& \lesssim  \|      \< x \> ^{-s}  R_{\sigma _3(h+1)  }^{\pm}(E   )
 u_0\| _{L^2   (|E-1|\ll 1,     L^{2 }(\R ) )}  \lesssim \|  u_0 \| _{L^2(\R )}
\end{align*}
and \eqref{locsmooth} for the $+$ and for $-$. This completes the proof of \eqref{eq:smooth111b} and so also of Proposition
  \ref{lem:smooth111}.

  \section{Explicit Jost functions  of the linearization for $p=3$.}\label{sec:intsy}

When $p=3$ the Jost functions discussed in \S  \ref{sec:lin} have been  explicitly known  since Kaup \cite{kaup}. In fact it was shown that    these Jost functions  can be expressed in terms of the solutions of the Lax pair system. In turn, the latter ones can be expressed in terms of the solutions of the Lax pair system for the null solution of the NLS  using B\"{a}cklund transformations.
However here we will use some transformations in  Martel \cite{Martel1,Martel2}  to  write these explicit formulas. It is not restrictive to take $\omega =1$.
 For  $\omega =1$ we have $L _{+ }=L _{0 }$ and $L _{- }=L _{1 }$ where,  Chang et al. \cite{Chang},
\begin{align}\label{eq:opLj}
 L_{j }:=  - \partial _x^2   +1-k_{j-1}(p)k_{j }(p) \frac{2}{p+1}\phi   ^{p-1} \text{  for } j=0,1,2,... \text{ and } k_j(p) := \frac{p+1}{2}-\frac{j(p-1)}{2}.
\end{align}
 Notice that
 \begin{align*}
  k_1(p)=1 \, , \quad k_2(p)= \frac{3-p}{2} \text{  and } k_3(p)= 2-p.
 \end{align*}
 When $p=3$,  $L_{2 }= L_{3 } =- \partial _x^2   +1$.
  Let
  \begin{align*}
     S_1=S(k_1(p)):=\partial _x + k_1(p)  \tanh \(  \frac{p-1}{2} x    \)  =  \partial _x +     \tanh \(  \frac{p-1}{2} x    \)   .
  \end{align*}
  Martel  \cite{Martel1,Martel2} exploits the following formula,  which we derive for reader's sake,
 \begin{align}
   & S_1^2 L_0 L_1=  S_1^2 L_0S_1^*S_1= S_1S_1^*  L_3 S_1^2=L_2L_3  S_1^2,\nonumber
 \end{align}
 where the first and last equalities follow from (3.14) and the   second  from (3.24) in Chang et al. \cite{Chang}.
Taking the adjoint   we obtain
\begin{align}\label{eqmartelid2}
   &   L_1 L_0 (S_1^*)^2   =   (S_1^*)^2  L_3L_2.
 \end{align}
This formula is exploited in  Martel  \cite{Martel1,Martel2} to show that  starting from
 \begin{align}\label{eqmartelid3}
    \left\{\begin{matrix}      L_2 w_1=\lambda w_2   \\
    L_3 w_2=-\lambda w_1
\end{matrix}\right.
 \end{align}
 we get
 \begin{align}\label{eqmartelid4}
    \left\{\begin{matrix}     \xi_1:=(S_1^*)^2 w_ 1   \\
   \xi_2:= - \frac{1}{\lambda} L_0 \xi_1
\end{matrix}\right.   \Longrightarrow     \left\{\begin{matrix}      L_1 \xi_2=\lambda \xi_1   \\
    L_0 \xi_1=-\lambda \xi_2 ,
\end{matrix}\right.
 \end{align} where $ L_0 \xi_1=-\lambda \xi_2$  is true by definition and
 \begin{align*}   L_1 \xi_2 =  - \frac{1}{\lambda}    L_1L_0 (S_1^*)^2 w_ 1 = - \frac{1}{\lambda}    (S_1^*)^2  L_3L_2 w_ 1 =  -(S_1^*)^2  L_3  w_ 2=\lambda (S_1^*)^2 w_1= \lambda \xi_1 .
\end{align*}
  For $p=3$, by  $L_{2 }= L_{3 } =- \partial _x^2   +1$  for $\lambda =\im (1+k^2)  $ we consider solutions to
\eqref{eqmartelid3}  of the form
\begin{align}\label{eq:formw00}
  (w_1,w_2) ^\intercal =  \left\{\begin{matrix}   e^{\im k x} ( 1, -\im ) ^\intercal      \\
   e^{\mu x} ( 1, \im ) ^\intercal \text{ where }\mu :=\sqrt{2+k^2}
\end{matrix}\right.
\end{align}
and by \eqref{eqmartelid4}, after elementary computations, we obtain Jost functions for $\left . \mathcal{L}_{\omega}\right | _{\omega =1}$ for $p=3$,
\begin{align}\label{eqmartelid5}
    \left\{\begin{matrix}   e^{\im k x} \( 1-k^2 -2\im k\tanh (x) -2\sech ^2 (x),  \im  \(  1-k^2 -2\im k \tanh (x) \)   \) ^\intercal      \\
   e^{\mu x} \( \mu ^2 +1- 2\mu \tanh (x) -2\sech ^2 (x) ,  \im  \(  \mu ^2 +1- 2\mu \tanh (x) \)      \) ^\intercal   .
\end{matrix}\right.
\end{align}
Notice that
\begin{align}\label{eqres1}
  \left . e^{\im k x} \( 1-k^2 -2\im k\tanh (x) -2\sech ^2 (x),  \im  \(  1-k^2 -2\im k \tanh (x) \)   \) ^\intercal \right | _{k=0}=    \( 1- 2\sech ^2 (x),  \im     \) ^\intercal
\end{align}
yields the resonance at the threshold $\im$, see formula (3.54) Chang at al. \cite{Chang}.
  Eigenfunctions  for the operator $H$ are obtained  applying to the vectors in  \eqref{eqmartelid5} the matrix $U^{-1}$ yielding
\begin{align*} (u_1, u_2) ^\intercal :=2^{-1} ( \xi_1-\im \xi_2  ,  \xi_1+\im \xi_2) ^\intercal .
\end{align*}
Entering in this formula  the functions       in  \eqref{eqmartelid5} we obtain
\begin{align}\label{eq:plwave1}
   \begin{pmatrix} u_1   \\  u_2 \end{pmatrix} &= e^{\im kx } \begin{pmatrix} 1-k^2 -2\im k\tanh (x) - \sech ^2 (x)   \\   - \sech ^2 (x) \end{pmatrix} \text{   and } \\   \begin{pmatrix} u_1   \\  u_2 \end{pmatrix} &=  e^{\mu x} \begin{pmatrix}  - \sech ^2 (x)   \\   \mu ^2 +1- 2\mu \tanh (x) - \sech ^2 (x) \end{pmatrix}  , \label{eq:plwave2}
\end{align}
which are the Jost functions of $H$  for $p=3$.

\section{The linear approximation  of  $\gamma (p,1) $ at $p=3$.}\label{sec:FGRconst}

In this section we prove Lemma \ref{lem:FGRnondeg} by following Martel \cite{Martel2}.
We will focus only on  $\gamma (p ) :=\gamma (p,1) $, since the general $\omega>0$  case follows from the   $\omega =1$
by  scaling.
We write $\phi_p$ to denote $\phi$ given in \eqref{eq:sol}.
Similarly, when it is necessary to stress the dependence on  $p$, we write $g^{(1)}=g_p=(g_{p,1},g_{p,2})^\intercal $, $\xi_{\omega=1}=\xi_p=(\xi_{p,1},\xi_{p,2})^\intercal$, $\left.\mathcal{L}_\omega\right|_{\omega=1}=\mathcal{L}_p$ and $\left.L_{\pm \omega}\right|_{\omega=1}=L_{p\pm}$.
For $g_p$, like in \eqref{eq:reimxi1}, we  take $g_{p,1}=\Re g_{p,1}$ and $g_{p,2}=\im \Im g_{p,2}$.
In the following, we choose $$\xi_3=\(1-\phi_3^2,\im\) ^\intercal \text{ and }g_3=\(\frac{1}{2}\phi_3^2\cos(x)+\frac{\phi_3'}{\phi_3}\sin(x),\im \frac{\phi_3'}{\phi_3}\sin(x)\) ^\intercal,$$
where $\xi_3$ is   just a resonance and not an eigenfunction.
\begin{remark}
Notice that here for $\xi_p$ we are not using the normalization in \eqref{eq:xinormliz1} and instead  we are defining it  as a solution of \eqref{eq:eqv01} which will be defined    in Lemma \ref{lem:expv}.
On the other hand, $g_3$ is given multiplying  by $-\frac{1}{2}$  the vector given in \eqref{eqmartelid5} with $k=1$ and then taking the real part for the first component and imaginary part for the second component respectively.
\end{remark}

Also, for $g_p$ we have the following lemma which is a variant of Lemma 19 of Martel \cite{Martel2}.
\begin{lemma}
We can choose $g_p$ so that
\begin{align*}
\|g_p-\(\frac{1}{2}\phi_3^2\cos(\tau x)+\frac{\phi_3'}{\phi_3}\sin(\tau x),\im \frac{\phi_3'}{\phi_3}\sin(\tau x)\)\|_{L^\infty}\lesssim |p-3|,
\end{align*}
where $\tau=\sqrt{1-\lambda(p,1)^2}$.
\end{lemma}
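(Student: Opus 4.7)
The plan is to construct $g_p$ via a perturbative argument based on an intertwining reduction, in the spirit of Martel \cite{Martel2}. I would first invoke existence: for $p$ near $3$ with $p\ne 3$, the point $2\im\lambda(p,1)$ is strictly interior to the essential spectrum of $\mathcal{L}_p$ (by condition (i) of Remark \ref{rem:extthm:asstab}, $2\lambda(p,1)>1$), so by the distorted--plane--wave theory of Krieger and Schlag \cite{KrSch} there exists a bounded $L^\infty$ generalized eigenfunction of $\mathcal{L}_p$ at this energy, unique up to a multiplicative constant once an asymptotic mode is selected. The proof then identifies the correct normalization and compares to the explicit target function in the statement.

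Second, I would reduce the vectorial problem to a scalar one. Composing the two components of $\mathcal{L}_p g_p=2\im\lambda g_p$ yields $L_{p-}L_{p+}\,g_{p,1}=4\lambda^2 g_{p,1}$, together with $g_{p,2}=\tfrac{\im}{2\lambda}L_{p+}g_{p,1}$. The intertwining identity $L_{p-}L_{p+}(S_1^*)^2=(S_1^*)^2L_3L_2$ in \eqref{eqmartelid2}, combined with the fact that $k_2(p)=(3-p)/2$ vanishes at $p=3$, means that the potentials of both $L_2$ and $L_3$ are $O(|p-3|)$ and exponentially localized, so $L_3L_2$ is an $O(|p-3|)$ perturbation of the flat quartic $(-\partial_x^2+1)^2$ on any relevant weighted $L^\infty$ space. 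Writing $g_{p,1}=(S_1^*)^2 w$ converts the problem into the scalar equation $L_3L_2 w=4\lambda^2 w$.

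Third, I would make the ansatz explicit. The bounded solutions of the leading--order equation $(-\partial_x^2+1)^2 w = 4\lambda^2 w$ at $p=3$ combine with the characteristic exponents dictated by the factorization of $(1+\alpha^2)^2=4\lambda^2$, and Martel's Jost--type normalization of $g_p$ selects the specific linear combination whose corresponding exponent is $\pm\im\tau$ with $\tau=\sqrt{1-\lambda(p,1)^2}$; the associated trial profiles are $\cos(\tau x)$ and $\sin(\tau x)$. A direct computation of $(S_1^*)^2\cos(\tau x)$ and $(S_1^*)^2\sin(\tau x)$ at $p=3$, using $S_1^*=-\partial_x+\tanh(x)$, the identity $\tanh^2 x-\sech^2 x=1-2\sech^2 x$, and $\phi_3'/\phi_3=-\tanh x$, reproduces after regrouping precisely the envelope combination $\tfrac{1}{2}\phi_3^2\cos(\tau x)+(\phi_3'/\phi_3)\sin(\tau x)$ appearing in the first component of the target function. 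The second component is obtained from $g_{p,2}=\tfrac{\im}{2\lambda}L_{p+}g_{p,1}$ with $L_{p+}=L_{3+}+O(|p-3|)$ and an explicit computation on the leading profile, which produces the expected $\im(\phi_3'/\phi_3)\sin(\tau x)$.

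Finally, the $L^\infty$ error bound $O(|p-3|)$ is obtained from a Volterra / Lippmann--Schwinger iteration for the scalar equation $w=w_0+R\cdot(L_3L_2-(-\partial_x^2+1)^2)w$ off the explicit $p=3$ profile $w_0$. Since the perturbation $L_3L_2-(-\partial_x^2+1)^2$ has $L^\infty$ norm $O(|p-3|)$ and exponentially localized kernel (the potentials being proportional to $\phi_p^{p-1}$), and since $(S_1^*)^2$ and $\tfrac{\im}{2\lambda}L_{p+}$ act boundedly on the relevant weighted space, the iteration converges uniformly in $x\in\R$ with remainder $O(|p-3|)$ in $L^\infty$. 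The main obstacle is that $\tau\to 0$ as $p\to 3$ reflects the threshold degeneracy at which the bound state $\xi_p$ becomes a resonance at $\im$ for $\mathcal{L}_3$, so the factorization selected by the ansatz degenerates; this is handled precisely by taking Martel's choice of $g_p$ normalized to match the target function, so that the limiting profile $\tilde g_p$ remains in $L^\infty$ and the Volterra kernel stays uniformly bounded as $\tau\to 0^+$.
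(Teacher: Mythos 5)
The paper's own ``proof'' of this lemma is a single sentence deferring to Lemma 19 of Martel \cite{Martel2}, and the skeleton of your argument is indeed the Martel-style route that the paper has in mind: use the conjugation identity \eqref{eqmartelid2} to trade $L_{p-}L_{p+}g_{p,1}=4\lambda^2 g_{p,1}$ for the scalar problem $L_3L_2w=4\lambda^2 w$, observe that the potentials of $L_2,L_3$ are $O(|p-3|)$ and exponentially localized (since $k_2(p)=\frac{3-p}{2}$), solve by a Lippmann--Schwinger iteration off the explicit $p=3$ profile, map back with $(S_1^*)^2$, and recover the second component from $g_{p,2}=\frac{\im}{2\lambda}L_{p+}g_{p,1}$. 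This mirrors how the paper treats $\xi_p$ in Lemma \ref{lem:expv} and the explicit $p=3$ Jost functions in Section \ref{sec:intsy}, so in spirit your approach is the intended one.

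However, there is a genuine error at the step where you fix the oscillation frequency, and it propagates into your closing ``main obstacle'' discussion. From $(-\partial_x^2+1)^2w=4\lambda^2w$ the characteristic exponents are $\pm\im\sqrt{2\lambda-1}$ (bounded, oscillatory) and $\pm\sqrt{2\lambda+1}$ (exponential); the identity you invoke, $(1+\alpha^2)^2=4\lambda^2$, is false, since $\lambda=1-\alpha^2$, and $\sqrt{1-\lambda^2}=\alpha\sqrt{2-\alpha^2}\to 0$ whereas the actual frequency $\sqrt{2\lambda-1}\to 1$ as $p\to 3$. The latter is the one consistent with the reference function: $g_3$ is built from the Jost vector \eqref{eqmartelid5} at $k=1$ and oscillates like $\cos x,\sin x$, and indeed $(S_1^*)^2\cos x=-2\bigl(\sech^2x\cos x-\tanh x\sin x\bigr)$ reproduces (up to normalization) the stated envelope, while $(S_1^*)^2$ applied to a profile of frequency tending to $0$ degenerates to the resonance function $1-2\sech^2x$, i.e.\ to $\xi_3$, not to $g_3$. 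Consequently there is no degeneracy at all in constructing $g_p$: the energy $2\lambda\approx 2$ is well inside the essential spectrum $[1,\infty)$ of $H$, the resolvent/Green's kernels involved stay uniformly bounded, and the threshold resonance at $\im$ concerns $\xi_p$ (eigenvalue $\im\lambda\to\im$), a different object; your final paragraph, which organizes the error analysis around a supposed $\tau\to 0^+$ threshold degeneracy, would not close, since a profile with frequency $\sqrt{1-\lambda^2}\to 0$ differs from the true generalized eigenfunction (and from $g_3$) by an amount that is $O(1)$, not $O(|p-3|)$, in $L^\infty$. Read the $\tau$ of the statement as the wave number $k$ determined by $1+k^2=2\lambda(p,1)$, i.e.\ $\tau=\sqrt{2\lambda(p,1)-1}$; with that identification your steps (existence, reduction via \eqref{eqmartelid2}, explicit $p=3$ computation, and the $O(|p-3|)$ perturbative bound) assemble into a correct proof, but as written the frequency selection and the accompanying degeneracy argument are wrong.
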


\begin{proof}
The proof is parallel to Lemma 19 of Martel \cite{Martel2}.
\end{proof}

We will start by writing an expansion in $p$ of the eigenfunction $\xi_p$.
Along the way we give  a new proof, based on Martel \cite{Martel2}, of the result  by Coles and Gustafson \cite{coles} about  the existence of an eigenvalue.

\begin{lemma}
	\label{lem:existeig} There exists a small $\delta _1>0$ and a function   $\alpha \in C^\infty (D _{\R} (3,\delta _1) , \R )$ such that  \begin{align}\label{eq:expansalpha}
		\alpha (p)=   (p-3) ^2 \(   2^{-2} + 2^{-5}2^{-\frac{1}{2}}\< \phi_3^2, \mathbf{ T}        \>      \) + O\( (p-3) ^3    \),
	\end{align}
where
\begin{equation}\label{eq:defboldT}
	\mathbf{T}:= \dfrac{e^{- \sqrt{2}  |\cdot | } }{2 } * \phi_3^2,
\end{equation}
	and such that $\im (1-\alpha ^2(p))$ is an eigenvalue of $ \mathcal{L}_p$  for $0<|p-3|<\delta _1$.
	That is, $\lambda(p,1)=1-\alpha(p)^2$.
	
\end{lemma}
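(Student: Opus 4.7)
My plan is to reduce the eigenvalue problem for $\mathcal{L}_p$ to a scalar problem for a fourth-order operator via the factorization identity \eqref{eqmartelid4} of Martel \cite{Martel2}: a nontrivial even $w_1\in L^2(\R)$ solving $L_{p,3}L_{p,2} w_1 = (1-\alpha^2)^2 w_1$ produces an even eigenfunction $\xi_p = (\xi_1,\xi_2)^\intercal$ of $\mathcal{L}_p$ at $\im(1-\alpha^2)$ via $\xi_1 = (S_1^*)^2 w_1$ (nonzero because $(S_1^*)^2$ is injective on $L^2$) and $\xi_2 = -(1-\alpha^2)^{-1}L_{p+}\xi_1$. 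Writing $L_{p,j}=-\partial_x^2+1+V_j$ and using \eqref{eq:opLj}, one computes $V_2 = -\tfrac{3-p}{p+1}\phi_p^{p-1} = \tfrac{\epsilon}{4}\phi_3^2+O(\epsilon^2)$ and $V_3 = -\tfrac{(3-p)(2-p)}{p+1}\phi_p^{p-1} = -\tfrac{\epsilon}{4}\phi_3^2+O(\epsilon^2)$ with $\epsilon:=p-3$, together with the exact cancellation $V_2+V_3=-\tfrac{\epsilon^2}{4+\epsilon}\phi_p^{p-1}$ of order $\epsilon^2$. At $p=3$ the product $L_{3,3}L_{3,2}=(-\partial_x^2+1)^2$ has spectrum $[1,\infty)$ with the constant function $1$ as a threshold resonance.

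\textbf{Bifurcation via Jost functions.} For $\mu=1-\alpha^2\in(0,1)$, the characteristic roots at infinity of the ODE $L_{p,3}L_{p,2}w=\mu^2 w$ are close to $\pm\alpha,\pm\sqrt{2-\alpha^2}$; standard perturbation theory constructs smooth-in-$(\alpha,p)$ Jost bases $\{f^\pm_j(\cdot,\alpha,p)\}_{j=1,2}$ of solutions decaying at $\pm\infty$, with $f^+_1\sim e^{-\alpha x}$ and $f^+_2\sim e^{-\sqrt{2-\alpha^2}x}$ at $+\infty$, and analogously at $-\infty$. The existence of a nontrivial even $L^2$ solution is equivalent to the vanishing of a smooth scalar function $\mathcal{D}(\alpha,p)$ built from the $2\times 2$ matrix expressing the parity constraints $w'(0)=w'''(0)=0$ on an even extension of a decaying solution at $+\infty$. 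At $p=3$ the explicit unperturbed exponentials give $\mathcal{D}(\alpha,3)=2\alpha(1-\alpha^2)\sqrt{2-\alpha^2}$, so $\mathcal{D}(0,3)=0$ while $\partial_\alpha\mathcal{D}(0,3)=2\sqrt 2\neq 0$. The implicit function theorem then yields a unique $C^\infty$ function $\alpha=\alpha(p)$ near $p=3$ with $\alpha(3)=0$ and $\mathcal{D}(\alpha(p),p)\equiv 0$, giving the eigenvalue $\lambda(p,1)=1-\alpha(p)^2$.

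\textbf{Leading expansion.} The key qualitative fact is $\partial_p\mathcal{D}(0,3)=0$: the leading $O(\epsilon)$ perturbation of $L_{p,3}L_{p,2}$ is $V_3(-\partial_x^2+1)+(-\partial_x^2+1)V_2+O(\epsilon^2)$, which because $V_3=-V_2+O(\epsilon^2)$ reduces to a commutator $-\tfrac{\epsilon}{4}[\phi_3^2,-\partial_x^2]+O(\epsilon^2)$; the induced first-order change of the Jost functions applied to the resonance $1$ is $-\tfrac{\epsilon}{4}(\phi_3^2)''$, a total derivative whose integral against $1$ vanishes, so the first $p$-variation of $\mathcal{D}$ at $(0,3)$ vanishes. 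The leading non-trivial contribution to $\mathcal{D}$ is therefore quadratic in $\epsilon$, coming from (i) the direct $O(\epsilon^2)$ perturbations involving $V_2+V_3=-\tfrac{\epsilon^2}{4}\phi_3^2+O(\epsilon^3)$ and $V_2V_3=-\tfrac{\epsilon^2}{16}\phi_3^4+O(\epsilon^3)$, contributing the leading $(p-3)^2/4$ in \eqref{eq:expansalpha}, and (ii) the second iteration of the $O(\epsilon)$ commutator correction through the regular part of the base resolvent $(-\partial_x^2+2-\alpha^2)^{-1}$, whose Green's function $\tfrac{e^{-\sqrt{2-\alpha^2}|\cdot|}}{2\sqrt{2-\alpha^2}}$ generates the convolution $\mathbf{T}=\tfrac{e^{-\sqrt 2|\cdot|}}{2}\ast\phi_3^2$ with the coefficient $2^{-5}\cdot 2^{-1/2}$. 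Matching these contributions and solving $\mathcal{D}(\alpha,p)=0$ by the implicit function theorem produces the stated expansion.

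\textbf{Main obstacle.} The central difficulty is the rigorous construction of the Jost basis uniformly as $\alpha\to 0^+$: near threshold, the Jost function $f^+_1$ (behaving as $e^{-\alpha x}$) ceases to be in $L^2$ and its coefficient in the resolvent expansion is singular like $1/\alpha$. This singularity is exactly compensated by the simple zero of $\mathcal{D}$ at $(\alpha,p)=(0,3)$ in the $\alpha$-variable, and the careful tracking of this cancellation is what makes the bifurcation work. The second technical point — the vanishing of $\partial_p\mathcal{D}(0,3)$ forced by the commutator structure of the $O(\epsilon)$ perturbation $V_2-V_3=\tfrac{\epsilon}{2}\phi_3^2+O(\epsilon^2)$ — is the structural reason why $\alpha=O(\epsilon^2)$ rather than $\alpha=O(\epsilon)$, and it is the interplay of this vanishing with the explicit structure of the regular resolvent that accounts for the $\mathbf{T}$-dependent correction in \eqref{eq:expansalpha}.
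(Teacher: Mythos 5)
Your first half coincides with the paper: both use Martel's factorization \eqref{eqmartelid3}--\eqref{eqmartelid4} to move the eigenvalue problem to the $(L_2,L_3)$ level. From there the routes diverge: you propose an Evans-function/Jost-Wronskian bifurcation for the fourth-order scalar problem $L_{p,3}L_{p,2}w_1=(1-\alpha^2)^2w_1$ (closer in spirit to Coles--Gustafson), while the paper conjugates the $2\times2$ system by $U$ to obtain $\diag(-\partial_x^2+2-\alpha^2,\,-\partial_x^2+\alpha^2)+(p-3)\mathbf{P}_p$ and performs a Birman--Schwinger reduction in which the $\tfrac{1}{2\alpha}$ threshold singularity of the free resolvent is split off as a rank-one piece; this yields the scalar equation $\alpha=-\tfrac{p-3}{2}s(p,\alpha)$ with $s$ smooth, and both coefficients of \eqref{eq:expansalpha} are read off from $\langle e_2,\mathbf{P}_pe_2\rangle$ and $\langle e_2,\mathbf{P}_3N_0\mathbf{P}_3e_2\rangle$ in a few lines. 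Your setup is in principle viable: the value $\mathcal{D}(\alpha,3)=2\alpha(1-\alpha^2)\sqrt{2-\alpha^2}$ is correct for the stated parity determinant, so an implicit function theorem at the simple zero in $\alpha$ would give a smooth branch. Two caveats, though: in an Evans function built from Jost solutions there is no $1/\alpha$ singularity to be ``compensated'' (that singularity belongs to the resolvent, not the Jost basis); what does require proof is joint smoothness of the Jost solutions, hence of $\mathcal{D}$, at the root collision $\pm\alpha\to0$ of the fourth-order symbol, which you only assert. Also, identifying an eigenvalue (rather than a resonance or antibound state) needs $\alpha(p)>0$, which only follows once the quadratic expansion with positive leading coefficient is in hand.

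The genuine gap is the derivation of the coefficients in \eqref{eq:expansalpha}, which is the real content of the lemma, and your attribution of the terms does not survive a direct check. Pairing the perturbation $W=V_3(-\partial_x^2+1)+(-\partial_x^2+1)V_2+V_3V_2$ against the resonance $1$ gives exactly $\langle 1,W1\rangle=\int(V_2+V_3)+\int V_2V_3\approx-\epsilon^2\int\tfrac{\phi_3^2}{4}-\epsilon^2\int\tfrac{\phi_3^4}{16}=-\tfrac43\epsilon^2$, whereas the $2^{-2}$ in \eqref{eq:expansalpha} corresponds (in the paper's variables) to the integral $\approx-\tfrac{\epsilon^2}{2}$ of the $(2,2)$ entry of $(p-3)\mathbf{P}_p$ in the threshold channel; since your items (i) and (ii) must share one normalization, the claim that (i) alone produces $2^{-2}$ while (ii) produces $2^{-5}2^{-\frac12}\langle\phi_3^2,\mathbf{T}\rangle$ is inconsistent. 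Moreover the second-order contribution of $W_1=\tfrac{\epsilon}{4}[-\partial_x^2,\phi_3^2]$, whose action on the resonance is $-\tfrac{\epsilon}{4}(\phi_3^2)''$, must be fed through the reduced resolvent of the product $(-\partial_x^2+\alpha^2)(-\partial_x^2+2-\alpha^2)$, so it generates pairings involving $(\phi_3^2)''$ and \emph{both} factors, not only the clean $\mathbf{T}$-term; whether everything recombines, after integrations by parts, into the stated coefficients is precisely the computation the lemma demands, and it is neither carried out nor correctly organized in your sketch. Finally, the link between the $p$-derivatives of $\mathcal{D}$ at $(0,3)$ and such perturbative pairings (with the normalization fixed by $\partial_\alpha\mathcal{D}(0,3)=2\sqrt2$) is asserted but never established, and at a threshold this is delicate because the resonance function is not in $L^2$; the paper's rank-one splitting of the $\tfrac1{2\alpha}$ singularity is exactly the device that makes this step rigorous and the coefficient computation mechanical.
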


\begin{remark}
Notice that $ (-\partial ^2_x+2) \mathbf{T}=\sqrt{2}\phi_3^2  $.
\end{remark}
\proof  We are looking  to solutions to
\begin{align}  &
	\left\{\begin{matrix}
		L_{-  }\xi  _{p,2} =\im (1-\alpha ^2)  \xi  _{p,1} \\ L_{+  }\xi  _{p,1}  =-\im (1-\alpha ^2) \xi  _{p,2}
	\end{matrix}\right.  . \label{eq:eqv01}
\end{align}
By \eqref{eqmartelid3}, this is equivalent to the existence of  $w_p = (w  _{p,1}  ,  w _{p,2}) ^\intercal$ such that
\begin{align}\nonumber & \begin{pmatrix} 0 &    -L_3   \\  L_2      &  0\end{pmatrix}    \begin{pmatrix} w  _{p,1}   \\  w _{p,2} \end{pmatrix} \\& = \( - J(- \partial _x^2   +1) +k_{2}(p)\frac{2}{p+1}\phi_p  ^{p-1}
	\begin{pmatrix} 0 &    k_{3 }(p)  \\  -1     &  0\end{pmatrix}\) w _p = \im (1-\alpha ^2)  w _p  . \label{eq:eqw01}
\end{align}
Applying  $U ^{-1}$, recall $U$ is given in \eqref{def:U},  to this equation
and introducing $Z _{p }=:U ^{-1}  w _{p, } $,  after elementary computations we get the equivalent problem
\begin{align*}&      \( -\sigma _3 (- \partial _x^2   +1)  +k_{2}(p)\frac{1}{p+1}\phi_p  ^{p-1}
	\begin{pmatrix} k_{3 }(p)+1 &    1-k_{3 }(p)  \\  k_{3 }(p)-1     &  -(k_{3 }(p)+1)\end{pmatrix}  \)  Z _{p }=    (1-\alpha ^2)  Z  _{p } .
\end{align*}
Substituting the values  of $k_{2}(p)$   and $k_{3}(p)$ and multiplying by $-\sigma _3$,  this can be written
\begin{align*}
	  \(  (- \partial _x^2   +1)   + (p-3)  \mathbf{P}_p(x)   \)  Z _{p }= - \sigma _3  (1-\alpha ^2)  Z  _{p }
\end{align*}
with
\begin{align*}\mathbf{P}_p(x) = \begin{pmatrix} 3-p & p-1   \\  p-1 & 3-p \end{pmatrix}  \frac{1}{2(p+1)}\phi  ^{p-1}_p (x).
\end{align*}
Notice in particular that
\begin{align}\label{eq:formulap3}
	\mathbf{P}_3(x) =  \sigma _1 \frac{1}{4}\phi_3  ^{2}  (x) = \sigma _1 \frac{1}{2}\sech   ^{2}  (x).
\end{align}
For
\begin{align*}     H _\alpha  = \begin{pmatrix}  - \partial _x^2   + \kappa ^2 &   0  \\      0  &  - \partial _x^2   + \alpha ^2 \end{pmatrix} \text{  with }\kappa  =\sqrt{2- \alpha^2},
\end{align*}
we can write
\begin{align*}
	\( H _\alpha   + (p-3)  \mathbf{P}_p \) Z_p=0,
\end{align*}
 which is equivalent to
 \begin{align*}
 	 \( 1 + (p-3) H _\alpha ^{-1}\mathbf{P}_p \) Z_p=0.
 \end{align*}
If we set
\begin{align*}
	|\mathbf{P}_p(x) | ^{\frac{1}{2}}& :=   \begin{pmatrix}
		1+\sqrt{p-2} & 1-\sqrt{p-2}\\
		1-\sqrt{p-2} & 1+\sqrt{p-2}
	\end{pmatrix}      \frac{1}{2 \sqrt{p+1} }\phi_p  ^{\frac{p-1}{2}} (x) \text{ and }  \\ \mathbf{P}_p ^{\frac{1}{2}}(x) &:=    \sigma _1  |\mathbf{P}_p(x) | ^{\frac{1}{2}} =   \begin{pmatrix}
		1-\sqrt{p-2} & 1+\sqrt{p-2}\\
		1+\sqrt{p-2} & 1-\sqrt{p-2}
	\end{pmatrix}      \frac{1}{2 \sqrt{p+1} }\phi_p  ^{\frac{p-1}{2}} (x) ,
\end{align*}
from the elementary computation
\begin{align*}
	\begin{pmatrix}
		1-c & 1+c\\
		1+c & 1-c
	\end{pmatrix}
	\begin{pmatrix}
		1+c & 1-c\\
		1-c & 1+c
	\end{pmatrix}
	&=\begin{pmatrix}
		2(1-c^2) & (1-c)^2+(1+c)^2\\
		(1-c)^2+(1+c)^2 & 2(1-c^2)
	\end{pmatrix}\\&
	=2\begin{pmatrix}
		1-c^2 & 1+c^2\\
		1+c^2 & 1-c^2
	\end{pmatrix},
\end{align*}
it follows that $ \mathbf{P}_p (x) =  \mathbf{P}_p ^{\frac{1}{2}}(x) |\mathbf{P}_p(x)| ^{ \frac{1}{2}} $  and furthermore these matrices commute.

\noindent Setting
\begin{equation}\label{eq:defPsi}
	\Psi   _{p }:=  \mathbf{P}_p^{\frac{1}{2}} Z _{p },
\end{equation}
the equation for $Z _{p }$ writes
\begin{align*}
	\(  1+  (p-3) K _{\alpha p}     \)  \Psi  _{p }=0    \text{ where }       K _{\alpha p}   = \mathbf{P}_p^{\frac{1}{2}} H _\alpha ^{-1}|\mathbf{P}_p| ^{\frac{1}{2}}.
\end{align*}
We expand
\begin{align*}&
	K _{\alpha p}  =  L _{\alpha p} +  M _{\alpha p}    \text{  with }  M _{\alpha p}  := \mathbf{P}_p^{\frac{1}{2}} N _{\alpha  }|\mathbf{P}_p| ^{\frac{1}{2}} \text{ with integral kernels }  \\&  N _{\alpha  }(x,y)= \frac{1}{2\alpha}    \begin{pmatrix} \frac{\alpha}{\kappa} e^{-\kappa |x-y|} &   0  \\  0     &   e^{-\alpha |x-y|} -1 \end{pmatrix}    \text{ and }    \\&   L _{\alpha p}(x,y) = \frac{1}{2\alpha} \mathbf{P}_p^{\frac{1}{2}}(x) \ \diag (0,1) \ |\mathbf{P}_p(y)| ^{\frac{1}{2}}.
\end{align*}
Here $(  p , \alpha )\to M _{\alpha p}$ is  in $C^\infty (  D _{\R ^2} ( ( 3,0), \delta _1),    L^2(\R , \C ^2))$ for a small enough $\delta _1>0$.
The equation for $\Psi  _{p }$ becomes   (for $\< \cdot , \cdot \> =  \< \cdot , \cdot \>_{\C ^2}$)
\begin{align}\label{eq:eqpsi1}
	\frac{1}{2\alpha}(1+(p-3) M _{\alpha p} ) ^{-1} \mathbf{P}_p^{\frac{1}{2}}(x)  e_2 \<   |\mathbf{P}_p| ^{\frac{1}{2}}\Psi  _{p } , e_2 \>   = -\frac{1}{p-3} \Psi  _{p } .
\end{align}
To have a solution in \eqref{eq:eqpsi1} it is not restrictive to posit
\begin{align}\label{eq:formpsi}
	\Psi  _{p } &= (1+(p-3) M _{\alpha p} ) ^{-1}  \mathbf{P}_p^{\frac{1}{2}}   e_2 \in C^\infty (  D _{\R ^2} ( ( 3,0), \delta _1),    L^2(\R , \C ^2))  \\  \Psi  _{p } &=  \mathbf{P}_p^{\frac{1}{2}}(1+(p-3)  N _{\alpha  } \mathbf{P}_p  ) ^{-1}  e_2 , \label{eq:formpsi2}
\end{align}
where the two formulas for $\Psi  _{p }$ are equivalent.  With them,
\eqref{eq:eqpsi1} is equivalent to
\begin{align}\label{eq:implalpha}
	\alpha  =-\frac{p-3}{2}
	s(p,\alpha )   \text{  with }  s(p,\alpha ):= \<   |\mathbf{P}_p| ^{\frac{1}{2}}(1+(p-3) M _{\alpha p} ) ^{-1}  \mathbf{P}_p^{\frac{1}{2}}   e_2   , e_2 \>   .
\end{align}
Notice that $ s(\cdot ,\cdot ) \in  C^\infty (  D _{\R ^2} ( (3,0), \delta _1),   \R )$ with
\begin{align*}
	s(p,\alpha )&=  \< e_2 ,  \mathbf{P}_p      e_2  \>   - (p-3) \< e_2 ,  |\mathbf{P}_p| ^{\frac{1}{2}} M _{\alpha p}   \mathbf{P}_p^{\frac{1}{2}}      e_2  \>  +O\( (p-3)^2   \)  \\&  =  \< e_2 ,  \mathbf{P}_p      e_2  \>   -  (p-3) \< e_2 ,   \mathbf{P}_p N_{\alpha }   \mathbf{P}_p       e_2  \>  +O\( (p-3)^2    \)  .
\end{align*}
Since, we have
\begin{align*}
	\< e_2 ,  \mathbf{P}_p      e_2  \>   &=     \frac{p-3}{ 2(p+1) }  \< e_2 ,  \begin{pmatrix} -1 & 1   \\  1 & -1 \end{pmatrix}  \phi_p  ^{p-1}      e_2  \>     + \frac{1}{p+1} \cancel{\< e_2 ,     \phi_p  ^{p-1}  \sigma _1    e_2  \> } \\& =    -   \frac{p-3}{ 2(p+1) }   \int _\R  \phi_p  ^{p-1} dx   =  -    \frac{  p-3    }{ 2( p-1 )  }   \int _\R   \sech ^2 \(   x  \)  dx   = -  \frac{ p-3   }{  p-1    }
	,
\end{align*}
with the canceled term null,
and
\begin{align*}
	\< e_2 ,   \mathbf{P}_p N_{\alpha }   \mathbf{P}_p       e_2  \> & =  \< e_2 ,   \mathbf{P}_3 N_{0 }   \mathbf{P}_3      e_2  \>  +O\( (p-3) \) +O\( \alpha  \) \\& = 4 ^{-2}\< e_2 ,  \phi_3^2 \sigma _1   N_{0 }   \phi_3^2  \sigma _1   e_2  \>  +O\( (p-3) \) +O\( \alpha  \)
	\\&  = 2^{-4} 2^{-\frac{1}{2}}\< \phi_3^2,\mathbf{ T}        \>  +O\( (p-3) \) +O\( \alpha  \) ,
\end{align*}
we obtain
\begin{align*}&
	s(p,\alpha )=  - (p-3) \(   \frac{ 1  }{  p-1  } + 2^{-5}2^{-\frac{1}{2}}\< \phi_3^2, \mathbf{ T}        \>      \) + O\( (p-3) ^3    \) +  O\( (p-3) \alpha    \) .
\end{align*}
Applying  the implicit function theorem  to \eqref{eq:implalpha} we get \eqref{eq:expansalpha}.

\qed

In analogy to Martel \cite{Martel2} we give an expansion of a  $  \xi_p$.
\begin{lemma}	\label{lem:expv} There exists an open interval $\mathcal{I}$ containing 3 and  for each $p\in \mathcal{I}$
	there exists a  solution  $ \xi_p=(\xi_{p,1},\xi_{p,2})^\intercal$  of \eqref{eq:eqv01}  of the form
	\begin{align}
		\xi _{p,1}&=1-\phi_3^2+(p-3)R_1+(p-3)^2\tilde{\xi} _{p,1},\label{eq:expandxip1}\\
		\xi _{p,2} &=\im\(1+(p-3)R_2+(p-3)^2\tilde{\xi} _{p,2}\),\label{eq:expandxip2}
	\end{align}
	where
	\begin{align*}
		R_1&=-x\phi_3\phi_3'-\frac{1}{4\sqrt{2}}(3-\phi_3^2)\mathbf{T}-\frac{\phi_3'}{2\sqrt{2}\phi_3}\mathbf{ T}' \text{ and }\\
		R_2&=\frac{1}{2}\phi_3^2+\frac{3}{4\sqrt{2}}\mathbf{T}+\frac{\phi_3'}{2\sqrt{2}\phi_3}\mathbf{ T}'
	\end{align*}
	and where furthermore, for any $k\ge 0$ there exists a constant $C_k$ such that
	\begin{align} \label{eq:exptildev}
		& |  \widetilde{\xi} _{p,j} ^{(k)} (x) |\le  C_k  \< x\>  ^{3} \text{ for all } x\in \R \text{ and all $p\in \mathcal{I}$}.
	\end{align}

\end{lemma}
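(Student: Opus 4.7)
The plan is to construct $\xi_p$ from the intermediate eigenvector $Z_p$ produced in the proof of Lemma \ref{lem:existeig} via the Darboux chain in \eqref{eqmartelid4}. Setting $w_p:=U Z_p$, equation \eqref{eq:eqw01} says that $w_p$ solves \eqref{eqmartelid3} with $\lambda=\im\lambda(p,1)=\im(1-\alpha(p)^2)$; one then defines $\xi_{p,1}:=(S_1^*)^2 w_{p,1}$ and $\xi_{p,2}:=-\lambda^{-1}L_0\xi_{p,1}$, and by \eqref{eqmartelid2} this pair automatically solves \eqref{eq:eqv01}. At $p=3$ the formula gives $w_3=(1,-\im)^\intercal$ and the elementary computation $(-\partial_x+\tanh x)^2\cdot 1=1-2\sech^2 x=1-\phi_3^2$ immediately produces $\xi_{3,1}=1-\phi_3^2$, and since $L_{+,3}(1-\phi_3^2)=1$ (using $\phi_3''=\phi_3-\phi_3^3$) also $\xi_{3,2}=\im$, matching the leading terms in \eqref{eq:expandxip1}--\eqref{eq:expandxip2}.

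For the first-order corrections $R_1,R_2$, the Neumann representation \eqref{eq:formpsi2} combined with \eqref{eq:formulap3}, with the fact that the $(1,1)$-entry of $N_0$ is the convolution kernel $\tfrac{1}{2\sqrt{2}}e^{-\sqrt{2}|x-y|}$, and with $\alpha=O(p-3)$ from \eqref{eq:expansalpha}, gives
\[Z_p=e_2-(p-3)\tfrac{1}{4\sqrt{2}}\mathbf{T}\,e_1+O((p-3)^2).\]
Applying $U$ transfers the $\mathbf{T}$-correction to $w_{p,1}$ and $w_{p,2}$. A careful but essentially routine expansion of the $p$-dependent Darboux operator $(S_1^*)^2$ (using $\tanh(\tfrac{p-1}{2}x)=\tanh x+\tfrac{p-3}{2}x\sech^2 x+O((p-3)^2)$) and of $-\lambda^{-1}L_0$, together with the identity $(-\partial_x^2+2)\mathbf{T}=\sqrt{2}\phi_3^2$ used to simplify the second derivatives of $\mathbf{T}$ that appear along the way, then reproduces the stated closed-form expressions. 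The $-x\phi_3\phi_3'$ in $R_1$ and $\tfrac{1}{2}\phi_3^2$ in $R_2$ originate from the $p$-variation of the Darboux operators themselves (in particular from $\partial_p\tanh(\tfrac{p-1}{2}x)|_{p=3}=\tfrac{x}{2}\sech^2 x$), while the $\mathbf{T},\mathbf{T}'$ terms originate from the $\mathbf{T}$-correction of $w_p$ transported through these operators.

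To justify the remainder estimate \eqref{eq:exptildev}, I would substitute the ansatz directly into \eqref{eq:eqv01}, subtract the orders $0$ and $1$ identities just verified, and read off a linear inhomogeneous system $\mathcal{L}_p\tilde\xi_p=\mathcal{F}_p[\tilde\xi_p]$ whose source $\mathcal{F}_p$ collects the Taylor remainders of $\phi_p^{p-1}$, $\tanh(\tfrac{p-1}{2}x)$, $\alpha(p)^2$ and of the Neumann expansion of $Z_p$. A contraction-mapping argument in the weighted space $\{u\in C^k(\R,\C^2):\|\langle x\rangle^{-3}u\|_{C^k}<\infty\}$ then produces $\tilde\xi_p$ with the claimed pointwise bound, and the derivative bounds follow by differentiating the fixed-point equation and applying elliptic regularity, absorbing losses via the exponential decay of the coefficients of $\mathcal{L}_p-\sigma_3(-\partial_x^2+1)$. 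The weight $\langle x\rangle^3$ is dictated by iterating the polynomial-growth control of Claim \ref{claim:1} (which costs one factor of $\langle x^-\rangle$ per inversion of the model equation) through the two Darboux compositions needed to pass from $Z_p$ to $\xi_p$. The main obstacle will be precisely this last step: keeping the polynomial weight fixed at $\langle x\rangle^3$ uniformly in $p$ and for all derivatives $\tilde\xi_{p,j}^{(k)}$ requires decomposing $\mathcal{F}_p$ into an exponentially decaying part (involving $\phi_p^{p-1}$) and a polynomially growing part (from $x\sech^2 x$ and from iterated convolutions with $\mathbf{T}$), and then trading exponential decay for polynomial weights before inverting $\mathcal{L}_p$ against non-decaying data in the spirit of the Green's function bound \eqref{eq:boudres1}.
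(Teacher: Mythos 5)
Your construction of $\xi_p$ and your computation of the zeroth and first order terms follow essentially the paper's own route: $Z_p$ from the Birman--Schwinger/Neumann representation \eqref{eq:formpsi2}, $w_p=UZ_p$, then $\xi_{p,1}=(S_1^*)^2w_{p,1}$ and $\xi_{p,2}=-\lambda^{-1}L_0\xi_{p,1}$ via \eqref{eqmartelid4}, with the first order terms read off from the correction $-(p-3)\tfrac{1}{4\sqrt 2}\mathbf{T}e_1$ in $Z_p$ together with the expansion of $\phi_p^{p-1}$ (the term $q_1$) and of the coefficients $\phi_p'/\phi_p$, $\phi_p''/\phi_p=1-\phi_p^{p-1}$. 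That part is correct, if compressed.

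The gap is in the remainder estimate \eqref{eq:exptildev}. You propose to produce $\widetilde\xi_p$ afresh by a contraction mapping for $\mathcal{L}_p\widetilde\xi_p=\mathcal{F}_p[\widetilde\xi_p]$ in a $\< x\>^3$-weighted space, inverting the linearized operator ``in the spirit of'' \eqref{eq:boudres1}. As stated this does not work: the operator you must invert is $\mathcal{L}_p-\im\lambda(p,1)$ evaluated exactly at its eigenvalue, so it has a nontrivial kernel and a solvability condition you never address; the eigenvalue sits at distance $\alpha(p)^2\sim (p-3)^4$ from the threshold $\im$ of the essential spectrum, which at $p=3$ carries a resonance, so no inversion bound uniform in $p\in\mathcal{I}$ comes from soft arguments; and the kernel bound \eqref{eq:boudres1}, with growth $1+x^-+y^+$, cannot be integrated against data growing like $\< y\>^3$, so the proposed right-hand side is not even in the domain of that inversion. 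You would also need a uniqueness statement in the growing class to know that the fixed point coincides with the remainder you already defined in the first paragraph. Moreover, the weight $\< x\>^3$ is not produced by ``iterating Claim \ref{claim:1} through the Darboux compositions'': Claim \ref{claim:1} belongs to the Jost-function analysis of Section \ref{sec:lin} and plays no role here, and $(S_1^*)^2$ is a differential operator with bounded coefficients, which costs nothing. The paper avoids all of this because the solution already exists by Lemma \ref{lem:existeig}: the remainder is an \emph{explicit} function, $\widetilde Z=\widetilde Z_1+\widetilde Z_2$ given by the integrated $p$-derivative of $N_{\alpha(p)}\mathbf{P}_p$ plus the tail of the Neumann series, and the cubic weight comes from the elementary kernel bound $|\partial_p ( N_{\alpha(p)}(x,y)\partial_y^k\mathbf{P}_p(y) ) e_2|\lesssim \< x-y\>^3\sech (\tfrac{p-1}{2}y )$, i.e.\ from differentiating the degenerate entry $(e^{-\alpha|x-y|}-1)/(2\alpha)$ of $N_\alpha$ in $p$. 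One then transports $\widetilde Z$ through $(S_1^*)^2$, uses the Taylor remainder $q_R$ of $\phi_p^{p-1}$ with the bound \eqref{eq:colesexp11}, obtaining $\widetilde\xi_{p,1}$, and gets $\widetilde\xi_{p,2}$ and its bound from $-\im(1-\alpha^2)\xi_{p,2}=L_{+p}\xi_{p,1}$. To repair your argument, drop the fixed point and estimate the explicitly defined remainder directly along these lines.
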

\proof    From \eqref{eq:defPsi} and \eqref{eq:formpsi2}, and in particular expanding the latter,   we have
\begin{align*}
	Z  _{p }&= (1+(p-3) N _{\alpha  } \mathbf{P}_p) ^{-1}     e_2  =     e_2- (p-3) N _{\alpha  } \mathbf{P}_p    e_2  +(p-3)^2   \widetilde{{Z}}_2 \\& =  e_2-  (p-3)2^{-2} 2^{-\frac{1}{2}}\mathbf{T} e_1 + (p-3) ^2\widetilde{Z}  \text{ with }
	\widetilde{Z} =  \widetilde{{Z}}_1   + \widetilde{{Z}}_2
	\\ \widetilde{{Z}}_1  &:= -\int _0 ^1 \left .\partial   _{p'} \(   N _{\alpha  (p') }\mathbf{P} _{p'} \)\right |_{p'=3+t(p-3 )}   e_2  dt \\ \widetilde{{Z}}_2&:=
	N _{\alpha  }\mathbf{P}_pN _{\alpha  }|\mathbf{P}_p | ^{\frac{1}{2}} (1+(p-3) M _{\alpha p} ) ^{-1}   \mathbf{P}_p^{\frac{1}{2}}    e_2
\end{align*}
where we used
$N_0 \mathbf{P} _{3} = 2^{-2} \mathbf{T} e_1$. By standard computations for any $k\ge 0$ there exists a constant $C_k$ such that
\begin{align*}
	| \partial _x ^k \widetilde{{Z}}_2 |\le C_k \< x \>    \text{ for all } x\in \R \text{ and all $p$ near 3}.
\end{align*}
It is also elementary to see that for any $k\ge 0$ there exists a constant $C_k$ such that
\begin{align*}
	|\partial   _{p } \(   N _{\alpha  (p )     } (x,y)  \partial _y ^k\mathbf{P} _{p } (y) \)    e_2| \lesssim  \<x-y \> ^3 \sech \( \frac{p-1}{2} y\)  \text{ for all } x,y\in \R ,
\end{align*}
which implies for any $k\ge 0$ there exists a constant $C_k$ such that
\begin{align}\label{eq:esttildez}
	| \partial _x ^k \widetilde{{Z}}  |\le C_k \< x \>  ^3  \text{ for all } x\in \R \text{ and all $p$ near 3}.
\end{align}
Going back to $w_{p}=(w _{p,1}, w_{p,2})^\intercal $ and for $ \widetilde{ w}= U \widetilde{Z}$ we have
\begin{align*}
	& w_{p }= UZ_{p }=       \begin{pmatrix}     1
		\\    -\im      \end{pmatrix} -    (p-3)
	2^{-2} 2^{-\frac{1}{2}}\mathbf{T}       \begin{pmatrix}         1    \\     \im       \end{pmatrix}  +   (p-3) ^2   \widetilde{ w} .
\end{align*}
Notice that the first term in the expansion is exactly what we get entering $k=0$   in \eqref{eq:formw00}.

Going back to $\xi_p=(\xi_{p,1},\xi_{p,2}) ^\intercal $ by means of  \eqref{eqmartelid4} we have
\begin{align*}
	\xi_{p,1} &=(S_1^*)^2 w_{p,1}= \( \partial _x + \frac{\phi_p'}{\phi_p} \) ^2 w_{p,1}= \(  \partial _x ^2 +2 \frac{\phi_p'}{\phi_p} \partial _x + \(  \frac{\phi_p'}{\phi_p} \) '  + \frac{\phi_3 ^{\prime 2 }}{\phi_p^2}   \) w_{p,1}\\& =\(  \partial _x ^2 +2 \frac{\phi_p'}{\phi_p} \partial _x +  \frac{\phi_p''}{\phi_p}    \)  w_{p,1} .
\end{align*}
We will use also the expansion  in \cite{coles}
\begin{align}\label{eq:colesexp1}&
	\phi_p ^{p-1}=\phi_3^2+(p-3)q_1+  (p-3) ^2 q_R(x)   \text{ with }q_1(x) =\sech ^2(x)  \(  2^{-1}-2x \tanh (x)\)
\end{align}
where
\begin{align*}
	q_R(x)&= \int _0^1 \left .  \partial  _{p'} ^2\phi ^{p'-1} _{p'}(x)\right |_{p'=(1-t)3 +tp} t dt = \int _0^1 \left .  \partial  _{p'} ^2 \(  \frac{p'+1}{2}  \sech ^{2}  \(  \frac{p'-1}{2}x \) \)   \right |_{p'=(1-t)3 +tp} t dt .
\end{align*}
Notice that for any $k\ge 0$ there exists a constant $C_k$ such that
\begin{align}\label{eq:colesexp11}&
	|  q_R^{(k)}(x)|\le C_k \< x \> ^2\sech ^2 \(  \min  \left  \{  \frac{p-1}{2} ,1     \right \}    x\)   \text{ for all } x\in \R \text{ and all $p$ near 3}.
\end{align}
Recalling  the  identities
\begin{align*}&
	-\phi_p''+ \phi_p-\phi_p^p=0 \text{  and }\\& -\phi^{\prime 2}_p +\phi^{  2}_p-\frac{2}{p+1}\phi^{p+1}_p=0
\end{align*}
we get
\begin{align*}
	\frac{\phi_p''}{\phi_p}  =  1-  \phi_p^{p-1}   =1-\phi_3^2-(p-3) q_1-  (p-3) ^2 q_R ,
\end{align*}
so that    using also the expansion in \eqref{eq:colesexp1} we have
\begin{align*}
	\xi_{p,1} &=   1-\phi_3^2     +      (p-3 )  R_1           +  (p-3) ^2 \widetilde{\xi}_1
\end{align*}
where, using the equation in \eqref{eq:defboldT},
\begin{align*}
	R_1&:= -q_1    - 2^{-2}2^{-\frac{1}{2}} \(  \partial _x ^2 +2 \frac{\phi_3' }{\phi_3} \partial _x +   1-\phi_3^2 \) \mathbf{T} \\&  = -q_1+2^{-2} \phi_3^2  -2^{-2} 2^{-\frac{1}{2}}  (3-\phi_3^2)  \mathbf{T}    +2^{-1}2^{-\frac{1}{2}}\tanh (x)\mathbf{T}'
\end{align*}
which by    \eqref{eq:colesexp1}  yields the desired expression of $R_1$
and
\begin{align*}
	\widetilde{\xi}_1&:= -   \frac{2^{-\frac{1}{2}}}{4(p-3) } \(  \frac{\phi_p'}{\phi_p} -   \frac{\phi_3' }{\phi_3}\) \mathbf{T}'   +  2^{-2}2^{-\frac{1}{2}} q_R \mathbf{T} +    (S_1^*)^2 \widetilde{w}_1 .
\end{align*}
By \eqref{eq:esttildez} and \eqref{eq:colesexp11}, we have \eqref{eq:exptildev} for $j=1$.
 Next, by \eqref{eqmartelid4}, we have $-\im (1-\alpha^2)\xi_{p,2}= L_{ +p}\xi_{p,1}$.
Substituting the expansions \eqref{eq:expandxip1} and $$L_{ +p}=L_{+3}-(p-3)(\phi_3^2+3q_1)-(p-3)^2\(3q_R+q_1+(p-3)q_1q_R\),$$
which follows from \eqref{eq:colesexp1}, we have
\begin{align*}
	-\im \xi_{p,2}&=L_{+p}\xi_{p1}+(p-3)^2\(\frac{\alpha^2}{(p-3)^2} \frac{1}{1-\alpha^2} L_{+p} \xi_{p,1}\)\\&
	=L_{+3}(1-\phi_3^2) + (p-3)\(L_{+3}R_1 - (\phi_3^2+3q_1)(1-\phi_3^2)\)\\&\quad+(p-3)^2\(L_{+3} \tilde{\xi}_1 -(\phi_3^2+3q_1)\(R_1+(p-3)\tilde{\xi}_1\)-\(3q_R+q_1+(p-3)q_1q_R\)\xi _{p,1}\)\\&\quad
	+(p-3)^2\(\frac{\alpha^2}{(p-3)^2} \frac{1}{1-\alpha^2} L_{+p} \xi_{p,1}\)
\end{align*}
By looking the coefficients of $(p-3)^0$ and $(p-3)^1$ we have \eqref{eq:expandxip2}.
Further, the estimate of $\tilde{\xi}_2$ follows from the estimate of $\tilde{\xi}_1$ given by \eqref{eq:exptildev}, \eqref{eq:colesexp11} and the explicit form of $R_1$ and $q_1$.

\qed

%
%

Now, Lemma \ref{lem:FGRnondeg} is a direct consequence of the following lemma.
\begin{lemma}\label{lem:comgamma1}
For $|p-3|\ll 1$, we have
\begin{align*}
\gamma(p,1)=\frac{\pi}{\sqrt{2}\cosh(\pi/2)}(p-3) + o(p-3).
\end{align*}
\end{lemma}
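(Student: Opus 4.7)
Since $\xi_p$ and $g_p$ depend analytically on $p$ near $p=3$ (a consequence of the analyticity of $\lambda(p,1)$ furnished by Lemma \ref{lem:existeig} and of the corresponding spectral perturbation theory), so does $\gamma(p):=\gamma(p,1)$. The plan is to compute the Taylor expansion of $\gamma(p)$ at $p=3$: first show $\gamma(3)=0$, then identify $\gamma'(3) = \pi/(\sqrt{2}\cosh(\pi/2))$. The first-order ingredients I will use are: Lemma \ref{lem:expv}, giving $\xi_p = (1-\phi_3^2,\im)^\intercal + (p-3)(R_1,\im R_2)^\intercal + O((p-3)^2)$; the analogue of that lemma for the dispersive eigenfunction $g_p$, produced by the same perturbation scheme applied to $\mathcal{L}_p g_p = 2\im(1-\alpha(p)^2)g_p$, yielding $g_p = g_3 + (p-3)G + O((p-3)^2)$ with an explicit $G$; and the expansion \eqref{eq:colesexp1} of $\phi_p^{p-1}$, from which that of $\phi_p^{p-2}$ follows.

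For the vanishing at $p=3$, one substitutes $\xi_3 = (1-\phi_3^2,\im)^\intercal$, $g_3 = \bigl(\tfrac12\phi_3^2\cos x + (\phi_3'/\phi_3)\sin x,\,\im(\phi_3'/\phi_3)\sin x\bigr)^\intercal$, and $\phi_3 = \sqrt{2}\sech x$ into the definition \eqref{eq:fgrgamma}. Since $\xi_{3,2}^2 = -1$ and $\xi_{3,1}\xi_{3,2} = \im(1-\phi_3^2)$, the quantity $\gamma(3)$ reduces to a finite sum of integrals over $\R$ of polynomials in $\sech x$ and $\tanh x$ multiplied by $\cos x$ or $\sin x$. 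Using $(\sech x)' = -\sech x\,\tanh x$, integration by parts reduces these integrals to scalar multiples of $\int_\R \sech x\,\cos x\,dx$ that cancel pairwise, giving $\gamma(3)=0$. Conceptually, this vanishing reflects the absence of radiation damping of internal modes at the integrable point.

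For the linear coefficient, differentiation of $\gamma(p)$ at $p=3$ produces four contributions, coming from (a) the explicit $p$-factor in $p\xi_{p,1}^2$, contributing $\int\phi_3\xi_{3,1}^2 g_{3,1}\,dx$; (b) $\partial_p[\phi_p^{p-2}]|_{p=3}$, obtained from \eqref{eq:colesexp1}; (c) the corrections $R_1,R_2$ from Lemma \ref{lem:expv}; and (d) the correction $G$ in $g_p$. The nonlocal pieces involving $\mathbf{T} := \tfrac12 e^{-\sqrt{2}|\cdot|}\ast\phi_3^2$ that appear in $R_1,R_2,G$ are handled using the identity $(-\partial_x^2+2)\mathbf{T} = \sqrt{2}\phi_3^2$: since $\cos x$ and $\sin x$ are eigenfunctions of $-\partial_x^2$ with eigenvalue $1$, integrating by parts twice yields $\int_\R \mathbf{T}\cos x\,dx = \tfrac{\sqrt{2}}{3}\int_\R\phi_3^2\cos x\,dx$ and the analogous sine identity, converting every $\mathbf{T}$-integral into a local one. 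All resulting integrals involve polynomials in $\sech x,\tanh x$ paired with $\cos x$ or $\sin x$ and reduce, via the residue identity $\int_\R \sech x\cos x\,dx = \pi/\cosh(\pi/2)$, to rational multiples of that quantity. Systematic cancellation yields $\gamma'(3) = \pi/(\sqrt{2}\cosh(\pi/2))$, which is the claim.

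The main obstacle is the extensive bookkeeping, with the hardest ingredient being the explicit first-order expansion of $g_p$ in the dispersive regime $2\lambda(p,1)\in[1,\infty)$. Unlike $\xi_p$, which is $L^2$, $g_p$ is only bounded at infinity, so the perturbation analysis must track the correct oscillatory asymptotics at $\pm\infty$ rather than solvability in $L^2$, requiring a Fredholm-theoretic variant of the argument of Lemma \ref{lem:expv} (working in $L^\infty$ against the free resolvent of $-\partial_x^2-\tau^2$ with $\tau = \sqrt{1-\lambda(p,1)^2}$). Once this expansion is available, the remaining calculation is a finite exercise, with the residue identity $\int_\R \sech x\cos x\,dx = \pi/\cosh(\pi/2)$ as the only transcendental input, accounting for the precise form of the final answer.
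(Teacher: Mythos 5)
Your overall strategy (Taylor expansion in $p-3$, vanishing at $p=3$, identification of the linear coefficient) is in the spirit of the paper's proof, but your specific route has two genuine gaps. First, your scheme requires the first-order correction of $g_p$ (your contribution (d)), and you only sketch how to get it. Since $2\im\lambda(p,1)$ is embedded in the essential spectrum, $g_p$ is a bounded generalized eigenfunction, not an isolated-eigenvalue object, so "analyticity by spectral perturbation theory" is not available; differentiability of $g_p$ in $p$, with a coherent choice of normalization, is exactly the kind of statement that needs a separate Jost-function/limiting-absorption construction, and the constant $\gamma$ does depend on which $g_p$ you pick. The paper avoids this entirely: using $g_{p,2}=\frac{\im}{2\lambda(p,1)}L_{p+}g_{p,1}$ it writes $\gamma(p,1)=\< G_{p,1}+\frac{1}{2\lambda(p,1)}L_{p+}G_{p,2},\,g_{p,1}\>$, computes $G_{3,1}+\frac12 L_{3+}G_{3,2}=2\phi_3$, and then uses the exact identity $\<\phi_p,g_{p,1}\>=0$ to convert the pairing of this zeroth-order term against the $p$-dependent $g_{p,1}$ into $-2(p-3)\<E,g_{p,1}\>$; consequently only continuity of $g_p$ in $p$ (the variant of Martel's Lemma 19) is ever used, never $\partial_p g_p$. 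In your direct differentiation the term pairing the zeroth-order source with $\partial_p g_p$ is a genuine nonzero contribution (it is what produces $-2\<E,g_{3,1}\>$), so you cannot bypass a rigorous construction of that correction.

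Second, your mechanism for the nonlocal terms is flawed as stated. Writing $\cos x=\frac13(-\partial_x^2+2)\cos x$ and moving the operator onto $\mathbf{T}$ localizes only the unweighted integrals $\int\mathbf{T}\cos$ and $\int\mathbf{T}\sin$. The actual first-order computation, through $R_1,R_2$ and the corresponding terms in $\Delta_1,\Delta_2$, produces weighted integrals of the type $\int\sech^k\,\mathbf{T}\cos$, $\int\sech^k\,\mathbf{T}\tanh\sin$, $\int\sech^k\,\mathbf{T}'\sin$ (the constants $r_k,s_k,e_k,f_k$ of the paper); there the integration by parts also hits the $\sech^k\tanh^j$ factors and does not convert anything into a local integral, and these constants are not rational multiples of $\int\sech x\cos x\,dx$. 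The proof closes only because, after reduction via integration-by-parts recursions to $p_1,q_1,r_1,s_1,a_1$, the coefficients of $q_1,r_1,s_1,a_1$ all cancel — the "surprising simplification" the paper (following Martel) must verify by the long explicit computation, and not a consequence of your trick. Hence your concluding claims, that the only transcendental input is $\int\sech x\cos x\,dx$ and that systematic cancellation yields $\gamma'(3)=\pi/(\sqrt2\cosh(\pi/2))$, are precisely the content that remains unproved in the proposal (the same remark applies, more mildly, to the asserted pairwise cancellation giving $\gamma(3)=0$, which the paper instead gets structurally from $G_{3,1}+\frac12 L_{3+}G_{3,2}=2\phi_3$ and $\<\phi_3,g_{3,1}\>=0$).
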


\begin{proof}
We set
\begin{align*}
E&:=\left.\partial_p\right|_{p=3}\phi_p=\frac{1}{2}\phi_3\(\frac{1}{4}-\log \phi_3\) +\frac{1}{2}x \phi' _3,\\
F&:=\left.\partial_p\right|_{p=3}\phi_p^{p-2}=E+\phi_3\log\phi_3.
\end{align*}
Further, we set
\begin{align}
\tilde{F}=\phi_{p}^{p-2}-\phi_3-(p-3)F.\label{eq:ftilde}
\end{align}
Recall,
\begin{align*}
\gamma(p,1)=\<\phi_p^{p-2}(p\xi_{p,1}^2+\xi_{p,2}^2),g_{p,1}\>+2\<\phi_p^{p-2}\xi_{p,1}\xi_{p,2},g_{p,2}\>
\end{align*}
and setting, following   notation and   argument  in Martel \cite{Martel2},
\begin{align*}
G_{p,1}:=\phi_p^{p-2}(p\xi_{p,1}^2+\xi_{p,2}^2),\\
G_{p,2}:=-2\im \phi_p^{p-2}\xi_{p,1}\xi_{p,2},
\end{align*}
we have
\begin{align*}
G_{p,1}&=\phi_3\(3(1-\phi_3^2)^2-1\)+(p-3)\Delta_1+\tilde{\Delta}_1,\\
\frac{1}{2}G_{p,2}&=\phi_3(1-\phi_3^2)+(p-3)\Delta_2+\tilde{\Delta}_2,
\end{align*}
where
\begin{align*}
\Delta_1&=F\(3(1-\phi_3^2)^2-1\) + \phi_3(1-\phi_3^2)^2+6\phi_3(1-\phi_3^2)R_1-2\phi_3R_2,\\
\Delta_2&=F(1-\phi_3^2)+\phi_3R_1+\phi_3(1-\phi_3^2)R_2
\end{align*}
and $\tilde{\Delta}_1, \tilde{\Delta}_2$ are   remainder terms of $(p-3)^2$ order.
Since it is easy to verify that the $\tilde{F}$ in  \eqref{eq:ftilde}   is decaying exponentially, we see from Lemma \ref{lem:expv} that the contribution of $\tilde{\Delta}_1$ and $\tilde{\Delta}_2$ to $\gamma(p,1)$ are $(p-3)^2$ order.
Thus, we can ignore these terms.

\noindent We have $\<\phi_p,g_{p1}\>=0$  like in Martel \cite{Martel2}.
Now, by $g_{p,2}=\frac{\im}{2\lambda(p,1)}L_{p+}g_{p,1}$, we have
\begin{align*}
\gamma(p,1)&=\<G_{p,1},g_{p,1}\>+\<G_{p,2},\frac{1}{2\lambda(p,1)}L_{p+} g_{p,1}\>\\&
=\<G_{p,1}+\frac{1}{2\lambda(p,1)} L_{p+}G_{p,2},g_{p,1}\>
\end{align*}
By direct computation, see the   proof of Lemma 20 of \cite{Martel2}, we have
\begin{align*}
G_{3,1}+\frac{1}{2}(-\partial_x^2+1-3\phi_3^2)G_{3,2}=\phi_3\(3(1-\phi_3^2)^2-1\)+\frac{1}{2}(-\partial_x^2+1-3\phi_3^2)\(\phi_3(1-\phi_3)^2\)=2\phi_3.
\end{align*}
Thus, we see $\gamma(3,1)=0$.
Further, expanding $G_{p,1}, G_{p,2}$ and $L_{p+}$, we have
\begin{align*}
&\gamma(p,1)=\<2\phi_3+(p-3)\(\Delta_1+L_{3+}\Delta_2 +\frac{1}{2}\(\left.\partial_p\right|_{p=3}L_{p+} \)G_{3,2}\),g_{p,1}\>+o(p-3)\\&
=(p-3)\(-2\<E,g_{3,1}\>+\<\Delta_1,g_{3,1}\> +2\<\Delta_2,-\im g_{3,2}\> - \frac{1}{2}\< \phi_3\(\frac{7}{4}\phi _3+3x\phi_3'\)G_{3,2},g_{3,1} \> \)\\&\quad +o(p-3).
\end{align*}
Thus, it suffices to compute the coefficient of $p-3$, which we denote $\gamma_1$ (i.e.\ $\gamma(p,1)=(p-3)\gamma_1+o(p-3)$).
Following Martel \cite{Martel2} we will consider the following constants,
 \begin{align*}
p_k&=\int\sech^k\cos,\\
q_k&=\int \sech^k  \log\circ \sech   \cos \\
r_k&=\int \sech^k   \mathbf{T}\cos\\
s_k&=\int \sech^k \mathbf{T} \tanh \sin\\
a_k&=\int x \sech^k \tanh \cos\\
b_k&=\int \sech^k\tanh \sin\\
c_k&=\int \sech^k \log\circ \sech  \tanh\sin\\
d_k&=\int x \sech^k  \sin\\
e_k&=\int \sech^k \tanh  \mathbf{T}'\cos\\
f_k&=\int \sech^k \mathbf{T}'\sin    .
\end{align*}
Then, after a quite long but elementary  computation, we arrive to
\begin{align*}
&\gamma_1=\sqt\(-\frac{3}{2}(2\log 2+1)p_5+\frac{3}{2}(2\log 2+1)p_7
+\frac{3}{2}q_3-6q_5+6q_7
-\frac{3}{2}a_3+6a_5-6a_7\)\\&
+\sqt\(\frac{3}{2}(2\log 2+1)b_3-\frac{3}{2}(2\log 2+1)b_5
-\frac{3}{2}c_1+6c_3-6c_5\)\\&
+\sqt\(\frac{3}{2}d_1-\frac{3}{2}d_3-6d_3+6d_5+6d_5-6d_7)\)\\&
+\sqt\(
-\frac{1}{2}q_3
+\frac{1}{2}a_3
+\frac{1}{2}c_1
-\frac{1}{2}d_1+\frac{1}{2}d_3\)\\&
+\sqt \(-4 p_5+4 p_7+4 b_3 -4 b_5+12 a_5-24a_7-12d_3+36 d_5-24d_7 -2p_5+2  b_3\)\\&-\frac{9}{2}r_3+12 r_5-6r_7+3e_3-6e_5
+\frac{9}{2}s_1-12 s_3+6s_5-3f_1+9f_3-6f_5-\frac{3}{2}r_3+e_3 +\frac{3}{2} s_1 - f_1+f_3.
\end{align*}
We can further simplify this quantity.
First, we can eliminate $b_k,c_k,d_k,e_k$ and $f_k$ by the identities obtained by integration by parts,
\begin{align*}
b_k
&=(k+1)p_{k+2}-kp_k,\\
c_k&
=(k+1)q_{k+2} -kq_k +p_{k+2}-p_k,\\
d_k&=-ka_k+p_k,\\
e_k&
=s_k+kr_k-(k+1)r_{k+2},\\
f_k&=-r_k+ks_k.
\end{align*}
The expression given by $p_k,q_k,r_k,s_k$ and $a_k$ ($k=1,3,5,7$) can be reduced to $p_1,q_1,r_k,s_1$ and $a_1$ by the identities, again obtained by integration by parts,
\begin{align*}
p_{k+2}&=\frac{1+k^2}{k(k+1)}p_k,\\
q_{k+2}&=\frac{1}{k(k+1)}\((1+k^2)q_k -(2k+1)p_{k+2} + (k+1)p_k\),\\
r_{k+2}&=\frac{1}{k(k+1)}\((k^2-3) r_k+2ks_k+2\sqt p_{k+2}\),\\
s_{k+2}&=\frac{1}{(k+1)(k+2)}\( (k^2-3) s_k +2(k+1)r_{k+2} -2k r_k +2\sqt (k+3) p_{k+4}-2\sqt (k+2) p_{k+2}\),\\
a_{k+2} &=\frac{1}{(k+1)(k+2)} \((k^2+1)a_k -2kp_k +2(k+1)p_{k+2}\).
\end{align*}
Now, as in Martel \cite{Martel2}, a quite surprising simplification occurs. That is,
after reducing the expression to a linear combination of $p_1,q_1,r_1,s_1$ and $a_1$ by means of lengthy but nonetheless very elementary computations, the coefficients of $q_1,r_1,s_1$ and $a_1$ vanish  and we are left with the very simple formula
\begin{align*}
\gamma_1=\frac{1}{\sqrt{2}}p_1,
\end{align*}
like in  Martel \cite{Martel2}.
We have $p_1=\pi/\cosh(\pi/2)$, see Martel \cite{Martel2}. We have the relation  $p_1=p_{1,\text{Martel}}/\sqrt{2}$. This completes the proof of  Lemma \ref{lem:comgamma1}. We   provide all the elementary computations
which we have skipped here  in the   note \cite{CM242}.
\end{proof}

\section*{Acknowledgments}
C. was supported   by the Prin 2020 project \textit{Hamiltonian and Dispersive PDEs} N. 2020XB3EFL.
M.  was supported by the JSPS KAKENHI Grant Number 19K03579, G19KK0066A, 23H01079 and 24K06792. The authors are very grateful to prof. Dmitry Pelinovsky for the information and the computations he provided for the plane waves of the linearization of the cubic NLS and for his comments on the manuscript.

Department of Mathematics and Geosciences,  University
of Trieste, via Valerio  12/1  Trieste, 34127  Italy.
{\it E-mail Address}: {\tt scuccagna@units.it}

Department of Mathematics and Informatics,
Graduate School of Science,
Chiba University,
Chiba 263-8522, Japan.
{\it E-mail Address}: {\tt maeda@math.s.chiba-u.ac.jp}

\end{document}